\theoremstyle{plain} \newtheorem{theorem}{Theorem}[section]
\theoremstyle{plain} \newtheorem{corollary}[theorem]{Corollary}
\theoremstyle{plain} \newtheorem{proposition}[theorem]{Proposition}
\theoremstyle{plain}\newtheorem{lemma}[theorem]{Lemma}
\theoremstyle{definition} \newtheorem{definition}[theorem]{Definition}
\theoremstyle{definition}\newtheorem{example}[theorem]{Example}
\theoremstyle{remark}\newtheorem{remark}[theorem]{Remark}
\theoremstyle{definition}
\theoremstyle{remark}\newtheorem{remarks}[theorem]{Remarks}
\theoremstyle{definition}
\theoremstyle{definition}
\theoremstyle{definition}\newtheorem{warning}[theorem]{Warning}
\newcommand{\N}{{\mathbb{N}}}
\newcommand{\C}{{\mathbb{C}}}
\newcommand{\R}{{\mathbb{R}}}
\newcommand{\T}{{\mathbb{T}}}
\newcommand{\Z}{{\mathbb{Z}}}
\newcommand{\Pn}{{\mathbb{P}}}
\newcommand{\F}{{\mathscr{F}}}
\newcommand{\TT}{{\mathscr{T}}}
\newcommand{\XX}{{\mathscr{X}}}
\newcommand{\FF}{{\mathcal{F}}}
\newcommand{\Spaces}{{\mathbf{Spaces}}}
\newcommand{\NCSpaces}{{\mathbf{NCSpaces}}}
\newcommand{\CommAlgebras}{{\mathbf{Algebras}}}
\newcommand{\NCAlgebras}{{\mathbf{NCAlgebras}}}
\newcommand{\TTT}{{\mathcal{T}}}
\newcommand{\Stacks}{{\mathbf{Stacks}}}
\newcommand{\Groupoids}{{\mathbf{Groupoids}}}
\newcommand{\TTTT}{{\mathfrak{T}}}
\newcommand{\PP}{{\mathscr{P}}}
\newcommand{\BB}{{\mathscr{B}}}
\def\restriction#1#2{\mathchoice
              {\setbox1\hbox{${\displaystyle #1}_{\scriptstyle #2}$}
              \restrictionaux{#1}{#2}}
              {\setbox1\hbox{${\textstyle #1}_{\scriptstyle #2}$}
              \restrictionaux{#1}{#2}}
              {\setbox1\hbox{${\scriptstyle #1}_{\scriptscriptstyle #2}$}
              \restrictionaux{#1}{#2}}
              {\setbox1\hbox{${\scriptscriptstyle #1}_{\scriptscriptstyle #2}$}
              \restrictionaux{#1}{#2}}}
\def\restrictionaux#1#2{{#1\,\smash{\vrule height .8\ht1 depth .85\dp1}}_{\,#2}}
\numberwithin{equation}{section}
\def\uniondisjointe{\mathop\bigsqcup}
\newcommand{\Torus}{\ensuremath{{\mathbb T}}} 
\newcommand{\nctorus}[2]{\ensuremath{{\mathscr{T}_{#1,#2}}}} 
\newcommand{\nctorusgerbe}[2]{\ensuremath{{{\mathscr{T}}^{cal}_{#1,#2}}}} 
\newcommand{\nctoric}[3]{\ensuremath{{\mathscr{X}_{#1,#2,#3}}}} 
\newcommand{\nctoricgerbe}[3]{\ensuremath{{\mathscr{X}^{cal}_{#1,#2,#3}}}} 
\newcommand{\modulitorus}[2]{\ensuremath{{\mathscr{M}_{\mathrm{torus}}^{#1,#2}}}} 
\newcommand{\modulitoric}[2]{\ensuremath{{\mathscr{M}_{\mathrm{toric}}^{#1,#2}}}} 
\newcommand{\modulitorusgerbe}[2]{\ensuremath{{\hat{\mathscr{M}}_{\mathrm{torus}}^{#1,#2}}}} 
\newcommand{\modulitoricgerbe}[2]{\ensuremath{{\hat{\mathscr{M}}_{\mathrm{toric}}^{#1,#2}}}} 
\newcommand{\fg}{\ensuremath{{\tilde T\,_m\kern-2pt\times_{\bar E_k}\C^d}}}	
\newcommand{\fgdec}{\ensuremath{{\tilde T^{cal}\,_{m^{cal}}\kern-2pt\times_{\bar E_k}\C^d}}}	
	\title{Quantum (Non-commutative) Toric Geometry: Foundations}
\author[L. Katzarkov \and E. Lupercio \and L. Meersseman \and A. Verjovsky]{Ludmil Katzarkov\and Ernesto Lupercio\and Laurent Meersseman\and Alberto Verjovsky}
\address{
	University of Miami\\
	Department of Mathematics\\
	PO Box 249085\\
	Coral Gables, FL 33124-4250}
\address{
	Departamento de Matem\'{a}ticas\\
	Cinvestav\\
	Av. IPN \#2508, Col. San Pedro Zacatenco, D.F., \underline{M\'{e}xico}}
\address{Laboratoire Angevin de Recherche en Math\'ematiques\\Universit\'{e} d'Angers\\ F-49045 Angers Cedex, \underline{France}}
\address{
	Instituto de Matem\'{a}ticas\\
	Universidad Nacional Autonoma de M\'{e}xico
	Cuernavaca, \underline{M\'{e}xico}}
\date{\today}
\begin{document}

\begin{abstract}
	In this paper, we will introduce \emph{Quantum Toric Varieties} which are (non-commutative) generalizations of ordinary toric varieties where all the tori of the classical theory are replaced by quantum tori.  Quantum toric geometry is the non-commutative version of the classical theory; it generalizes non-trivially most of the theorems and properties of toric geometry. By considering quantum toric varieties as (non-algebraic) stacks, we define their category and show that it is equivalent to a category of quantum fans. We develop a Quantum Geometric Invariant Theory (QGIT) type construction of Quantum Toric Varieties.
	
	Unlike classical toric varieties, quantum toric varieties admit moduli and we define their moduli spaces, prove that these spaces are orbifolds and, in favorable cases,  up to homotopy, they admit a complex structure.
\end{abstract}	
	
\maketitle
\tableofcontents

\section{Introduction} \label{intro}

The classical theory of toric geometry has found multiple applications in the resolution of problems in various fields of mathematics going from combinatorics to differential geometry.

The purpose of this paper is to present a wide-ranging generalization of toric geometry: in the same manner in which non-commutative geometry generalizes classical geometry, quantum toric geometry is the non-commutative version of the classical theory; it generalizes non-trivially most of the theorems and properties of toric geometry. 

Tori (both real and complex) are the building blocks of the classical theory; indeed, a classical $n$-complex dimensional compact, projective K\"ahler toric manifold $X$ can be defined as an equivariant, projective compactification of the $n$-complex dimensional torus $\T_\C^d:= \C^* \times \cdots \times \C^*$ (where $\C^*:= \C \setminus \{0\}$).

Real tori also play an important role in the classical theory: the real torus $\T_\R^d=S^1 \times \cdots \times S^1 \subset \C^* \times \cdots \times \C^* = \T_\C^d$ acts holomorphically on the whole of $X \supset \T_\C^d$. Thinking of the K\"ahler manifold $(X,g,J,\omega)$ as a symplectic manifold, the action of the real compact Lie group $\T_\R^d$ on $X$ is Hamiltonian, implying thus the existence of a continuous equivariant moment map $\mu$ with convex image $P$:
$$\mu \colon X \longrightarrow  P \subseteq \R^d \cong \mathrm{Lie}({\T_\R^d})^*.$$ \emph{A priori}, whenever $X$ is compact, $P$ is a compact, convex set but, for a toric variety $X$, $P$ turns out to be a convex, rational, Delzant polytope: that is, the combinatorial dual of $P$ is a triangulation  of the sphere $S^{d-1}$, and all the slopes of all the edges of $P$ are rational.

It is natural to consider $P$ as a stratified space $P = P_0 \amalg P_1 \amalg \cdots \amalg P_d$ where $P_i$ is the disjoint union of all facets of $P$ of dimension $i$; this stratification is inherited by $X$ via the moment map. To wit, the map $\mu|_{P_i} \colon X_i := \mu^{-1}(P_i) \longrightarrow P_i$ is a trivial real-torus bundle over $P_i$ identifying $X_i$ with the product $P_i \times \T_\R^i$, and so, $X = (P_0 \times \T_\R^0) \amalg \ldots \amalg (P_d \times \T_\R^d),$
reconstructing then $X$ as the disjoint union of real Lagrangian tori.

For more general toric varieties, fans (which can be thought in the polytopal case as the cone with vertex at the origin of the dual of the polytope) are used rather than polytopes, but still, an ubiquitous use of complex and real tori (often appearing in the theory in the guise of lattices on vector spaces), and their partial compactifications, are the basis of the classical theory.

The basic idea behind the field of \emph{Quantum Toric Geometry} is to replace all the tori appearing in classical toric geometry by quantum tori (also known as non-commutative tori): just in the same manner in which toric manifolds can be thought of as integrable systems, our quantum toric manifolds can, in turn, be interpreted as quantum integrable systems. And likewise, for the same essential reason that a version of mirror symmetry for toric varieties can be construed as a parametrized version of $T$-duality for tori, analogously, quantum toric manifolds will have a version of mirror symmetry in which the basic component is non-commutative $T$-duality.

From a slightly different point of view, quantum toric geometry can be thought of as a deformation (with deformation parameter $\hbar$) of the whole field of toric geometry (say, as presented in \cite{CoxBook}): while very many results from the classical theory have their counterparts in our quantum generalization, the proofs of such results are not entirely obvious. On the other hand, the flavor of our theory is familiar, for we encounter the usual suspects: quantum fans, quantum lattices, and the like. Furthermore, of course, the classical theory is a particular case of the quantum theory, as it should be.

Thus, the basic building block of our theory is a non-commutative deformation of the classical tori $\T_\R^i$ known as the \textit{quantum torus} $\TT_{\R,\hbar}^i$ (depending on a '\emph{real} deformation parameter $\hbar$): it is one of the most important and basic spaces in the field of non-commutative geometry \cite{Connes}.

 Let us recall what we mean by a non-commutative space. While an ordinary commutative space $X$ has an algebra of smooth complex valued functions that is commutative, a non-commutative space $\XX$ is a gadget (often, it is  not an ordinary topological space) whose algebra of smooth functions is non-commutative. In fact, roughly speaking\footnote{We are obviating various important analytical and categorical issues in this introduction: often, in non-commutative geometry, one speaks about $C^*$-algebras, rather than simply speaking about algebras. Also, in non-commutative geometry, we don't really consider morphisms of algebras as mappings among them, but instead, a morphism of algebras $A\to B$ will be an $A$-$B$-bi-module: the resulting notion of isomorphism delivers the concept of \emph{Morita equivalence} ($\sim_M$) of algebras (cf. \cite{nlab:morita_equivalence}). In this paper, though, Morita equivalence appears in an enriched manner: as the equivalence relation of groupoids that produces stacks \cite{moerdijk2002introduction}; rather than bimodules, in the groupoid case, Hilsum-Skandalis bibundles take their place \cite{hilsum1987morphismes}.}, we have the following diagram relating four categories\footnote{The upper arrow of the diagram is essentially a consequence of commutative Gelfand–Naimark theorem \cite{arveson2012invitation}.}:
\begin{center}
	\begin{tikzcd}
	\Spaces\arrow[r, "\cong", leftrightarrow]\arrow[d, hook]&
	\CommAlgebras\arrow[d, hook] \\
	\NCSpaces\arrow[r, "\cong", leftrightarrow]& \NCAlgebras/{\sim_M}
	\end{tikzcd}
\end{center}

The quantum 2-torus $\TTT_{\R,\hbar}^2\in\NCSpaces\cong\NCAlgebras/{\sim_M}$ (non-commutative algebras \emph{up to Morita equivalence}) is a good starting example, its algebra $A_\hbar$ of smooth functions (in $\NCAlgebras$) has two (periodic) generators $X$, $Y$ that don't quite commute but rather satisfy the relation:
$$XY = e^{2\pi i\hbar} YX.$$
The algebra $A_\hbar$ can be realized as an operator algebra first appearing in quantum mechanics\footnote{In fact, this equation is precisely the classical Born-Heisenberg-Jordan commutation relation \cite{born1925quantenmechanik}, \cite{born1926quantenmechanik} ,\cite{fedak20091925} in Weyl exponential form \cite{weyl1927quantenmechanik}.}. When we specialize the parameter $\hbar$ to be zero, we obtain a commutative algebra and, in fact, $\TTT_{\R,\hbar=0}^2 \cong \T_\R^2$, recovering the usual torus.

There is an important dichotomy for the parameter $\hbar$; the space $\TTT_{\R,\hbar}^2$ is truly non-commutative only when $\hbar$ is irrational; when $\hbar$ is rational, its algebra of functions is Morita equivalent to a commutative algebra.

A. Connes has pointed out a beautiful geometric interpretation for the non-commutative space $\TT_{\R,\hbar}^2$; it can be thought of as the space of leaves of a foliation (see Section 6 of \cite{connes2008walk}). The Kronecker foliation of slope $\hbar$ on $\T_\R^2$ (depicted in Fig. \ref{Kronecker}) consists on taking the foliation of the Euclidean plane $\R^2$ and projecting it up by the translation action of the integral lattice $\Z^2 \subset \R^2$. This is the same as considering the image of $\R^2$ (and its foliation) into $\C^2$ given by the map $E$, defined as:
$$ E:(x,y) \mapsto (\exp(2\pi i x),\exp(2\pi i y)),$$
and it is because of this that the exponential will play a fundamental role in our theory.

\begin{figure}[h]
	\centering
	\includegraphics[width=0.7\linewidth]{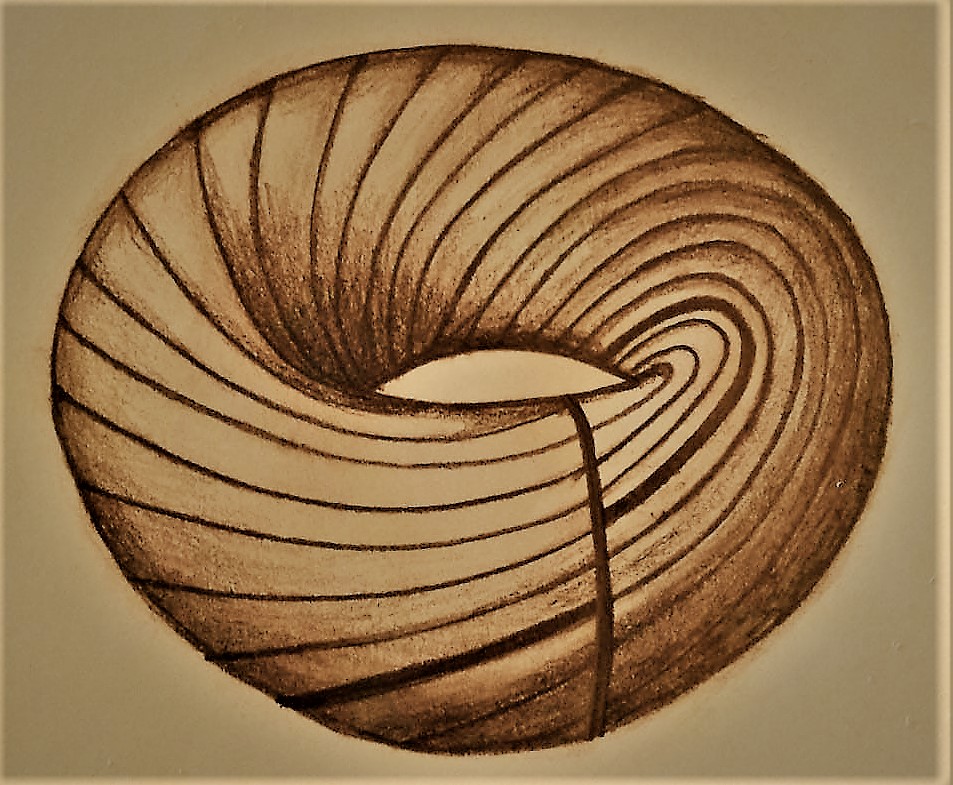}
	\caption[The Kronecker Foliation and its Holonomy]{The Kronecker foliation and its holonomy: the thick line represents an interval of a leaf in the Kronecker foliation going around once and thus defining a rotation from the transversal vertical circle into itself. We will denote the angle of rotation by $\hbar$.}\label{Kronecker}
\end{figure}

Whenever $\hbar=p/q$ is rational, this is a foliation of the real torus $\T_\R^2$ by circles (actually $(p,q)$-torus knots) but, otherwise, each leaf winds densely inside $\T_\R^2$. 

As a first approximation, we think of the leaf space of the Kronecker foliation as the quotient topological space $\TTTT(\hbar):=\T_\R^2/E(\hbar)$ where $E(\hbar):=\{E(x,\hbar x): x\in\R \})$ is the (possibly dense) leaf of the torus passing through the origin; it is also a normal subgroup of $\T_\R^2$, and the quotient is taken in the group sense. We could obtain the same quotient by considering only the transversal circle (the vertical circle in Fig. \ref{Kronecker} above). If $\rho_\hbar\colon S^1 \to S^1$ is the holonomy map that rotates the circle by an angle $\hbar$, and $\langle \rho_\hbar \rangle$  is the discrete group of rotations of the circle it generates (we have an infinite cyclic group $\langle \rho_\hbar \rangle \cong \Z$ whenever $\hbar$ is irrational, and a finite cyclic group otherwise),  then we have:
$$\TTTT(\hbar):=\T_\R^2/E(\hbar) \cong S^1 /\langle \rho_\hbar \rangle,$$
again, a dichotomy ensues: either $\hbar$ is rational and $\TTTT(\hbar)$ is a circle (the quotient of a torus by an embedded torus knot) or $\hbar$ is irrational and $\TTTT(\hbar)$ is a non-Hausdorff topological space.                                                                                                                                                                                                                                                                                                                                                                                                                                                                                                                                                                                                                                                                                                                                                                                                                                                                                                                                                                                          

When, in general, $\TTTT = T/ \sim $ is a non-Hausdorff topological space obtained as the quotient of a manifold $T$ divided by the action of a (possibly non-compact) group (really, any equivalence relation $\sim$ defined by a Lie groupoid action on $T$), there is, at least, two very fertile ways to enrich $\TTTT$ preserving some of the information of the geometric groupoid action on $T$ (and landing in nicer categories than that of possibly non-Hausdorff topological spaces); (1) by using non-commutative algebras (taking the non-commutative quotient as in section 4 of \cite{connes2008walk}), and (2) by using \emph{stacks} (sheafs of groupoids \cite{edidin2003communications}): from $(T,\sim)$ (thought of as a topological groupoid), we can obtain three related objects; (a) a non-Hausdorff topological space, (b) a non-commutative algebra $A_\TTT$, and (c) a stack $\TT$. From these, $\TT$ is the richer, it has more information about the groupoid $(T,\sim)$ than the other two objects; then, by applying the Connes convolution algebra mapping (\cite{cartier2001mad} page 5): 
\begin{center}
	\begin{tikzcd}
	\Groupoids\arrow[r, "C", rightarrow]\arrow[d, twoheadrightarrow]&
	\NCAlgebras\arrow[d, twoheadrightarrow] \\
	\Stacks\arrow[r, "C", rightarrow]& \NCSpaces
	\end{tikzcd}
\end{center}
here, it is useful to remember that $\Stacks\cong\Groupoids/{\sim_M}$ and that $\NCSpaces\cong\NCAlgebras/{\sim_M}$, moreover, the descending arrows consists in both cases in quotienting out Morita equivalences.

Let us consider the example of the quantum torus. Here we have:
\begin{center}
	\begin{tikzcd}
	(\T_\R^2,E(\hbar))\arrow[r, "C", mapsto]\arrow[d, twoheadrightarrow]&
	A_\hbar\arrow[d, twoheadrightarrow] \\
	\TT_{\R,\hbar}^2\arrow[r, "C", mapsto]& \TTT_{\R,\hbar}^2
	\end{tikzcd}
\end{center}
The \emph{dramatis personae} of this commutative diagram are as follows:
\begin{enumerate}[(i)]
	\item The (translation) Lie groupoid $(\T_\R^2,E(\hbar))$ whose manifold of objects is the torus $\T_\R^2$ (which happens to be a Lie group), and whose arrows $(t,s): t \mapsto t\cdot s$ are pairs of elements in $\T^2\times E(\hbar)$.
	\item The non-commutative algebra $A_\hbar$  (whose two generators satisfy $XY = e^{2\pi i\hbar} YX$).
	\item The non-commutative space\footnote{Actually, we should really be taking $\TTT_{\R,\hbar}^2$ to be the dg-category $D^b_{\mathrm{coh}}(A_\hbar-{\mathrm{mod}})$ (after an adequate interpretation of what a coherent sheaf should be); see, for example, \cite{kontsevich2007xi}.} $\TTT_{\R,\hbar}^2$, namely, the Morita equivalence class $[A_\hbar]_{\sim_M}$  of the algebra $A_\hbar$. We will call this \emph{the non-commutative torus} $\TTT_{\R,\hbar}^2$.
	\item The (non-separated, non-algebraic, smooth) stack $\TT_{\R,\hbar}^2$ obtained by stackification of $(\T_\R^2,E(\hbar))$. We will call this the \emph{quantum torus} $\TT_{\R,\hbar}^2$.
\end{enumerate}

Because of the remarkable properties of the category of stacks (for example, the existence of fibered products), in this paper, we will always use stacks\footnote{Another approach would have been to use \emph{diffeological spaces}: indeed, surprisingly, from the diffeological quotient of $\T^2$ by $E(\hbar)$, one can recover the stack $\TT_{\R,\hbar}^2$ (cf. \cite{iglesias2013diffeology, Becerra})} rather than non-commutative spaces: in principle all the non-commutative geometry can be recovered from the stacky geometry although, in practice, this may be not entirely trivial. We will return to this issue in a future work. In any case, it is much simpler to state that, for instance, $\TT_{\C,\hbar}^2$ is a K\"ahler stack, than to try to say the same for its non-commutative avatar $\TTT_{\C,\hbar}^2$.

Our notation for stacky quotients uses brackets so that, for example, we have: 
$$\TT_{\R,\hbar}^2 := [\T_\R^2/E(\hbar)] \cong [S^1 /\langle \rho_\hbar \rangle],$$
and, from now on, we will always use the presentation $\TT_{\R,\hbar}^2 := [S^1 /\langle \rho_\hbar \rangle]$ for the quantum torus. Actually, we will need to pass to the Lie algebra by taking logarithms. Indeed, we will find convenient to use the exponential group homomorphism (with kernel $\Z = \langle 1 \rangle$):
$ E: x \in \R \mapsto E(X):= \exp(2\pi i x) \in S^1,$
which, in turn, induces a map  $$E:[\R /\langle 1,\hbar \rangle] \to [S^1 /\langle \rho_\hbar=E(\hbar) \rangle] = \TT_{\R,\hbar}^2 $$ which is an isomorphism. We will write the additive subgroup $$\Gamma:=\langle 1,\hbar \rangle \subset \R.$$ Sometimes $\Gamma$ is called a \emph{quasi-lattice} but, given our motivation, we will call it a \emph{quantum lattice} or, simply, a \emph{q-lattice}. Clearly, $\Gamma$ behaves quite differently whether $\hbar$ is rational or not: in the former case, $\Gamma$ really is a lattice in $\R$, for it is always the case that $\Gamma \cong\Z$. In any case, $\Gamma$ plays the role of the `Lie algebra' of the rotation group $\langle \rho_\hbar \rangle$:
$$ E: \Gamma \to \langle \rho_\hbar \rangle.$$
With this, we arrive at the \emph{logarithmic representation of the quantum torus}:
$$\TT_{\R,\hbar}^2 \cong [\R /\Gamma].$$
There are two variations to the previous setting that we will need in our theory. First, we will work mostly with complex quantum tori rather than with real quantum tori (although Lagrangian tori will still be real):
$$\TT_{\C,\hbar}^2 := [\T_\C^2/E(\hbar)] \cong [(\C^*) /\langle \rho_\hbar \rangle] \cong [\C /\Gamma].$$
The second important variation arises from the fact that we will need to work with tori of arbitrary integer dimension $d+1$, so that, in general, we define:
$$\TT_{\C,d,\Gamma}:= \TT_{\C,\Gamma}^{d+1} := [\T_\C^{d}/E(\Gamma)]  \cong [\C^d /\Gamma].$$
where $\Gamma$ is a \emph{q-lattice} (namely, a finitely generated additive subgroup of some $\R^d$ spanning it over the real number field). We are to think of $\Gamma$ as the holonomy of a linear foliation on $\T_\C^{d+1}$ analogous to that of Figure \ref{Kronecker} (where $d=1$ and $\Gamma=\langle 1, \hbar \rangle$), of $\T_\C^{d}$ as a transversal to the foliation, and of $\C^d$ as the universal cover to such transversal. 

The simplest example of a quantum toric variety is probably a quantum projective line; just a projective line is an equivariant compactification of a one dimensional complex torus:
$$ \C P^1 = \C^* \cup \{0 \} \cup \{\infty \},$$
the analogous statement is true for a quantum projective line (which is then, in turn, a compactification of a quantum torus):
$$ \C \PP^1_\hbar = \TT_{\C,1,\hbar}  \cup \{0 \} \cup \{\infty \}.$$

This example is constructed in full detail in Examples \ref{P1QFans} and \ref{quantumP1} below. Enough is to say here that we construct $ \C \PP^1_\hbar$ with two charts, both of the form $[\C/\exp (2i\pi \hbar\Z)]$ (which is a partial compactification of $[\C^*/\exp (2i\pi \hbar\Z)]$),
glued by the attaching map:
$$ [z]\in [\C^*/\exp (2i\pi\Gamma)]\longmapsto [z^{-1}]\in [\C^*/\exp (2i\pi (-\Gamma))]. $$

Notice that a quantum projective line $ \C \PP^1_\hbar$ is a compactification of $\TT_{\C,\hbar}^2$. You may want to imaginatively think that $\C$ (resp. $\R$) is both the Lie algebra and the universal covering of $\TT_{\C,\hbar}^2$ (resp. $\TT_{\R,\hbar}^2$), and that $\pi_1(\TT_{\R,\hbar}^2)\cong\Gamma$, but this would be off by one dimension ($2\neq1$) for the case $\hbar=0$.  The dimensions may, at first, look confusing to the reader. To clarify this possible confusion let us mention that:
\begin{itemize}
	\item[i)] The `naive dimension' of $\TT_{\R,\hbar}^2$ seems to be $\dim \T^2 -\dim E(\hbar) = 2-1 = 1$. This is why we shift to the notation $\TT_{1,\R,\hbar}:= \TT_{\R,\hbar}^2$ in the body of the paper.
	\item[ii)]	 The `homotopy type' of $\TT_{\R,\hbar}^2$ is given by the homotopy quotient $\T^2\times_{E(\hbar)} E\R$ which in turn is homotopy equivalent to $\T^2$ (for $E(\hbar)\cong\R$ is contractible), and hence has `homotopic-dimension' two. The same holds for $\TT_{\C,\hbar}^2$. This will be reflected in the periodic cyclic homology of $\TTT_{\R,\hbar}^2$: from the homological point of view, it will look like a two-dimensional space.
	\item[iii)] As mentioned above, a quantum projective line $\C \PP^1_\hbar$ is a compactification of $\TT_{\C,\hbar}^2$, and, indeed, it will also have a `naive complex dimension' of 1 and a `homotopic dimension' of 2. Moreover, we will describe a complex manifold $N_\hbar$ (known as a LVM-manifold cf. Section \ref{lvmtheory} below) together with a foliation $\FF_\hbar$ (defined in Subsection \ref{F}) so that the groupoid $(N_\hbar,\FF_\hbar)$ compactifies the Kronecker groupoid $(\T^2_\C, E(\hbar))$ and the stack $ \C \PP^1_\hbar \cong [N_\hbar/\FF_\hbar]$ equivariantly compactifies the stack $\TT_{\C,\hbar}^2 := [\T_\C^2/E(\hbar)]$. The point here is that the complex dimension of $N_\hbar$ is two (in fact $N_\hbar\cong S^1\times S^3$ is a complex non-symplectic Hopf surface\footnote{Hopf manifolds and the more generally, Calabi-Eckmann manifolds, \cite{calabi1953class} are non-K\"{a}hler manifolds. Topologically they are of the form $S^{2n-1}\times S^{2m-1}$ and they are deformations of an elliptic, holomorphic fibration $E\to S^{2n-1}\times S^{2m-1}\to \Pn^{n-1}\times \Pn^{m-1}$. In this example we are interested in the Hopf case $n=1$, $m=2$. Of course, $\Pn^{n-1}\times \Pn^{m-1}$ is a toric variety. The generalization of a Calabi-Eckmann manifold corresponding to a general toric variety $X$ are the \emph{ LVM-manifolds} as it is proved in \cite{meersseman2004holomorphic}.}), and (in the irrational case) the leaves of the foliation $\F_\hbar$ are all isomorphic to $\C$ and hence, are contractible. In this situation,  the fact that $\C \PP^1_\hbar \cong [N_\hbar/\C]$ explains both the naive and  homotopic dimension countings for this quantum projective line. 
	\item[iv)] The case of rational $\hbar$ (say $\hbar=0$) requires more care, for here the leaves of the foliation $\F_\hbar$ wind up on themselves and rather than being copies of $\C$, they become elliptic curves of the form $S^1\times S^1$ (indeed, the map $N_\hbar \cong S^1\times S^3 \to   [N_0/\FF_0] \cong [N_0/S^1\times S^1] \cong \Pn^1 \cong S^2$ is the trivial constant map crossed with the Hopf fibration, with fiber $S^1\times S^1$), and then the homotopical dimension of $\Pn^1$ and the naive complex geometric dimension coincide and are both equal to 1 (of course). Notice here that the stack $\C \PP^1_0 \cong [N_0/\C] ( \ncong [N_\hbar/\FF_\hbar] \cong \Pn^1)$ still has homotopical dimension equal to two, and we have a fibration $$\C^* \simeq \BB \Z \to \C \PP^1_0 \to \Pn^1,$$
	that is to say \emph{$\C \PP^1_0 $ is a gerbe over $\Pn^1$ with abelian band $\Z$}, which explains the difference of dimensions by 1. We refer to this process as \emph{calibrating} $\Pn^1$ to obtain $\C \PP^1_0 $, the choice of calibration is by no means unique (cf. Definition \ref{decQFdef}, Subsection \ref{NDstandardConedec} and Subsection \ref{CCGcalibrated}. Here we are using a standard calibration as in Example \ref{P1QFans}). In any case, it is easy to recover $\Pn^1$ from $\C \PP^1_0 $ (by forgetting the gerbe) and vice-versa, $\C \PP^1_0 $ can be naturally constructed from is the complex version of the `2-fold homotopy cover' of $\Pn^1$ (the complex 2-fold homotopy cover of $S^2$ is $S^1\times S^3$). 
	\item[v)] When (in Section \ref{Qmoduli}), we want to form a moduli space of quantum lines (more generally, of quantum toric stacks), it is natural to add the calibration (to be thought of as a gerbe degree of freedom) to the rational case\footnote{Here we have a beautiful generalization of what Seiberg and Witten would refer to as `turning on the $B$-field' \cite{seiberg1999string}. In \cite{seiberg1999string}, the authors show that turning on the $B$-field (what we would call `considering a gerbe' over a space) produces an effective action that in turn, can be interpreted in terms of spacetime becoming non-commutative; to wit, string theory in the presence of a constant, non-zero $B$-field, brings about the appearance of non-commutative tori. Thus, in our case, turning on the gerbe, makes $\C \PP^1_0 $ into a slightly non-commutative version of $\Pn^1$. For a motivation in terms of mirror symmetry, see Section 4.8 in \cite{auroux2008mirror}}. The moduli space of calibrated quantum lines can be thought of as a desingularization of the naive moduli space which uses no calibrations (cf. Remark \ref{rkmodulitorus} below and \cite{bursztyn2002survey}). Also, if we want  periodic cyclic homology to form a nice bundle over the moduli space, we will need to use calibrations. 
\end{itemize}

Recall that all the information to reconstruct $ \C P^1$ can be combinatorially encoded by a fan in $\R^1$ with three cones: $\{0\}$, $\R_+$ and $\R_-$ together with the integral lattice $\Z \subset \R^1$. Likewise, all we need to reconstruct $ \C \PP^1_\hbar$ is the \emph{quantum fan} consisting of three cones $\{0\}$, $\R_+$ and $\R_-$ together with the q-lattice $\Gamma \subset \R^1$.

\begin{figure}[h]
	\centering
	\includegraphics[width=0.7\linewidth]{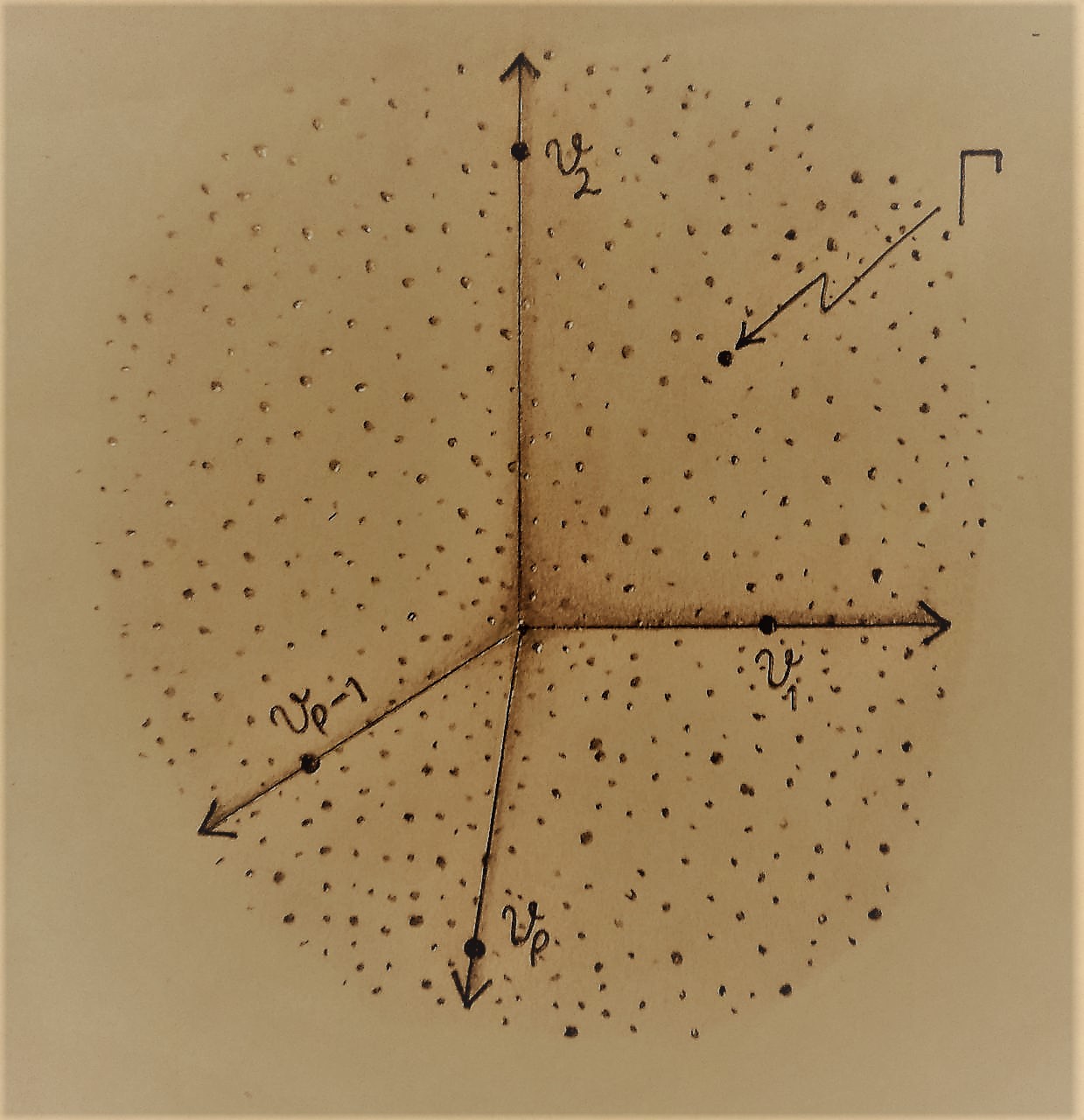}
	\caption[A Quantum Fan]{A quantum fan $(\Delta, v)$ in $\Gamma$ is very similar to a classical fan $\Delta$ in toric geometry, but instead of an integral lattice, it is equipped with a q-lattice $\Gamma$ (cf. Definition \ref{QFdef}). Notice that we must mark the (non-canonical) `primitive vectors' $(v_1,\ldots, v_p)$ (all in $\Gamma$) on every ray of the $1$-skeleton of the fan. The fan $\Delta$ no longer needs to be rational.}\label{AQuantumFan}
\end{figure}

A general quantum toric stack can be constructed starting from a general (not necessarily rational) \emph{quantum fan} (see Figure \ref{AQuantumFan} and Definition \ref{QFdef}): such a q-fan carried a q-lattice $\Gamma \subset \R^d$, and therefore defines a q-torus $\TT_{\C,d,\Gamma} \cong [\C^d /\Gamma]$. The quantum toric stack $\XX_{\Delta,\Gamma,v}$ (cf. Definition \ref{QToricObjects}) is an equivariant compactification of $\TT_{\C,d,\Gamma}$ given by the data of the quantum fan $(\Delta,v)$.

As explained before, we really want to consider the calibrated case (adding gerbe degrees of freedom). At the level of fans, this is achieved by the definition of a \emph{calibrated quantum fan}. The precise description of a calibrated quantum fan (depicted in Figure \ref{ACalibratedQuantumFan}) is found in Definition \ref{decQFdef} below. For now, think of a calibration as a homomorphism $h: \Z^n \to \Gamma$. Given such a calibrated quantum fan, then we define a \emph{Calibrated Quantum Toric Stack} in Definition \ref{decQTVdef} and denoted\footnote{Here $J$ is a certain crucial piece of combinatorial data the we are omitting altogether in the introduction but see Definition \ref{decQFdef} for details.} by $\nctoricgerbe{\Delta}{h}{J}$.

\begin{figure}[h]
	\centering
	\includegraphics[width=0.9\linewidth]{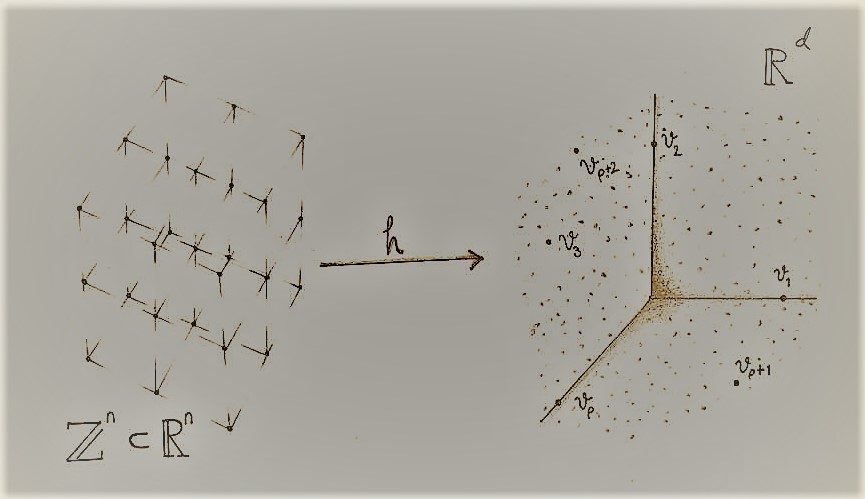}
	\caption[A Calibrated Quantum Fan]{A calibrated quantum fan $(\Delta,h)$ is essentially a quantum fan plus a calibration, namely, a homomorphism $h: \Z^n \to \Gamma$ `determining the various Planck lengths of the quantum system' (cf. Definition \ref{decQFdef}). }\label{ACalibratedQuantumFan}
\end{figure}

The relation between the calibrated Quantum Toric stack $\nctoricgerbe{\Delta}{h}{J}$ and its un-calibrated version  $\nctoric{\Delta}{\Gamma}{v}$ is explained in Proposition \ref{propgerbe}; $\nctoricgerbe{\Delta}{h}{J}$ is a gerbe over $\nctoric{\Delta}{\Gamma}{v}$ with band $\Z^a$ (where $a:=n- \mathrm{rank}_\Z(\Gamma)$): $\nctoricgerbe{\Delta}{h}{J}$ is completely determined by a classifying map $\nctoric{\Delta}{\Gamma}{v} \to \BB \BB \Z$.

One of the main results of this paper is Theorem \ref{prisodecQTV}:\emph{ the category of simplicial calibrated Quantum Toric Stacks $\mathscr Q^{cal}$ is equivalent to the category of quantum toric fans $Q^{cal}$}.

The Quantum Geometric Invariant Theory (QGIT) corresponding to quantum toric geometry is specially interesting: we prove that one can indeed represent $\nctoricgerbe{\Delta}{h}{J}$ as a global quotient of the form (cf. Theorem \ref{TheoremGITdec}):
$$\nctoricgerbe{\Delta}{h}{J}\cong[\mathscr{S}/\mathcal{A}],$$
where $\mathscr{S}$ is the complement in $\C^n$ of a union of coordinate vector subspaces and the classical torus $\Torus^n$ acts multiplicatively on it with a Zariski dense orbit (we denote by $\mathcal{A}$ this action). Moreover, $\mathcal{A}$ actually defines a foliation on $\mathscr{S}$ so that the stackification of the holonomy groupoid of said foliation is isomorphic to the un-calibrated toric  stack $\nctoric{\Delta}{\Gamma}{v}$ (Theorem \ref{theoremholgroupoid}).

From the point of view of QGIT, there is a deep and beautiful relation beetween quantum toric stacks and \emph{LVMB theory} (see section \ref{lvmtheory} for definitions). Such relation occurs only when $n-d$ is even (cf. Definition \ref{defcentered}). The reader may want to think for now of a LVMB-manifold (together with a canonical foliation induced by a holomorphic $C^m$-action) $(N,\FF)$ as a generalization of the Calabi-Eckmann manifolds (and their elliptic foliation induced by a $\C$-action, where $\C$ covers the elliptic curve) so that (cf. Theorem \ref{mainGIT}): 
$$\nctoric{\Delta}{\Gamma}{v}\cong [N/\FF]$$
and
$$\nctoricgerbe{\Delta}{h}{J}\cong[N/\C^m].$$
Furthermore, we will show (Theorem \ref{thmLVMBdec}) that \emph{the category $\mathscr{V}^*_{LVMB}$ of LVMB-manifolds is equivalent to the full subcategory $\mathscr{Q}^{cal}_{even}$ of $\mathscr{Q}^{cal}$ whose objects are calibrated Quantum Toric Stacks associated to an even calibrated Quantum Fan}. To deal with the odd case, we could consider Ishida's theory \cite{IshidaMax}. There, Ishida considers compact complex manifolds $M$ with maximal real-torus action. Complete toric varieties and LVMB manifolds are examples of such manifolds. Just like LVMB-manifolds, these manifolds are also endowed with a canonical holomorphic foliation and the leaf stacks of such foliations are always Quantum toric stacks, landing sometimes in the case when $n-d$ is odd (cf. Remark \ref{Ishida}). In any case, this method provides a way to interpret QGIT for the odd case.  By using Ishida's results \cite{Ishida}, we can also prove that \emph{\nctoric{\Delta}{\Gamma}{v} is K\"{a}hler iff $\Delta$ is polytopal,} for an appropriate definition of Kähler in this context  (Theorem \ref{thkahler}). In the polytopal case that has been just considered, the image of the quantum moment map is described in Subsection \ref{asspol}, and the inverse image of a point in the polytope is a real lagrangian quantum torus.

\begin{figure}[h]
	\centering
	\includegraphics[width=0.7\linewidth]{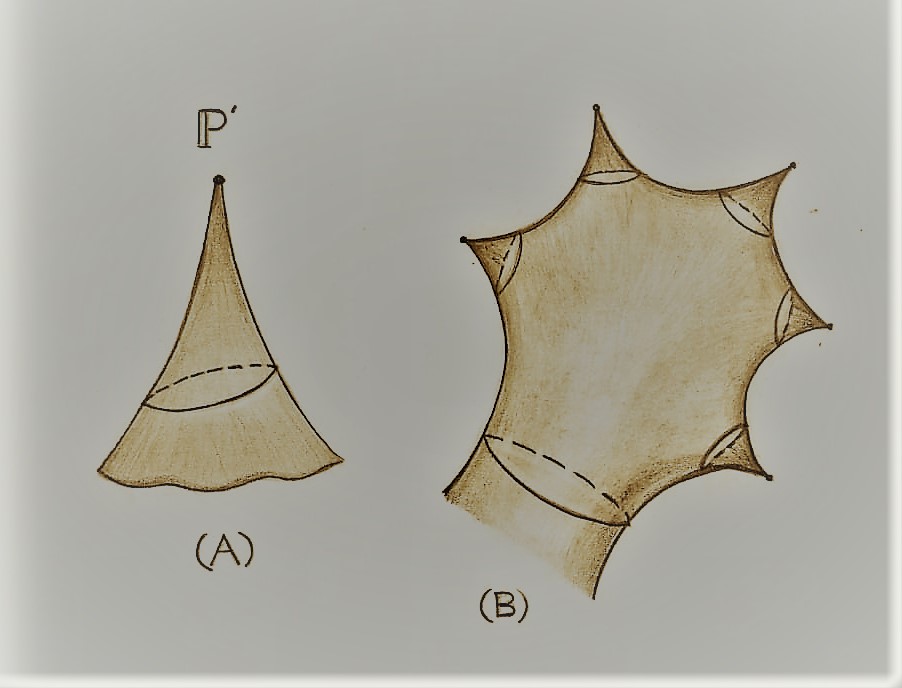}
	\caption[Moduli spaces of quantum toric stacks]{Moduli spaces of quantum toric stacks: in (A) we depict the moduli space of quantum projective lines: it has only one orbifold point with stabilizer $\Z_2$ corresponding to the classical $\Pn^1$ (or equivalently, to its non-commutative avatar $\C \PP^1_0$); for the case of $\Pn^d$, see Subsection \ref{QmoduliPn}. In (B) we depict a more fanciful representation of the moduli space $\modulitoricgerbe{D}{n,d}$ of calibrated Quantum Toric Stacks  with fixed combinatorial type $D$: the orbifold points occur whenever the fan suddenly has more symmetries (cf. Section \ref{Qmoduli}). The classical toric varieties will land on the rational locus of some of these moduli spaces.}\label{ModuliQuantumToricStack}
\end{figure}

Unlike classical toric varieties which are rigid (as equivariant toric spaces), quantum toric stacks admit moduli. In Section \ref{Qmoduli}, the final section of this paper, we study various moduli spaces, specially the moduli space $\modulitoric{D}{d}$ of quantum toric stacks with fixed combinatorial type $D$ (and $\Gamma$-complete), the moduli space $\modulitoricgerbe{D}{n,d}$ of calibrated Quantum Toric Stacks (of maximal length) and fixed combinatorial type $D$ and the moduli space $\mathcal M^{\mathcal S}_{m,n}$ of $G$-biholomorphism classes of LVMB manifolds (see Figure \ref{ModuliQuantumToricStack}). The main theorem of this final section is that that all those moduli spaces  are real finite-dimensional orbifolds (cf. Corollary \ref{cormoduliGC}, Corollary \ref{cormodulidec} and Proposition \ref{propiso}). Under certain numerical condition (that of Theorem \ref{thmorbifold}), more is true: $\mathcal M^{\mathcal S}_{m,n}$ is a complex orbifold, and there is a `twistor bundle complexification mapping':
$$\R^{(n-d)^2/2} \to  {\mathcal M^{\mathcal S}_{m,n}} \to \modulitoricgerbe{D}{n,d},$$
namely, ${\mathcal M^{\mathcal S}_{m,n}}$ is a (sometimes complex) orbibundle of even rank $(n-d)^2/2$ over $\modulitoricgerbe{D}{n,d}$. This immediately implies the homotopy equivalence (a diffeomorphism when $n=d$):
$$ {\mathcal M^{\mathcal S}_{m,n}} \simeq \modulitoricgerbe{D}{n,d}, $$
namely, the moduli space $\modulitoricgerbe{D}{n,d}$ can be promoted (in the same homotopy class) to a complex orbifold ${\mathcal M^{\mathcal S}_{m,n}}$. Let us point out here that there is an interesting analogy with the classical case of the moduli space of curves $M(g,n)$ where $g$ is the genus (combinatorial information) and $n$ is a marking that makes the compactification of the moduli space nicer. Here too, we have combinatorial information $(D,d)$, and marking information $n$, making the moduli space nicer, and even sometimes giving it a complex structure. We will explore this analogy elsewhere.

Let us finish this introduction by adding some very schematic and cursory remarks regarding future work on the relation of this theory to the field of mirror symmetry. It is natural to ask for a generalization of Homological Mirror Symmetry for toric varieties. Classical HMS for ordinary toric varieties has been proved by Abouzaid \cite{abouzaid2009morse} using methods from tropical geometry. It can be considered as a SYZ (Strominger-Yau-Zaslow)  correspondence based on family of Lagrangian tori over a polytope. 
The methods of this paper can be used to obtain a quantization of the corresponding Lagrangian tori and of the fan associated to the polytope.
It is reasonable to expect that HMS generalizes to this situation. This could be understood as a quantum version of the SYZ-correspondance. All the non-commutative projective spaces of the work of Auroux, Katzarkov and Orlov\cite{auroux2008mirror}  on mirror symmetry projective spaces,  appear as quantum toric stacks, and that we expect many of the results of \cite{auroux2008mirror} to generalize to our setting.  There is a specially simple and interesting example of an LVM manifold: the classical Hopf surface  $S^3 \times S^1$. As we have discussed earlier, it has a canonical elliptic foliation that, when deformed, covers all quantum toric projective lines. Using $T$-duality for quantum tori, we can understand `mirror symmetry' for the Hopf surface giving an alternative approach to the one developed by Abigail Ward in her Harvard PhD thesis. From this point of view it is also natural to characterize the moduli space of stability conditions for non-commutative toric varieties considered as triangulated categories (cf. \cite{soibelman2001quantum, baranovsky2003representations, latremoliere2005approximation,latremoliere2017noncommutative,kapovich2017note}): we conjecture that they are closely related to the solenoidal spaces as studied by Sullivan-Verjovsky \cite{SullivanVerjovsky}.

The previous program seems to beg a solution to the question as to the correct formulation of an algebraic geometry (the so called $B$-side) that allows one to define the adequate categories associated to a quantum toric stack. Two puzzles must be met: first, the stacks constructed in this paper are far from being algebraic, and second, even in the K\"{a}hler case, they are constructed starting from non-symplectic manifolds such as the Hopf surface. All the difficulties can be resolved simultaneously by the introduction of a version of algebraic geometry that occurs in a fixed non-standard extension of the complex numbers. We have investigated and introduced such \emph{`chimeric algebraic geometry'} in a series of works \cite{ChLMO, ChKLMV} (cf. \cite{gendron2006algebraic}, \cite{gendron2008geometric},\cite{kossak2004nonstandard}). Then, the theory offered in \cite{ChKLMV} solves the issue of constructing the desired $B$-side for the mirror symmetry program, on the other hand, the symplectic geometry needed for the $A$-side remains an outstading question requiring further analysis. We will return to this issue elsewhere.

Finally, the theory we present here is, as we said before, a wide ranging generalization of toric geometry. There were earlier approaches generalizaing toric geometry in a number of ways, and they are in various ways related to our theory. Let us simply refer the reader to some of the most relevant works in this direction: Battaglia-Prato \cite{BP2}, Battaglia-Zaffran \cite{BZ}, Bressler-Lunts \cite{bressler2005hard}, Firat Pir \cite{firat2018irrational}, Ford \cite{ford1999toroidal}, Postinghel-Sottile \cite{postinghel2015degenerations}, and Ratiu-Zung \cite{ratiu2017presymplectic}.

\ 
\

\textbf{Acknowledgments.} L.K. was supported by the 
Simons collaborative Grant - HMS,
NRU HSE, RF government grant, ag. 14.641.31.000,
Simons Principle Investigator Grant,
CKGA VIHREN grant K$\Pi$-06-$\Pi$B/16.
Much of the research was
conducted while the authors enjoyed the hospitality of IMSA Miami
and Laboratory of Mirror Symmetry HSE Moscow.

E.L. would like to thank FORDECYT (CONACYT), IMATE-UNAM, NRU HSE, RF government grant, ag. 14.641.31.000, the Institute for Mathematical Sciences of the Americas, the Simon's Foundation and the Moshinsky Foundation, the University of Geneva, the QUANTUM project from the University of Angers and the Laboratory of Mirror Symmetry HSE Moscow. 

L.M. would like to thank the kind support of the QUANTUM project from the University of Angers, UMI CNRS 2001 LaSol and the Institute for Mathematical Sciences of the Americas (Simons Foundation and University of Miami).

A.V. would like to thank the support of the University of Angers,
QUANTUM project UMI CNRS 2001 LaSo,  IMSA in Miami and
DGAPA  PAPIIT project  IN108120  UNAM, Mexico.

\section{Conventions on Stacks}
\label{stacks}
In this section, we fix some notations and conventions on the stacks that will be used in the paper. These include all the Quantum Toric Stacks and all calibrated Quantum Toric Stacks; however, the orbifolds of Section \ref{Qmoduli} are not instances of what follows; they are briefly defined in Remark \ref{rkorbifold}.

\begin{remark}
	This section can be skipped by readers that are not familiar with the theory of stacks. Such a reader can think of a stack as a `manifold whose charts are not one-to-one' very similar to orbifolds. All our stacks will have local charts of the form $V \to V/\mathbf{G}$ where the local group $\mathbf{G}$ is discrete and abelian (not quite finite, as in the case of orbifolds). Nevertheless a warning is in order: for most quantum toric varieties, the quotient of a local chart $V/\mathbf{G}$ will \emph{not be Hausdorff} much like in the theory of \emph{quasifolds} \cite{prato2017quasifolds}; still, we use the theory of stacks for we need the whole machinery of morphisms of stacks, fibered products, etc. in our development. Surprisingly, we could have used all the required machinery using the theory of \emph{diffeological spaces} \cite{iglesias2013diffeology} to model our quantum toric manifolds as it is shown on \cite{Becerra} (but the morphisms would still be a bit off). In any case, we prefer the more standard use of stacks presented here.  
\end{remark}

We take as base category the category $\mathfrak A$ of affine toric varieties and toric morphisms. We take for covering of an affine toric variety $T$ a decomposition $T=T_1\cup\cdots \cup T_n$ into toric Zariski open subsets of $T$. With these coverings, $\mathfrak A$ is a site.

We will also need the category $\mathfrak{G}$ of complex analytic spaces $X$ endowed with a holomorphic action of a complex abelian Lie group $G$ with a Zariski open orbit isomorphic to $G$. Morphisms are equivariant holomorphic mappings that restrict to Lie group morphisms. Observe that $\mathfrak{A}$ is a subcategory of $\mathfrak{G}$. 

By a {\itshape cover} of $T$ of $\mathfrak{A}$, we mean an object $\tilde T$ of $\mathfrak{G}$ endowed with a free and proper holomorphic action of a discrete abelian group $H$ whose quotient is $T$. Hence $p : \tilde T\to T$ is an unramified analytic cover of $T$. Moreover, $p$ is equivariant, that is, belongs to $\mathfrak{G}$ since the abelian $H$-action commutes with the abelian $G$-action.

If $H$ is a non-discrete complex abelian Lie group acting freely and properly on $\tilde T$ with quotient $T$, then $p : \tilde T\to T$ is an equivariant principal $H$-bundle and we rather speak of a {\it principal bundle} over $T$.

Quantum Toric Stacks are of the form $[X/H]$ with $X\in\mathfrak{G}$ and $H$ a discrete abelian group or a complex abelian Lie group, or are descent data of such $[X/H]$, cf. Section \ref{Qatlas}. To be more precise, we consider the category $[X/H]$ whose objects are covers $\tilde T$ over a space $T\in\mathfrak A$ with an equivariant holomorphic map $m$
\begin{equation}
\label{stackobjects}
\begin{tikzcd}
\tilde T \arrow[r, "m"]\arrow[d]&X\\
T
\end{tikzcd}
\end{equation} 
Here $m$ is assumed to be equivariant with respect to both the $H$-action and the $G$-action.

The morphisms are
\begin{equation}
\label{stackmorphisms}
\begin{tikzcd}
&            &X\\
\tilde T\arrow[r]\arrow[rru,bend left=20,"m"]\arrow[d]          &\tilde  S \arrow[d]\arrow[ru,bend right=10,"n"']      &                       \\
T        \arrow[r]         &S       &          
\end{tikzcd}
\end{equation} 

The following proposition is enough for our purposes.

\begin{proposition}
	\label{propstacks}
	The functor $[X/H]\to\mathfrak A$ sending $(\tilde T,T,m)$ onto $T$; and $(\tilde T\to \tilde S,T\to S,m,n)$ onto $T\to S$ is a stack over $\mathfrak{A}$.
	\end{proposition}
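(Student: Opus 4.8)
The plan is to verify the standard stack axioms directly from the definitions of the category $[X/H]$ and the site $\mathfrak{A}$. Recall that to show that the functor $\pi\colon [X/H]\to\mathfrak{A}$, $(\tilde T,T,m)\mapsto T$, is a stack, one must check three things: (1) $\pi$ is a category fibered in groupoids, (2) morphisms form a sheaf (descent of morphisms), and (3) descent data for objects is effective. Since all three reduce to gluing unramified analytic covers equipped with equivariant maps to $X$, and these are local-analytic conditions, the proof will be a routine — if somewhat lengthy — verification; I will organize it around these three points.

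First I would establish that $\pi$ is fibered in groupoids. Given a morphism $f\colon S\to T$ in $\mathfrak A$ and an object $(\tilde T,T,m)$ over $T$, one forms the pullback cover $\tilde S:=\tilde T\times_T S$, which carries the induced free and proper $H$-action with quotient $S$ and lies in $\mathfrak{G}$ (the $G$-action pulls back since $f$ is toric, hence $G$-equivariant in the relevant sense), together with the composite $\tilde S\to\tilde T\xrightarrow{m}X$. This yields a cartesian arrow lifting $f$; uniqueness up to unique isomorphism of such lifts, and the fact that fibers over a fixed $T$ are groupoids (every morphism in $[X/H]$ covering an identity is invertible, since a morphism of covers over the same base that is compatible with the projections is automatically an isomorphism of covering spaces), are then immediate.

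Next, for descent of morphisms, fix objects $(\tilde T,T,m)$ and $(\tilde S,T,n)$ over the same $T$ and a covering $T=T_1\cup\cdots\cup T_k$ by toric Zariski opens. A morphism $(\tilde T,T,m)\to(\tilde S,T,n)$ over $\mathrm{id}_T$ is an $H$-equivariant holomorphic map $\tilde T\to\tilde S$ over $T$ commuting with the maps to $X$; restricting to the preimages of the $T_i$ gives a compatible family, and conversely a family of such morphisms over the $T_i$ that agree on the overlaps $T_i\cap T_j$ glues uniquely to a morphism over $T$, because a holomorphic map is determined by its restrictions to an open cover and the equivariance and commutativity conditions are checked locally. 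This shows $\mathrm{Hom}$ is a sheaf on the site $\mathfrak A$. Finally, for effectivity of descent: given descent data — covers $\tilde T_i\to T_i$ with maps $m_i\colon\tilde T_i\to X$ and isomorphisms $\varphi_{ij}$ over $T_i\cap T_j$ satisfying the cocycle condition — one glues the $\tilde T_i$ along the $\varphi_{ij}$ to obtain a space $\tilde T$ with a map to $T$; one checks the glued $H$-action is still free and proper (these are local conditions on the total space, and the $\varphi_{ij}$ are $H$-equivariant), that $\tilde T$ lies in $\mathfrak G$ (the $G$-actions glue since the $\varphi_{ij}$ are $G$-equivariant), that $\tilde T\to T$ is an unramified cover, and that the $m_i$ glue to a global equivariant $m\colon\tilde T\to X$ because they agree on overlaps via the $\varphi_{ij}$.

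The main obstacle I anticipate is not any single axiom but rather the bookkeeping around the two commuting group actions ($H$ discrete, $G$ the torus) and ensuring that every gluing construction stays inside the categories $\mathfrak A$ and $\mathfrak G$ as defined — in particular, that "free and proper" and "Zariski-dense $G$-orbit isomorphic to $G$" are preserved under pullback and gluing, and that toric morphisms in $\mathfrak A$ interact correctly with the covering data. One should also be slightly careful that the site structure on $\mathfrak A$ (coverings by toric Zariski opens) is fine enough to detect the relevant local-analytic gluings; since unramified covers and holomorphic maps are determined by restriction to Zariski-open covers, this causes no trouble, but it is the point where the choice of Grothendieck topology must be used explicitly. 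Once these compatibilities are recorded, the verification of the three stack axioms is formal.
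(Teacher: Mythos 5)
Your overall plan is the same as the paper's: check that pull-backs exist and are unique up to unique isomorphism, that morphisms/isomorphisms form a sheaf, and that descent data are effective. The last two points are exactly the ones the paper dismisses as routine, and your gluing sketch of them is acceptable. The genuine gap is at the one step the paper does write out in full: why the pullback $\tilde T\times_T S$ is an object of the fibered category at all, i.e.\ why it lies in $\mathfrak{G}$ together with equivariant structure maps. Your parenthetical justification, ``the $G$-action pulls back since $f$ is toric,'' does not work as stated. The group $G$ acts on $\tilde T$ and, via the equivariant projection $p$, covers an action of its image in the dense torus of $T$; there is in general no way to make $G$ itself act on the fibre product (one would need a lift of that image to the dense torus of $S$), and even if it acted its orbits could not be Zariski dense: over the open torus the pullback is a torsor under the fibre product group, whose dimension is $\dim G+\dim S-\dim T$, so for $\dim S\neq\dim T$ the acting group genuinely changes.

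What is needed — and what constitutes essentially the whole of the paper's proof — is to restrict $f$ to the dense tori and $p$ to the open orbit $G$, form the fibre product group $(\C^*)^{a}\times_{(\C^*)^{b}}G$ with $a=\dim S$, $b=\dim T$, check that the componentwise law $(w,g)\cdot(w',g')=(w\cdot w',g\cdot g')$ is well defined on it (using $f(w\cdot w')=p(g\cdot g')$), so that it is a complex abelian Lie group, and then verify that its natural action on $\tilde T\times_T S$ is holomorphic with a Zariski-open orbit isomorphic to the group, with both projections and the composite map to $X$ equivariant for this group and for $H$. Once this identification of the correct group and its dense orbit is inserted, your argument coincides with the paper's; without it, membership of the pullback in $\mathfrak{G}$ — the very condition that makes it an object of $[X/H]$ over $S$ — has not been established.
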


\begin{proof}
	We must check that pull-backs exist and are unique up to unique isomorphism, that isomorphisms form a sheaf and that descent data are effective.
	
	Given a cartesian diagram
	\begin{equation}
	\label{stackpullback}
	\begin{tikzcd}
	T_f\times_p\tilde S \arrow[r]\arrow[d]&\tilde S\arrow[d,"p"]\\
	T\arrow[r,"f"]&S
	\end{tikzcd}
	\end{equation} 
	its restriction to the Zariski open orbits is
	\begin{equation}
	\label{stackpullbackgroups}
	\begin{tikzcd}
	(\C^*)^a_f\times_p G \arrow[r]\arrow[d]&G\arrow[d,"p"]\\
	(\C^*)^a\arrow[r,"f"]&(\C^*)^b
	\end{tikzcd}
	\end{equation} 
	with $a$, respectively $b$, the dimension of $T$, resp. S. Now, $(\C^*)^a_f\times_p G$ is a complex Lie abelian group, with the following law
	\begin{equation}
	\label{fplaw}
	(w,g)\cdot (w',g')=(w\cdot w',g\cdot g')
	\end{equation}
	which is well defined since $f(w\cdot w')=f(w)\cdot f(w')=p(g)\cdot p(g')=p(g\cdot g')$.
	
	But \eqref{fplaw} extends as an equivariant holomorphic action of $(\C^*)^a_f\times_p G$ on $T_f\times_p\tilde S$. It has obviously a Zariski open orbit isomorphic to $(\C^*)^a_f\times_p G$ and all the arrows of \eqref{stackpullback} are equivariant. Thus pull-back exist and it is straightforward to check that they are unique up to unique isomorphisms of $\mathfrak{G}$.
	
	The last two points (isomorphisms form a sheaf and descent data are effective) are routine checking and we leave their verification to the reader. 
\end{proof}
The choice of $\mathfrak{A}$ as base category reflects the fact that we consider Quantum Toric Stacks as a generalization of classical toric varieties. The choice of equivariant covers reflects the fact that we only deal with equivariant properties of these geometric objects. Of course, other choices - as taking $\mathfrak{G}$ or taking the category of analytic spaces as base category, using equivariant covers or not - may have their interest and should be developed to look for different flavours of the construction.

We finish this section with a warning.

\begin{warning}
	\label{warningstacks}
	Quantum Toric Stacks are not {\itshape algebraic stacks}. Many of them have stabilizers equal to some power of $\Z$. And even when it is not the case, the intensive and fundamental role played by the exponential map $E$ defined in \eqref{E} prevents them from being algebraic.
	
	However, since Quantum Toric Stacks have an atlas which is an analytic space, cf. Corollaries \ref{coratlas} and \ref{coratlasdec}, they are analytic in the sense of \cite[\S 2.4]{Mteich}.
\end{warning}
\section{Quantum Tori}
\label{QTori}
In this section, we define the Quantum Tori which sit inside Quantum Toric Varieties. They are complex analogues of $\TT_{\R,\hbar}^i$ but the precise relationship is not investigated here but in Section \ref{Qmoduli}, see Theorem \ref{thmmoduliQt}.

\subsection{Quantum Torus associated to an additive subgroup of $\R^d$}
\label{QTorus}
In classical toric geometry, there is a single complex torus $\Torus^d:= \Torus^d_\C$ once the dimension $d$ is fixed. In a more intrinsic presentation, for any lattice $\Gamma$ that spans $\R^d$ over the reals, is associated a torus $\text{Hom }(\check{\Gamma},\C^*)$, but all are isomorphic to $\Torus^d$.
	
In Quantum Toric Geometry, things are different since $\Gamma$ is no more a lattice but any finitely generated additive subgroup of some $\R^d$ spanning it over the reals (a \emph{q-lattice}). Then we define,

\begin{definition}
	\label{defQTorus}
	The {\it quantum torus} associated to $\Gamma$ is the quotient stack $[\C^d/\Gamma]$. We denote it by $\nctorus{d}{\Gamma}$.
\end{definition}

\begin{example}
	\label{examplesqrt2prel}
	Let $d=1$ and define $\Gamma$ as the subgroup of $(\R,+)$ generated by $1$ and $\sqrt 2$. Since $\sqrt 2$ is irrational, it is dense in $\R$. It acts freely on $\R$ by translation but this action is not proper and the topological quotient is not Hausdorff. Indeed, for any pair of real numbers $x,y$, since $y-x$ is an accumulation point of $\Gamma$, the images of $x$ and $y$ are not separated.\\
	It follows that the associated quantum torus $[\C/\Gamma]$ is not a manifold and stack language is needed to handle it as a complex "space".
\end{example}
	Hence $\nctorus{d}{\Gamma}=[\C^d/\Gamma]$ has to be understood more formally as a category fibered in groupoids  $\nctorus{d}{\Gamma}\to\mathfrak A$, as in Section \ref{stacks}.
	
	Objects of $\nctorus{d}{\Gamma}$ are $\Gamma$-covers $T$ over a space $A\in\mathfrak A$ with an equivariant holomorphic map $m$ in $\C^d$
	\begin{equation}
	\label{nctorusobjects}
	\begin{tikzcd}
	T \arrow[r, "m"]\arrow[d]&\C^d\\
	A
	\end{tikzcd}
	\end{equation} 
	and morphisms
	\begin{equation}
	\label{nctorusmorphisms}
\begin{tikzcd}
&            &\C^d\\
T\arrow[r]\arrow[rru,bend left=20,"m"]\arrow[d]          &S  \arrow[d]\arrow[ru,bend right=10,"n"']      &                       \\
A        \arrow[r]         &B       &          
\end{tikzcd}
	\end{equation} 
	The functor $\nctorus{d}{\Gamma}\to\mathfrak A$ sends $(T,A,m)$  onto $A$; and $(T\to S,A\to B,m,n)$ onto $A\to B$. \vspace{5pt}\\
If $L$ is a linear map from $\R^d$ to $\R^{d'}$ that sends $\Gamma$ onto $\Gamma'$, then its extension over the complex numbers obviously descends as a stack morphism from $\nctorus{d}{\Gamma}$ onto $\nctorus{d'}{\Gamma'}$.
\begin{equation}
\label{CDdeftorusmorphism}
\begin{tikzcd}[row sep=small]
\R^d\arrow[d,hook]\arrow[r, "L"]&\R^{d'}\arrow[d,hook]\\
\C^d\arrow[d]\arrow[r,"L"]&\C^{d'}\arrow[d]\\
\nctorus{d}{\Gamma}\arrow[r,"\mathscr L"]&\nctorus{d'}{\Gamma'}
\end{tikzcd}
\end{equation}
 We take this as definition of toric morphism. Hence,
\begin{definition}
	\label{deftorusmorphism}
	A {\it torus morphism} $\mathscr L$ from $\nctorus{d}{\Gamma}$ onto $\nctorus{d'}{\Gamma'}$ is a stack morphism such that there exists a linear map  $L$ from $\R^d$ to $\R^{d'}$ sending $\Gamma$ to $\Gamma'$ and satisfying
	{\rm \eqref{CDdeftorusmorphism}}.
\end{definition}
To wit, given $T\to A$ a $\Gamma$-cover, set
\begin{equation}
\label{TLG}
T\times_{L}\Gamma':=\{(t,\gamma')\in T\times \Gamma'\}/\Gamma
\end{equation}
for the following action of $\Gamma$
\begin{equation}
\label{TLGaction}
\gamma\cdot (t,\gamma')=(\gamma\cdot t,\gamma'-L\gamma)
\end{equation}
Let also
\begin{equation}
\label{TLGm}
[(t,\gamma')]\longmapsto m'[(t,\gamma')]:=Lm(t)+\gamma'
\end{equation}
which is well-defined since
\begin{equation*}
\begin{aligned}
Lm(\gamma\cdot t,\gamma'-L\gamma)&=Lm(\gamma\cdot t)+\gamma'-L\gamma\\
&=Lm(t)+L\gamma+\gamma'-L\gamma\\
&=Lm(t)+\gamma'
\end{aligned}
\end{equation*}
Given $f:T\to S$ a morphism of $\Gamma$-cover, over $A\to B$, we define
\begin{equation}
[(t,\gamma')]\in T\times_L\Gamma'\longmapsto f_L([(t,\gamma')]:=[(f(t),\gamma')])\in S\times_L\Gamma'
\end{equation}
and, if $m$, resp. $n$, is the equivariant map associated to $T\to A$, resp. $S\to B$, so that $m=n\circ f$, we have
\begin{equation*}
\begin{aligned}
n'\circ f_L([(t,\gamma')])&=n'([f(t),\gamma'])\\
&=Ln(f(t))+\gamma'\\
&=Lm(t)+\gamma'\\
&=m'([t,\gamma'])
\end{aligned}
\end{equation*}
So we finally have
\begin{equation}
\label{torusmorphismL1}
\mathscr L(T,A,m)=(T\times_L\Gamma',A,m')
\end{equation}
and
\begin{equation}
\label{torusmorphismL2}
\begin{aligned}
\mathscr L&(f : T\to S,A\to B,m,n)=(f_L :T\times_L\Gamma'\to S\times_L\Gamma',\\
& A\to B, m', n')
\end{aligned}
\end{equation}
Since $\text{Vect }\Gamma$ is the whole $\C^d$, there exists a basis of $\C^d$ formed by vectors $(v_1,\hdots,v_d)$ of $\Gamma$. We may assume that  is such a basis. There exists some linear isomorphism $L$ of $\C^d$ sending $(v_1,\hdots,v_d)$ onto the canonical basis $(e_1,\hdots,e_d)$. It defines a torus isomorphism between $\nctorus{d}{\Gamma}$ and $\nctorus{d}{L\Gamma}$. \vspace{5pt}\\
So, defining
\begin{definition}
	\label{defstandardGamma}
	Let $(e_1,\hdots,e_d)$ be the canonical basis of $\R^d$.
	We say that $\Gamma$ is {\it standard} if it contains the standard lattice $\bigoplus_{i=1}^d\Z e_i$.\\
	We say that $\nctorus{d}{\Gamma}$ is {\it standard} if $\Gamma$ is standard.
\end{definition}
\noindent We just proved
\begin{lemma}
	\label{lemmastandard}
	Any quantum torus is isomorphic to a standard quantum torus.
\end{lemma}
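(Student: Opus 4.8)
The plan is to reduce everything to the discussion immediately preceding Definition \ref{deftorusmorphism}, where it is shown that a linear map $L\colon\R^d\to\R^{d'}$ carrying $\Gamma$ onto $\Gamma'$ induces a torus morphism $\mathscr L\colon\nctorus{d}{\Gamma}\to\nctorus{d'}{\Gamma'}$ fitting into \eqref{CDdeftorusmorphism}. First I would invoke the hypothesis that $\Gamma$ spans $\R^d$ over $\R$: since $\Gamma$ is finitely generated and $\mathrm{Vect}\,\Gamma=\C^d$, one can extract from a finite generating family of $\Gamma$ a subfamily $(v_1,\dots,v_d)$ which is an $\R$-basis of $\R^d$, hence a $\C$-basis of $\C^d$, with all $v_i\in\Gamma$.

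Next I would let $L\in GL_d(\C)$ be the unique $\C$-linear isomorphism with $Lv_i=e_i$ for all $i$. Because the $v_i$ and the $e_i$ lie in $\R^d$, the matrix of $L$ is real, so $L$ restricts to an $\R$-linear automorphism of $\R^d$ and carries the subgroup $\Gamma\subset\R^d$ into $\R^d$. Then $L\Gamma$ is again a finitely generated additive subgroup of $\R^d$ spanning it, i.e. a q-lattice, and $L\Gamma\supseteq\bigoplus_i\Z\,Lv_i=\bigoplus_i\Z e_i$, so $L\Gamma$ is standard in the sense of Definition \ref{defstandardGamma}; hence $\nctorus{d}{L\Gamma}$ is a standard quantum torus.

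It remains to see that the induced $\mathscr L\colon\nctorus{d}{\Gamma}\to\nctorus{d}{L\Gamma}$ is an isomorphism of stacks. Since $L$ is invertible and $L^{-1}$ carries $L\Gamma$ onto $\Gamma$, it likewise induces a torus morphism $\mathscr L^{-1}\colon\nctorus{d}{L\Gamma}\to\nctorus{d}{\Gamma}$. I would then check that the assignment $L\mapsto\mathscr L$ is functorial on the level of the fibered categories over $\mathfrak A$: from the explicit model $T\times_L\Gamma'$ of \eqref{TLG} with the equivariant map \eqref{TLGm}, one has canonical identifications $T\times_{\mathrm{id}}\Gamma\cong T$ and $(T\times_L\Gamma')\times_{L'}\Gamma''\cong T\times_{L'L}\Gamma''$, compatible with the maps $m'$ of \eqref{TLGm} and with morphisms of $\Gamma$-covers through \eqref{torusmorphismL2}. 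These yield natural isomorphisms $\mathscr L_{\mathrm{id}}\simeq\mathrm{id}$ and $\mathscr L_{L'}\circ\mathscr L_L\simeq\mathscr L_{L'L}$; applying them to $L$ and $L^{-1}$ gives $\mathscr L^{-1}\circ\mathscr L\simeq\mathrm{id}$ and $\mathscr L\circ\mathscr L^{-1}\simeq\mathrm{id}$, so $\mathscr L$ is an equivalence, i.e. an isomorphism of stacks, and it is a torus morphism by construction. Thus $\nctorus{d}{\Gamma}\cong\nctorus{d}{L\Gamma}$ with the target standard.

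The step I expect to absorb most of the work is the coherence/functoriality of $L\mapsto\mathscr L$ at the fibered-category level — producing the canonical natural isomorphisms above and checking compatibility with morphisms of covers — rather than anything conceptual; once that bookkeeping is in place, invertibility of $\mathscr L$ is immediate from invertibility of $L$. Alternatively one can bypass it entirely by arguing through \eqref{CDdeftorusmorphism}: a $\C$-linear isomorphism of $\C^d$ carrying $\Gamma$ isomorphically onto $L\Gamma$ visibly descends to an isomorphism of the quotient stacks $[\C^d/\Gamma]\to[\C^d/L\Gamma]$, with inverse the descent of $L^{-1}$.
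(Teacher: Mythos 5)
Your argument is correct and is essentially the paper's own proof: the paper likewise extracts a basis $(v_1,\dots,v_d)$ of $\C^d$ from $\Gamma$, takes the linear isomorphism $L$ sending it to the canonical basis, and observes that $L$ induces a torus isomorphism $\nctorus{d}{\Gamma}\cong\nctorus{d}{L\Gamma}$ with $L\Gamma\supseteq\Z^d$ standard. The only difference is that you spell out the functoriality/invertibility bookkeeping (via \eqref{TLG}--\eqref{torusmorphismL2} or descent through \eqref{CDdeftorusmorphism}) that the paper leaves implicit.
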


If $\Gamma$ is standard, we may decompose it as  
\begin{equation}
\label{Gamma}
\Gamma=\Z^d+\Gamma_0
\end{equation}

Define
\begin{equation}
\label{E}
x\in\R^{d}\subset \C^d\longmapsto E(x):=(e^{2i\pi x_1},\hdots, e^{2i\pi x_{d}})\in (\mathbb S^1)^{d} \subset\Torus^d
\end{equation}
Then we may put a standard $\nctorus{d}{\Gamma}$ in multiplicative form as the quotient stack $[\Torus^d/E(\Gamma)]$ where $E(\Gamma)=E(\Gamma_0)$ acts multiplicatively on $\Torus^d$. We have a commutative diagram
\begin{equation}
\label{CDaddvsmulttorus}
\begin{tikzcd}[column sep=large]
\C^d \arrow[d]\arrow[r,"E"] &\Torus^d\arrow[d]\\
\nctorus{d}{\Gamma}\arrow[r,"\simeq"]&\phantom{d}\kern-5pt[\Torus^d/E(\Gamma)]
\end{tikzcd}
\end{equation} 
In a more functorial point of view, $E$ defines a stack isomorphism $\mathscr E$ from $\nctorus{d}{\Gamma}$ to $[\Torus^d/E(\Gamma_0)]$ such that
\begin{equation*}
\mathscr{E}(T,A,m)=(T/\Z^d,A,Em)
\end{equation*}
with $\Z^d$ acts on $T$ as a subgroup of $\Gamma$ and with
\begin{equation*}
\begin{tikzcd}
T\arrow[d]\arrow[dd,bend right=50]\arrow[r,"m"]&\C^d\arrow[d,"E"]\\
T/\Z^d\arrow[d]\arrow[r,"Em"]&\Torus^d\\
A &
\end{tikzcd}
\end{equation*}
and
\begin{equation*}
\mathscr{E}(f:T\to S,A\to B,m,n)=(f:T/\Z^d\to S/\Z^d,A\to B,Em,En)
\end{equation*}
where $f$ descends as a morphism from $T/\Z^d$ to $S/\Z^d$ since it satisfies by definition $f(p\cdot t)=p\cdot f(t)$ for all $p\in\Gamma$.
\begin{example}
	\label{examplesqrt2}
	Let $d=1$ and $\Gamma$ be generated by $1$ and $\sqrt 2$ as in Example \ref{examplesqrt2prel}. Consider the linear map
	\begin{equation*}
	z\in\C\longmapsto \sqrt{2}\cdot z\in\C
	\end{equation*}
	It sends $1$ onto $\sqrt{2}$ and $\sqrt{2}$ onto $2$ hence it preserves $\Gamma$. So it descends as the torus morphism
	\begin{equation*}
	[z]\in[\C/\Gamma]\longmapsto [\sqrt{2}z]\in[\C/\Gamma]
	\end{equation*}
	or, in multiplicative form,
	\begin{equation}
	\label{multsqrt2}
	[w]\in[\Torus/E(\Gamma)]\longmapsto [w^{\sqrt{2}}]\in[\Torus/E(\Gamma)]
	\end{equation}
	This must be understood as follows. Given $w\in\Torus$, choose $z\in\C$ with $E(z)=w$. Then compute $E(z\sqrt{2})$. Of course, $z$ is unique only up to addition of an integer, and for $p\in\Z^*$, the complex number $E((z+p)\sqrt{2})$ is different from $E(z\sqrt{2})$. Hence there is no well defined mapping $w\to w^{\sqrt{2}}$. However, $E(p\sqrt{2})$ belongs to $E(\Gamma)$ so \eqref{multsqrt2} is well defined.   
\end{example}

\begin{remark}
	\label{rkaddvsmultmorphism}
	It is important to notice that a torus morphism $\mathscr L$ does not always lift as a mapping from $\Torus^d$ to $\Torus^{d'}$, as shown in Example \ref{examplesqrt2}. This will be a source of problem when defining toric morphisms in Section \ref{Qatlas}.
\end{remark}
\subsection{Calibrated Quantum Tori}
\label{decQTorus}
We define now calibrated quantum tori. To do this, we need to fix, in addition to $\Gamma$, a set of generators of $\Gamma$. More precisely, we set

\begin{definition}
	\label{defcalibration}
	A {\it calibration} of $\Gamma$ is given by
	\begin{enumerate}[i)]
		\item An epimorphism $h : \Z^n\to \Gamma$
		\item A subset $J\subset\{1,\hdots,n\}$ such that 
		\begin{equation}
			\label{deccondition}
			\text{Vect}\{h(e_i)\mid i\not \in J\}=\C^d
		\end{equation}
	\end{enumerate}
\end{definition}
The subset $J$ is called the set of {\it virtual generators}. It may be empty as in the following important example.

\begin{example}\textbf{The trivial calibration of the standard torus.}
	\label{trivialdec}
	Let $\Gamma$ be $\Z^n$, so that $\nctorus{n}{\Gamma}$ is just $\Torus^n$. The trivial calibration of $\Torus^n$ is $h=Id$. Note that it forces the set of virtual generators to be empty.
\end{example}

Now we set

\begin{definition}
	\label{defQTorusdec}
	The {\it calibrated quantum torus} associated to $h\ :\ \Z^n\to\Gamma$ is the quotient stack $[\C^d/\Z^n]$ where $\Z^n$ acts through $(z,p)\mapsto z+h(p)$. We denote it by $\nctorusgerbe{h}{J}$.
\end{definition}

Objects of $\nctorusgerbe{h}{J}$ are $\Z^n$-covers $T$ over a space $A\in\mathfrak A$ with an equivariant map $m$ in $\C^d$
\begin{equation}
\label{nctorusobjectsdec}
\begin{tikzcd}
T \arrow[r, "m^{cal}"]\arrow[d]&\C^d\\
A
\end{tikzcd}
\end{equation} 
and morphisms
\begin{equation}
\label{nctorusmorphismsdec}
\begin{tikzcd}
&            &\C^d\\
T\arrow[r]\arrow[rru,bend left=20,"m^{cal}"]\arrow[d]          &S \arrow[d]\arrow[ru,bend right=10,"n^{cal}"']      &                       \\
A        \arrow[r]         &B       &          
\end{tikzcd}
\end{equation} 
The functor $\nctorusgerbe{h}{J}\to\mathfrak A$ sends $(T,A,m^{cal})$  onto $A$; and $(T\to S,A\to B,m^{cal},n^{cal})$ onto $A\to B$. \vspace{5pt}\\
If $L$ is a linear map from $\R^d$ to $\R^{d'}$ that sends $\Gamma$ onto $\Gamma'$ and induces a toric morphism, then its extension over the complex numbers does not define a stack morphism from $\nctorusgerbe{h}{J}$ onto $\nctorusgerbe{h'}{J'}$. We need an extra morphism $H$ from $\Z^n$ to $\Z^{n'}$ such that the following diagram is commutative.
\begin{equation}
\label{CDLH}
\begin{tikzcd}
\Z^n \arrow[r,"H"]\arrow[d,"h"']&\Z^{n'}\arrow[d,"{h'}"]\\
\Gamma\arrow[r,"L"']&\Gamma'
\end{tikzcd}
\end{equation}
Now the map $z\in\C^d\mapsto Lz\in\C^{d'}$ satisfies
\begin{equation}
\label{commut}
L(z+h(p))=Lz+h'(H(p))
\end{equation}
and descends as a stack morphism $\mathscr{L}^{cal}$ from $\nctorusgerbe{h}{J}$ onto $\nctorusgerbe{h'}{J'}$ so we have
\begin{equation}
\label{CDdeftorusmorphismdec}
\begin{tikzcd}[row sep=small]
\R^d\arrow[d,hook]\arrow[r, "L"]&\R^{d'}\arrow[d,hook]\\
\C^d\arrow[d]\arrow[r,"L"]&\C^d\arrow[d]\\
\nctorusgerbe{h}{J}\arrow[r,"\mathscr L^{cal}"]&\nctorusgerbe{h'}{J'}
\end{tikzcd}
\end{equation} 

Now, we will ask $H$ to preserve the virtual generators, that is, we both impose that there exists a map $s$ between the sets of virtual generators $J$ and $J'$ such that $H$ satisfies 
\begin{equation}
\label{Hs}
H(e_i)=e_{s(i)}\qquad\text{ for }\qquad i\in J
\end{equation} 
and that $H$ satisfies
\begin{equation}
\label{Hrestric}
H(e_i)\in \bigoplus_{j\not\in J'}\Z e_j\qquad\text{ for }\qquad i\not\in J
\end{equation}
So finally, we define

\begin{definition}
	\label{deftorusmorphismdec}
	A {\it calibrated torus morphism} $\mathscr L^{cal}$ from $\nctorusgerbe{h}{J}$ onto $\nctorusgerbe{h'}{J'}$ is a stack morphism such that there exists a linear map  $L$ from $\R^d$ to $\R^{d'}$, a linear map $H$ from $\R^n$ to $\R^{n'}$ and a map $s$ from $J$ to $J'$ satisfying {\rm \eqref{CDLH}}, {\rm \eqref{commut}}, {\rm \eqref{CDdeftorusmorphismdec}} as well as
	{\rm \eqref{Hs}} and finally {\rm\eqref{Hrestric}}.\vspace{5pt}\\
	It is a {\it isomorphism} if the three of $L$, $H$ and $s$ are isomorphisms. And it is a {\it marked isomorphism} if, moreover, $J$ is equal to $J'$ and $s$ is the identity.
\end{definition}
To wit, given $T\to A$ a $\Z^n$-cover, set
\begin{equation}
\label{TLGdec}
T\times_{H}\Z^{n'}:=\{(t,q')\in T\times \Z^{n'}\}/\Z^n
\end{equation}
for the following action of $\Z^n$
\begin{equation}
\label{TLGactiondec}
p\cdot (t,q')=(p\cdot t,q'-H(p))
\end{equation}
Let also
\begin{equation}
\label{TLGmdec}
[(t,q')]\longmapsto (m^{cal})'[(t,q')]:=Lm^{cal}(t)+h'(q')
\end{equation}
which is well-defined since
\begin{equation*}
\begin{aligned}
Lm^{cal}(p\cdot t,q'-H(p))&=Lm^{cal}(p\cdot t)+h'(q')-h'H(p)\\
&=Lm^{cal}(t)+Lh(p)+h'(q')-h'H(p)\\
&=Lm^{cal}(t)+h'(q')\quad\text{ since }Lh=h'H
\end{aligned}
\end{equation*}
Given $f:T\to S$ a morphism of $\Z^n$-cover, over $A\to B$, we define
\begin{equation}
[(t,q')]\in T\times_H\Z^{n'}\longmapsto f_H([(t,q')]:=[(f(t),q')])\in S\times_H\Z^{n'}
\end{equation}
and, if $m^{cal}$, resp. $n^{cal}$, is the equivariant map associated to $T\to A$, resp. $S\to B$, so that $m^{cal}=n^{cal}\circ f$, we have
\begin{equation*}
(n^{cal})'\circ f_H([(t,q')])=Ln^{cal}(f(t))+h'(q')=
\begin{aligned}
&Lm^{cal}(t)+h'(q')\\
=&(m^{cal})'([t,q'])
\end{aligned}
\end{equation*}
So we finally have
\begin{equation}
\label{torusmorphismLdec1}
\mathscr{L}^{cal}(T,A,m^{cal})=(T\times_H\Z^{n'},A,(m^{cal})')
\end{equation}
and
\begin{equation}
\label{torusmorphismLdec2}
\begin{aligned}
\mathscr{L}^{cal}&(f : T\to S,A\to B,m^{cal},n^{cal})=(f_H :T\times_H\Z^{n'}\\
& \to S\times_H\Z^{n'},A\to B, (m^{cal})', (n^{cal})')
\end{aligned}
\end{equation}

By \eqref{deccondition}, there exists a basis of $\C^d$ formed by vectors $h(e_i)$ with $i\not\in J$. We may assume that $(h(e_1),\hdots,h(e_d))$ is such a basis. We also may assume that $J$ is given by $\{n-\vert J\vert+1,\hdots,n\}$. Otherwise, there exists a permutation of $\{1,\hdots n\}$ with those properties. Let $H$ denote the matrix corresponding to this permutation and let $s$ be the induced set mapping between $J$ and $\{n-\vert J\vert+1,\hdots,n\}$. Then $(Id, H,s)$ defines a calibrated torus isomorphism between $\nctorusgerbe{h}{J}$ and $\nctorusgerbe{\Gamma}{hH^{-1}}$ and we may replace $h$ with $hH^{-1}$. In the same way, there exists some linear isomorphism $L$ of $\C^d$ sending $hH^{-1}(e_1),\hdots, hH^{-1}(e_d)$ onto the canonical basis $(e_1,\hdots,e_d)$. With $H=Id$ and $s=Id$, it defines a calibrated torus isomorphism between $\nctorusgerbe{\Gamma}{hH^{-1}}$ and $\nctorusgerbe{L\Gamma}{LhH^{-1}}$. \vspace{5pt}\\
So, defining
\begin{definition}
	\label{defstandardh}
	Let $(e_1,\hdots,e_n)$, resp. $(e_1,\hdots,e_d)$ be the canonical basis of $\R^n$, resp. $\R^d$.
	We say that the calibration $h:\Z^n\to\Gamma$ is {\it standard} if 
	\begin{enumerate}[i)]
		\item $h(e_i)=e_i$ for $i$ between $1$ and $d$. 
		\item The set of virtual generators is $\{n-\vert J\vert+1,\hdots,n\}$.
	\end{enumerate}
\end{definition}
\noindent We just proved
\begin{lemma}
	\label{lemmastandarddec}
	Any calibrated quantum torus is isomorphic to one standardly calibrated.
\end{lemma}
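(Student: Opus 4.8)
The plan is to mimic, in the calibrated setting, the proof of Lemma~\ref{lemmastandard}, producing the desired isomorphism as a composite of two calibrated torus isomorphisms (in the sense of Definition~\ref{deftorusmorphismdec}): one that moves the virtual generators into standard position, and one that normalizes the first $d$ (non-virtual) generators to the canonical basis of $\C^d$.

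First I would fix a calibration $(h,J)$ of $\Gamma$, with $h:\Z^n\to\Gamma$. Since the non-virtual generators $\{h(e_i)\mid i\notin J\}$ span $\C^d$ by \eqref{deccondition}, their number $n-\vert J\vert$ is at least $d$, so we may select a $d$-element subset $S\subset\{1,\dots,n\}\setminus J$ indexing a basis of $\C^d$. Because the target blocks $\{1,\dots,d\}$ and $\{n-\vert J\vert+1,\dots,n\}$ are disjoint (this is exactly what $n-\vert J\vert\ge d$ guarantees), there is a permutation $\sigma$ of $\{1,\dots,n\}$ carrying $S$ onto $\{1,\dots,d\}$ and $J$ onto $\{n-\vert J\vert+1,\dots,n\}$; let $H$ be the associated permutation matrix on $\Z^n$, $L=\mathrm{Id}$ on $\R^d$, and $s=\sigma|_J$. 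One checks that the square \eqref{CDLH} commutes (with $h'=hH^{-1}$ and $\Gamma'=\Gamma$), that \eqref{commut} and \eqref{CDdeftorusmorphismdec} hold trivially, and that \eqref{Hs} and \eqref{Hrestric} are satisfied since a permutation matrix sends basis vectors to basis vectors and carries $J$ bijectively onto the new tail $J'$. Hence $(\mathrm{Id},H,s)$ is a calibrated torus isomorphism $\nctorusgerbe{h}{J}\to\nctorusgerbe{hH^{-1}}{J'}$, and we may henceforth assume $J=\{n-\vert J\vert+1,\dots,n\}$ and that $(h(e_1),\dots,h(e_d))$ is a basis of $\C^d$.

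Next I would let $L$ be the unique $\C$-linear automorphism of $\C^d$ sending $(h(e_1),\dots,h(e_d))$ to the canonical basis $(e_1,\dots,e_d)$; it restricts to an $\R$-linear automorphism of $\R^d$ carrying $\Gamma$ to $L\Gamma$. Taking $H=\mathrm{Id}_{\Z^n}$ and $s=\mathrm{Id}_J$, the square \eqref{CDLH} becomes $Lh=h'$, which we use to \emph{define} $h':=Lh$; relation \eqref{commut} then reads $L(z+h(p))=Lz+h'(p)$, which is immediate, \eqref{CDdeftorusmorphismdec} holds by construction, and \eqref{Hs} and \eqref{Hrestric} hold because $J'=J$ and $H=\mathrm{Id}$. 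So $(L,\mathrm{Id},\mathrm{Id})$ is a marked calibrated torus isomorphism onto $\nctorusgerbe{Lh}{J}$, where by construction $Lh(e_i)=e_i$ for $1\le i\le d$ while the virtual set of $Lh$ still occupies $\{n-\vert J\vert+1,\dots,n\}$. Thus $(Lh,J)$ is standard in the sense of Definition~\ref{defstandardh}, and composing the two isomorphisms (composites of calibrated torus isomorphisms being again such, by composing the respective $L$'s, $H$'s and $s$'s in Definition~\ref{deftorusmorphismdec}) proves the lemma; the explicit effect on objects and morphisms is given by \eqref{torusmorphismLdec1}--\eqref{torusmorphismLdec2}.

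I expect the only genuinely delicate point — the main obstacle — to be the combinatorial compatibility in the first step: that a single permutation can simultaneously realize both normalizations, together with the verification that this permutation, packaged as $(H,s)$, meets all of \eqref{Hs} and \eqref{Hrestric}. The inequality $n-\vert J\vert\ge d$ extracted from \eqref{deccondition} is precisely what makes the two target index blocks disjoint and hence the permutation possible. Everything else is routine bookkeeping.
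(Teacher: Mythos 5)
Your proof is correct and follows essentially the same two-step route as the paper: first a permutation $(\mathrm{Id},H,s)$ putting the virtual generators at the tail and a spanning set of non-virtual images in the first $d$ slots, then a linear normalization $(L,\mathrm{Id},\mathrm{Id})$ sending that basis to the canonical one, with the composite giving a standard calibration. The verifications of \eqref{CDLH}, \eqref{commut}, \eqref{Hs} and \eqref{Hrestric}, and the observation that $n-\vert J\vert\ge d$ makes the permutation possible, match the paper's argument.
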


If $h$ is standard, we may decompose it as  
\begin{equation}
\label{h}
(p,q)\in\Z^d\times \Z^{n-d}\longmapsto p+\hbar(q)\in\Gamma
\end{equation}

This allows us to describe easily the multiplicative form of $\nctorusgerbe{h}{J}$ as the quotient stack $[\Torus^d/\Z^{n-d}]$ where $p\in\Z^{n-d}$ acts multiplicatively on $w\in\Torus^d$ through the formula $wE(\hbar(p))$ (which means that each coordinate of $w$ is multiplied by the corresponding coordinate of $E(\hbar(p))$).
 We have a commutative diagram
\begin{equation}
\label{CDaddvsmultdectorus}
\begin{tikzcd}[column sep=large]
\C^d \arrow[d]\arrow[r,"E"] &\Torus^d\arrow[d]\\
\nctorusgerbe{h}{J}\arrow[r,"\simeq"]&\phantom{d}\kern-5pt[\Torus^d/\Z^{n-d}]
\end{tikzcd}
\end{equation} 

In a more functorial point of view, $E$ defines a stack isomorphism $\mathscr E^{cal}$ from $\nctorus{\Gamma}{h}$ to $[\Torus^d/\Z^{n-d}]$ such that
\begin{equation*}
\mathscr{E}^{cal}(T,A,m^{cal})=(T/\Z^d,A,Em^{cal})
\end{equation*}
with $p\in\Z^d$ acting on $t$ as $(p,0)\in\Z^d\times\Z^{n-d}=\Z^n$ on $t$ and with
\begin{equation*}
\begin{tikzcd}
T\arrow[d]\arrow[dd,bend right=50]\arrow[r,"m^{cal}"]&\C^d\arrow[d,"E"]\\
T/\Z^d\arrow[d]\arrow[r,"Em^{cal}"]&\Torus^d\\
A &
\end{tikzcd}
\end{equation*}
and
\begin{equation*}
\begin{aligned}
&\mathscr{E}^{cal}(f:T\to S,A\to B,m^{cal},n^{cal})=(f:T/\Z^d\to S/\Z^d,\\
&A\to B,Em^{cal},En^{cal})
\end{aligned}
\end{equation*}
where $f$ descends as a morphism from $T/\Z^d$ to $S/\Z^d$ since it satisfies by definition $f(p\cdot t)=p\cdot f(t)$ for all $p\in\Z^n$.

\begin{example}
	\label{examplesqrt2dec}
	As in Example \ref{examplesqrt2}, let $d=1$ and $\Gamma$ be generated by $1$ and $\sqrt 2$ and let $L$ be the linear map $z\mapsto z\sqrt 2$. Consider the calibration
	\begin{equation*}
	(x,y,z)\in\R^3\longmapsto x+y\sqrt 2 \in \R
	\end{equation*}
	with set of virtual generators $\{3\}$. We have
	\begin{equation*}
	L(z+h(p))=Lz+h(2p_2,p_1,p_3)
	\end{equation*} 
	hence, setting $H(x,y,z)=(2y,x,z)$, we obtain a calibrated torus morphism
	\begin{equation*}
	[z]\in[\C/\Z^3]\longmapsto [\sqrt{2}z]\in[\C/\Z^3]
	\end{equation*}
	or, in multiplicative form,
	\begin{equation*}
	[w]\in[\Torus/\Z^2]\longmapsto [w^{\sqrt{2}}]\in[\Torus/\Z^2]
	\end{equation*}
	However, if we use the calibration 	
	\begin{equation*}
	(x,y,z)\in\R^3\longmapsto x+y\sqrt 2+z \in \R
	\end{equation*}
	an easy computation proves that there is no calibrated torus morphism corresponding to $L$, for the requirement that $H$ fixes the virtual generator gives a contradiction.
\end{example}

\section{The category of Quantum Fans}
\label{Qfans}
In this Section, we describe the Quantum Fans, that is the combinatorial data needed to construct Quantum Toric Stacks. In what follows we will refer to Quantum Toric Stacks shortly as \emph{Quantum Torics}.

\subsection{Quantum Fans}
\label{Qfan}
Let $\Gamma\subset \R^d$ be a finitely generated additive subgroup such that $\text{Vect}_{\R}\Gamma=\R^d$.

\begin{definition}
\label{QFdef}
A {\it Quantum Fan} $(\Delta, v)$ in $\Gamma$ consists of 
\begin{enumerate}
\item[i)] A collection $\Delta$ of strongly convex cones in $\Gamma$ such that every intersection of cones is a cone, every face of a cone is a cone and $0$ is a cone.
\item[ii)] The choice, for every $1$-cone of $\Delta$ of a generating vector $v_i\in\Gamma\setminus\{0\}$.
\end{enumerate}
\end{definition}
We set $v=(v_1,\hdots, v_p)$. If $\sigma$ is the cone generated by $(v_{i_1},\hdots v_{i_k})$, we use the notation $\sigma=\langle i_1\hdots i_k\rangle$.
\begin{remarks}
	\label{rkordinaryfans}
	To compare with known cases, observe that
	\begin{enumerate}
		\item[i)] The case of classical Toric Fans corresponds to the case $\Gamma$ discrete and $v_i$ the unique primitive vector of $\Gamma$ generating the corresponding $1$-cone. Here the data $v$ is completely determined by $\Delta$.
		\item[ii)] The case of Orbifold Toric Fans (stacky Fans of \cite{BCS}) corresponds to the case $\Gamma$ discrete and $\Delta$ simplicial. Here $v_i$ is not assumed to be primitive so choosing $v$ means  choosing a positive multiple of each primitive generator. 
	\end{enumerate}
\end{remarks}

\begin{definition}
	\label{defsfans}
	We say that a Quantum Fan $(\Delta,v)$ in $\Gamma$ is 
	\begin{enumerate}[\rm i)]
		\item {\it irrational} if $\Gamma$ is not discrete in $\R^d$
		\item {\it simplicial} if every cone of $\Delta$ is a cone over a simplex
		\item {\it complete} if the union of all cones of $\Delta$ cover $\R^d$ entirely
		\item {\it $\Gamma$-complete} if $\Gamma=\Z v_1+\hdots +\Z v_p$.
		\item {\it polytopal} if there exists a convex polytope $P$ with vertices in $\Gamma$ such that $\Delta$ is the fan over the faces of $P$.
		\end{enumerate}
\end{definition}
Notice that only points i) and iv) are  specific to Quantum Fans.
We define now morphisms of Quantum Fans.
\begin{definition}
\label{QFmorphismdef}
A {\it morphism of Quantum Fans} between $(\Delta, v)$ in $\Gamma$ and $(\Delta', v')$ in $\Gamma'$ is a linear map
\begin{equation}
\label{QFmdef}
L\ :\ \R^d\longrightarrow \R^{d'}
\end{equation}
such that
\begin{enumerate}
\item[i)] $L(\Gamma)\subset \Gamma'.$
\item[ii)] If $\sigma$ is a cone of $\Delta$ then $L(\sigma)\subseteq\sigma'$ for some cone $\sigma'$ of $\Delta'$.
\item[iii)] For each $i$ and each $\sigma'=\langle j_1\hdots j_k\rangle$ such that $L(v_i)\subseteq\sigma'$; then $L(v_i)$ is a $\N$-linear combination of $(v'_{j_1},\hdots,v'_{j_k})$.
\end{enumerate}
\end{definition}

\begin{remarks}
Observe that
\begin{enumerate}
\item[i)] For ordinary fans, that is $\Gamma$ discrete and $v_i$ primitive generators, then point iii) is automatically satisfied.
\item[ii)] For  stacky fans, definition \ref{QFmorphismdef} coincides with that of \cite[Remark 4.5]{BCS}.
\end{enumerate}
\end{remarks}

The following characterization of isomorphisms will be useful.

\begin{lemma}
\label{isoQFlemma}
Let $L$ be a Quantum Fan morphism between $(\Delta,v)$ and $(\Delta',v')$. The following two statements are equivalent
\begin{enumerate}
\item[i)] $L$ is a Quantum Fan isomorphism.
\item[ii)] $L$ is a linear isomorphism, $L(\Gamma)=\Gamma'$, both Quantum Fans have the same number of $1$-cones, say $p$, and $L$ sends $(v_1,\hdots,v_p)$ onto a permutation of $(v'_1,\hdots,v'_p)$.
\end{enumerate}
\end{lemma}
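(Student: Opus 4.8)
The plan is to reduce the statement to one geometric fact about extreme rays of the cones of a quantum fan and to read off both directions from it. First I would observe that, since morphisms of quantum fans are linear maps and composition is composition of linear maps, $L$ is an isomorphism in the category precisely when $L$ is a linear isomorphism (so $d=d'$) and its linear inverse $L^{-1}$ is \emph{also} a quantum fan morphism. Thus (i) is equivalent to the assertion that both $L$ and $L^{-1}$ satisfy conditions i)--iii) of Definition \ref{QFmorphismdef}. Under this reformulation the implication (ii) $\Rightarrow$ (i) becomes an essentially routine verification of those conditions for $L^{-1}$, and the substance is (i) $\Rightarrow$ (ii).

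The key lemma I would isolate first is: \emph{if $\tau\in\Delta$ is a cone with marked ray generators $v_{m_1},\dots,v_{m_r}$, and one of them, say $v_i$, is written $v_i=\sum_t c_t v_{m_t}$ with all $c_t\ge 0$, then $c_t=0$ for $m_t\neq i$ and the coefficient of $v_i$ is $1$.} The proof is short: $\rho_i:=\R_{\ge 0}v_i$ is a $1$-cone of $\Delta$ contained in $\tau$, hence (by the fan axioms) a face of $\tau$, so there is a linear functional $\ell$ with $\ell\ge 0$ on $\tau$ and $\tau\cap\ell^{-1}(0)=\rho_i$; evaluating $\ell$ on the relation gives $0=\sum_t c_t\ell(v_{m_t})$ with all terms $\ge 0$, forcing $\ell(v_{m_t})=0$, hence $v_{m_t}\in\rho_i$, hence $m_t=i$, whenever $c_t>0$, and then strong convexity pins the last coefficient to $1$. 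I would emphasize that the hypothesis ``$v_i$ is a ray generator of $\tau$'' is essential (otherwise the statement fails, e.g. $e_1+e_2=e_1+e_2$), which is exactly why the combinatorics of the fan must enter.

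For (i) $\Rightarrow$ (ii): condition i) of Definition \ref{QFmorphismdef} for $L$ and for $L^{-1}$ gives $L(\Gamma)=\Gamma'$ at once. Next, fix a marked generator $v_i$ of $\Delta$. By conditions ii) and iii) for $L$ I would write $L(v_i)=\sum_s a_s v'_{j_s}$ with $a_s\in\N$ not all zero, the sum over the rays of some $\sigma'=\langle j_1\ldots j_k\rangle\in\Delta'$ containing $L(\rho_i)$. Applying condition ii) for $L^{-1}$ to the \emph{whole} cone $\sigma'$, there is one cone $\tau\in\Delta$ with $L^{-1}(\sigma')\subseteq\tau$; then $\rho_i\subseteq\tau$ and each $L^{-1}(v'_{j_s})$ lies in $\tau$, and condition iii) for $L^{-1}$ expands $L^{-1}(v'_{j_s})=\sum_m b^{(s)}_m v_m$ over the marked generators of $\tau$, with $b^{(s)}_m\in\N$. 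Composing,
\[
v_i=L^{-1}(L(v_i))=\sum_s a_s\,L^{-1}(v'_{j_s})=\sum_m\Bigl(\textstyle\sum_s a_s b^{(s)}_m\Bigr)v_m
\]
is an $\N$-combination of the marked generators of $\tau$, so the key lemma applies and gives $\sum_s a_s b^{(s)}_m=0$ for $m\neq i$ and $\sum_s a_s b^{(s)}_i=1$. Since every $a_s$ in the support is $\ge 1$, the first batch of equalities forces $L^{-1}(v'_{j_s})$ to be a positive integer multiple of $v_i$ for each such $s$, and then $\sum_s a_s b^{(s)}_i=1$ in $\N$ leaves exactly one index $s_0$, with $a_{s_0}=b^{(s_0)}_i=1$ and $a_s=0$ otherwise; hence $L(v_i)=v'_{j_{s_0}}$ is a marked generator of $\Delta'$ and $L^{-1}(v'_{j_{s_0}})=v_i$. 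Putting $\pi(i):=j_{s_0}$ gives an injective map of index sets (because $L$ is injective and the $v'_j$ are pairwise non-proportional); the symmetric argument with $L$ and $L^{-1}$ interchanged gives an injective $\pi'$ with $L^{-1}(v'_j)=v_{\pi'(j)}$ and $\pi\circ\pi'=\mathrm{id}$, whence $p=p'$ and $\pi$ is a permutation with $L(v_i)=v'_{\pi(i)}$. That is (ii).

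For (ii) $\Rightarrow$ (i): $L$ is already a fan morphism by hypothesis, so I only need that $L^{-1}$ satisfies i)--iii) of Definition \ref{QFmorphismdef}. Condition i) is $L^{-1}(\Gamma')=\Gamma$; condition iii) holds because $L^{-1}(v'_j)=v_{\pi^{-1}(j)}$ is a marked generator of $\Delta$, hence an extreme ray of every cone of $\Delta$ containing it; and condition ii) for $L^{-1}$ amounts to saying $L$ matches the cone posets of $\Delta$ and $\Delta'$, which I would deduce from $L$ and $L^{-1}$ being fan morphisms carrying marked rays bijectively, together with the fan axiom that a cone contained in another is a face of it. The step I expect to be the main obstacle is the geometric lemma on extreme rays, together with the bookkeeping ensuring that all vectors in the composition live in a \emph{single} cone $\tau$ of $\Delta$ so that the lemma can be invoked — which is precisely why one must use condition ii) for $L^{-1}$ on the whole cone $\sigma'$ and not ray by ray; the secondary subtlety is the cone-poset matching in the converse direction, which for the (e.g. complete) fans relevant to the theory is forced by the same considerations.
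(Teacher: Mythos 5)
Your forward direction (i) $\Rightarrow$ (ii) is correct and takes a genuinely different route from the paper's. The paper's own proof is a one-liner: an isomorphism sends cones onto cones, so each $L(v_i)$ is a positive multiple of some $v'_{i'}$, and condition iii) of Definition \ref{QFmorphismdef}, applied to $L$ and to $L^{-1}$, makes this multiple a positive integer with integer inverse, hence equal to $1$. You never use the (unproved) assertion that cones map onto cones; instead you apply condition iii) to $L$ and to $L^{-1}$, compose, and collapse the resulting $\N$-combination via a supporting-functional argument on an extreme ray. This is more work, but it actually substantiates the step the paper takes for granted, at the modest price of invoking the standard fan fact that a $1$-cone of $\Delta$ contained in a cone $\tau\in\Delta$ is a face of $\tau$ (this needs the usual axiom that the intersection of two cones is a common face, slightly stronger than the literal wording of Definition \ref{QFdef}, but clearly what is intended).

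The converse is where your proposal has a genuine gap, and it is exactly the point you flag. To get (ii) $\Rightarrow$ (i) you must verify condition ii) of Definition \ref{QFmorphismdef} for $L^{-1}$, i.e. that $L^{-1}$ maps every cone of $\Delta'$ into a cone of $\Delta$; your justification (``deduce from $L$ and $L^{-1}$ being fan morphisms'') is circular, since $L^{-1}$ being a morphism is precisely what is to be proved, and the fallback to complete fans is not among the hypotheses. In fact this step cannot be closed from (ii) alone, because (ii) says nothing about cones of dimension at least $2$: take $\Delta'$ to consist of $0$ and the rays through $e_1,e_2$ in $\R^2$, take $\Delta$ to consist of these together with the cone spanned by $e_1,e_2$, with $\Gamma=\Gamma'=\Z^2$ and $v_i=v'_i=e_i$; then the identity is a Quantum Fan morphism from $(\Delta',v')$ to $(\Delta,v)$ satisfying every clause of (ii), yet its inverse violates condition ii), so it is not an isomorphism. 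The paper simply declares the converse ``obvious''; it is obvious only in the situations where the lemma is applied, namely when $\Delta'$ is by construction the image fan $L(\Delta)$, so that $L$ already matches the cone posets. So in your write-up you should either add that hypothesis (equivalently, that $L$ induces a bijection between the cones of $\Delta$ and those of $\Delta'$) when proving the converse, or state explicitly that this is how the lemma is to be read.
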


\begin{proof}
Assume i). From the definition, if $L$ is an isomorphism of Quantum Fans then it is a linear isomorphism, $L(\Gamma)=\Gamma'$ and $L(\sigma)=\sigma'$ in point ii). In particular, $L(v_i)$ is a positive multiple of some $v'_{i'}$. Point iii) in definition \ref{QFmorphismdef}  shows that it is a positive integer multiple, with integer inverse. Hence, for all $i$, we have that $L(v_i)$ is equal to some $v'_{i'}$ proving ii).
The converse is obvious.
\end{proof}

Especially, observe that every quantum fan with $\text{Vect}(v_1,\hdots,v_p)=\R^d$ is isomorphic to one satisfying 
\begin{enumerate}
\item[i)] $\Z^d\subset \Gamma$.
\item[ii)] $(v_1,\hdots, v_d)$ is the canonical basis of $\R^d$.
\end{enumerate}
If $\text{Vect}(v_1,\hdots,v_p)$ has dimension $l<d$, it is isomorphic to a Quantum Fan satisfying 
\begin{enumerate}
\item[i)] $\Z^l\times\{0\}\subset \Gamma$.
\item[ii)] $(v_1,\hdots, v_l)$ is the canonical basis of $\R^l\times\{0\}\subset \R^d$.
\end{enumerate}

\begin{definition}
\label{standarddef}
We call {\it standard} a Quantum Fan satisfying the previous normalization conditions.
\end{definition}
Quantum Fans and their morphisms form a category that we denote by $Q$.
\begin{example}
	\label{exP2}
	Consider the $\Gamma$-complete and complete standard Quantum Fan of $\R^2$ generated by
	\begin{equation*}
	v_1=e_1\qquad v_2=e_2\qquad v_3=ae_1+be_2
	\end{equation*}
	with $a<0$ and $b<0$. For $a=b=-1$, this is exactly the fan of the classical $\mathbb P^2$.
	
	We call such a fan {\it a $\Gamma$-complete quantum deformation} of $\mathbb P^2$'s fan.
\end{example}
\subsection{Calibrated Quantum Fans}
\label{decQfan}
We now introduce the finer notion of calibrated quantum fan.

\begin{definition}
\label{decQFdef}
A {\it calibrated Quantum Fan} $(\Delta, h)$ in $\Gamma$ consists of 
\begin{enumerate}
\item[i)] A collection $\Delta$ of strongly convex cones in $\Gamma$ such that every intersection of cones is a cone, every face of a cone is a cone and $0$ is a cone. 
\item[ii)] A calibration $h : \Z^n\to\Gamma$ with its set $J$ of virtual generators (see Definition  \ref{defcalibration}).
\item[iii)] A set of generators, that is a subset $I=\{i_1,\hdots, i_p\}$ in $\{1,\hdots, n \}$ which is disjoint from $J$ and such that the $1$-cones generated by $h(e_{i_k})$ for $k=1,\hdots, p$ are exactly the $1$-cones of $\Delta$ 
\end{enumerate}
The {\it length} $l$ of the calibrated Quantum Fan $(\Delta,h)$ is defined as the nonnegative quantity $\vert J\vert$. It is {\it maximal} if equal to $n-p$.
\end{definition}
We immediately see that $(\Delta,h)$ determines canonically a Quantum Fan $(\Delta,v)$ by setting $v=(h(e_{i_1}),\hdots, h(e_{i_p}))$. However, it contains strictly more information than a quantum fan as soon as $J$ is not empty, as we shall see in Section \ref{decodef}. 
\\
Indeed, one usually goes the other way round. Starting with a Quantum Fan, we calibrate it by choosing an epimorphism $h$ and a set of virtual generators. The  following two cases are important.

\begin{example}\textbf{The Trivial calibration of a $\Gamma$-complete fan.}
	\label{completecalibration}
	Let $\Delta$ be a $\Gamma$-complete fan in $\Gamma\subset \R^d$ with $1$-cone generators $v_1,\hdots v_p$. Since it is $\Gamma$-complete, we can calibrate it with $h : \Z^p\to \Gamma$ by setting $h(e_i)=v_i$ for $i=1,\hdots,p$. This forces the set of virtual cones to be empty.
\end{example}

A calibrated quantum fan also determines a fan in the standard lattice $\Z^n$ as follows. Let
\begin{equation}
\label{imath}
i_j\in I\longmapsto \imath(j):= j\in \{1,...p\}
\end{equation} 
Given $\sigma=\langle j_1,\hdots, j_k\rangle$ a cone of $\Delta$, we consider the cone $\imath^*\sigma$ in $\Z^n\subset \R^n$ generated by $\imath^{-1}(j_1),\hdots, \imath^{-1}(j_k)$. When $\sigma$ runs over the cones of $\Delta$, it describes a classical fan $(\Delta_h,e_I)$ in the lattice of integer points of $\R^n$. 

\begin{example}\textbf{Canonical calibration of $(\Delta_h,e_I)$}.
		\label{trivialdecDh}
		Since the $1$-cone generators of $(\Delta_h,e_I)$ are vectors of the canonical basis of $\R^n$, it can trivially calibrated by the identity as in Example \ref{trivialdec}. And we can take $J$ as subset of virtual generators. We denote this calibrated fan by $(\Delta_h,Id)$ and call its calibration the {\itshape canonical calibration} of $(\Delta_h,e_I)$.
\end{example} 

\begin{remark}
	\label{rkimath}
	Observe that $\imath^*$ realizes an order preserving bijection from the cones of $\Delta$ to the cones of $\Delta_h$. We denote by $\imath_*$ its inverse, which is also order preserving. 
\end{remark}
Let us now define morphism of calibrated Quantum Fans.
\begin{definition}
\label{decQFmorphismdef}
A {\it morphism of calibrated Quantum Fans} between $(\Delta, h)$ in $\Gamma$ and $(\Delta', h')$ in $\Gamma'$ is a pair $(L,H)$, where
\begin{enumerate}[\rm i)]
\item $L$ is a morphism between the associated Quantum Fans $(\Delta, v)$ and $(\Delta', v')$.
\item $H$ is a morphism between the associated fans $(\Delta_h, e_I)$ and $(\Delta'_{h'}, e_{I'})$. 
\item We have $L\circ h=h'\circ H$.
\item Each $e_i$ for $i\not\in J$ is sent through $H$ onto a $\Z$-linear combination of $(e_i)_{i\not\in J'}$.
\item There exists a map $s$ from $J$ to $J'$ such that $H$ sends every vector $e_i$ with $i\in J$ onto $e_{s(i)}$.
\end{enumerate}
	It is an {\it isomorphism} if the three of $L$, $H$ and $s$ are isomorphisms. And it is a {\it marked isomorphism} if moreover $J$ is equal to $J'$ and $s$ is the identity.
\end{definition}
The last requirement implies that $H$ sends the set of virtual generators of $(\Delta,h)$ into the same set for $(\Delta',h')$. So we have to think of these vectors as marked vectors.
Observe also that the fourth requirement is automatic for $e_i$ being generator of a $1$-cone since $H$ is a toric morphism (and indeed as a $\mathbb N$-linear combination); so it really concerns those $e_i$ with $i\not\in J$ which are not generators of $1$-cones. It agrees with definition \ref{deftorusmorphismdec} and implies that $H$ splits as
\begin{equation*}
H=\begin{pmatrix}
H_1 &0\\
0 &H_2
\end{pmatrix}
\end{equation*}
with $H_1$, respectively $H_2$, a $\vert\bar J\vert$ square matrix, resp. a $\vert J\vert$ square matrix.

Notice also that, with this definition, $H$ acts on the cones of $\Delta_h$ exactly as $L$ acts on the cones of $\Delta$. To wit,
\begin{lemma}
	\label{lemmadecpreserve}
	Let $(L,H)$ be a morphism of calibrated Quantum Fans between $(\Delta, h)$ in $\Gamma$ and $(\Delta', h')$ in $\Gamma'$. Let $\sigma$, resp. $\sigma'$, be a cone of $\Delta$, resp. $\Delta'$. Let $\imath^*\sigma$, resp. $(\imath')^*\sigma'$, be the cone of $\Delta_h$, resp. $\Delta_{h'}$, associated through {\rm \eqref{imath}}. Then,
	$$
	L\sigma\subset \sigma'\iff H\imath^*\sigma\subset (\imath')^*\sigma'
	$$
\end{lemma}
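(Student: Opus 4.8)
The plan is to move the inclusion back and forth along the two surjective linear maps $h\colon\imath^*\sigma\to\sigma$ and $h'\colon(\imath')^*\sigma'\to\sigma'$, and then to reduce the non-trivial direction to the extreme rays of $\imath^*\sigma$. The point to keep in mind is that, writing $\sigma=\langle j_1\ldots j_k\rangle$, the cone $\imath^*\sigma$ is by construction the cone in $\R^n$ generated by the distinct canonical vectors $e_{i_{j_1}},\ldots,e_{i_{j_k}}$, so applying the linear map $h$ (which sends $e_{i_j}$ to $v_j$) carries $\imath^*\sigma$ onto the cone generated by $v_{j_1},\ldots,v_{j_k}$, that is onto $\sigma$ itself; likewise $h'((\imath')^*\sigma')=\sigma'$. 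One implication is then immediate: if $H\imath^*\sigma\subset(\imath')^*\sigma'$, applying $h'$ and using $L\circ h=h'\circ H$ (Definition \ref{decQFmorphismdef}(iii)) gives $L\sigma=L(h(\imath^*\sigma))=h'(H\imath^*\sigma)\subset h'((\imath')^*\sigma')=\sigma'$.

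For the converse, suppose $L\sigma\subset\sigma'$. Since $\imath^*\sigma$ is the convex cone on $e_{i_{j_1}},\ldots,e_{i_{j_k}}$ and $(\imath')^*\sigma'$ is convex, it suffices to show that $H(e_{i_{j_\ell}})\in(\imath')^*\sigma'$ for each $\ell$. Fix $\ell$. Because $e_{i_{j_\ell}}$ generates a $1$-cone of $\Delta_h$ and $H$ is a morphism of classical fans (Definition \ref{decQFmorphismdef}(ii)), $H(e_{i_{j_\ell}})$ lies in some cone of $\Delta_{h'}$; every cone of $\Delta_{h'}$ is of the form $(\imath')^*\rho'$ with $\rho'\in\Delta'$ and is generated by distinct canonical vectors, hence smooth, so, $H$ being a lattice map, the smallest cone of $\Delta_{h'}$ containing $H(e_{i_{j_\ell}})$ has the same form, say $(\imath')^*\rho'$, and
\[
H(e_{i_{j_\ell}})=\sum_{m\in R}c_m\,e_{i'_m},\qquad c_m\in\N\setminus\{0\},
\]
where $R$ indexes the extreme rays of $\rho'$. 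Applying $h'$ and using $h'(e_{i'_m})=v'_m$ together with $L\circ h=h'\circ H$ gives $L(v_{j_\ell})=\sum_{m\in R}c_m\,v'_m$ with every $c_m>0$, so $L(v_{j_\ell})$ lies in the relative interior of $\rho'$. On the other hand $v_{j_\ell}\in\sigma$, so $L(v_{j_\ell})\in L\sigma\subset\sigma'$, whence $L(v_{j_\ell})\in\rho'\cap\sigma'$. Since two cones of a fan meet along a common face, $\rho'\cap\sigma'$ is a face of $\rho'$ and it contains a relative-interior point of $\rho'$; therefore $\rho'\cap\sigma'=\rho'$ and $\rho'$ is a face of $\sigma'$. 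As $(\imath')^*$ preserves the order (Remark \ref{rkimath}), $(\imath')^*\rho'$ is a face of $(\imath')^*\sigma'$, hence $H(e_{i_{j_\ell}})\in(\imath')^*\rho'\subset(\imath')^*\sigma'$. Letting $\ell$ vary concludes the proof.

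I expect the heart of the argument to be the middle of the converse: a priori, $H$ being a fan morphism only tells us that $H(e_{i_{j_\ell}})$ lies in \emph{some} cone of $\Delta_{h'}$, and the work is to identify that cone as (a face of) $(\imath')^*\sigma'$. This is done by tracking the support of $H(e_{i_{j_\ell}})$ — smoothness of the cones of $\Delta_{h'}$ forces the coordinates to be nonnegative integers — and pushing this forward through $h'$ to the relative-interior statement for $L(v_{j_\ell})$, after which the hypothesis $L\sigma\subset\sigma'$ and the common-face property of fans close the loop. The remaining ingredients — the identities $h(\imath^*\sigma)=\sigma$ and $h'((\imath')^*\sigma')=\sigma'$, the commuting square $L\circ h=h'\circ H$, and the order-preserving bijection $\imath^*$ of Remark \ref{rkimath} — are routine bookkeeping.
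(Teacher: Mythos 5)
Your proof is correct, and the forward implication coincides with the paper's: apply $h'$ to the identity $L\sigma=Lh\,\imath^*\sigma=h'H\imath^*\sigma$. For the converse the paper is much terser: it writes $h'H\imath^*\sigma=L\sigma\subset\sigma'=h'\bigl((\imath')^*\sigma'\bigr)$ and then concludes at once from the fact that $h'$ induces a bijection between the cones of $\Delta_{h'}$ and those of $\Delta'$. That last inference is exactly the containment-reflection statement that requires an argument (since $h'$ is far from injective), and your ray-by-ray analysis --- the nonnegative integer support of $H(e_{i_{j_\ell}})$ inside a cone of $\Delta_{h'}$ spanned by canonical basis vectors, the push-forward to a relative-interior point of $\rho'$, and the common-face property forcing $\rho'$ to be a face of $\sigma'$ --- is precisely the justification the paper compresses into that one sentence. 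So your route is best described as a completed version of the paper's proof rather than a genuinely different one; it buys an actual verification of the key step, at the cost of more bookkeeping (one could streamline it slightly by running the same argument once on the single point $\sum_\ell H(e_{i_{j_\ell}})$ instead of each generator separately). The only thing to flag is your appeal to ``two cones of a fan meet along a common face'': this is not literally contained in Definition \ref{QFdef}/\ref{decQFdef}, but it is implicitly assumed by the paper as well (it is already needed for $\Delta_{h}$ to be a classical fan and for the order-preserving bijection of Remark \ref{rkimath}), so the reliance is harmless though worth acknowledging explicitly.
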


\begin{proof}
	Use
	\begin{equation*}
	L\sigma=Lh\imath^*\sigma=h'H\imath^*\sigma
	\end{equation*}
	If $H\imath^*\sigma\subset (\imath')^*\sigma'$, then, applying $h'$ to the previous relation, we obtain $L\sigma\subset \sigma'$.\\
	Conversely, assume $L\sigma\subset \sigma'$. Still by the previous relation, we obtain 
	$$
	h'H\imath^*\sigma=L\sigma\subset \sigma'=h'(\imath')_*\sigma'
	$$
	By construction, $h'$ realizes a bijection between the cones of $\Delta_h$ and those of $\Delta$. Hence we conclude $H\imath^*\sigma\subset (\imath')^*\sigma'$.
\end{proof}

The next lemma shows that $h$ induces a Quantum Fan morphism between $(\Delta_h,Id)$ and $(\Delta, h)$.
\begin{lemma}
	\label{hdecmorphism}
	Let $(\Delta,h)$ be a calibrated Quantum Fan and let $(\Delta_h,Id)$ be the associated trivially calibrated fan. Then $(h,Id)$ is a Quantum Fan morphism from $(\Delta_h,Id)$ to $(\Delta, h)$.
\end{lemma}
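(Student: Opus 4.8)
The plan is to unwind the definitions and verify, one by one, the five requirements of Definition~\ref{decQFmorphismdef} for the pair $(L,H):=(h,\mathrm{Id})$, where $L=h$ now denotes the $\R$-linear extension $\R^n\to\R^d$ of the epimorphism $h:\Z^n\to\Gamma$, and $H=\mathrm{Id}:\R^n\to\R^n$; the auxiliary set map will be $s=\mathrm{Id}:J\to J$. Recall the bookkeeping: the source $(\Delta_h,\mathrm{Id})$ has associated Quantum Fan $(\Delta_h,e_I)$ and associated fan in $\Z^n$ again equal to $(\Delta_h,e_I)$ (its calibration is the identity), while the target $(\Delta,h)$ has associated Quantum Fan $(\Delta,v)$ with $v=(h(e_{i_1}),\hdots,h(e_{i_p}))$ and associated fan in $\Z^n$ equal to $(\Delta_h,e_I)$ by construction. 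The one elementary fact used repeatedly is that $h$ carries $\imath^*\sigma$ onto $\sigma$ for every cone $\sigma$ of $\Delta$: indeed $\imath^*\sigma$ is generated by the $e_{i_j}$ with $j$ running over the generators of $\sigma$, and $h(e_{i_j})=v_j$; thus $h$, restricted to the cones of $\Delta_h$, is the order-preserving bijection $\imath_*$ of Remark~\ref{rkimath}.

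The substantive requirement is the first one, that $L=h$ be a morphism of the underlying Quantum Fans $(\Delta_h,e_I)\to(\Delta,v)$ in the sense of Definition~\ref{QFmorphismdef}. Condition i) there, $h(\Z^n)\subseteq\Gamma$, holds since $h$ is a calibration; condition ii), that $h$ maps each cone of $\Delta_h$ into a cone of $\Delta$, is exactly the identity $h(\imath^*\sigma)=\sigma$ just recalled. For condition iii) I would argue: the marked generators of $\Delta_h$ are the $e_{i_l}$ and $h(e_{i_l})=v_l$; if $v_l$ lies in a cone $\sigma'=\langle j_1\hdots j_k\rangle$ of $\Delta$, then the ray $\R_{\geq 0}v_l=\langle l\rangle$ is a cone of $\Delta$ contained in $\sigma'$, hence a face of $\sigma'$ (it equals the common face $\langle l\rangle\cap\sigma'$), hence --- $\sigma'$ being strongly convex --- one of the extreme rays of $\sigma'$, so $\langle l\rangle=\langle j_m\rangle$ for some $m$; since the marked generators of $\Delta$ biject with its $1$-cones, $v_l=v_{j_m}$, a (trivial) $\N$-linear combination of $(v_{j_1},\hdots,v_{j_k})$. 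This settles condition i); note that no simpliciality hypothesis on $\Delta$ is needed.

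Requirements ii)--v) of Definition~\ref{decQFmorphismdef} are then purely formal, because $H=\mathrm{Id}$ and $s=\mathrm{Id}$. For ii), $H$ must be a fan morphism between the fans in $\Z^n$ attached to source and target; both equal $(\Delta_h,e_I)$, so the identity works. For iii), $L\circ h_{\mathrm{source}}=h_{\mathrm{target}}\circ H$ reads $h\circ\mathrm{Id}=h\circ\mathrm{Id}$. For iv), each $e_i$ with $i\notin J$ is fixed by $H$, hence is a $\Z$-linear combination of $(e_i)_{i\notin J}$. For v), $H(e_i)=e_i=e_{s(i)}$ for $i\in J$. Putting the pieces together exhibits $(h,\mathrm{Id})$ as a morphism of calibrated Quantum Fans $(\Delta_h,\mathrm{Id})\to(\Delta,h)$, which is the assertion of the lemma. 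The only real obstacle is organizational --- tracking which fan and which index set each generator belongs to --- together with the small geometric input in the previous paragraph that a ray of a fan contained in one of its cones is a face of that cone, for which one must resist assuming simpliciality.
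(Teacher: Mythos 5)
Your proof is correct and follows essentially the same route as the paper's: one checks the conditions of Definition \ref{decQFmorphismdef} for $(L,H)=(h,\mathrm{Id})$ with $s=\mathrm{Id}$, the commutativity in iii) and the bookkeeping in iv)--v) being formal. You are in fact somewhat more careful than the paper on the two points it waves off --- you actually verify condition iii) of Definition \ref{QFmorphismdef} for $L=h$ via the ray-is-a-face argument (the paper just says ``by definition''), and you take the set of virtual generators of $(\Delta_h,\mathrm{Id})$ to be $J$ as in Example \ref{trivialdecDh} so that points iv)--v) genuinely hold with $s=\mathrm{Id}$ --- but the overall argument is the same.
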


\begin{proof}
	The proof is straightforward. Firstly, $h$ sends $\Delta_h$ onto $\Delta$ by definition. Secondly, the identity is obviously a fan isomorphism of $(\Delta_h,Id)$. Thirdly, the following diagram commutes
	\begin{equation}
	\label{CDhId}
	\begin{tikzcd}
	\Z^n \arrow[r,"Id"]\arrow[d,"Id"']&\Z^{n}\arrow[d,"{h}"]\\
	\Z^n\arrow[r,"h"']&\Gamma
	\end{tikzcd}
	\end{equation}
	Fourthly, the set of virtual vectors of $\Delta_h$ is empty, so there is nothing to check in points iv) and v) of Definition \ref{decQFmorphismdef}.
\end{proof}
For later use, we note that $(L,H)$ induces a homomorphism $K$ between the kernel $\Xi$ of $h$ and the kernel $\Xi'$ of $h'$ such that the following diagram is commutative
\begin{equation}
\begin{tikzcd}
\label{completemorphismCD}
0 \arrow[r] &\Xi \arrow[r]\arrow[d,"K"'] &\Z^n \arrow[r,"h"]\arrow[d,"H"'] &\Gamma \arrow[r]\arrow[d,"L"'] & 0\\
0 \arrow[r] &\Xi' \arrow[r]  &\Z^{n'} \arrow[r,"h'"'] &\Gamma'\arrow[r] &0
\end{tikzcd}
\end{equation}

We also give a characterization of isomorphisms in the maximal length case. 

\begin{lemma}
\label{isoDQFlemma}
The following two statements are equivalent
\begin{enumerate}
\item[i)] $(L,H)$ is an isomorphism between the maximal calibrated Quantum Fans $(\Delta,h)$ and $(\Delta',h')$.
\item[ii)] $L$ is a Quantum Fan isomorphism and $H$ is a block permutation matrix that permutes both $\{e_{i_1},\hdots,e_{i_p}\}$ and $\{e_{j_1},\hdots,e_{j_{n-p}}\}$. 
In particular, $L$ sends the $1$-cones generators $(v_1,\hdots,v_p)$ onto a permutation of $(v'_1,\hdots,v'_p)$ and the virtual generators $(h(e_{j_1}),\hdots, h(e_{j_{n-p}}))$ onto a permutation of $(h'(e_{j'_1}),\hdots, h'(e_{j'_{n-p}}))$.
\end{enumerate}
Moreover, $(L,H)$ is a marked isomorphism if and only if in addition $J=J'$ and the permutation induced by $H$ on $(h(e_{j_1}),\hdots, h(e_{j_{n-p}}))$ is the identity.
\end{lemma}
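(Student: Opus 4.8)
The plan is to unwind the definitions of morphism and isomorphism of calibrated Quantum Fans (Definition \ref{decQFmorphismdef}) under the maximality hypothesis $l = |J| = n-p$, using the already-established characterization of Quantum Fan isomorphisms (Lemma \ref{isoQFlemma}) together with the block decomposition $H = \begin{pmatrix} H_1 & 0 \\ 0 & H_2\end{pmatrix}$ noted right after Definition \ref{decQFmorphismdef}. For the direction i) $\Rightarrow$ ii), I would first observe that if $(L,H)$ is an isomorphism then, by definition, $L$ is an isomorphism of the associated Quantum Fans, so Lemma \ref{isoQFlemma} immediately gives that $L$ is a linear isomorphism with $L(\Gamma)=\Gamma'$ sending $(v_1,\dots,v_p)$ onto a permutation of $(v'_1,\dots,v'_p)$; in particular $p=p'$, and since both fans are maximal, $n-p = n'-p'$, forcing $n=n'$. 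Next I would analyze $H$: condition v) forces $H_2$ to be the permutation matrix $e_i \mapsto e_{s(i)}$ on the virtual block (here $s$ is a bijection since $(L,H)$ is an isomorphism), and condition iv) says $H_1$ has columns in $\bigoplus_{i\notin J'}\Z e_i$, i.e. $H_1$ is an integer matrix on the generator block. To see $H_1$ is itself a permutation matrix, I would use the commutativity $L\circ h = h'\circ H$ of \eqref{CDLH}: restricting to the generator block and using that $h(e_{i_k}) = v_k$, $h'(e_{i'_k}) = v'_k$ are the marked $1$-cone generators, the relation $L(v_k) = v'_{\pi(k)}$ (a genuine permutation, by Lemma \ref{isoQFlemma}) pins down $H_1$ as exactly the permutation matrix realizing $\pi$ on the generator basis vectors; alternatively one invokes Lemma \ref{lemmadecpreserve} to transport the cone structure and conclude $H_1$ acts on the $1$-cone generators of $(\Delta_h, e_I)$ as $L$ acts on those of $\Delta$, hence as a permutation. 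This yields ii), and the statement about virtual generators follows by applying $h'$ to $H e_{j_k} = e_{s(j_k)}$.

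For the converse ii) $\Rightarrow$ i), the argument is essentially formal: given that $L$ is a Quantum Fan isomorphism and $H$ is the asserted block permutation matrix, I would simply verify each of the five conditions of Definition \ref{decQFmorphismdef} and check that $L$, $H$, $s$ are all invertible. Conditions i) and ii) hold because $L$ and $H$ are fan isomorphisms; iii) ($L\circ h = h'\circ H$) holds because a block permutation matrix intertwines the two epimorphisms provided it matches the chosen permutation of generators — here one uses that in the maximal standard situation the calibration is determined (up to the permutation data) by the $1$-cone generators and virtual generators, so compatibility of the two permutations forces the square to commute; iv) and v) are immediate from the block-permutation shape of $H$. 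Invertibility of $L$ is given; $H$ is invertible as a permutation matrix; and $s$, being the restriction of the permutation $H_2$ to index sets, is a bijection, hence an isomorphism of (finite) sets. The final "moreover" clause about marked isomorphisms then drops out of the definition: a marked isomorphism additionally requires $J = J'$ and $s = \mathrm{id}$, which by the dictionary above is exactly the condition that $H_2$, equivalently the induced permutation on $(h(e_{j_1}),\dots,h(e_{j_{n-p}}))$, is the identity.

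The main obstacle I anticipate is the direction i) $\Rightarrow$ ii), specifically showing that $H_1$ is a \emph{permutation} matrix rather than merely an invertible integer matrix with integer inverse sending each generator $e_{i_k}$ to a nonnegative-integer combination of the $e_{i'}$'s. The subtlety is that condition iv) only constrains $H_1$ to have integer entries in the generator block, and one genuinely needs to feed in the interaction with $L$ via the commutative square \eqref{CDLH} and the strong conclusion of Lemma \ref{isoQFlemma} (that $L(v_k)$ equals an actual $v'_{\pi(k)}$, not just a positive multiple) in order to rule out, say, $H_1$ being a nontrivial unimodular matrix. Once one correctly exploits that $h$ realizes an order-preserving bijection between the cones of $\Delta_h$ and those of $\Delta$ (Remark \ref{rkimath}) and that the $1$-cone generators of $(\Delta_h,e_I)$ are honest basis vectors of $\R^n$, the rigidity forces $H_1$ into permutation form; this is the one place where the maximality hypothesis and the structure of calibrations really do the work, so I would write that step out carefully rather than dismiss it as routine.
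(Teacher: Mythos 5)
Your proposal follows essentially the same route as the paper: the paper's own proof is just a few lines, reading off from Definition \ref{decQFmorphismdef} that $L$ is a Quantum Fan isomorphism, that $H$ is an isomorphism (hence $s$ a permutation), and then invoking Lemma \ref{isoQFlemma} to get the permutation of the $1$-cone generators, with the converse and the marked case declared obvious. Your write-up is a more detailed version of exactly this, and your treatment of the converse and of the marked clause matches the paper's (implicit) reasoning.

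One caveat on your key step. Your first suggested mechanism for showing that $H_1$ is a permutation matrix --- namely that $L\circ h=h'\circ H$ together with $L(v_k)=v'_{\pi(k)}$ ``pins down'' $H_1$ --- does not work on its own: $h'$ is only an epimorphism, and the generators $v'_1,\hdots,v'_p$ may satisfy nontrivial integer relations (already for the quantum projective line one has $v'_1+v'_2=0$), so from $h'(H(e_{i_k}))=h'(e_{i'_{\pi(k)}})$ one only learns that the difference lies in $\ker h'$, not that $H(e_{i_k})=e_{i'_{\pi(k)}}$. The argument you offer as an alternative is the correct one and is what the paper's terse proof implicitly uses: since $(L,H)$ is an isomorphism, $H$ is an isomorphism of the classical fans $(\Delta_h,e_I)$ and $(\Delta'_{h'},e_{I'})$ (Lemma \ref{lemmadecpreserve} transports the cone combinatorics, or one quotes Definition \ref{decQFmorphismdef} ii) directly), whose $1$-cone generators are primitive basis vectors of $\Z^n$; applying the rigidity of Lemma \ref{isoQFlemma} to $H$ (for $H$ and $H^{-1}$) then forces $H$ to permute the $e_{i_k}$, while condition v) with $s$ bijective handles the virtual block. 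Since your final paragraph singles out precisely this rigidity as the real content of the step, the proof is correct once that route, rather than the bare commutative-square argument, is the one written out.
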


\begin{proof}
Assume i). From the definition,  $(L,H)$ is an isomorphism implies that $L$ is a Quantum Fan isomorphism and that $H$ is an isomorphism. Hence $s$ is a permutation. Moreover, from Lemma \ref{isoQFlemma}, we have $L$ induces a permutation of $(v_1,\hdots,v_p)$ so we are done. The converse is obvious, as well as the marked case.
\end{proof}

As in the proof of Lemma \ref{lemmastandarddec}, by a permutation $H$ in $\R^n$ and a linear change of coordinates $L$ in $\R^d$, we may assume that $J$ is given by the last $\vert J\vert$ elements of $\{1,\hdots, n\}$ and that $h$ is in standard form $\eqref{h}$. To be more precise, this linear change of coordinates $L$ straighten a basis $B$ of $\R^d$ formed by vectors $h(e_i)$ with $i\not \in J$ onto the canonical basis. Let $k$ be the dimension of the vector subspace of $\R^d$ generated by $v_1,\hdots, v_p$. Say for simplicity in the notations that $v_1,\hdots,v_k$ form a basis of this subspace. We may take as first $k$ vectors of $B$ the $1$-cones generators $v_1,\hdots,v_k$. Hence, through  $(L,H)$, we put both $h$ and $\Delta$ standard. Finally, performing another permutation of $\R^n$ if necessary, we may assume that $I$ is sent to $\{1,\hdots,k,d+1,\hdots,d+p-k\}$.

Summing up, we just show that $(\Delta, h)$ is isomorphic to a calibrated Quantum Fan satisfying
\begin{enumerate}
	\item[i)] The calibration $h$ is standard.
	\item[ii)] The induced fan $(\Delta,v)$ is standard.
	\item[iii)] The set of generators $I$ is $\{1,\hdots,k,d+1,\hdots,d+p-k\}$.
\end{enumerate}
\noindent and we define
\begin{definition}
	\label{defstandarddecfan}
	We call {\it standard} a calibrated fan satisfying the previous normalization conditions.
\end{definition}

We say that $(\Delta,h)$ is simplicial (resp. irrational, ... see Definition \ref{defsfans}) if the underlying $(\Delta,v)$ is simplicial (resp. irrational, ...). Calibrated Quantum Fans and their morphisms form a category that we denote by $Q^{cal}$.

We finish with some examples of (calibrated) Quantum Fans and morphisms.

\begin{example}
\label{P1QFans}
Let 
\begin{equation}
 \label{quantumP1v12}
v_1=1\qquad v_2=-1
\end{equation}
and 
\begin{equation}
\label{quantumP1Delta}
\Delta=\{0,\R^+\cdot v_1,\R^+\cdot v_2\}
\end{equation}
Let $a\in\R$ and let finally
\begin{equation}
 \label{quantumP1Gamma}
\Gamma=\Z+a\Z
\end{equation}
This is a standard Quantum Fan. We can calibrate it by defining
\begin{equation*}
(x,y,z)\in\Z^3\longmapsto h_a(x,y,z):=x-y+az\in\Gamma
\end{equation*}
and $J=\{3\}$. Note that $I=\{1,2\}$, thus the fan is maximal. There are only two isomorphisms of $(\Delta,v)$: the identity $Id$ and $-Id$. However $-Id$ is not always an isomorphism of $(\Delta, h_a)$. Setting
\begin{equation}
\label{HP1inversion}
H(e_1)=e_2\qquad H(e_2)=e_1\qquad H(e_3)=e_3
\end{equation}
(there is no other possible choice following Lemma \ref{isoDQFlemma}), we need the commutation property $h_aH=Lh_a$. This only occurs if $a$ is zero. In that case, $(-Id, H)$ is an isomorphism of $(\Delta, h_0)$. 

More generally, $(-Id, H)$ with $H$ as in \eqref{HP1inversion} is an isomorphism from $(\Delta, h_a)$ to $(\Delta, h_{-a})$.
\end{example}

\begin{example}
	\label{exP2dec}
	Consider a $\Gamma$-complete quantum deformation of the $\mathbb P^2$'s fan as in Example \ref{exP2}. Its trivial calibration is given by
	\begin{equation}
	h(x,y,z)=(x+az,y+bz)
	\end{equation}
	Note that the trivial calibration is in standard form because we calibrated a standard fan.
\end{example}
\begin{example}
\label{P1P2morphisms}
Consider the calibrated Quantum Fan in $\R^2$ with canonical basis $(e_1,e_2)$
\begin{equation*}
v_1=e_1\qquad v_2=e_2\qquad v_3=-e_1-e_2\qquad w=xe_1+ye_2
\end{equation*}
with $x$ and $y$ arbitrary real numbers. Here we denote by $\Delta_2$ the complete fan generated by $v_1,v_2,v_3$, $\Gamma$ is generated by $v_1,v_2,v_3$ and $w$, and $h: \Z^4\to \Gamma$ is defined through $h(e_i)=v_i$ and $h(e_4)=w$. We assume it is maximal.
\\
We look for morphisms $(L, H)$ from the calibrated fan $(\Delta, h_a)$ of example \ref{P1QFans} to $(\Delta_2,h)$. Starting with $L$ being a matrix $^t(\alpha,\beta)$, we see from point iii) of Definition \ref{QFmorphismdef} that $\alpha$ and $\beta$ are integers.  Then, $H$ must send the virtual generator $e_3$ onto the virtual generator $e_4$. Through the equality $hH=Lh_a$, this yields $x=a\alpha$ and $y=a\beta$. Since $\alpha$ and $\beta$ are integers, we obtain that there exists such a morphism $(L,H)$ if and only if $x\in\Z a$ and $y\in \Z a$.\\
Assume this condition is fulfilled. Then there are two cases for $L$. Either $a$ and thus $x$ and $y$ are zero and $L$ may take any integer entries. Observe that this is the classical case, i.e. both fans are classical fans. Or $a$ is not zero, so $\alpha$ and $\beta$ are fixed by the previous condition.\\
We have now to determine $H$.  Assume $\alpha$ and $\beta$ are nonnegative and $\alpha\geq \beta$. {Then $L$ sends $\R^+\cdot e_1$ into the cone $\langle 1,2\rangle$ and $\R^+\cdot (-e_1)$  into the cone $\langle 2,3\rangle$, hence} $H(e_1)$, respectively $H(e_2)$ is a linear combination of $e_1$, $e_2$, resp. $e_2$ and $e_3$, with nonnegative integer coefficients. From this data, a straightforward computation shows that the unique admissible $H$ have the form
\begin{equation*}
H=\begin{pmatrix}
&\alpha &0 &0\cr
&\beta &\alpha-\beta &0\cr
&0 &\alpha &0\cr
&0 &0 &1
\end{pmatrix}
\end{equation*}
So, we finally get that there exists a morphism $(L, H)$ from $(\Delta, h_a)$ to $(\Delta_2,h)$ if and only if $x=a\alpha$ and $y=a\beta$. Moreover, if in addition $a=0$, there exists a unique morphism $(L,H)$ for each choice of $(\alpha,\beta)$ in $\Z^2$; and if $a$ is not zero, there exists a unique morphism $(L,H)$. The other cases ($\beta\geq \alpha\geq 0$ and so on) are treated similarly.
\end{example}

\section{The definition (atlas) of Quantum Toric Varieties}
\label{Qatlas}
In this section, we give an explicit description of a Quantum Toric Variety associated to a simplicial Quantum Fan as a stack over affine toric varieties.

Geometrically a quantum toric variety has local charts modelled onto the quotient of $\C^d$ by a discrete group, and with gluings given by monomials with possibly irrational exponents, that is, is a quasifold, as introduced in \cite{BP1} and \cite{BP2}.

However, to work with such an object, it is necessary to give a more functorial description of it. In the same way that orbifolds are functorially described as Deligne-Mumford stacks, we replace the quasifold type description with a stack construction.

 Recall Section \ref{stacks}. Every (classical) toric variety is a stack over $\mathfrak A$ and is indeed characterized by a descent data of affine toric varieties. For example, the standard $\mathbb P^1$ is obtained by gluing two copies of the affine toric variety $\C$ over $\C^*$ through the map $z\to 1/z$. As a stack over $\mathfrak A$, it may be presented as the following descent data of affine toric varieties. 

An object over $T\in\mathfrak A$ is a pair $(\begin{tikzcd}
T_1\arrow[r,"m_1"] &\C
\end{tikzcd}, \begin{tikzcd}
T_2\arrow[r,"m_2"] &\C
\end{tikzcd})$ such that
\begin{enumerate}
	\item[i)] $T=T_1\cup T_2$ is a covering of $T$.
	\item[ii)] We have $m_1(T_1\cap T_2)=m_2(T_1\cap T_2)=\C^*$.
	\item[iii)] The following diagram commutes
	$$\begin{tikzcd}
	T_1\cap T_2 \arrow[r,"m_1"] \arrow[dr, "m_2"']&\C^*\arrow[d,"z\mapsto 1/z"]\\
	&\C^*
	\end{tikzcd}
	$$
\end{enumerate}
And a morphism above $\begin{tikzcd}
T\arrow[r,"f"] &S
\end{tikzcd}$
between $(\begin{tikzcd}
T_1\arrow[r,"m_1"] &\C
\end{tikzcd}, \begin{tikzcd}
T_2\arrow[r,"m_2"] &\C
\end{tikzcd})$ and 
$(\begin{tikzcd}
S_1\arrow[r,"n_1"] &\C
\end{tikzcd}, \begin{tikzcd}
S_2\arrow[r,"n_2"] &\C
\end{tikzcd})$ must satisfy $n_i\circ f_i=m_i$ for $i=1,2$.
Of course, this is isomorphic to the stack $\underline{\mathbb P^1}$ and is a complicated way of describing it. But the point here is that, once given the category of affine toric varieties, general toric varieties can be defined directly and functorially through this descent data procedure.

In the Quantum case, we will first define affine simplicial quantum toric varieties as discrete quotient stacks; and then general simplicial quantum toric varieties through descent.
 
\subsection{Standard affine Quantum Toric Varieties and their morphisms}
\label{NDstandardCone}
Let $C_{k,d}$ be the standard simplicial cone generated by $(v_1=e_1,\hdots,v_k=e_k)$ in $\R^d$. Let $\Gamma=\Z^d+\Gamma_0$ be in standard form also.

Observe that the group $E(\Gamma)$ acts freely on $\Torus^d$.
The Standard affine Quantum Toric Variety associated to $C_{k,d}$ and $\Gamma$ is the quotient stack
\begin{equation}
\label{QstaffineTV}
Q_{k,d,\Gamma}=\left [
\C^k\times \Torus^{d-k}/E(\Gamma)
\right ]
\end{equation}
Hence, an object over the affine toric variety $T$ is simply a $E(\Gamma)$-cover $\tilde T$ with an equivariant map $m$ 
\begin{equation}
\label{affQTVobject}
\begin{tikzcd}
\tilde T \arrow[r,"m"]\arrow[d] &\C^k\times \Torus^{d-k}\\
T & &
\end{tikzcd}
\end{equation}
and a morphism over $T\to S$ is given by
\begin{equation}
\label{affQTVmorphism}
\begin{tikzcd}
         &            &\C^k\times \Torus^{d-k}\\
\tilde T \arrow[r,"f"']\arrow[rru,bend left=20,"m"]\arrow[d]          &\tilde S  \arrow[d]\arrow[ru,bend right=10,"n"']      &                       \\
T        \arrow[r]         &S       &          
\end{tikzcd}
\end{equation}
Taking $k=0$ in \eqref{affQTVobject} and \eqref{affQTVmorphism} gives the description of the Quantum Torus $\nctorus{d}{\Gamma}$ of Definition \ref{defQTorus} (in its multiplicative form).

Inclusion of a standard $l$-affine quantum toric variety into a standard $k$-cone quantum toric variety is given by inclusion of $\C^l\times \Torus^{d-l}$ into $\C^k\times \Torus^{d-k}$ that is
\begin{equation}
\label{affQTVinclusion}
\begin{tikzcd}
\tilde T \arrow[r,"m"]\arrow[d] &\C^l\times \Torus^{d-l}\arrow[r,hook]&\C^k\times\Torus^{d-k}\\
T & & &
\end{tikzcd}
\end{equation}
and
\begin{equation}
\label{affQTVmorphisminclusion}
\begin{tikzcd}
&            &\C^l\times \Torus^{d-l}\arrow[r,hook]&\C^k\times\Torus^{d-k}\\
\tilde T \arrow[r]\arrow[rru,bend left=20, "m"]\arrow[d]          &\tilde S  \arrow[d]\arrow[ru,bend right=10,"n"']      &  &                     \\
T        \arrow[r]         &S       &   &       
\end{tikzcd}
\end{equation}
In particular, taking $l=0$, we have an inclusion of the Quantum Torus $\nctorus{d}{\Gamma}$ into $Q_{k,d,\Gamma}$.\vspace{5pt}\\
Let us now describe morphisms from a standard $k$-cone onto a standard $k'$-cone, which is more delicate. Recall Definition \ref{deftorusmorphism}. We set

\begin{definition}
	\label{defafftoricmorphism}
	A toric morphism $\mathscr L$ from $Q_{k,d,\Gamma}$ onto $Q_{k',d',\Gamma'}$ is a stack morphism which restricts to a torus morphism from $\nctorus{d}{\Gamma}$ to $\nctorus{d'}{\Gamma'}$.
\end{definition}

Hence we may associate to $\mathscr L$ a linear map $L$ from $\R^d$ to $\R^{d'}$ sending $\Gamma$ onto $\Gamma'$.

Let us show now how to construct such morphisms from a Quantum Fan morphism between the fan constituted by the standard cone $C_{k,d}$ in $\Gamma$ with generators $(e_1,\hdots,e_k)$ of $1$-cones and the fan constituted by the standard cone $C_{k',d'}$ in $\Gamma'$ with generators $(e'_1,\hdots,e'_{k'})$ of $1$-cones. So, according to Definition \ref{QFmorphismdef}, we assume that the linear map $L :\R^d\to \R^{d'}$ has the additional property of sending each $e_i$ for $i$ between $1$ and $k$ onto a $\N$-linear combination of $(e'_1,\hdots,e'_{k'})$.

For $s\leq d$ a positive integer, define
\begin{equation}
\label{expk}
z\in \C^d\mapsto E_s(z):=(e^{2i\pi z_1},\hdots,e^{2i\pi z_s},z_{s+1},\hdots, z_d)\in \Torus^{s}\times\C^{d-s}
\end{equation}
and
\begin{equation}
\label{expbark}
z\in \C^d\mapsto \bar E_s(z):=(z_{1},\hdots, z_s,e^{2i\pi z_{s+1}},\hdots,e^{2i\pi z_d})\in \C^{s}\times\Torus^{d-s}
\end{equation}
We note that $E_s\circ \bar E_s=\bar E_s\circ E_s=E$. In the same way, we define $E'$, $E'_{s'}$ and $\bar E'_{s'}$ for $s'\leq d'$.

We first consider the diagram
\begin{equation}
\label{Lextension}
\begin{tikzcd}
\C^d=\C^k\times \C^{d-k} \arrow[r,"L"]\arrow[d,"E_k"']&\C^{d'}\arrow[d,"E'_{k'}"]\\
\Torus^{k}\times\C^{d-k}\arrow[r,"\overline{{L}}"]\arrow[d,hook]&\Torus^{k'}\times\C^{d'-k'}\arrow[d,hook]\\
\C^d\arrow[r,"\overline{{L}}"]\arrow[d,"\bar E_k"']&\C^{d'}\arrow[d,"\bar E'_{k'}"]\\
\C^k\times\Torus^{d-k}\arrow[r,dashed]&\C^{k'}\times\Torus^{d'-k'}
\end{tikzcd}
\end{equation}
In \eqref{Lextension}, the map $L$ descends to $\Torus^{k}\times\C^{d-k}$ and then extends to $\C^d$ because it sends the first $k$ vectors of the canonical basis of $\R^d$ onto a $\N$-linear combination of $(v'_1,\hdots, v'_{k'})$. However, {\it there is no reason for it to descend to $\C^k\times\Torus^{d-k}$}. The dash arrow means that, in general, there is no well-defined arrow at the bottom. This important fact is a source of trouble to turn $L$ into a morphism of the standard Quantum Toric Variety.

Let $(\tilde T,m)$ be an object of this stack above $T$. It therefore satisfies \eqref{affQTVobject}. Consider the fiber product
\begin{equation}
\label{fp}
\hat T:=\fg=\{(\tilde t,z)\mid m(\tilde t)=\bar E_k(z)\}
\end{equation}
By definition, we have a cartesian diagram
\begin{equation}
\label{cartesian}
\begin{tikzcd}
\hat T\arrow[d,"pr_1"']\arrow[r,"pr_2"]\arrow[dr,phantom,"\scriptstyle{\square}",very near start, shift right=0.5ex]&\C^d\arrow[d,"\bar E_k"]\\
\tilde T\arrow[r,"m"]&\C^k\times\Torus^{d-k}
\end{tikzcd}
\end{equation}
where $pr_i$ means projection onto the $i$-th factor.
Observe that $E_k(\Gamma)$ acts on $\hat T$ as follows. Set
\begin{equation}
\label{Ekaction}
E_k(\gamma)\cdot z=(E(\gamma_1)z_1,\hdots,E(\gamma_k)z_k,z_{k+1}+\gamma_{k+1},\hdots,z_d+\gamma_d)
\end{equation}
then
\begin{equation}
\label{fpaction}
E_k(\gamma)\cdot (\tilde t,z)=(\bar E_kE_k(\gamma)\cdot \tilde t=E(\gamma)\cdot \tilde t,E_k(\gamma)\cdot z)
\end{equation}
is well defined since
\begin{equation}
\begin{aligned}
m(E(\gamma)\cdot \tilde t)=&E(\gamma)\cdot m(\tilde t)\\
=&\bar E_kE_k(\gamma)\cdot\bar E_k(z)\\
=&\bar E_k(E_k(\gamma)\cdot z)
\end{aligned}
\end{equation}
and we have
\begin{lemma}
	\label{fpcovering}
	Let $p$ be the composition $$\begin{tikzcd}
	\hat T=\fg\arrow[r,"pr_1"] &\tilde T\arrow[r] &T
	\end{tikzcd}
	$$
	Then $\begin{tikzcd}
	\hat T=\fg\arrow[r,"p"] &T
	\end{tikzcd}
	$  is a $E_k(\Gamma)$-cover.
\end{lemma}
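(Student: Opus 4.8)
The plan is to verify directly that $\hat T = \fg$, equipped with the $E_k(\Gamma)$-action described in \eqref{fpaction} and the map $p$, satisfies the three defining properties of a cover in the sense of Section \ref{stacks}: that $p$ is an unramified analytic covering, that the $E_k(\Gamma)$-action is free and proper, and that the quotient $\hat T / E_k(\Gamma)$ is exactly $T$ (as an object of $\mathfrak A$, with $p$ descending to the identity). I would organize the argument around the cartesian square \eqref{cartesian}, exploiting the fact that $\bar E_k : \C^d \to \C^k \times \Torus^{d-k}$ is itself a covering map with deck group $\bar E_k(\Gamma) = $ the image of $\Gamma_0$ under exponentiation in the last $d-k$ coordinates (here one uses $\Gamma = \Z^d + \Gamma_0$ in standard form, so that $\bar E_k$ kills exactly $\Z^{d-k}$ in the toric directions and nothing in the $\C^k$ directions).

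First I would observe that, since $\bar E_k$ is an unramified analytic covering and $m : \tilde T \to \C^k \times \Torus^{d-k}$ is holomorphic, the projection $pr_2 : \hat T \to \C^d$ pulled back from $m$ along $\bar E_k$ makes $pr_1 : \hat T \to \tilde T$ an unramified analytic covering as well (base change of a covering along a holomorphic map is a covering). Composing with the given $\Gamma$-cover $\tilde T \to T$ — which is unramified analytic and equivariant by hypothesis on objects of $\nctorus{d}{\Gamma}$, cf. \eqref{affQTVobject} — yields that $p : \hat T \to T$ is an unramified analytic covering, being a composition of two such. Next I would identify the deck group: the fibre of $p$ over a point of $T$ is the product of the fibre of $\tilde T \to T$ (a $\Gamma$-torsor) with the fibre of $\bar E_k$ (a $\Z^{d-k}$-torsor in the toric directions), but these are not independent — the condition $m(\tilde t) = \bar E_k(z)$ ties them together, and a short bookkeeping shows the combined fibre is a torsor under $E_k(\Gamma)$, with the action being precisely \eqref{fpaction}. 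Freeness is inherited from freeness of the $\Gamma$-action on $\tilde T$ together with the explicit formula \eqref{Ekaction}; properness I would get from properness of the two constituent covering actions plus the fact that $\hat T$ is a closed submanifold of $\tilde T \times \C^d$ cut out by the equation $m(\tilde t) = \bar E_k(z)$, which is preserved by the diagonal-type action.

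The main obstacle I expect is not any single deep point but rather the careful matching of groups: one must check that the group that genuinely acts freely, properly, and with quotient $T$ is $E_k(\Gamma)$ and not, say, $\Gamma$ itself or $E_k(\Gamma_0)$. The subtlety is that $E_k$ is injective on the first $k$ coordinates (where it is the full exponential $E$) but has kernel $\Z^{d-k}$ on the last $d-k$ coordinates, so $E_k(\Gamma) \cong \Gamma / (\Z^{d-k} \cap \Gamma)$ in a way that must be reconciled with the fibre count coming from the fibre product. I would handle this by writing an element of $\hat T$ lying over a fixed $t \in T$ as a pair $(\tilde t, z)$ with $\tilde t$ ranging over the $\Gamma$-orbit and $z$ over $\bar E_k^{-1}(m(\tilde t))$, then checking that two such pairs lie in the same $E_k(\Gamma)$-orbit if and only if they lie over the same $t$; the computation \eqref{fpaction} and its accompanying verification that the action is well defined is exactly what makes this work, and I would simply invoke it. Once the three properties are in place, the conclusion that $\hat T \to T$ is an $E_k(\Gamma)$-cover is immediate from the definition in Section \ref{stacks}, noting additionally that $p$ is equivariant with respect to the ambient $G$-actions (it is a composition of equivariant maps), so $\hat T$ is indeed an object of $\mathfrak G$ as required.
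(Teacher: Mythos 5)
Your overall plan coincides with the paper's: the lemma reduces to checking that the action \eqref{fpaction} is free and proper and that the fibres of $p$ are exactly the $E_k(\Gamma)$-orbits, and those are precisely the three verifications the paper carries out. However, two of your steps have real problems. First, the properness argument as you state it does not go through. The two actions that are genuinely proper are actions of \emph{different} groups --- $E(\Gamma)$ on $\tilde T$ and the deck group $\{0\}^k\times\Z^{d-k}$ of $\bar E_k$ on $\C^d$ --- whereas what is needed is properness of the $E_k(\Gamma)$-action on $\tilde T\times\C^d$, and neither of its single-factor actions is proper: the kernel of $E_k(\Gamma)\to E(\Gamma)$ (integer translations in the last $d-k$ coordinates) acts trivially on the factor $\tilde T$, and in the irrational case $E_k(\Gamma)$ acts on $\C^d$ through a dense group of rotations in the first $k$ coordinates, which preserve compact polyannuli. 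So ``properness of the two constituent covering actions plus invariance of the closed submanifold'' is not a valid deduction; one must interleave the two factors, as the paper does: properness of the $E(\Gamma)$-action on $\tilde T$ forces $E(\gamma)$ into a finite set, compactness of $pr_2(K)\subset\C^d$ then forces the translation part $(\gamma_{k+1},\hdots,\gamma_d)$ into a finite set, and only the two constraints together pin $E_k(\gamma)$ down to a finite set.

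Second, your group bookkeeping --- which you yourself flag as the main obstacle --- is backwards: by \eqref{expk}, $E_k$ exponentiates the \emph{first} $k$ coordinates and is a translation on the last $d-k$, so its kernel on $\Gamma$ is $\Gamma\cap(\Z^k\times\{0\})$, not ``$\Z^{d-k}\cap\Gamma$''; and the deck group of $\bar E_k:\C^d\to\C^k\times\Torus^{d-k}$ is $\{0\}^k\times\Z^{d-k}$, which has nothing to do with $\Gamma_0$ or with $\bar E_k(\Gamma)$. Relatedly, merely ``invoking'' the well-definedness computation for \eqref{fpaction} does not give that the fibre of $p$ over $t$ is a single orbit: given $(\tilde t,z)$ and $(\tilde t',z')$ over the same $t$, one finds $\gamma\in\Gamma$ with $\tilde t'=E(\gamma)\cdot\tilde t$, but then $z'$ and $E_k(\gamma)\cdot z$ agree only up to an integer translation in the last $d-k$ coordinates coming from the ambiguity of $\bar E_k^{-1}$; absorbing that translation requires an element of $\Z^d\subset\Gamma$ acting through $E_k$, i.e.\ exactly the standardness hypothesis on $\Gamma$. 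Freeness has the same two-step structure (the $\tilde T$-factor kills the $E(\Gamma)$-part, the $\C^d$-factor kills the residual integer translations), which is what the paper's proof writes out explicitly. With these repairs your argument becomes the paper's; as written, the properness and fibre-transitivity steps are genuine gaps.
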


\begin{proof}
We first prove that action {\rm \eqref{fpaction}} is free and proper. If $\gamma\in E_k(\Gamma)$ fixes $(\tilde t,z)$, then we have
\begin{equation*}
\bar E_k(\gamma)\cdot\tilde t=\tilde t\qquad\text{ and }\qquad \gamma\cdot z=z
\end{equation*}
Now the first equation says that $\bar E_k(\gamma)\in E(\gamma)$ acts as a deck transformation of $\tilde T\to T$ fixing point $\tilde t$, hence is equal to identity. So, in coordinates, 
\begin{equation*}
\gamma=(1,p)\in(\C^*)^k\times \Z^{d-k}
\end{equation*}
and the second equation gives
\begin{equation*}
\gamma\cdot z=(z_1,\hdots,z_k,z_{k+1}+p_1,\hdots,z_d+p_{d-k})=z
\end{equation*}
so all $p_i$ are zero and $\gamma$ is the identity of $E(\Gamma)$, proving freeness of action \eqref{fpaction}.

Moreover, if $K$ is a compact of $\hat T$, then $pr_1(K)$, resp. $pr_2(K)$ are compact of $\tilde T$, resp. $\C^d$. Let $\gamma$ such that $\gamma\cdot K\cap K$ is not empty. Since the action of $E(\Gamma)$ on $\tilde T$ is proper, then $\bar E_k(\gamma)\cdot(pr_1(K))$ meets $pr_1(K)$ only for a finite number of $\bar E_k(\gamma)$. In other words, the first $k$-coordinates of $\gamma$ belong to a finite set. Since the action of $E_k(\Gamma)$ on $\C^d$ is by translations on the last $(d-k)$-coordinates, $\gamma\cdot pr_2(K)$ meets $pr_2(K)$ only for a set of $\gamma$ whose last $(d-k)$-coordinates belong to a finite set. Putting altogether, this proves that $\gamma$ lives in a finite set, and action \eqref{fpaction} is proper.

Finally, it is easy to check that the map
\begin{equation*}
\begin{tikzcd}
(\tilde t,z)\in\hat T\arrow[r] &t\in T,
\end{tikzcd}
\end{equation*} 
where $t$ is the image of $\tilde t$ through the cover projection $\tilde T\to T$, is invariant through action \eqref{fpaction}, and, using $m(\tilde t)=\bar E_k(z)$, we conclude as well that $(\tilde t,z) \to t$ has fiber above $t$ equal to the orbit of $(\tilde t,z)$. Thus it descends as an isomorphism between the quotient of $\hat T$ by \eqref{fpaction} and $T$. 
\end{proof}
Now define $\tilde T'$ as the quotient of $\hat T\times E'(\Gamma')$ by the free $E_k(\Gamma)$-action
\begin{equation}
E_k\gamma\cdot (\tilde t,z,E'\gamma')=(E\gamma\cdot \tilde t,E_k\gamma\cdot z,\bar E'_{k'}\overline{{L}}(E_k\gamma)^{-1}\cdot E'\gamma')
\end{equation}
We will use the notations of bundles associated to a principal bundle. That is, given a principal bundle $E\to B$ with fiber and structural group an abelian group $G$, a manifold $F$ and a morphism $\rho$ from $G$ to the automorphism group of $F$, one can construct the associate cover $E\times_{\rho}F\to B$ with fiber $F$ as
\begin{equation}
\label{asscov}
E\times_{\rho}F:=(E\times F)/G
\end{equation}
where $G$ acts by deck transformations on $E$ and via $\rho^{-1}$ on $F$\footnote{Here since our group is abelian, there is no need to distinguish between left and right actions of $G$.}. Here this is exactly what we are doing and $\tilde T'$ is nothing else than
\begin{equation}
(\fg)\times_{\bar E'_{k'}\overline{{L}}}E'(\Gamma')
\end{equation} 
that is the $E'(\Gamma')$-cover associated to the $E_k(\Gamma)$-cover $\fg$ through the morphism  $\bar E'_{k'}\overline{{L}}$.

Finally, set
\begin{equation}
\label{fp2ndaction}
(\tilde t,z,E'(\gamma'))\longmapsto m'(\tilde t,z,E'\gamma'):=\bar E'_{k'}\overline{{L}}(z)\cdot E'(\gamma')\in \C^k\times \Torus^{d-k}
\end{equation}
Straightforward computations show that $m'$ descends as a $E'(\Gamma')$-equivariant map from $\tilde T'$ to $\C^k\times \Torus^{d-k}$. 

Hence 
\begin{equation}
\label{affQTVobjectprime}
\begin{tikzcd}
\tilde T' \arrow[r,"m'"]\arrow[d] &\C^k\times \Torus^{d-k}\\
T & &
\end{tikzcd}
\end{equation}
is an object of $Q_{k,d,\Gamma}$.

Observe that all these spaces fit into the following commutative diagrams
\begin{lemma}
	\label{assoclemma}
	We have
	\begin{equation*}
\begin{tikzcd}
\hat T\arrow[d]\arrow[dr,phantom,"\scriptstyle{\square}", very near start]\arrow[r]&\C^d\arrow[r,"\overline{{L}}"]\arrow[d,"\bar E_k"']&\C^{d'}\arrow[d,"\bar E'_{k'}"]&\hat T\times_{\overline{{L}}}E'_{k'}(\Gamma')\arrow[l, "B"']\arrow[dl,phantom,"\kern25pt \scriptstyle{\square}", very near start]\arrow[d,"A"]\\
\tilde T\arrow[r,"m"]\arrow[dr, bend right=20]&\C^k\times\Torus^{d-k}\arrow[r,dashed]&\C^{k'}\times\Torus^{d'-k'}&\tilde T'\arrow[l, "m'"']\arrow[lld, bend left=10]\\
&T &&\\
\end{tikzcd}
\end{equation*}
where
\begin{equation}
\label{A}
A(\tilde t,z,E'_{k'}\gamma')=(\tilde t,z,E'\gamma')
\end{equation}
and
\begin{equation}
\label{B}
B(\tilde t,z,E'_{k'}\gamma')=\bar L(z)\cdot E'_{k'}\gamma'
\end{equation}
\end{lemma}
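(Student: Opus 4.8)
The plan is to read the diagram of the lemma as a conjunction of four statements and check each in turn: (a) the left-hand square, the one with corner $\hat T$, is cartesian; (b) the maps $A$ and $B$ of \eqref{A}--\eqref{B} are well defined on the quotient $\hat T\times_{\overline L}E'_{k'}(\Gamma')$; (c) the right-hand square, the one with corner $\hat T\times_{\overline L}E'_{k'}(\Gamma')$, commutes and is cartesian; (d) the two triangles down to $T$ commute. Nothing is claimed about the central cell: the dashed arrow is exactly the non-existent map noted after \eqref{Lextension}, and $\overline L\colon\C^d\to\C^{d'}$ is simply the map built in the upper rows of \eqref{Lextension}, whose failure to descend along $\bar E_k$ and $\bar E'_{k'}$ is precisely what the dash records.

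Statement (a) is immediate: by its definition \eqref{fp}, $\hat T=\fg$ is the fibre product realizing the cartesian square \eqref{cartesian}, and $\hat T\to T$ is an $E_k(\Gamma)$-cover by Lemma \ref{fpcovering}. For (b) I would compare the two quotient presentations already in play: $\tilde T'$ is $(\hat T\times E'(\Gamma'))/E_k(\Gamma)$ for the action displayed just before \eqref{fp2ndaction}, whereas $\hat T\times_{\overline L}E'_{k'}(\Gamma')$ is $(\hat T\times E'_{k'}(\Gamma'))/E_k(\Gamma)$, where $E_k(\gamma)$ acts on the last factor by the group translation by $\overline L(E_k\gamma)^{-1}$; this preserves $E'_{k'}(\Gamma')$ since $L(\Gamma)\subseteq\Gamma'$ and $\overline L\circ E_k=E'_{k'}\circ L$ (top square of \eqref{Lextension}). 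Using $\bar E'_{k'}\circ E'_{k'}=E'$, one sees that $(\tilde t,z,E'_{k'}\gamma')\mapsto(\tilde t,z,E'\gamma')$ carries the first action onto the second, hence descends to $A$; and, since $\overline L$ is equivariant for these $E_k(\Gamma)$-actions, the factor $\overline L(E_k\gamma)$ produced by $E_k(\gamma)$ acting on $z$ cancels the twist $\overline L(E_k\gamma)^{-1}$ on the last factor, so $(\tilde t,z,E'_{k'}\gamma')\mapsto\overline L(z)\cdot E'_{k'}\gamma'$ is $E_k(\Gamma)$-invariant and descends to $B$.

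For (c), the relation $\bar E'_{k'}\circ B=m'\circ A$ is a short computation from the definition \eqref{fp2ndaction} of $m'$ and $\bar E'_{k'}\circ E'_{k'}=E'$. To get cartesianness I would apply the construction \eqref{fp}--\eqref{cartesian} together with Lemma \ref{fpcovering} verbatim, now to the $E'(\Gamma')$-cover $\tilde T'\to T$ with $(\tilde T',m')$ in the role of $(\tilde T,m)$ (legitimate, since by the construction preceding the lemma $\tilde T'\to T$ is an $E'(\Gamma')$-cover and $m'$ is equivariant): this produces a space $\widehat{T'}:=\tilde T'\times_{m',\bar E'_{k'}}\C^{d'}$ which is an $E'_{k'}(\Gamma')$-cover of $T$ and which by construction fits into a cartesian square with $\C^{d'}$, $\tilde T'$ and $\C^{k'}\times\Torus^{d'-k'}$ of exactly the same shape as the right-hand square of the lemma; so it suffices to identify $\hat T\times_{\overline L}E'_{k'}(\Gamma')$ with $\widehat{T'}$ compatibly with $A$ and $B$. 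The candidate map is $x\mapsto(A(x),B(x))$; it is well defined by (b) and the relation above, it is $E'_{k'}(\Gamma')$-equivariant over $\mathrm{id}_T$, and being an equivariant map of free $E'_{k'}(\Gamma')$-torsors over each point of $T$ it is automatically a fibrewise bijection, hence an isomorphism. Statement (d) is then routine, both composites to $T$ being by inspection the structural projections of the respective covers.

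I expect the one genuinely delicate point to be the cartesianness in (c). One must keep three groups straight at once --- $E_k(\Gamma)$, used to build both $\hat T$ and $\tilde T'$; $E'(\Gamma')$, the deck group of $\tilde T'$; and $E'_{k'}(\Gamma')$, the deck group of $\hat T$ and of $\widehat{T'}$ --- and recognise that the associated bundle $\hat T\times_{\overline L}E'_{k'}(\Gamma')$ is none other than the fibre product $\tilde T'\times_{\C^{k'}\times\Torus^{d'-k'}}\C^{d'}$, i.e. that building $\tilde T'$ out of $\hat T$ and then unwinding once more against $\C^{d'}$ gives back an associated bundle of $\hat T$. Once the three actions are written out this is only bookkeeping, but it is where care is needed; along the way one re-confirms freeness and properness of the $E_k(\Gamma)$-action on the product $\hat T\times E'_{k'}(\Gamma')$ exactly as in the proof of Lemma \ref{fpcovering}.
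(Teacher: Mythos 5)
Your proposal is correct and follows essentially the same route as the paper: check that $A$ and $B$ are invariant/equivariant for the $E_k(\Gamma)$-action so they descend, verify $\bar E'_{k'}\circ B=m'\circ A$ by direct computation, and establish cartesianness by identifying $\hat T\times_{\overline L}E'_{k'}(\Gamma')$ with the fibre product $\tilde T'\,_{m'}\kern-2pt\times_{\bar E'_{k'}}\C^{d'}$ via the map $x\mapsto(A(x),B(x))$, which is exactly the paper's isomorphism $I$. The only difference is that you justify that this map is an isomorphism by the equivariant-map-of-$E'_{k'}(\Gamma')$-torsors argument, a detail the paper asserts without elaboration.
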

\begin{proof}
	Observe that \eqref{A} and \eqref{B} define $A$ and $B$ as functions from $\hat T\times E'_{k'}(\Gamma')$, but, since
	\begin{equation*}
	\begin{aligned}
	A(g\cdot (\tilde t,z,E'_{k'}\gamma'))=&A(\bar E_k(g)\cdot \tilde t,g\cdot z,(\overline{{L}}g^{-1})\cdot E'_{k'}\gamma')\\
	=&(\bar E_k(g)\cdot \tilde t,g\cdot z,\bar E'_{k'}\overline{{L}}g^{-1}\cdot E\gamma')\\
	=&g\cdot (\tilde t,z,E\gamma')
	\end{aligned}
	\end{equation*}
and
\begin{equation*}
\begin{aligned}
B(g\cdot (\tilde t,z,E'_{k'}\gamma'))=&\overline{{L}}(g\cdot z)\cdot\overline{{L}}g^{-1}\cdot E'_{k'}\gamma'\\
=&\overline{{L}}(g)\cdot\overline{{L}}(z)\cdot(\overline{{L}}(g))^{-1}E'_{k'}\gamma'\\
=&B(\tilde t,z,E'_{k'}\gamma')
\end{aligned}
\end{equation*}
they descend as function from $\hat T\times_{\overline{{L}}}E'_{k'}(\Gamma')$ onto $\tilde T'$ (for $A$) and onto $\C^{d'}$ (for $B$).

Then, we have only to check that the right-hand diagram is commutative and cartesian. But
\begin{equation*}
\bar E'_{k'}B(\tilde t,z,E'_{k'}(\gamma'))=\bar E'_{k'}\overline{{L}}(z) E'(\gamma')=m'(\tilde t,z,E'(\gamma'))
\end{equation*}
hence it is commutative. Now, the map
\begin{equation*}
(\tilde t,z,E'_{k'}\gamma')\in \hat T\times E'_{k'}(\Gamma')\longmapsto(\tilde t, z, E'\gamma',\overline{{L}}(z)\cdot E'_{k'}\gamma')\in \hat T\times E'(\Gamma')\times \C^{d'}
\end{equation*}
descends as an isomorphism $I$ between $\hat T\times_{\overline{{L}}}E'_{k'}(\Gamma')$ and $\tilde T'\,_{m'}\kern-2pt\times_{\bar E'_{k'}}\C^{d'}$ which makes the following diagram commutative
\begin{equation}
\label{superfp}
\begin{tikzcd}
&&\tilde T'\,_{m'}\kern-2pt\times_{\bar E'_{k'}}\C^{d'}\arrow[dll,"pr_2", bend right=10]\arrow[ddl,"pr_1",bend left=10]\\
\C^{d'}\arrow[d,"\bar E'_{k'}"]&T\times_{\overline{{L}}}E'_{k'}(\Gamma')\arrow[l, "B"]\arrow[d,"A"]\arrow[ur,"I"]&\\
\C^{k'}\times \Torus^{d'-k'}&\tilde T'\arrow[l, "m'"]&\\
\end{tikzcd}
\end{equation}
and the desired diagram is cartesian.
\end{proof}
 Let $\mathscr{L}$ be the map from $Q_{k,d,\Gamma}$ to $Q_{k',d',\Gamma'}$ sending $(\tilde T,m)$ to $(\tilde T',m')$ and \eqref{affQTVmorphism}
 to
\begin{equation}
\label{affQTVmorphismprime}
\begin{tikzcd}
&            &\C^k\times \Torus^{d-k}\\
\tilde T' \arrow[r,"f'"']\arrow[rru,bend left=20, "m'"]\arrow[d]          &\tilde S'  \arrow[d]\arrow[ru, bend right=10, "n'"']      &                       \\
T        \arrow[r]         &S       &          
\end{tikzcd}
\end{equation}
with
\begin{equation}
\label{fprime}
f'([\tilde t,z,E'(\gamma')])=[f(\tilde t),z,E'(\gamma')]
\end{equation}
The map $\mathscr{L}$ is a stack morphism from $Q_{k,d,\Gamma}$ to $Q_{k',d',\Gamma'}$. \vspace{3pt}\\
Moreover, we note the following property.
\begin{lemma}
	\label{stackmorphismlemma}
	Given 
	\begin{equation}
	\label{composition}
	\begin{tikzcd}
	C_{k,d}\subset\C^d\arrow[r,"L"]&C_{k',d'}\subset\C^{d'}\arrow[r,"L'"]&C_{k'',d''}\subset\C^{d''}
	\end{tikzcd}
	\end{equation}
	and setting $L''=L'\circ L$, we have
	\begin{equation}
	\label{assoc}
	\mathscr{L''}=\mathscr{L'}\circ{\mathscr{L}}.
	\end{equation}
\end{lemma}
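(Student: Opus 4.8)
The plan is to track an object $(\tilde T, m)$ of $Q_{k,d,\Gamma}$ through both sides of \eqref{assoc} and exhibit a canonical identification of the resulting objects of $Q_{k'',d'',\Gamma''}$; by the functoriality of the constructions this will suffice, since a stack morphism between quotient stacks of this type is determined by its effect on objects and morphisms over each $T\in\mathfrak A$. First I would unwind $\mathscr L(\tilde T,m)=(\tilde T',m')$ via the fiber-product construction \eqref{fp}--\eqref{fp2ndaction}: $\hat T=\fg$, and $\tilde T'=\hat T\times_{\bar E'_{k'}\overline{L}}E'(\Gamma')$ with $m'$ as in \eqref{fp2ndaction}. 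Then I would apply $\mathscr L'$ to $(\tilde T',m')$, forming $\widehat{\tilde T'}=\tilde T'\,_{m'}\kern-2pt\times_{\bar E'_{k'}}\C^{d'}$ and $\tilde T''=\widehat{\tilde T'}\times_{\bar E''_{k''}\overline{L'}}E''(\Gamma'')$. On the other side I would directly form $\mathscr{L''}(\tilde T,m)$ using $L''=L'\circ L$: namely $\hat T''_{\mathrm{dir}}=\tilde T\,_m\kern-2pt\times_{\bar E_k}\C^d$ and $\tilde T''_{\mathrm{dir}}=\hat T''_{\mathrm{dir}}\times_{\bar E''_{k''}\overline{L''}}E''(\Gamma'')$.

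The key step is to produce a natural isomorphism $\tilde T''\cong \tilde T''_{\mathrm{dir}}$ of $E''(\Gamma'')$-covers over $T$ compatible with $m''$. Here I would exploit exactly the transitivity of the associated-bundle construction \eqref{asscov}: an associated bundle of an associated bundle, for abelian structure groups and composable morphisms, is the associated bundle for the composite morphism. Concretely, Lemma \ref{assoclemma} already provides, for the first step, a cartesian square identifying $\hat T\times_{\overline{L}}E'_{k'}(\Gamma')$ with $\tilde T'\,_{m'}\kern-2pt\times_{\bar E'_{k'}}\C^{d'}=\widehat{\tilde T'}$; feeding this identification into the definition of $\tilde T''$ rewrites it as $\bigl(\hat T\times_{\overline{L}}E'_{k'}(\Gamma')\bigr)\times_{\bar E''_{k''}\overline{L'}}E''(\Gamma'')$. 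Now I would use that the relation $m(\tilde t)=\bar E_k(z)$ defining $\hat T$ together with $\bar E'_{k'}\circ\overline{L}$ and $\overline{L'}=\overline{L'}$ fit into the tower \eqref{Lextension} applied successively, and that $\overline{L''}$ restricted appropriately equals $\overline{L'}\circ\overline{L}$ on the relevant factors (using $\bar E'_{k'}\overline{L}=\overline L'$-compatibility and $E_s\circ\bar E_s=\bar E_s\circ E_s=E$), so the two iterated fiber products collapse to the single one defining $\tilde T''_{\mathrm{dir}}$. The maps $A$, $B$, $I$ from Lemma \ref{assoclemma} are the explicit isomorphisms realizing each collapse, and stitching them together gives the desired identification; one then checks $m''$ matches on the nose, which is the computation $\bar E''_{k''}\overline{L'}(\bar E'_{k'}\overline{L}(z)\cdot E'\gamma')\cdot E''\gamma''=\bar E''_{k''}\overline{L''}(z)\cdot E''\gamma''$ using functoriality of the $\bar E$'s.

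Finally I would verify the morphism level: for $f:\tilde T\to\tilde S$ over $T\to S$, formula \eqref{fprime} gives $f'$ and then $f''$ as $[\tilde t,z,\ldots]\mapsto[f(\tilde t),z,\ldots]$ at each stage, so both $(\mathscr{L'}\circ\mathscr L)(f)$ and $\mathscr{L''}(f)$ act as $f$ on the $\tilde T$-factor and as the identity on the auxiliary factors; the isomorphism constructed above is visibly natural in $f$, hence intertwines the two. The main obstacle I anticipate is bookkeeping rather than conceptual: carefully matching which coordinates are exponentiated at each stage (the $k$ versus $k'$ versus $k''$ bookkeeping in $E_s$, $\bar E_s$, $\overline L$) and checking that the dashed, a priori ill-defined arrow in \eqref{Lextension} never actually needs to be evaluated — only the well-defined composites $\bar E'_{k'}\overline{L}$ and $\bar E''_{k''}\overline{L''}$ appear — so that the associativity of associated-bundle formation genuinely applies. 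Once the diagram \eqref{superfp} of Lemma \ref{assoclemma} is invoked twice and composed, \eqref{assoc} follows.
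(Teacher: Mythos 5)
Your proposal is correct and follows essentially the same route as the paper's proof: identify $\tilde T'\,_{m'}\kern-2pt\times_{\bar E'_{k'}}\C^{d'}$ with $\hat T\times_{\overline{L}}E'_{k'}(\Gamma')$ via Lemma \ref{assoclemma} and \eqref{superfp}, then invoke transitivity of the associated-cover construction to collapse the iterated fiber products to $(\fg)\times_{\bar E''_{k''}\overline{L'}\,\overline{L}}E''(\Gamma'')$, using $\overline{L''}=\overline{L'}\circ\overline{L}$. The only difference is cosmetic: the paper checks \eqref{assoc} on objects only and leaves morphisms and the compatibility of $m''$ to the reader, whereas you spell these out.
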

	
	\begin{proof}
		We just prove property \eqref{assoc} on the objects. The rest of the properties can be verified easily. Starting from \eqref{composition} and from
		 an object $(\tilde T,m)$ over $T$, we form $\tilde T'=\mathscr{L}(T)$ and $\tilde T''=\mathscr{L'}(T')$ and we have to compare it with $\mathscr{L''}(T)$.
		 
		 Now, it is a standard fact that 
		 \begin{equation*}
		 \left (\fg\times_{{\overline{L}}}E'_{k'}(\Gamma')\right )\times_{{\overline{L'}}}E''_{k''}(\Gamma'')
		 \end{equation*} 
		 is isomorphic to
		 \begin{equation*}
		 \left (\fg\right )\times_{{\overline{L'}}{\overline{L}}}E''_{k''}(\Gamma'')
		 \end{equation*}
		 which implies 
		 \begin{equation*}
		 \begin{aligned}
		 \tilde T''=&\left (\tilde T'\,_{m'}\kern-2pt\times_{\bar E'_{k'}}\C^{d'}\right )\times_{\bar E''_{k''}{\overline{L'}}} E''(\Gamma'')\\
		 	=&\left (((\fg)\times_{\bar E'_{k'}{\overline{L}}}E'(\Gamma'))\,_{m'}\kern-2pt\times_{\bar E'_{k'}}\C^{d'}\right )\times_{\bar E''_{k''}{\overline{L'}}}E''(\Gamma'')\\
		 	=&\left ((\fg)\times_{{\overline{L}}}E'_{k'}(\Gamma')\right )\times_{\bar E''_{k''}{\overline{L'}}}E''(\Gamma'')
		 \end{aligned}
		 \end{equation*}
		 but, using Lemma \ref{assoclemma} and \eqref{superfp}, we obtain
		 \begin{equation*}
		 \tilde T''=(\fg)\times_{\bar E''_{k''}{\overline{L'}}{\overline{L}}}E''(\Gamma'')
		 \end{equation*}
		 and we are done.
	\end{proof}

Such morphisms are indeed toric morphisms. And we claim that we obtain in this way all affine toric morphisms, that is

\begin{theorem}
	\label{theoremtoricmorphismcondition}\ \\
	\vspace{-15pt}
	\begin{enumerate}
		\item [\rm i)] Let $\mathscr L$ be a stack morphism as above. Then $\mathscr L$ is a toric morphism.
		\item[\rm ii)] Let $\mathscr L$ be a torus morphism from $\nctorus{d}{\Gamma}$ to $\nctorus{d'}{\Gamma'}$ and let $L$ be the induced linear mapping $L$ from $\R^d$ to $\R^{d'}$ sending $\Gamma$ onto $\Gamma'$. 
		Then, $\mathscr L$ extends as a toric morphism from $Q_{k,d,\Gamma}$ onto $Q_{k',d',\Gamma'}$ if and only if $L$ is a morphism of Quantum Fans from the cone $(C_{k,d},e_1,\hdots,e_k)$ onto the cone $(C_{k',d'},e'_1,\hdots e'_{k'})$.
	\end{enumerate}
\end{theorem}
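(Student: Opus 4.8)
The plan is to read everything off the explicit construction of $\mathscr{L}$ in the paragraphs preceding the statement, together with Definitions \ref{deftorusmorphism} and \ref{defafftoricmorphism} and the way the q-lattices interact with the exponential $E$ of \eqref{E}.

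\textbf{Part (i) and the ``if'' half of (ii).} In (i), $\mathscr L$ is associated with a Quantum Fan morphism $L$ between the one-cone fans $(C_{k,d},e_1,\dots,e_k)$ and $(C_{k',d'},e'_1,\dots,e'_{k'})$; in particular $L$ sends each $e_i$, $i\le k$, onto an $\N$-combination of $(e'_1,\dots,e'_{k'})$ --- precisely the condition that makes $\overline{L}$ of \eqref{Lextension} well defined on $\Torus^{k}\times\C^{d-k}$ and extend over $\C^d$. First I would check that $\mathscr L$ carries the open dense substack $\nctorus{d}{\Gamma}\subset Q_{k,d,\Gamma}$ into $\nctorus{d'}{\Gamma'}\subset Q_{k',d',\Gamma'}$: on a torus object the map $m$ of \eqref{affQTVobject} takes values in $(\C^*)^{d}$, the corresponding $z$ in the fibre product \eqref{fp} lies in $(\C^*)^k\times\C^{d-k}$, and $\overline L$ maps that locus into $(\C^*)^{k'}\times\C^{d'-k'}$, so $m'$ of \eqref{fp2ndaction} has all coordinates in $\C^*$. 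Then I would match this restriction with the torus morphism of Definition \ref{deftorusmorphism} attached to $L$: specializing \eqref{cartesian} and \eqref{superfp} to $k=k'=0$ (so $\bar E_0=E$) and composing with $E$ identifies the cover $\hat T\times_{\overline L}E'(\Gamma')$ with the cover $T\times_{L}\Gamma'$ of \eqref{TLG}, sending \eqref{fp2ndaction} to \eqref{TLGm} and \eqref{fprime} to the assignment just below \eqref{TLGm}. By Definition \ref{defafftoricmorphism} this proves (i). For the ``if'' part of (ii): if $L$ is a Quantum Fan morphism of the cones, the construction of Subsection \ref{NDstandardCone} applies verbatim and produces such an $\mathscr L$, which by (i) is toric and which, by the identification just made, restricts to the given torus morphism.

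\textbf{The ``only if'' half of (ii).} Now suppose a toric morphism $\widetilde{\mathscr L}\colon Q_{k,d,\Gamma}\to Q_{k',d',\Gamma'}$ restricts on the torus to $\mathscr L$, with linear part $L$. Since each of the two fans has a single maximal cone, ``$L$ is a morphism of Quantum Fans of the cones'' is exactly: $L(\Gamma)\subseteq\Gamma'$ (given) and $L(e_i)\in\bigoplus_{j=1}^{k'}\N e'_j$ for all $i\le k$ (conditions ii)--iii) of Definition \ref{QFmorphismdef}). To get the latter, fix $i\le k$, let $D_i=\{z_i=0\}$ be the corresponding boundary divisor, and apply $\widetilde{\mathscr L}$ to the tautological atlas object of $Q_{k,d,\Gamma}$ over the affine toric variety $U=\C^k\times\Torus^{d-k}$ --- the trivial $E(\Gamma)$-torsor on $U$ with its action map. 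The output $(\tilde U',m')$ is an object of $Q_{k',d',\Gamma'}$ defined over all of $U$: an $E(\Gamma')$-cover $\tilde U'\to U$ and an everywhere-defined equivariant holomorphic $m'\colon\tilde U'\to\C^{k'}\times\Torus^{d'-k'}$. Over $U\setminus D_i$ this is the object produced by the torus morphism; its $E(\Gamma')$-cover has monodromy $E(L(e_i))\in E(\Gamma')$ around the meridian of $D_i$ (the class $e_i=v_i$ being the image of that meridian under $L\colon\pi_1(\nctorus{d}{\Gamma})\cong\Gamma\to\Gamma'\cong\pi_1(\nctorus{d'}{\Gamma'})$), and its $m'$ is, up to units, the monomial of exponents $L_{j\bullet}$ in its $j$-th coordinate. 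As $U$ is smooth and $D_i$ a smooth divisor, the meridian dies in $\pi_1(U)$, so the existence of $\tilde U'\to U$ forces this monodromy to vanish, i.e.\ $L(e_i)\in\ker E=\Z^{d'}$; and then holomorphy of $m'$ across $D_i$ --- it is a map into the \emph{fixed} space $\C^{k'}\times\Torus^{d'-k'}$, so the $j$-th coordinate may neither blow up nor leave $\C$ (for $j\le k'$) nor leave $\C^*$ (for $j>k'$) as $z_i\to0$ --- forces $L_{ji}\ge 0$ for $j\le k'$ and $L_{ji}=0$ for $j>k'$. Hence $L(e_i)\in\bigoplus_{j\le k'}\N e'_j$, as desired.

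\textbf{Where the difficulty lies.} The whole weight of the argument is the ``only if'' direction, and more precisely the claim that a toric extension cannot introduce \emph{irrational} or \emph{negative} monomial exponents along the $D_i$. On the open torus such ``fractional'' stack morphisms genuinely exist (Example \ref{examplesqrt2}), so one may not argue formally; I expect the real work to be a careful monodromy-and-order-of-vanishing computation on the fibre product $\fg$ of \eqref{fp} associated with the tautological test object, turning ``the cover $\tilde U'$ extends over $D_i$'' into the vanishing of $E(L(e_i))$ --- the point being that, unlike on the torus, near a boundary stratum of $Q_{k',d',\Gamma'}$ there is no infinite cyclic cover available to absorb a non-integral exponent, and an object of the stack must be a genuine discrete, free, proper cover rather than a bibundle. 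The non-negativity of the ray coordinates and the vanishing of the non-ray ones are then comparatively soft, being a properness statement about a holomorphic map into the fixed target space.
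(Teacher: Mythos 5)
Your proposal is correct and follows essentially the same route as the paper's proof: part (i) and the ``if'' half of (ii) are obtained by unwinding the construction and matching the restriction to the torus with the torus morphism (the paper does this via the augmented diagram \eqref{augCD}), while the ``only if'' half is obtained by applying the extension to the tautological atlas object over $\C^k\times\Torus^{d-k}$ and showing that the dashed arrow $\overline{L}$ of \eqref{Lextension} must exist. The only difference is in how that last step is phrased: the paper argues by sequential convergence (``$E'L(z_n)$ must converge, so $\overline{L}$ is well defined''), whereas you split the same mechanism into a monodromy-triviality argument forcing $L(e_i)\in\Z^{d'}$ and a holomorphy/limit argument forcing nonnegativity of the first $k'$ coordinates and vanishing of the remaining ones, which is a more explicit rendering of the paper's terse argument.
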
 

\begin{proof}
	To prove i), we just have to check that the restriction of $\mathscr L$ to the Quantum torus $\nctorus{d}{\Gamma}$ coincides with the torus morphism induced by $L$. Consider the object
	\begin{equation}
	\begin{tikzcd}
	\tilde T\arrow[r,"m"] \arrow[d]&\Torus^d\arrow[r,hook]&\C^k\times\Torus^{d-k}\\
	T&&
	\end{tikzcd}
	\end{equation} 
	Then $(\tilde S,m'):=(\mathscr L(\tilde T),\mathscr{L}(m))$ fits into the diagram of Lemma \ref{assoclemma}
		\begin{equation}
	\begin{tikzcd}[column sep=large]
	\hat T\arrow[d]\arrow[dr,phantom,"\kern -5pt\scriptstyle{\square}", very near start]\arrow[r]&\Torus^k\times\C^{d-k}\arrow[r,"\overline{{L}}"]\arrow[d,"\bar E_k"']&\Torus^{k'}\times\C^{d'-k'}\arrow[d,"\bar E'_{k'}"]&\hat S\arrow[l]\arrow[dl,phantom,"\kern 5pt \scriptstyle{\square}", very near start]\arrow[d]\\
	\tilde T\arrow[r,"m"]\arrow[dr, bend right=20]&\Torus^{d}\arrow[r,dashed]&\Torus^{d'}&\tilde S\arrow[l, "m'"']\arrow[lld, bend left=10]\\
	&T&&\\
	\end{tikzcd}
	\end{equation}
	with
	\begin{equation}
	\left\{
	\begin{aligned}
	&S=\hat T\times_{\bar E'_{k'}{\overline{L}}}E'(\Gamma')\\
	&\hat S=S_{m'}\kern-5pt\times_{\bar E'_{k'}}\Torus^{k'}\times\C^{d'-k'}\\
	&m'(\tilde t,z,E'(\gamma'))=\bar E'_{k'}\overline{L}(z)E'(\gamma')
	\end{aligned}
	\right .
	\end{equation}
	and, using \eqref{Lextension}, the previous diagram may be augmented as
	\begin{equation}
	\label{augCD}
	\begin{tikzcd}[column sep=large]
	\bar T\arrow[rrr,bend left=25, "\restriction{\mathscr{L}}{\nctorus{d}{\Gamma}}"]\arrow[dr,phantom,"\kern -5pt\scriptstyle{\square}", very near start]\arrow[d]\arrow[r]&\C^d\arrow[r,"L"]\arrow[d,"E_k"']&\C^{d'}\arrow[d,"E'_{k'}"]&\bar S \arrow[l,"a"']\arrow[dl,phantom,"\kern 5pt \scriptstyle{\square}", very near start]\arrow[d,"b"]\\
	\hat T\arrow[d]\arrow[dr,phantom,"\kern -5pt\scriptstyle{\square}", very near start]\arrow[r]&\Torus^k\times\C^{d-k}\arrow[r,"\overline{{L}}"]\arrow[d,"\bar E_k"']&\Torus^{k'}\times\C^{d'-k'}\arrow[d,"\bar E'_{k'}"]&\hat S\arrow[l]\arrow[dl,phantom,"\kern 5pt \scriptstyle{\square}", very near start]\arrow[d]\\
	\tilde T\arrow[r,"m"]\arrow[dr, bend right=20]&\Torus^{d}\arrow[r,dashed]&\Torus^{d'}&S\arrow[l, "m'"']\arrow[lld, bend left=10]\\
	&T&&\\
	\end{tikzcd}
	\end{equation}
	\vspace{-15pt}\\for	
	\begin{equation}
	\left\{
	\begin{aligned}
	&\bar T=\tilde T\,_m\kern-2pt\times_{E}\C^d\quad\text{and }\\
	&\bar S=\bar T\times_{L}\Gamma'
	\end{aligned}
	\right .
	\end{equation}
	and finally
	\begin{equation}
	\left\{
	\begin{aligned}
	&a[\tilde t,z,\gamma']=Lz+\gamma'\quad\text{and }\\
	&b[\tilde t,z,\gamma']=([\tilde t,E_k(z),E'(\gamma')],\overline{L}E_k(z)E'_{k'}(\gamma'))
	\end{aligned}
	\right .
	\end{equation}
	 Comparing the first line with the multiplicative form of the torus morphism associated to $L$  yields the result. \vspace{3pt}\\
	\indent Let us prove ii). Let $\mathscr L$ be a torus morphism induced by a linear map $L$. If $L$ is a morphism of Quantum Fans from the fan $(C_{k,d},e_1,\hdots,e_k)$ onto the fan $(C_{k',d'},e'_1,\hdots e'_{k'})$ then it sends the first $k$ vectors of the canonical basis of $\R^d$ onto a $\N$-linear combination of $(v'_1,\hdots, v'_{k'})$. Hence diagram \eqref{Lextension} is well defined, we may apply the above construction so by Lemma \ref{stackmorphismlemma} and point i), $\mathscr L$ extends as a toric morphism.
	
	Conversely, assume $\mathscr L$ extends. Looking at the inclusion $\nctorus{d}{\Gamma}\subset Q_{k,d,\Gamma}$ on the atlas \eqref{QstaffineTV}, we obtain the diagram
	\begin{equation*}
	\begin{tikzcd}
	&            &\Torus^{d}\arrow[r,hook]&\C^k\times\Torus^{d-k}\\
	\Torus^d\times E(\Gamma) \arrow[r,hook]\arrow[rru,bend left=10, "m"]\arrow[d]          &\C^k\times\Torus^{d-k}\times E(\Gamma)\arrow[d]\arrow[rru,bend right=10,"m"']      &  &                     \\
	\Torus^d        \arrow[r,hook]         &\C^k\times\Torus^{d-k}       &   &       
	\end{tikzcd}
	\end{equation*}
	and its image through $\mathscr L$
	\begin{equation*}
	\begin{tikzcd}
	&            &\Torus^{d}\arrow[r,hook]&\C^k\times\Torus^{d-k}\\
	\tilde S \arrow[r,hook]\arrow[rru,bend left=10, start anchor=30, "m'"]\arrow[d]          &\mathscr L(\C^k\times\Torus^{d-k}\times E(\Gamma),m')\arrow[d]\arrow[rru,bend right=10,"m'"']      &  &                     \\
	\Torus^{d}        \arrow[r,hook]         &\C^{k}\times\Torus^{d-k}       &   &       
	\end{tikzcd}
	\end{equation*}
	where $\tilde S$ and $m'$ are defined as above. But this means that, if a sequence of points $(E(z_n), Id)$ in $\Torus^d\times E(\Gamma)$ converging to some limit in $\C^k\times\Torus^{d-k}\times E(\Gamma)$, then $E'L(z_n)$ must converge in $\C^{k'}\times \Torus^{d'-k'}$ (use the augmented diagram \eqref{augCD}). So $\bar L$ is well defined and we are done.  
	\end{proof}

From \eqref{QstaffineTV}, Definition \ref{defafftoricmorphism} and Lemma \ref{stackmorphismlemma}, we immediatly obtain that standard affine quantum toric varieties and their morphisms form a category. Moreover, we may sum up this Section in the following

\begin{corollary}
	\label{affisocat}
	Let $\mathscr Aff$ be the category whose objects are standard cones $C_{k,d}$ and morphisms are linear maps from $\C^d$ to $\C^{d'}$ sending $\Gamma$ in $\Gamma'$ and $C_{k,d}$ and $C'_{k',d'}$. Then $\mathscr Aff$ is equivalent to the category of standard affine quantum toric varieties.
\end{corollary}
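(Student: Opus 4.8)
The plan is to exhibit the equivalence as the functor $\mathscr F$ that is the identity on objects. On objects, $\mathscr F$ sends the standard cone $C_{k,d}$ (equipped with its standard q-lattice $\Gamma$) to the quotient stack $Q_{k,d,\Gamma}$ of \eqref{QstaffineTV}; on morphisms, it sends a fan morphism of cones $L\colon\C^{d}\to\C^{d'}$ to the stack morphism $\mathscr L$ built in \eqref{Lextension}--\eqref{fprime}. One then checks the three standard items: $\mathscr F$ is a well-defined functor, it is essentially surjective, and it is fully faithful.

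First I would verify that $\mathscr F$ is well defined on morphisms. A morphism of $\mathscr Aff$ from $C_{k,d}$ to $C_{k',d'}$ is, by definition, a morphism of Quantum Fans between the single-cone fans $(C_{k,d},e_1,\ldots,e_k)$ and $(C_{k',d'},e'_1,\ldots,e'_{k'})$ in the sense of Definition \ref{QFmorphismdef}; in particular $L$ sends the first $k$ canonical basis vectors to $\N$-linear combinations of $(e'_1,\ldots,e'_{k'})$, which is exactly the hypothesis that makes diagram \eqref{Lextension}, and hence the construction of $\mathscr L$, meaningful. Theorem \ref{theoremtoricmorphismcondition}(i) then guarantees $\mathscr L$ is a toric morphism, so it is a morphism of the target category. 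Functoriality reduces to $\mathscr F(\mathrm{Id})=\mathrm{Id}$, immediate from the construction, and $\mathscr F(L'\circ L)=\mathscr F(L')\circ\mathscr F(L)$, which is exactly Lemma \ref{stackmorphismlemma}. Essential surjectivity is automatic since $\mathscr F$ is a bijection on objects.

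The content is full faithfulness, for fixed $C_{k,d}$ (with $\Gamma$) and $C_{k',d'}$ (with $\Gamma'$). For faithfulness, suppose $L_1$ and $L_2$ yield isomorphic toric morphisms $Q_{k,d,\Gamma}\to Q_{k',d',\Gamma'}$. Restricting along the inclusion $\nctorus{d}{\Gamma}\hookrightarrow Q_{k,d,\Gamma}$ (that is \eqref{affQTVinclusion} with $l=0$) and pulling back along the atlas $\C^{d}\to\nctorus{d}{\Gamma}$, one obtains a continuous function $z\mapsto L_1z-L_2z$ valued in $\Gamma'$; since $\Gamma'$ is a countable, hence totally disconnected, subgroup of $\R^{d'}$ this function is constant, and it vanishes at the origin by linearity, so $L_1=L_2$. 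For fullness, let $\mathscr L\colon Q_{k,d,\Gamma}\to Q_{k',d',\Gamma'}$ be any toric morphism. By Definition \ref{defafftoricmorphism} it restricts to a torus morphism of the ambient Quantum Tori, which (by the theory of Section \ref{QTori}) has an associated linear map $L\colon\R^{d}\to\R^{d'}$ carrying $\Gamma$ into $\Gamma'$. Since this restriction \emph{does} extend to a toric morphism of the whole affine stacks --- namely to $\mathscr L$ itself --- Theorem \ref{theoremtoricmorphismcondition}(ii) forces $L$ to be a morphism of Quantum Fans between the two single-cone fans, i.e.\ $L$ is a morphism of $\mathscr Aff$. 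It then remains to identify $\mathscr F(L)$ with $\mathscr L$: both are toric morphisms restricting to the same torus morphism on $\nctorus{d}{\Gamma}$, and I would deduce $\mathscr F(L)\cong\mathscr L$ by pushing down to the atlas the fact that an equivariant holomorphic map out of $\C^{k}\times\Torus^{d-k}$ is determined by its restriction to the dense subset $\Torus^{d}$.

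The step I expect to be the main obstacle is precisely this last identification. Because the stacks $Q_{k,d,\Gamma}$ are badly non-separated (their coarse spaces are typically non-Hausdorff, as in Example \ref{examplesqrt2prel}), one cannot simply invoke ``a morphism is determined by its restriction to a dense open substack''; worse, a toric morphism is \emph{not} in general induced by an honest map of atlases (this is the ``dashed arrow'' phenomenon of \eqref{Lextension} and Remark \ref{rkaddvsmultmorphism}), so the comparison has to be carried out through the fiber-product presentation used to define $\mathscr L$, checking that the resulting $2$-isomorphism is compatible with descent and with composition. Once this bookkeeping is in place, the Corollary follows by assembling \eqref{QstaffineTV}, Definition \ref{defafftoricmorphism}, Lemma \ref{stackmorphismlemma} and Theorem \ref{theoremtoricmorphismcondition}.
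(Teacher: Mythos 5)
Your proposal is correct and follows essentially the same route as the paper, which obtains the corollary directly from the construction \eqref{QstaffineTV}, Definition \ref{defafftoricmorphism}, Lemma \ref{stackmorphismlemma} and Theorem \ref{theoremtoricmorphismcondition}; your added details (faithfulness via constancy of the continuous $\Gamma'$-valued map $z\mapsto L_1z-L_2z$, and the identification $\mathscr F(L)\cong\mathscr L$ through the fiber-product presentation) simply make explicit what the paper leaves implicit.
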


\subsection{Simplicial Quantum Toric Varieties}
\label{Cgluings}
Let $(\Delta,v)$ be a simplicial standard Quantum Fan in $\Gamma$. Since $\Delta$ is simplicial, every cone is the cone over a simplex. For $I\subset \{1,\hdots, d\}$, we denote by $v_I$ the cone over the simplex with vertices $(v_i)_{i\in I}$. Denote by $\vert I\vert$ the cardinal of $I$ and set $I=\{i_1,\hdots,i_{\vert I\vert}\}$. 

For each maximal cone $v_I$, choose some invertible linear map
\begin{equation}
\label{AIg}
\begin{tikzcd}
\C^d\arrow[r,"A_I"]&\C^d
\end{tikzcd}
\end{equation}
which sends $v_I$ onto the standard $\vert I\vert$-cone $C_{\vert I\vert, d}$. Notice that it sends $\Gamma$ onto $A_I\Gamma$.

We associate to $v_I$ the affine quantum toric variety
\begin{equation}
\label{QaffineTV}
Q_I=Q_{\vert I\vert,d}=\left [
\C^{\vert I\vert} \times \Torus^{d-\vert I\vert}/E({A_I}\Gamma)
\right ]
\end{equation}
If $\vert I\vert =d$, observe that we may set
\begin{equation}
\label{AI}
A_I^{-1}=\left ( v_{i_1},\hdots,v_{i_d}\right )
\end{equation}
as a square $d\times d$ matrix. Otherwise, we put
\begin{equation}
\label{AIbis}
A_I^{-1}=\left ( v_{i_1},\hdots,v_{i_{\vert I\vert}}, e_{j_1},\hdots, e_{j_{d-\vert I\vert}}\right )
\end{equation}
where $(e_i)$ is the canonical basis of $\R^d$ and where the indices $j_k$ are chosen so that the previous $d\times d$ matrix is invertible.

We note the obvious fact
\begin{lemma}
	\label{indepAI}
	Let $A_I$ and $A'_I$ be two linear maps as in {\rm \eqref{AIg}}. Let $Q_I$ and $Q'_I$ be the associated stacks following {\rm \eqref{QaffineTV}}. Then the linear map $A'_I\circ A_I^{-1}$ induces an isomorphism between $Q_I$ and $Q'_I$.
\end{lemma}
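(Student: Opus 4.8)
The plan is to exhibit the asserted isomorphism as the toric morphism attached to the linear automorphism $L:=A'_I\circ A_I^{-1}$ of $\C^d$, and to check that $L^{-1}=A_I\circ (A'_I)^{-1}$ provides an inverse for it. First I would record the elementary properties of $L$. Since $A_I$, respectively $A'_I$, carries the cone $v_I$ bijectively onto $C_{\vert I\vert,d}$, the composite $L$ is a linear isomorphism of $\C^d$ with $L(C_{\vert I\vert,d})=A'_I(v_I)=C_{\vert I\vert,d}$, so it fixes $C_{\vert I\vert,d}$ together with all its faces. Moreover $L(A_I\Gamma)=A'_I(A_I^{-1}(A_I\Gamma))=A'_I\Gamma$, so $L$ sends the q-lattice $A_I\Gamma$ onto $A'_I\Gamma$ (and $A'_I\Gamma$ is again in standard form, exactly as $A_I\Gamma$ is, so that $Q_I$ and $Q'_I$ are bona fide standard affine quantum toric stacks). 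Finally, using the explicit form \eqref{AI}--\eqref{AIbis} of $A_I$ and $A'_I$, one has $A_I(v_{i_j})=e_j=A'_I(v_{i_j})$ for $j=1,\hdots,\vert I\vert$, whence $L(e_j)=A'_I(A_I^{-1}(e_j))=A'_I(v_{i_j})=e_j$ for each $j\leq\vert I\vert$; in other words $L$ fixes every $1$-cone generator of $C_{\vert I\vert,d}$. By Lemma \ref{isoQFlemma}, applied to the fan consisting of $C_{\vert I\vert,d}$ and its faces, $L$ is therefore an isomorphism of Quantum Fans from $(C_{\vert I\vert,d},e_1,\hdots,e_{\vert I\vert})$ in $A_I\Gamma$ to the same combinatorial data in $A'_I\Gamma$; in particular condition iii) of Definition \ref{QFmorphismdef} is satisfied trivially.

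With this in hand I would simply invoke Corollary \ref{affisocat}: since $\mathscr{Aff}$ is equivalent to the category of standard affine quantum toric varieties and $L$ is an isomorphism in $\mathscr{Aff}$ (its inverse being $L^{-1}$, which satisfies all the same conditions), the induced toric morphism $\mathscr L\colon Q_I\to Q'_I$ is an isomorphism of stacks. Equivalently, one can bypass the category language: by Theorem \ref{theoremtoricmorphismcondition} ii) the Quantum Fan morphisms $L$ and $L^{-1}$ extend to toric morphisms $\mathscr L\colon Q_I\to Q'_I$ and $\mathscr L'\colon Q'_I\to Q_I$, and the functoriality of Lemma \ref{stackmorphismlemma}, applied to $L^{-1}\circ L=\mathrm{Id}_{\C^d}$ and $L\circ L^{-1}=\mathrm{Id}_{\C^d}$, gives $\mathscr L'\circ\mathscr L=\mathrm{id}_{Q_I}$ and $\mathscr L\circ\mathscr L'=\mathrm{id}_{Q'_I}$, using that the toric morphism attached to $\mathrm{Id}_{\C^d}$ is the identity (immediate from the construction \eqref{fp}--\eqref{fprime}, in which all the fibre products collapse).

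The only point that needs genuine attention is the verification of hypothesis iii) of Definition \ref{QFmorphismdef} for $L$ and $L^{-1}$: this is where one uses the precise choice \eqref{AI}--\eqref{AIbis} of $A_I$ and $A'_I$, namely that the prescribed generators $v_{i_j}$ are sent to the standard basis vectors $e_j$ and not merely to positive multiples of them, so that $L$ fixes the $1$-cone generators exactly (a linear map rescaling a generator by a non-integer factor would fail iii), and its inverse would fail it too). For a fully general $A_I$ as in \eqref{AIg} one would first conjugate by the linear automorphisms carrying generators to generators, which leave $Q_I$ and $Q'_I$ unchanged, to reduce to this case. Everything else is the routine bookkeeping that $A_I\Gamma$ and $A'_I\Gamma$ stay in standard form so that Corollary \ref{affisocat} (or Theorem \ref{theoremtoricmorphismcondition}) applies verbatim.
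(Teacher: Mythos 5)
Your main argument is correct and is essentially the paper's own proof: the paper simply notes that $A'_I\circ A_I^{-1}$ is an isomorphism of the category $\mathscr Aff$ and invokes Corollary \ref{affisocat}, while you spell out why it is one (it maps $C_{\vert I\vert,d}$ onto itself, sends $A_I\Gamma$ onto $A'_I\Gamma$, and fixes the marked generators $e_1,\hdots,e_{\vert I\vert}$, so condition iii) of Definition \ref{QFmorphismdef} holds and Lemma \ref{isoQFlemma} applies), and you also give the equivalent route via Theorem \ref{theoremtoricmorphismcondition} ii) together with the functoriality of Lemma \ref{stackmorphismlemma}. Up to that point nothing needs to change.

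The step I would not accept is the closing reduction for ``a fully general $A_I$ as in \eqref{AIg}'': the assertion that composing with a linear automorphism carrying generators to generators ``leaves $Q_I$ and $Q'_I$ unchanged'' is precisely the content of the lemma, so the reduction is circular — and under the literal cone-onto-cone reading of \eqref{AIg} the statement is in fact false. Take $d=1$, $\Gamma=\Z$, $v_1=1$, $A_I=\mathrm{id}$ and $A'_I=\sqrt{2}\cdot\mathrm{id}$: both send the cone $\R^{\geq 0}$ onto $C_{1,1}$, yet $Q_I=[\C/E(\Z)]\cong\C$ has trivial stabilizers while $Q'_I=[\C/E(\sqrt{2}\,\Z)]$ has stabilizer $\Z$ at the origin, so no isomorphism exists and the ``induced'' map $z\mapsto z^{\sqrt{2}}$ is not one. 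So the requirement that each $A_I$ send the marked generators $v_{i_j}$ onto (a permutation of) the standard basis vectors — as in the explicit choices \eqref{AI}--\eqref{AIbis}, which is what the construction actually uses and what makes $L$ and $L^{-1}$ satisfy condition iii) — is not a harmless normalization one can conjugate away, but an essential part of the hypothesis. Your proof is complete once the last sentence is deleted and \eqref{AIg} is read with that (intended) meaning; as written, that sentence claims more than is true.
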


\begin{proof}
	The linear map $A'_I\circ A_I^{-1}$ is an isomorphism of the category $\mathscr Aff$ of standard cones. Use Corollary \ref{affisocat}.
\end{proof}

The same type of description works for non-maximal cones. Starting with $J=i_1\cdots i_l$ labelling a non-maximal cone $v_J$, let $I\supset J$ such that $v_I$ is a maximal cone. Then we associate to $v_J$ the stack 
\begin{equation}
\label{QaffineTVnotmax}
Q_{I\supset J}=\left [
\C^{\vert J\vert }\times \Torus^{d-\vert J\vert}/E({A_I}\Gamma)
\right ]
\end{equation}

Following \eqref{affQTVinclusion} and \eqref{affQTVmorphisminclusion}, inclusion $J\subset I$ gives a stack inclusion $Q_{I\supset J}\subset Q_I$.

Besides, let $v_I$ and $v_{I'}$ be two maximal cones with non-empty intersection and let $J=I\cap I'$. Then the linear isomorphism $A_{IJ}:=A_JA_I^{-1}$ defines an isomorphism from $Q_{I\supset J}$ to $Q_{I'\supset J}$, which indicates how to glue the affine quantum toric varieties $Q_I$ and $Q_{I'}$ along $Q_J$. 

We finish with a collection of affine quantum toric varieties 
$\left (Q_I\right )_{I \text{ maximal}}$
and a collection of isomorphisms 
\begin{equation*}\left (\mathscr A_{IJ}\right )_{I,J \text{ maximal},\ I\cap J\not = \emptyset}
\end{equation*}
describing the gluings between these varieties. Because of Lemma \ref{stackmorphismlemma} and because the $(A_JA_I^{-1})$ form obviously a cocycle, the collection of isomorphisms is a descent datum. We use it to {\it define} the Quantum Toric Variety associated to the fan $(\Delta,v)$, cf. the example of $\mathbb P^1$ at the beginning of Section \ref{Qatlas}. We use the notation $\nctoric{\Delta}{\Gamma}{v}$ for this Quantum Toric Variety.

\begin{definition}
	\label{QToricObjects}
	Let $T\in\mathfrak{A}$. An {\it object of $\nctoric{\Delta}{\Gamma}{v}$ over $T$} is a covering $(T_I)_{I\in {\mathscr I_{max}}}$ of $T$ indexed over the set of maximal cones $\mathscr I_{max}$ together with an object 
	\begin{equation*}
	\begin{tikzcd}
	\tilde T_I \arrow[r,"m_I"]\arrow[d] &\C^{\vert I\vert}\times \Torus^{d-\vert I\vert}\\
	T_I & &
	\end{tikzcd}
	\end{equation*}
	of $Q_I$ for any $I\in\mathscr I_{max}$ satisfying the descent datum condition
	\begin{equation*}
	\mathscr{A}_{II'}\left ( 
	\begin{tikzcd}
	\tilde T_{I\supset J} \arrow[r,"m_I"]\arrow[d] &\C^{\vert J\vert}\times \Torus^{d-\vert J\vert}\\
	T_I & &
	\end{tikzcd}\kern-25pt
	\right )=
	\begin{tikzcd}
		\tilde T_{I'\supset J} \arrow[r,"m_{I'}"]\arrow[d] &\C^{\vert J\vert}\times \Torus^{d-\vert J\vert}\\
	T_I & &      
	\end{tikzcd}
	\end{equation*} 
	above any couple $(I,I')$ of maximal cones with non-empty intersection $J$, and setting $\tilde T_{I\supset J}:=m_I^{-1}(\C^{\vert J\vert}\times \Torus^{d-\vert J\vert})$ and $\tilde T_{I'\supset J}:=m_{I'}^{-1}(\C^{\vert J\vert}\times \Torus^{d-\vert J\vert})$.
\end{definition}  
	
\begin{definition}
	\label{QToricsMorphisms}
	Let $T\to S$ be a morphism from $T\in\mathfrak{A}$ and $S\in\mathfrak{A}$. A {\it morphism of $\nctoric{\Delta}{\Gamma}{v}$ over $T\to S$} is a collection of morphisms
	\begin{equation*}
	\begin{tikzcd}
	&            &\C^{\vert I\vert }\times \Torus^{d-\vert I\vert}\\
	\tilde T_I \arrow[r, "F_I"]\arrow[rru,bend left=20,start anchor =60, end anchor=real west,"m_I"]\arrow[d]          &\tilde S_I  \arrow[d]\arrow[ru,bend right=10,"n_I"']      &                       \\
	T_I        \arrow[r,"f"]         &S_I       &          
	\end{tikzcd}
	\end{equation*}
	satisfying the compatibility conditions: 
	\begin{equation*}
	\mathscr{A}_{II'}\left (
	\begin{tikzcd}
&            &\C^{\vert J\vert }\times \Torus^{d-\vert J\vert}\\
\tilde T_{I\supset J} \arrow[r, "F_I"]\arrow[rru,bend left=20,start anchor =60, end anchor=real west,"m_I"]\arrow[d]          &\tilde S_{I\supset J}  \arrow[d]\arrow[ru,bend right=10,"n_I"']      &                       \\
T_J        \arrow[r,"f"]         &S_J       &          
\end{tikzcd}
\right )
\end{equation*}
is equal to
\begin{equation*}
\begin{tikzcd}
&            &\C^{\vert J\vert }\times \Torus^{d-\vert J\vert}\\
\tilde T_{I'\supset J} \arrow[r, "F_{I'}"]\arrow[rru,bend left=20,start anchor =60, end anchor=real west,"m_{I'}"]\arrow[d]          &\tilde S_{I'\supset J}  \arrow[d]\arrow[ru,bend right=10,"n_{I'}"']      &                       \\
T_J        \arrow[r,"f"]         &S_J       &          
\end{tikzcd}
\end{equation*}
\end{definition}
 We may describe $\nctoric{\Delta}{\Gamma}{v}$ more concretely using gluings of quasifolds. 
Let $v_I$ and $v_{I'}$ be two maximal cones with non-empty intersection. Then, using the charts given in \ref{QaffineTV}, we glue them on the intersection using the map
\begin{equation}
\label{gluingmonomial}
[z]\in Q_{I\supset J}\longmapsto [z^{{A_{I'}}{A_I}^{-1}}]\in Q_{I'\supset J}
\end{equation}

In \eqref{gluingmonomial}, given $z\in\C^d$ and $A=(a_{ij})_{i,j=1}^d$ be a square $d\times d$ matrix, we use the notation 
\begin{equation}
\label{zA}
z^A\qquad\text{ for }\qquad (z_1^{a_{11}}\cdots z_d^{a_{d1}},\hdots, z_1^{a_{1d}}\cdots z_d^{a_{dd}})
\end{equation}

Observe that the exponents in \eqref{gluingmonomial} can be irrational; nevertheless, the corresponding monomials are well defined as morphisms from the stack $Q_{I\supset J}$ to the stack $Q_{I'\supset J}$. To be more precise, the gluing \eqref{gluingmonomial} is defined on $[\Torus^d/E({A_I}\Gamma)]\subset Q_I$ through the diagram
\begin{equation*}
\begin{tikzcd}
0\arrow[r]&A_I\Gamma\arrow[r]\arrow[d,"A_{I'}A_I^{-1}"]&\C^d\arrow[r]\arrow[d,"A_{I'}A_I^{-1}"]&\C^d/A_I\Gamma\arrow[r]\arrow[d,"A_{I'}A_I^{-1}"]&0\\
0\arrow[r]&A_{I'}\Gamma\arrow[r]&\C^d\arrow[r]&\C^d/A_{I'}\Gamma\arrow[r]&0
\end{tikzcd}
\end{equation*}
and the stack isomorphism between $[\C^d/A_I\Gamma]$ (respectively $[\C^d/A_{I'}\Gamma]$) and $[\Torus^d/E({A_I}\Gamma)]$ (respectively $[\Torus^d/E({A_{I'}}\Gamma)]$) given by $E$. 

Then it is extended to $Q_{I\supset I\cap J}$ by noting that the first $\vert I\cap J\vert$ rows of the matrix $A_J^{-1}A_I$ correspond to the first $\vert I\cap J\vert$ rows of the identity.
\medskip\\
Gluings \eqref{gluingmonomial} obviously satisfy the cocycle condition. Hence we obtain the alternative more geometric definition of $\nctoric{\Delta}{\Gamma}{v}$.
\begin{definition}
\label{QTVdef}
We call Quantum Toric Variety associated to the simplicial Quantum Fan $(\Delta, {\Gamma},v)$ the stack obtained by gluings all the $Q_I$ associated to maximal cones $V_I$ along their intersections through \eqref{gluingmonomial}. We denote it by $\nctoric{\Delta}{\Gamma}{v}$.
\end{definition}

\begin{remarks}
\label{rkordinaryTV}
If $(\Delta, v)$ is an ordinary fan (that is $\Gamma$ is discrete and each $v_i$ is the primitive vector of a $1$-cone, cf. Remark \ref{rkordinaryfans}), then this is the definition of the (usual) toric variety associated to it. If $(\Delta, v)$ is a stacky fan, then we obtain a toric orbifold.
\end{remarks}

\begin{remark}
	\label{rkAIdep}
	Definitions \ref{QToricObjects}, \ref{QToricsMorphisms} and \ref{QTVdef}
	depend a priori on the choice of matrices $A_I$ but we shall see in Section \ref{QTmorphismsection} that two different choices of matrices lead to isomorphic Quantum Toric Varieties. If we start with a {\it complete} fan, then \eqref{AI} gives a canonical choice. In the non-complete case, \eqref{AIbis} is not uniquely defined and cannot play this role.
\end{remark} 
We shall see in Section \ref{QGIT} that a Quantum Toric Variety can be realized as the leaf space (holonomy groupoid) of a non-K\"ahler foliated complex manifold.

\begin{example}
\label{quantumP1}
As an example, we deal with the case of a quantum projective line. Here, we use the Quantum Fan of Example \ref{P1QFans}, so we assume \eqref{quantumP1v12}, \eqref{quantumP1Delta} and \eqref{quantumP1Gamma}. Note that, if $a=0$, this is the fan of the classical $\mathbb P^1$.

We have two charts. Both are modelled onto $Q_I=Q_J=[\C/\exp (2i\pi a\Z)]$.
The gluing is the mapping
\begin{equation}
 \label{quantumP1gluing}
 [z]\in \Torus/\exp (2i\pi\Gamma)\longmapsto [z^{-1}]\in \Torus/\exp (2i\pi (-\Gamma))
\end{equation}
If $a\in\Z$, this is just the classical $\mathbb P^1$. If $a=p/q$ with $p$ and $q$ irreducible, then $\Gamma$ is the lattice $1/q\Z$, so $v_1$ and $v_2$ are not primitive and we obtain {a toric orbifold
	with $0$ and $\infty$ having stabilizer $\Z_q$}. Finally, if $a$ is irrational, then we obtain a stack with two points having a stabilizer equal to $\Z$. This is not an orbifold.
\end{example}

\begin{definition} 
\label{QProjectivespace}
Let $\Gamma\in\R^d$ such that $\Gamma$ contains the lattice of integer points. We call Quantum projective space associated to $\Gamma$ the Quantum toric variety associated to the simplicial fan in $\Delta$ generated by
\begin{equation}
v_1=e_1\qquad\hdots\qquad v_{d}=e_d\qquad v_{d+1}=-e_1-\hdots -e_d.
\end{equation}
\end{definition}

\begin{remarks}
Example \ref{quantumP1} describe all the quantum projective lines for $\Gamma$ as in \eqref{quantumP1Gamma}, that is generated by one or two generators. In all dimensions, observe that
\begin{enumerate}
\item[i)] There is exactly one quantum $\mathbb P^d$ associated to each $\Gamma$.
\item[ii)] If $\Gamma$ is the lattice of integer points, this is exactly the standard $\mathbb P^d$.
\item[iii)] If $\Gamma$ is discrete but is not the lattice of integer points, we obtain a weighted projective space.
\item[iv)] If $\Gamma$ is irrational, some points have infinite discrete stabilizers.
\end{enumerate}
\end{remarks}

Not all compact examples in dimension one are Quantum projective lines as shown by the following example.
  
\begin{example}
\label{QuantumP1bis}
Let us slightly complicate example \ref{quantumP1}. We still assume that $d$ is equal to $1$ and that the fan contains only $\{0\}$ and the two cones $\R^{\geq 0}$ and $\R^{\leq 0}$. But we take as generators for the $1$-cones 
\begin{equation}
 \label{NCP1v12}
 v_1=1\qquad v_2=a\text{ with }a<0
\end{equation}
and we set
\begin{equation}
 \label{NCP1Gamma}
 \Gamma=\Z+\Z a+\Z b\qquad\text{ with }b\in\R
\end{equation}
Let $\Pn^1(a,b)$ be the resulting Quantum toric variety. We see that we still have two charts but this time one is modelled onto $\C/\exp (2i\pi\Gamma)$ and the other onto $\C/\exp (2i\pi a^{-1}\Gamma)$.
The gluing is the mapping
\begin{equation}
 \label{NCP1gluing}
 [z]\in \C/\exp (2i\pi\Gamma)\longmapsto [z^{a^{-1}}]\in \C/\exp (2i\pi a^{-1}\Gamma)
\end{equation}
Let us describe certain of these Quantum Toric Varieties more precisely following the values of $a$ and $b$.
Assume firstly that $a$ and $b$ are rational. Let $q$ be the LCM of the denominators of $a$ and $b$ in their reduced fractional form.
It is easy to check that $\Gamma$ is generated by $1/q$. Hence the first chart is an orbifold chart $\C/\Z_{q}$, and the second one is an orbifold chart $\C/\Z_{q\vert a\vert}$. {If $q$ and $qa$ are coprime}, we obtain the (orbifold) weighted projective line with weights $q$ and $q\vert a\vert$. Observe that the GCD of the weights is not always $1$. Indeed, taking $-a$ equal to one and $b$ equal to $1/q$, we have two orbifold singularities of degree $q$ {and this is not a weighted projective line}.
Assume secondly that $a$ and $b$ are both irrational and are not rationally dependent. Then 
\begin{equation}
 \label{NCP1cplx3}
 \Gamma=\Z\oplus\Z a\oplus\Z b
\end{equation}
so the two charts are modelled onto 
\begin{equation}
 \label{NCP1cplx3charts}
 \C/\exp (2i\pi (\Z a\oplus \Z b))\qquad\text{ and }\qquad  \C/\exp (2i\pi (\Z a^{-1}\oplus \Z a^{-1}b))
\end{equation}
i.e. $\C$ divided by a $\Z^2$ acting freely outside zero but fixing $0$. This is not an orbifold.

The cases $a$ and $b$ are irrational but rationally dependent, or $a$ is rational and $b$ not or $b$ is rational and $a$ not, stay also outside the world of orbifolds but with charts modeled onto $\C$ divided by a $\Z$-action. We left the computations to the reader.
\end{example}

\begin{example}
	\label{exqdP2}
	We construct now the $\Gamma$-complete quantum deformations of $\mathbb P^2$ whose fan is given in Example \ref{exP2}.
	
	The three maximal cones $\langle 1,2\rangle$, $\langle 2,3\rangle$ and $\langle 3,1\rangle$ give rise to the three matrices
	\begin{equation}
	\label{AIP2}
	A_{12}=\begin{pmatrix} 1 &0\\ 0&1\end{pmatrix},\qquad
	A_{23}=\dfrac{1}{a}\begin{pmatrix}
	-b &a\\
	1 &0
	\end{pmatrix},\qquad
	A_{31}=\dfrac{1}{b}\begin{pmatrix}
	0 &1\\
	b &-a
	\end{pmatrix}
	\end{equation}
	and the corresponding three charts
\begin{equation}
\label{chartsQDP2}
Q_{12}=\left [\mathbb C^2\bigg /E\left (\mathbb Z\begin{pmatrix}
	a\\
	b
	\end{pmatrix} \right )\right ],\qquad
	Q_{23}=\left [\mathbb C^2\bigg /E\left (\mathbb Z\begin{pmatrix}
	-b/a\\
	1/a
	\end{pmatrix} \right )\right ]
\end{equation}
and
\begin{equation}
	Q_{31}=\left [\mathbb C^2\bigg /E\left (\mathbb Z\begin{pmatrix}
	1/b\\
	-a/b
	\end{pmatrix} \right )\right ]
\end{equation}
	Gluings are done using matrices \eqref{AIP2}. For example, the gluing between $Q_{23}$ and $Q_{12}$ is given by $A_{23}A_{12}^{-1}$, that is
	\begin{equation}
	[z,w]\in Q^*_{23}\longmapsto [z^{-b/a}w,z^{1/a}]\in Q^*_{12}
	\end{equation}
	where the $*$ means that the common coordinate, here that corresponding to the cone $2$, is not zero. The other two gluings are
	\begin{equation}
	\begin{aligned}
	&[z,w]\in Q^*_{31}\longmapsto [w^{1/b},zw^{-a/b}]\in Q_{12}^*\\
	\text{ and }&[z,w]\in Q^*_{31}\longmapsto [z^{1/b}w,z^{-a/b}]\in Q_{23}^*
	\end{aligned}
	\end{equation}
	So if $a=b=-1$, we have three charts modelled on $\mathbb C^2$ and we recover the classical $\mathbb P^2$.
	
	If $a$ and $b$ are rational, we have three charts modelled on the quotient of $\mathbb C^2$ by a finite group acting with a fixed point. We obtain a toric orbifold, indeed a weighted projective space. Straightforward but lengthy calculations show that, setting $\vert a\vert =p/q$ and $\vert b\vert=r/s$, then we obtain the weighted projective space $\mathbb P(\alpha,\beta,\gamma)$ with
	\begin{equation}
	\label{weights}
	\left\{
	\begin{aligned}
	&\alpha=\text{LCM}(q,s)\\
	&\beta=\text{LCM}(sp/\text{GCD}(sp,qr),p)\\
	&\gamma=\text{LCM}(qr/\text{GCD}(sp,qr),r)
	\end{aligned}
	\right .
	\end{equation}
	{where the weights \eqref{weights} have no common divisor}. This reduces, for $a$ and $b$ integers, to $\mathbb P(1,-a,-b)$. In particular, only $a=b=-1$ gives $\mathbb P^2$. It is also worth noticing that {the actions defining the charts \eqref{chartsQDP2} are always effective so we cannot obtain weighted projective spaces with non-coprime weights}.
		
	Finally, if at least $a$ or $b$ is irrational, we have three charts modelled on the quotient of $\mathbb C^2$ by $\mathbb Z$ acting with a fixed point. We obtain a quasifold and not an orbifold. 
\end{example}
\subsection{Toric morphisms of Simplicial Quantum Toric Varieties.}
\label{QTmorphismsection}
A toric morphism between two simplicial Quantum Toric Varieties $\nctoric{\Delta}{\Gamma}{v}$ and $\nctoric{\Delta'}{\Gamma'}{v'}$ is simply a collection of affine toric morphisms $\mathscr{L}_{II'}$ between the affine cones $Q_I$ and $Q'_{I'}$ that are compatible with the gluings. 
Compatibility means that $\mathscr{A'}_{I'J'}\circ \mathscr{L}_{II'}$ and $\mathscr{L}_{JJ'}\circ \mathscr{A}_{IJ}$ are equal on $Q_{J\supset{I}}$.
\vspace{5pt}\\
It follows immediatly from Lemma \ref{stackmorphismlemma} that this compatibility boils down to
\begin{equation*}
A'_{I'J'}L_{II'}=L_{JJ'}A_{IJ}
\end{equation*}
In other words the linear maps $A'_{I'}L_{II'}A_{I}$ glue together in a Quantum Fan morphism $L$.

Conversely, it is straightforward to check that a morphism of Quantum Fans induces a morphism of the corresponding Quantum Toric Varieties. From this, we obtain a proof of remark \ref{rkAIdep} as well as the following statement (compare with Corollary \ref{affisocat})

\begin{theorem}
	\label{prisoQTV}\textcolor{white}{gh}
	\begin{enumerate}[\rm i)]
		\item A stack morphism between Quantum Toric Varieties is a toric morphism if and only if its restriction to the Quantum tori is a torus morphism.
		\item Let $\mathscr Q$ be the category of simplicial Quantum Toric Varieties. Then $\mathscr Q$ is isomorphic to $Q$.
	\end{enumerate}
\end{theorem}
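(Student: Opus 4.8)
The plan is to produce the functor $\Phi\colon Q\to\mathscr Q$ taking a quantum fan to its quantum toric variety, and to deduce everything from the affine comparison Theorem~\ref{theoremtoricmorphismcondition} together with the gluing‑compatibility Lemma~\ref{stackmorphismlemma}; part~i) is really the local heart of part~ii).

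\emph{Part i).} For ``only if'': by definition a toric morphism $\nctoric{\Delta}{\Gamma}{v}\to\nctoric{\Delta'}{\Gamma'}{v'}$ is a family of affine toric morphisms $\mathscr L_{II'}$ compatible with the gluings; each $\mathscr L_{II'}$ restricts on the open dense quantum torus to a torus morphism by Definition~\ref{defafftoricmorphism}, and the compatibility conditions force all these restrictions to agree, so together they form a torus morphism of the ambient quantum tori. For ``if'': let $\mathscr F$ be a stack morphism whose restriction to $\nctorus{d}{\Gamma}$ is a torus morphism with linear map $L$, so $L(\Gamma)\subset\Gamma'$. Fix a maximal cone $v_I$ of $\Delta$ and restrict $\mathscr F$ to the chart $Q_I$. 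A limit argument along one-parameter subgroups, identical to the converse part of the proof of Theorem~\ref{theoremtoricmorphismcondition}~ii), shows that $\mathscr F|_{Q_I}$ factors through a single chart $Q'_{I'}$ of the target; equivalently $L(v_I)$ is contained in some cone $v'_{I'}$ of $\Delta'$. Conjugating the cones to standard position by $A_I$ and $A'_{I'}$ and applying Theorem~\ref{theoremtoricmorphismcondition}~ii) then shows that $A'_{I'}LA_I^{-1}$ is a morphism of standard cones and that $\mathscr F|_{Q_I}$ is the affine toric morphism $\mathscr L_{II'}$ it induces. Since all the $\mathscr L_{II'}$ descend from the single $L$, Lemma~\ref{stackmorphismlemma} gives $\mathscr A'_{I'J'}\circ\mathscr L_{II'}=\mathscr L_{JJ'}\circ\mathscr A_{IJ}$ on overlaps, so $(\mathscr L_{II'})$ is a descent datum and $\mathscr F$ is a toric morphism.

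\emph{Part ii).} On objects set $\Phi(\Delta,\Gamma,v)=\nctoric{\Delta}{\Gamma}{v}$ (Definition~\ref{QTVdef}); by Remark~\ref{rkAIdep} this is independent of the auxiliary matrices $A_I$, and, the objects of $\mathscr Q$ being by definition exactly these stacks, $\Phi$ is a bijection on objects. On morphisms, a quantum fan morphism $L\colon(\Delta,v)\to(\Delta',v')$ picks out, for each maximal $v_I$, a maximal $v'_{I'}$ with $L(v_I)\subseteq v'_{I'}$, hence by Theorem~\ref{theoremtoricmorphismcondition}~ii) an affine toric morphism $\mathscr L_{II'}$; these glue by Lemma~\ref{stackmorphismlemma}, giving a toric morphism $\Phi(L)$, and functoriality of $\Phi$ is again Lemma~\ref{stackmorphismlemma}. \emph{Faithfulness}: $\Phi(L)$ restricts on $\nctorus{d}{\Gamma}$ to the torus morphism of $L$ (by the proof of Theorem~\ref{theoremtoricmorphismcondition}~i)), so if $\Phi(L)=\Phi(L')$ the torus morphisms of $L$ and $L'$ coincide; lifting to the atlas $\C^d\to\nctorus{d}{\Gamma}$, this means $L,L'\colon\C^d\to\C^{d'}$ followed by $\C^{d'}\to\nctorus{d'}{\Gamma'}$ are $2$-isomorphic, and since $\C^d$ is simply connected and $\Gamma'$ discrete such a $2$-isomorphism is a constant translation by an element of $\Gamma'$, which vanishes by linearity; hence $L=L'$. \emph{Fullness}: given a toric morphism $\mathscr F$, part~i) supplies a torus morphism on the quantum tori with linear map $L$, $L(\Gamma)\subset\Gamma'$, and the chart-by-chart analysis of part~i) shows $L$ sends each cone of $\Delta$ into a cone of $\Delta'$ and, by the $\N$-linearity clause of Theorem~\ref{theoremtoricmorphismcondition}~ii), sends each generator $v_i$ to an $\N$-linear combination of the relevant $v'_j$; thus $L$ satisfies i)--iii) of Definition~\ref{QFmorphismdef} and $\mathscr F=\Phi(L)$. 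A functor that is bijective on objects and fully faithful is an isomorphism of categories, so $\mathscr Q\cong Q$.

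\emph{Main obstacle.} The real work is the ``if'' direction of part~i) (equivalently, fullness in part~ii)): showing that an abstractly given stack morphism is, on each affine chart, the affine toric morphism attached to a fan morphism. Once one knows that $\mathscr F$ carries the affine chart $Q_I$ into a single chart $Q'_{I'}$ everything follows from Theorem~\ref{theoremtoricmorphismcondition}~ii), but that ``charts go to charts'' statement is the genuinely non-formal point, proved by the same limit arguments and the explicit augmented atlas \eqref{augCD} as in that theorem. A secondary subtlety is that one asserts an \emph{isomorphism}, not merely an equivalence, of categories, which relies on the standard-form normalizations of Subsections~\ref{NDstandardCone}--\ref{Cgluings} to rigidify $\Phi$ on objects.
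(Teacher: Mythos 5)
Your proposal is correct and takes essentially the same route as the paper, which likewise deduces the theorem from the affine comparison Theorem \ref{theoremtoricmorphismcondition} together with Lemma \ref{stackmorphismlemma}, the key point being that compatibility of the affine pieces with the gluings is exactly the statement that the conjugated linear maps glue into a single quantum fan morphism (the paper's own write-up is even terser, leaving your charts-to-charts and faithfulness elaborations implicit). One small correction to the faithfulness step: what is used is not that $\Gamma'$ is discrete in $\R^{d'}$ (in the quantum case it typically is not), but that it is countable, so the continuous difference $L-L'$ from the connected space $\C^d$ into $\Gamma'$ is constant and hence zero by linearity.
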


\section{The definition (atlas) of calibrated Quantum Toric Varieties.}
\label{decodef}
We treat now the calibrated case. We follow the same process as in Section \ref{Qatlas}, that is we first define standard affine calibrated toric varieties and their morphisms, and then general ones using descent.
 
 \subsection{Standard affine calibrated Quantum Toric Varieties and their morphisms}
 \label{NDstandardConedec}
 Let $\Gamma=\Z^d+\Gamma_0$ be in standard form and let $(C_{k,d},h)$ be a standard calibrated cone. Hence $h : \Z^n\to \Gamma$ is a surjective morphism satisfying $h(e_i)=e_i$ for $1\leq i\leq d$, the set of generators of $1$-cones is $\{1,\hdots,k\}$ and the set of virtual generators is $J=\{n-l+1,\hdots, n\}$ for some $l$ between $0$ and $n-d$.\\ 
 The Standard affine calibrated Quantum Toric Variety associated to $C_{k,d}$ and $h$ is the quotient stack
 \begin{equation}
 \label{decQstaffineTV}
 Q^{cal}_{k,d,h}=\left [
 \C^k\times \Torus^{d-k}/\Z^{n-d}
 \right ]
 \end{equation}
 where $\Z^{n-d}$ acts on $\C^k\times \Torus^{d-k}$ as follows
 \begin{equation}
 \label{Zaction}
 (p,z)\longmapsto E(\hbar(p))\cdot z
 \end{equation}
 Hence, an object over the affine toric variety $T$ is simply a $\Z^{n-d}$-cover $\tilde T^{cal}$ with an equivariant map $m^{cal}$ 
 \begin{equation}
 \label{affQTVobjectdec}
 \begin{tikzcd}
 \tilde T^{cal} \arrow[r,"m^{cal}"]\arrow[d] &\C^k\times \Torus^{d-k}\\
 T & &
 \end{tikzcd}
 \end{equation}
 and a morphism over $T\to S$ is given by
 \begin{equation}
 \label{affQTVmorphismdec}
 \begin{tikzcd}
 &            &\C^k\times \Torus^{d-k}\\
 \tilde T^{cal} \arrow[r,"g"]\arrow[rru,bend left=20,"m^{cal}"]\arrow[d]          &\tilde S^{cal}  \arrow[d]\arrow[ru,bend right=10,"n^{cal}"']      &                       \\
 T        \arrow[r]         &S       &          
 \end{tikzcd}
 \end{equation}
 Inclusion of standard cones $(C_{k,d},h)$ in $(C_{k',d},h)$ with $k\leq k'$ is obtained by adding the inclusion $\C^k\times \Torus^{d-k}\hookrightarrow$ $\C^{k'}\times \Torus^{d-k'}$ in diagrams \eqref{affQTVobjectdec} and \eqref{affQTVmorphismdec}, cf. \eqref{affQTVinclusion} and \eqref{affQTVmorphisminclusion}.
 
 We copy now Definition \ref{defafftoricmorphism}. 
 \begin{definition}
 	\label{defmorphismstdec}
 	A (toric) morphism $\mathscr L^{cal}$ between $Q^{cal}_{k,d,h}$ and $Q^{cal}_{k',d',h'}$ is a stack morphism which restricts to a calibrated torus morphism from $\nctorusgerbe{h}{J}$ to $\nctorusgerbe{h'}{J'}$.
 \end{definition}

Hence, we may associate to $\mathscr L^{cal}$ a couple of linear maps $(L,H)$ satisfying \eqref{CDLH}.

Given $(L,H)$ a morphism of calibrated Quantum Fans between $(C_{k,d},h)$ and $(C_{k',d'},h')$, we define a stack morphism $\mathscr{L}^{cal}$ between $Q^{cal}_{k,d,h}$ and $Q^{cal}_{k',d',h'}$ as follows. Observe that 
\begin{equation*}
L=\begin{pmatrix}
L_1 &0\\
0 &L_2
\end{pmatrix}
\qquad\text{ and }\qquad
H=\begin{pmatrix}
H_1 &0\\
0 &H_2
\end{pmatrix}
\end{equation*}
with $L_2$, resp. $H_2$, a $(d-k)\times (d'-k')$ matrix, resp. $(n-k)\times (n'-k')$ matrix. Starting with an object \eqref{affQTVobjectdec}, we first form the fibered product
\begin{equation}
\label{fpdec}
\begin{tikzcd}
\hat T^{cal}:=\fgdec=\{(\tilde t,z)\mid m^{cal}(\tilde t)=\bar E_k(z)\}
\end{tikzcd}
\end{equation}
so that we have a cartesian diagram
\begin{equation}
\label{cartesian2}
\begin{tikzcd}
\hat T^{cal}\arrow[d,"pr_1"']\arrow[r,"pr_2"]\arrow[dr,phantom,"\scriptstyle{\square}",very near start, shift right=0.5ex]&\C^d\arrow[d,"\bar E_k"]\\
\tilde T^{cal}\arrow[r,"m^{cal}"]&\C^k\times\Torus^{d-k}
\end{tikzcd}
\end{equation}
Here $\Z^{n-k}$ acts on $\hat T^{cal}$ via
\begin{equation}
\label{actionhatTdec}
p\cdot (\tilde t,z)=(pr_k(p)\cdot \tilde t,E_k(h(p))\cdot z)
\end{equation}
where $pr_k$ is the projection of $\Z^{n-k}=\Z^{d-k}\times\Z^{n-d}$ onto $\Z^{n-d}$ and where in the expression $h(p)$ we identified $\Z^{n-k}$ with $\{0\}\times \Z^{n-k}$ in $\Z^n$.

Then 
\begin{equation}
\label{Ldec}
\mathscr{L}^{cal}(\tilde T^{cal})=\hat T^{cal}\times_{pr_{k'}\circ H_2}\Z^{n'-d'}
\end{equation}
with associated equivariant map
\begin{equation}
\label{mtildeprimedec}
\begin{tikzcd}[column sep=huge]
(\tilde t,z,q)\arrow[r,mapsto,"\mathscr{L}^{cal}(m^{cal})"]&\bar E'_{k'}\overline{L}(z)\cdot \left (E'\hbar'(q)\right )
\end{tikzcd}
\end{equation}
As for the morphisms, $\mathscr{L}^{cal}$ sends a morphism \eqref{affQTVmorphismdec} onto
\begin{equation}
\label{affQTVmorphismdecL}
\begin{tikzcd}
&            &\C^k\times \Torus^{d-k}\\
\mathscr{L}^{cal}(\tilde T^{cal}) \arrow[r,"g'"]\arrow[rru,bend left=20,"\mathscr{L}^{cal}(m^{cal})"]\arrow[d]          &\mathscr{L}^{cal}(\tilde S^{cal})  \arrow[d]\arrow[ru,bend right=10,"\mathscr{L}^{cal}(n^{cal})"']      &                       \\
T        \arrow[r]         &S       &          
\end{tikzcd}
\end{equation}
with
\begin{equation*}
g'(\tilde t,z,q)=(g(\tilde t),z,q)
\end{equation*}

That $\mathscr{L}^{cal}$ is a stack morphism is proved by adapting the proof of Lemma \ref{stackmorphismlemma} to the calibrated case. We left the details to the reader. As in Section \ref{NDstandardCone}, we then characterize a calibrated toric morphism.

\begin{theorem}
	\label{theoremdectoricmorphismcondition}\ \\
	\vspace{-15pt}
	\begin{enumerate}[\rm i)]
		\item  Let $\mathscr{L}^{cal}$ be a stack morphism as above. Then $\mathscr{L}^{cal}$  is a toric morphism.
		\item Let $\mathscr{L}^{cal}$  be a calibrated torus morphism from $\nctorusgerbe{h}{J}$ to $\nctorus{\Gamma'}{h'}$ and let $(L,H)$ be the induced linear mappings. 
		Then $\mathscr{L}^{cal}$ extends as a toric morphism from $Q^{cal}_{k,d,h}$ to $Q^{cal}_{k',d',h'}$ if and only if $(L,H)$ is a morphism of calibrated Quantum Fans from the cone $(C_{k,d},h)$ onto the cone $(C_{k',d'},h')$.
	\end{enumerate}
\end{theorem}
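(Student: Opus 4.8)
The plan is to mirror, step by step, the proof of Theorem \ref{theoremtoricmorphismcondition} from the uncalibrated setting, carrying the extra combinatorial datum $(H,s)$ along throughout. For part i), I would take an object \eqref{affQTVobjectdec} of $Q^{cal}_{k,d,h}$ supported on the quantum torus, i.e.\ with $m^{cal}$ landing in $\Torus^d$, and trace it through the construction \eqref{fpdec}--\eqref{mtildeprimedec}. Exactly as in the uncalibrated case, the fiber product $\hat T^{cal}$ augments to $\tilde T^{cal}\,_{m^{cal}}\kern-2pt\times_E\C^d$ carrying the full $\Z^n$-action, and on this augmented level $\mathscr L^{cal}$ is induced by the honest linear map $L$ of \eqref{CDdeftorusmorphismdec}; comparing with the description \eqref{torusmorphismLdec1}--\eqref{torusmorphismLdec2} of the calibrated torus morphism $\mathscr L^{cal}$ (now using the commutation $Lh = h'H$ of \eqref{commut} in place of the simpler equivariance used before) shows that the restriction of our stack morphism to $\nctorusgerbe{h}{J}$ is precisely the prescribed calibrated torus morphism. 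Hence $\mathscr L^{cal}$ is toric in the sense of Definition \ref{defmorphismstdec}.

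For the ``if'' direction of part ii), suppose $(L,H)$ is a morphism of calibrated Quantum Fans from $(C_{k,d},h)$ to $(C_{k',d'},h')$. Then in particular $L$ sends $e_1,\dots,e_k$ onto $\N$-linear combinations of $v'_1,\dots,v'_{k'}$, so the analogue of diagram \eqref{Lextension} is well defined: $\overline L$ descends to $\Torus^k\times\C^{d-k}$ and extends to $\C^d$. The block decompositions $L=\mathrm{diag}(L_1,L_2)$, $H=\mathrm{diag}(H_1,H_2)$ (forced by requirements iv)--v) of Definition \ref{decQFmorphismdef}) are exactly what is needed for formulas \eqref{Ldec}--\eqref{mtildeprimedec} to make sense, and the fact that $\mathscr L^{cal}$ so defined is a stack morphism is the calibrated analogue of Lemma \ref{stackmorphismlemma} (with the associativity of associated-bundle constructions replaced by its $\Z$-equivariant version); combined with part i) this gives a toric morphism extending the torus morphism.

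For the ``only if'' direction, suppose $\mathscr L^{cal}$ extends to a toric morphism of the affine pieces. Restricting to the atlas $[\Torus^d/\Z^{n-d}]\subset Q^{cal}_{k,d,h}$ of \eqref{decQstaffineTV} and applying $\mathscr L^{cal}$, the existence of a well-defined arrow at the bottom of the calibrated analogue of \eqref{Lextension} forces: whenever a sequence $(E(z_\nu),\mathrm{Id})$ in $\Torus^d\times\Z^{n-d}$ converges in $\C^k\times\Torus^{d-k}\times\Z^{n-d}$, the image $E'L(z_\nu)$ must converge in $\C^{k'}\times\Torus^{d'-k'}$; this says exactly that $L$ sends $C_{k,d}$ into $C_{k',d'}$ with $L(e_i)$ an $\N$-combination of the $e'_j$ for $i\le k$, which is condition ii)--iii) of Definition \ref{QFmorphismdef} for the underlying fan morphism. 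That the auxiliary data $(H,s)$ already satisfies \eqref{CDLH}, \eqref{Hs}, \eqref{Hrestric} is built into Definition \ref{defmorphismstdec} via the notion of calibrated torus morphism it restricts to. The main obstacle I anticipate is bookkeeping rather than conceptual: carefully splitting the $\Z^{n-k}$-action \eqref{actionhatTdec} into its $\Z^{d-k}$ (deck) and $\Z^{n-d}$ (calibration) parts and checking that $\mathscr L^{cal}$ intertwines these with the primed data through $H=\mathrm{diag}(H_1,H_2)$, i.e.\ verifying well-definedness of \eqref{Ldec}--\eqref{mtildeprimedec} on quotients exactly as \eqref{torusmorphismLdec1} was checked, and confirming that the resulting assignment is functorial in the covering $T$. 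Once the $\Z$-equivariant analogues of Lemma \ref{assoclemma} and Lemma \ref{stackmorphismlemma} are in place, the two implications follow by the same diagram chases as in Theorem \ref{theoremtoricmorphismcondition}.
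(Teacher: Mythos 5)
Your proposal is correct and takes essentially the same route as the paper: the paper's own proof simply exhibits the two augmented commutative diagrams (the calibrated analogues of \eqref{augCD}, with the $\Z^{n-k}$ and $\Z^{n'-k'}$ factors carried along) and then invokes the same line of argument as Theorem \ref{theoremtoricmorphismcondition}, which is exactly the comparison you spell out, including the use of $Lh=h'H$, the block form of $(L,H)$, and the convergence argument for the ``only if'' direction.
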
 

\begin{proof}
	This is similar to the proof of Theorem \ref{theoremtoricmorphismcondition}. We content ourselves with giving following two commutative diagrams which describe a calibrated torus morphism $\mathscr{L}^{cal}$ for the first one and a toric morphism for the second. Comparing them and following the same line of arguments as in the proof of Theorem \ref{theoremtoricmorphismcondition} yield the result
	\begin{equation*}
	\begin{tikzcd}[column sep=small]
	\Torus^k\times\C^{d-k}\times \Z^{n-k}\arrow[d,"p"']\arrow[dr,phantom,"\kern -25pt\scriptstyle{\square}", very near start]\arrow[r]&\Torus^k\times\C^{d-k}\arrow[r,"\overline{{L}}"]\arrow[d,"\bar E_k"']&\Torus^{k'}\times\C^{d'-k'}\arrow[d,"\bar E'_{k'}"]&\Torus^{k'}\times\C^{d'-k'}\times \Z^{n'-k'}\arrow[l]\arrow[dl,phantom,"\kern 5pt \scriptstyle{\square}", very near start]\arrow[d]\\
	\Torus^d\times \Z^{n-d}\arrow[r,"m"]\arrow[dr, bend right=20]&\Torus^{d}\arrow[r,dashed]&\Torus^{d'}&\Torus^{d'}\times \Z^{n-d}\arrow[l, "m'"']\arrow[lld, bend left=10]\\
	&\Torus^d &&\\
	\end{tikzcd}
	\end{equation*}
	and
	\begin{equation*}
	\begin{tikzcd}[column sep=small]
	\C^{d}\times \Z^{n-k}\arrow[d,"p"']\arrow[dr,phantom,"\kern -25pt\scriptstyle{\square}", very near start]\arrow[r]&\C^{d}\arrow[r,"\overline{{L}}"]\arrow[d,"\bar E_k"']&\C^{d'}\arrow[d,"\bar E'_{k'}"]&\C^{d'}\times \Z^{n'-k'}\arrow[l]\arrow[dl,phantom,"\kern 5pt \scriptstyle{\square}", very near start]\arrow[d]\\
	\C^k\times\Torus^{d-k}\times \Z^{n-d}\arrow[r,"m"]\arrow[dr, bend right=20]&\C^k\times\Torus^{d-k}\arrow[r,dashed]&\C^{k'}\times\Torus^{d'-k'}&\Torus^{d'}\times \Z^{n'-d'}\arrow[l, "m'"']\arrow[lld, bend left=10]\\
	& \C^{k'}\times\Torus^{d'-k'}&&\\
	\end{tikzcd}
	\end{equation*}
\end{proof}

Consider now the calibrated cone $(C_{k,n},Id)$ in $\R^n$ equipped with the lattice $\Z^n$ and with $J$ as set of virtual generators. This is exactly $(\Delta_h,Id)$ and the associated toric variety is simply $\C^k\times\Torus^{n-k}$. Note that the associated cone $C_{k,n}$ also corresponds to $\C^k\times\Torus^{n-k}$, that is there is no difference here between the calibrated and the non-calibrated case. We have

\begin{corollary}
	\label{corhdecmorphism}
	The calibration $h$ induces a toric morphism $\text{\slshape{h}}^{cal}$ from $\C^k\times\Torus^{n-k}$ onto $Q^{cal}_{k,d,h}$.
\end{corollary}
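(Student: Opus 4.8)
The plan is to construct $\text{\slshape{h}}^{cal}$ as an instance of the morphism $\mathscr L^{cal}$ built in the previous subsection, applied to a particular morphism of calibrated Quantum Fans, and then to quote Theorem \ref{theoremdectoricmorphismcondition} to conclude it is toric. First I would record that the calibrated cone $(C_{k,n},Id)$ is exactly the cone-version of $(\Delta_h, Id)$ of Example \ref{trivialdecDh}: its lattice is $\Z^n$, its calibration is the identity $\Z^n \to \Z^n$, its $1$-cone generators are $e_1,\dots,e_k$, and its set of virtual generators is $J$. The associated standard affine calibrated Quantum Toric Variety is $Q^{cal}_{k,n,Id} = [\C^k \times \Torus^{n-k}/\Z^0] = \C^k \times \Torus^{n-k}$, since in \eqref{decQstaffineTV} we have $n-d = n-n = 0$, so the acting group is trivial; this matches the claim that there is no difference between the calibrated and non-calibrated cases here.

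Next I would exhibit the morphism of calibrated Quantum Fans $(L,H)$ from $(C_{k,n},Id)$ to $(C_{k,d},h)$. The natural choice is $H = Id : \Z^n \to \Z^n$ and $L = h : \R^n \to \R^d$ (the linear extension of $h$). This is precisely the content of Lemma \ref{hdecmorphism}, which already verifies that $(h,Id)$ is a morphism of calibrated Quantum Fans from $(\Delta_h,Id)$ to $(\Delta, h)$: the commutation $L \circ Id = h \circ Id$ is \eqref{CDhId}, the generators of $1$-cones are sent to $\N$-combinations of generators as required, requirement iv) of Definition \ref{decQFmorphismdef} holds because $Id$ sends each $e_i$ ($i \not\in J$) to itself, and requirement v) holds with $s = Id_J$ since $Id(e_i) = e_i$ for $i \in J$. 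The cone condition ii) is immediate because $h(C_{k,n}) = C_{k,d}$ on the nose (the first $k$ coordinates are fixed by the standard form of $h$). I would then define $\text{\slshape{h}}^{cal} := \mathscr{L}^{cal}$ for this $(L,H) = (h, Id)$, using the construction \eqref{Ldec}--\eqref{affQTVmorphismdecL}.

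With this in hand, the conclusion is almost automatic: by the paragraph following \eqref{affQTVmorphismdecL} (``That $\mathscr{L}^{cal}$ is a stack morphism is proved by adapting the proof of Lemma \ref{stackmorphismlemma}''), $\text{\slshape{h}}^{cal}$ is a stack morphism $\C^k \times \Torus^{n-k} \to Q^{cal}_{k,d,h}$; and by part i) of Theorem \ref{theoremdectoricmorphismcondition}, any stack morphism arising from this construction is a toric morphism. One should also note that the restriction of $\text{\slshape{h}}^{cal}$ to the quantum tori is the calibrated torus morphism induced by $(h, Id)$ — this is exactly the morphism studied in the passage around \eqref{commut}, where $L(z + h(p)) = Lz + h'(H(p))$ becomes $h(z + p) = h(z) + h(p)$, which is trivially satisfied.

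The only genuine point requiring care — and the step I would expect to be the main obstacle — is checking that the construction \eqref{Ldec}--\eqref{mtildeprimedec} really is well defined in this degenerate situation where the source group is $\Z^{n-n} = \{0\}$: one must make sure that forming $\hat T^{cal} = \tilde T^{cal}\,_{m^{cal}}\!\times_{\bar E_k} \C^d$, letting $\Z^{n-k}$ act via \eqref{actionhatTdec}, and then taking $\hat T^{cal} \times_{pr_{k'} \circ H_2} \Z^{n-d}$ produces, over $T = $ the trivial chart, exactly $Q^{cal}_{k,d,h}$ with its defining $\Z^{n-d}$-action \eqref{Zaction}, rather than something larger. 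Concretely, since on the source side $\tilde T^{cal} = T$ (no covering) and $m^{cal}$ lands in $\C^k \times \Torus^{n-k}$, one traces through that $\hat T^{cal} \to T$ is the $E_k(\Z^{n-k})$-cover pulled back from $\C^k\times\Torus^{n-k}$, that the associated-bundle construction with structural morphism $pr_{k'} \circ H_2 = pr_{n-d}$ (projection $\Z^{n-k} = \Z^{d-k}\times\Z^{n-d} \to \Z^{n-d}$) reinstates exactly the $\Z^{n-d}$-action of \eqref{Zaction}, and that the equivariant map \eqref{mtildeprimedec} becomes $(\tilde t, z, q) \mapsto \bar E'_{d}\,\overline{h}(z)\cdot E\hbar(q)$, which is the canonical object of $Q^{cal}_{k,d,h}$. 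Once this bookkeeping is done the corollary follows; everything else is a direct citation of Lemma \ref{hdecmorphism}, the stack-morphism claim, and Theorem \ref{theoremdectoricmorphismcondition}(i).
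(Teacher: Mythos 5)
Your proposal is correct and follows essentially the same route as the paper, whose proof is simply to combine Lemma \ref{hdecmorphism} (giving the calibrated fan morphism $(h,Id)$ from $(C_{k,n},Id)=(\Delta_h,Id)$ to $(C_{k,d},h)$) with Theorem \ref{theoremdectoricmorphismcondition}; you merely spell out the identification $Q^{cal}_{k,n,Id}=\C^k\times\Torus^{n-k}$ and the bookkeeping of the associated-bundle construction, which the paper leaves implicit.
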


\begin{proof}
	Combine Theorem \ref{theoremdectoricmorphismcondition} and Lemma \ref{hdecmorphism}.
\end{proof}

We also infer from Theorem \ref{theoremdectoricmorphismcondition} the following corollaries.

\begin{corollary}
	\label{corLdecetL}
	Let $\mathscr{L}^{cal}$ be a morphism from $Q^{cal}_{k,d,h}$ to $Q^{cal}_{k',d',h'}$. Then it induces a morphism $\mathscr{L}$ from $Q_{k,d,\Gamma}$ to $Q_{k',d',\Gamma'}$
\end{corollary}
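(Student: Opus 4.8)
The plan is to use the explicit functorial constructions set up in this section together with the "forgetful" relationship between calibrated and uncalibrated quantum tori. First I would record the shape of the data: a morphism $\mathscr{L}^{cal}$ from $Q^{cal}_{k,d,h}$ to $Q^{cal}_{k',d',h'}$ is, by Definition \ref{defmorphismstdec}, a stack morphism restricting to a calibrated torus morphism $\nctorusgerbe{h}{J}\to\nctorusgerbe{h'}{J'}$, and by Theorem \ref{theoremdectoricmorphismcondition}(ii) this is equivalent to the data of a morphism of calibrated Quantum Fans $(L,H)$ from $(C_{k,d},h)$ to $(C_{k',d'},h')$. By Definition \ref{decQFmorphismdef}(i), the first component $L$ is a morphism of the underlying Quantum Fans $(C_{k,d},v)$ and $(C_{k',d'},v')$, i.e.\ a linear map $L:\R^d\to\R^{d'}$ with $L(\Gamma)\subset\Gamma'$ sending the standard $k$-cone into the standard $k'$-cone and sending the $1$-cone generators to $\N$-combinations of the target ones. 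But that is precisely the hypothesis of Theorem \ref{theoremtoricmorphismcondition}(ii) which guarantees that $L$ extends to a toric morphism $\mathscr{L}:Q_{k,d,\Gamma}\to Q_{k',d',\Gamma'}$. So at the level of existence the statement is immediate; the content of the corollary is really that this $\mathscr{L}$ is \emph{induced by} $\mathscr{L}^{cal}$ in a compatible, functorial way.

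The key steps, in order, are then: (1) observe that there is a canonical ``forget the calibration'' stack morphism $\pi:Q^{cal}_{k,d,h}\to Q_{k,d,\Gamma}$, coming from the inclusion $E(\hbar)(\Z^{n-d})\subset E(\Gamma)$ acting on $\C^k\times\Torus^{d-k}$ and the induced map of quotient stacks $[\C^k\times\Torus^{d-k}/\Z^{n-d}]\to[\C^k\times\Torus^{d-k}/E(\Gamma)]$ (concretely, on objects it sends a $\Z^{n-d}$-cover $\tilde T^{cal}$ to the associated $E(\Gamma)$-cover $\tilde T^{cal}\times_{\Z^{n-d}}E(\Gamma)$, keeping the equivariant map); (2) likewise for the primed side, $\pi':Q^{cal}_{k',d',h'}\to Q_{k',d',\Gamma'}$; (3) check the square $\pi'\circ\mathscr{L}^{cal}=\mathscr{L}\circ\pi$ commutes, which amounts to comparing the two ways of building the output cover — on one hand apply the calibrated construction \eqref{Ldec}--\eqref{mtildeprimedec} and then induce up to $E(\Gamma')$, on the other hand induce up to $E(\Gamma)$ first and then apply the uncalibrated construction of Section \ref{NDstandardCone} — and these agree because the associated-bundle construction is transitive in exactly the way used in the proof of Lemma \ref{stackmorphismlemma}, together with the relation $L\circ h = h'\circ H$ from \eqref{CDLH} which matches the two $\Z$-actions. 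This is what lets one say $\mathscr{L}^{cal}$ ``induces'' $\mathscr{L}$.

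The main obstacle is step (3): verifying that the fibered-product / associated-cover diagram commutes on the nose, tracking the interplay between $H$, $H_2$, $pr_{k'}$, $\hbar'$ and the exponential maps $E_k,\bar E_k,\bar E'_{k'}$. This is the same bookkeeping that appears in Lemmas \ref{assoclemma} and \ref{stackmorphismlemma} and in the proof of Theorem \ref{theoremdectoricmorphismcondition}; the paper explicitly defers such checks to the reader (``We left the details to the reader''), so I would do the same, noting only that $L h = h' H$ is exactly the compatibility that makes $\pi,\pi'$ intertwine the actions and hence makes the square commute, and that uniqueness of pull-backs in Proposition \ref{propstacks} forces the induced morphism to be the expected one. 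A clean way to present it: define $\pi,\pi'$, invoke Theorem \ref{theoremtoricmorphismcondition}(ii) to get $\mathscr{L}$ from the fan morphism $L$ underlying $(L,H)$ (Definition \ref{decQFmorphismdef}(i)), and then state that a direct computation using \eqref{CDLH} shows $\pi'\circ\mathscr{L}^{cal}=\mathscr{L}\circ\pi$, so that $\mathscr{L}$ is the induced morphism. $\square$
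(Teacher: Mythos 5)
Your first paragraph is exactly the paper's proof: associate the pair $(L,H)$ to $\mathscr{L}^{cal}$ and invoke Theorem \ref{theoremtoricmorphismcondition}(ii) (via Definition \ref{decQFmorphismdef}(i)) to get $\mathscr{L}$, so the proposal is correct and follows the same route. The additional compatibility square $\text{\slshape f}\circ\mathscr{L}^{cal}=\mathscr{L}\circ\text{\slshape f}$ you sketch in steps (1)--(3) is not needed for this corollary; it is precisely the content of the later Lemma \ref{lemmadecoublicom} in Subsection \ref{subsecf}, where the paper carries out that bookkeeping.
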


\begin{proof}
	Associate to $\mathscr{L}^{cal}$ the couple $(L,H)$. The linear mapping $L$ induces a a morphism $\mathscr{L}$ from $Q_{k,d,\Gamma}$ to $Q_{k',d',\Gamma'}$ by Theorem \ref{theoremtoricmorphismcondition}.
\end{proof}
and in particular, since the non-calibrated $C_{k,n}$ also corresponds to $\C^k\times\Torus^{n-k}$,
\begin{corollary}
	\label{corhmorphism}
		The calibration $h$ induces a toric morphism $\text{\slshape{h}}$ from $\C^k\times\Torus^{n-k}$ onto $Q_{k,d,\Gamma}$.	
\end{corollary}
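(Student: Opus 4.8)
The plan is to deduce this from the calibrated version already established in Corollary \ref{corhdecmorphism} by forgetting the calibration via Corollary \ref{corLdecetL}; this is exactly the route signalled by the remark preceding the statement, namely that the non-calibrated cone $C_{k,n}$ also has $\C^k\times\Torus^{n-k}$ as its associated quantum toric variety. The first thing I would pin down is the double identification of this space. Equipping $C_{k,n}$ with the canonical calibration $(C_{k,n},Id)=(\Delta_h,Id)$ (possibly with a nonempty set $J$ of virtual generators) produces the standard affine calibrated quantum toric variety $Q^{cal}_{k,n,Id}=[\C^k\times\Torus^{n-k}/\Z^{n-d}]$ of \eqref{decQstaffineTV} with $d=n$, so the acting group is trivial and $Q^{cal}_{k,n,Id}=\C^k\times\Torus^{n-k}$; whereas equipping $C_{k,n}$ with the standard lattice $\Gamma=\Z^n$ (for which $\Gamma_0=\{0\}$ and $E(\Gamma)=\{1\}$) gives the non-calibrated $Q_{k,n,\Z^n}=[\C^k\times\Torus^{n-k}/E(\Z^n)]=\C^k\times\Torus^{n-k}$. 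Hence $Q^{cal}_{k,n,Id}$ and $Q_{k,n,\Z^n}$ both coincide with $\C^k\times\Torus^{n-k}$.

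With this in place the argument is short. By Corollary \ref{corhdecmorphism} the calibration $h$ induces a calibrated toric morphism $h^{cal}\colon Q^{cal}_{k,n,Id}\to Q^{cal}_{k,d,h}$, and by its construction (Lemma \ref{hdecmorphism} together with Theorem \ref{theoremdectoricmorphismcondition}) the pair of linear maps attached to $h^{cal}$ is $(L,H)=(h,Id)$, where $L$ is the $\R$-linear extension of the epimorphism $h\colon\Z^n\to\Gamma$. Applying Corollary \ref{corLdecetL} to $h^{cal}$ then produces the induced non-calibrated toric morphism $\mathscr L\colon Q_{k,n,\Z^n}\to Q_{k,d,\Gamma}$ attached to the same linear map $L=h$; invoking the identification $Q_{k,n,\Z^n}=\C^k\times\Torus^{n-k}$ from the previous step, this $\mathscr L$ is precisely the toric morphism $\text{\slshape{h}}$ claimed in the statement.

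As a cross-check, one can bypass the calibrated machinery and apply Theorem \ref{theoremtoricmorphismcondition}(ii) directly to the $\R$-linear extension $L=h\colon\R^n\to\R^d$: it sends $\Z^n$ onto $\Gamma$, hence defines a torus morphism $\nctorus{n}{\Z^n}\to\nctorus{d}{\Gamma}$ in the sense of Definition \ref{deftorusmorphism}, and since $h$ is standard we have $h(e_i)=e_i$ for $1\le i\le k$, so $L$ carries $C_{k,n}$ onto $C_{k,d}$ and sends each of the first $k$ basis vectors of $\R^n$ to an $\N$-linear combination of $(e_1,\ldots,e_k)$ --- exactly the condition for $L$ to be a morphism of Quantum Fans from $(C_{k,n},e_1,\ldots,e_k)$ to $(C_{k,d},e_1,\ldots,e_k)$ --- so Theorem \ref{theoremtoricmorphismcondition}(ii) extends it to the desired morphism. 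The only real care needed is bookkeeping: keeping the three descriptions of $\C^k\times\Torus^{n-k}$ apart (bare affine toric variety, $Q^{cal}_{k,n,Id}$, and $Q_{k,n,\Z^n}$) and checking that the linear map underlying $h^{cal}$ is the map $h$ itself. The analytic content --- the well-definedness of the relevant gluing, i.e.\ the existence of the dashed arrow in \eqref{Lextension} --- is already packaged inside Theorems \ref{theoremtoricmorphismcondition} and \ref{theoremdectoricmorphismcondition}, so I expect no substantive obstacle here.
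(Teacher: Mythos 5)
Your proposal is correct and follows essentially the same route as the paper: the corollary is obtained by applying Corollary \ref{corLdecetL} (forgetting the calibration) to the calibrated morphism $\text{\slshape h}^{cal}$ of Corollary \ref{corhdecmorphism}, using precisely the observation that the non-calibrated cone $C_{k,n}$ also corresponds to $\C^k\times\Torus^{n-k}$. Your extra bookkeeping of the double identification and the direct cross-check via Theorem \ref{theoremtoricmorphismcondition}(ii) are sound but add nothing beyond the paper's argument.
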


Moreover $H$ induces a torus morphism $\mathscr H$ between $\C^k\times \Torus^{n-k}$ and $\C^{k'}\times\Torus^{n'-k'}$. This morphism can be seen either as a calibrated or a non-calibrated morphism. We have

\begin{lemma}
	\label{lemmaLhhH}
	The diagram of morphisms 
	\begin{equation}
	\label{CDLHstack}
	\begin{tikzcd}
	\C^k\times \Torus^{n-k} \arrow[r,"\mathscr H"]\arrow[d,"\text{\sl h}"']&\C^{k'}\times\Torus^{n'-k'}\arrow[d,"(\text{\sl h'})"]\\
	Q_{k,d,\Gamma}\arrow[r, "\mathscr{L}"']&Q_{k',d',\Gamma'}
	\end{tikzcd}
	\end{equation}
	is commutative
\end{lemma}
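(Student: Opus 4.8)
The statement to prove is Lemma \ref{lemmaLhhH}: the square \eqref{CDLHstack} of stack morphisms commutes. The plan is to verify commutativity directly on objects and morphisms of the stacks, using the explicit formulas for the induced maps that were set up above. Recall that $\mathscr H$ is the torus morphism induced by $H : \R^n \to \R^{n'}$ (viewed as a non-calibrated map of $\C^k\times\Torus^{n-k}\to\C^{k'}\times\Torus^{n'-k'}$, which makes sense since $H$ sends the relevant cone generators to $\N$-linear combinations of the target generators by point ii) of Definition \ref{decQFmorphismdef}); $\text{\sl h}$, resp. $(\text{\sl h'})$, is the toric morphism of Corollary \ref{corhmorphism} induced by the linear map $h : \R^n\to\R^d$, resp. $h' : \R^{n'}\to\R^{d'}$; and $\mathscr L$ is induced by $L : \R^d \to \R^{d'}$. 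So at the level of the underlying linear maps, the claim reduces to the identity $L\circ h = h'\circ H$, which is precisely \eqref{CDLH}, part iii) of Definition \ref{decQFmorphismdef}, available by hypothesis since $(L,H)$ is a morphism of calibrated Quantum Fans.

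First I would reduce both composites to the explicit associated-cover form. By Corollary \ref{affisocat} (and its calibrated analogue, via Theorem \ref{theoremtoricmorphismcondition}), the stack morphisms induced by linear maps are determined functorially by the fiber-product / associated-cover construction of \eqref{fp}--\eqref{fprime}: a toric morphism $\mathscr M$ induced by a linear map $M$ sends an object $(\tilde T, m)$ over $T$ to $\bigl((\tilde T\,_m\!\times_{\bar E_\bullet}\C^\bullet)\times_{\bar E'_\bullet \overline M} E'(\Gamma'),\, m'\bigr)$ with $m'(\tilde t, z, E'\gamma') = \bar E'_{\bullet}\overline M(z)\cdot E'(\gamma')$. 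Applying this to $\text{\sl h}$ then $\mathscr L$, and separately to $\mathscr H$ then $(\text{\sl h'})$, and then invoking Lemma \ref{stackmorphismlemma} (functoriality of the construction under composition of linear maps: $\mathscr{M''} = \mathscr{M'}\circ\mathscr M$ when $M'' = M'\circ M$), the two composites around the square become the single toric morphisms induced by $L\circ h$ and by $h'\circ H$ respectively. Since these linear maps are equal by \eqref{CDLH}, the two composite stack morphisms are literally given by the same formula on objects and on morphisms, hence agree.

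The one point requiring a little care — and I expect this to be the main (mild) obstacle — is matching up the bookkeeping of which coordinates are ``$\C$-coordinates'' versus ``$\Torus$-coordinates'' across the square, i.e. checking that the exponential maps $\bar E_k$, $\bar E_n$, $E'_{k'}$, etc. used in forming $\text{\sl h}$, $\mathscr H$, $\text{\sl h'}$, $\mathscr L$ are compatibly indexed so that the composite formulas genuinely coincide (this is the standard-cone/standard-calibration normalization, Definitions \ref{defstandarddecfan}, \ref{defstandardh}). Concretely, one uses that $H$ is block-diagonal $\bigl(\begin{smallmatrix}H_1&0\\0&H_2\end{smallmatrix}\bigr)$ with $H_1$ a $|\bar J|$-block and $H_2$ a $|J|$-block, and similarly $L$ is block-diagonal, together with the identities $E_s\circ\bar E_s = \bar E_s\circ E_s = E$ from \eqref{expk}--\eqref{expbark} and the standard isomorphism of iterated associated covers $(\hat T\times_{\overline L}E'_{k'}(\Gamma'))\times_{\overline{L'}}E''_{k''}(\Gamma'')\cong \hat T\times_{\overline{L'}\,\overline{L}}E''_{k''}(\Gamma'')$ used in the proof of Lemma \ref{stackmorphismlemma}. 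Once the indexing is lined up, the verification on morphisms is the same routine check $g'(\tilde t, z, q) = (g(\tilde t), z, q)$ being compatible on both sides. Thus I would write: ``By Corollary \ref{affisocat}, Theorem \ref{theoremtoricmorphismcondition} and Lemma \ref{stackmorphismlemma}, both composites of \eqref{CDLHstack} are the toric morphism induced by the linear map $L\circ h = h'\circ H$ (the equality being \eqref{CDLH}); hence the diagram commutes,'' leaving the coordinate bookkeeping, as elsewhere in this section, to the reader.
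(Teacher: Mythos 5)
Your proposal is correct and follows essentially the same route as the paper: the paper's proof likewise invokes Lemma \ref{stackmorphismlemma} to identify the composites $\text{\slshape h'}\circ\mathscr{H}$ and $\mathscr{L}\circ\text{\slshape h}$ with the toric morphisms induced by the linear maps $h'\circ H$ and $L\circ h$, which coincide by \eqref{CDLH}. The extra coordinate-bookkeeping discussion you include is harmless but not needed beyond what Lemma \ref{stackmorphismlemma} already packages.
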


\begin{proof}
	By Lemma \ref{stackmorphismlemma}, the composition $\text{\slshape{h'}}\circ\mathscr{H}$, resp. $\mathscr{L}\circ \text{\slshape{h}}$, corresponds to the linear mapping $h'\circ H$, resp. $L\circ h$. But these mappings are equal by \eqref{CDLH}.
\end{proof}

The calibrated version of this lemma runs as follows. 

\begin{lemma}
	\label{lemmaLhhHdec}
	The diagram of morphisms 
	\begin{equation}
	\label{CDLHdecstack}
	\begin{tikzcd}
	\C^k\times \Torus^{n-k} \arrow[r,"\mathscr H"]\arrow[d,"\text{\sl (h)}^{cal}"']&\C^{k'}\times\Torus^{n'-k'}\arrow[d,"(\text{\sl h'})^{cal}"]\\
	Q^{cal}_{k,d,h}\arrow[r, "\mathscr{L}^{cal}"']&Q^{cal}_{k',d',h'}
	\end{tikzcd}
	\end{equation}
	is commutative
\end{lemma}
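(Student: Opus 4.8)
The plan is to follow the proof of Lemma \ref{lemmaLhhH} almost verbatim, with Lemma \ref{stackmorphismlemma} replaced by its calibrated counterpart (the composition formula for calibrated toric morphisms obtained by adapting the proof of Lemma \ref{stackmorphismlemma}, as announced just before Theorem \ref{theoremdectoricmorphismcondition}). The key point is that every arrow of the square is a calibrated toric morphism coming, through Theorem \ref{theoremdectoricmorphismcondition}, from an explicit morphism of calibrated Quantum Fans, and is therefore completely determined by the pair $(L,H)$ that induces it; so commutativity will reduce to the equality of two composed pairs.

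First I would read off the inducing pair of each arrow. I regard $\C^k\times\Torus^{n-k}$, resp. $\C^{k'}\times\Torus^{n'-k'}$, as the trivially calibrated affine variety associated to $(\Delta_h,Id)$, resp. $(\Delta'_{h'},Id)$, as in Example \ref{trivialdecDh}. Then Corollary \ref{corhdecmorphism} together with Lemma \ref{hdecmorphism} exhibits $\text{\sl h}^{cal}$ as the calibrated toric morphism induced by the pair $(h,Id)$, and likewise $\text{\sl h'}^{cal}$ as the one induced by $(h',Id)$; by construction $\mathscr{L}^{cal}$ is induced by the given pair $(L,H)$. It remains to see $\mathscr H$ as a calibrated morphism: by items ii), iv) and v) of Definition \ref{decQFmorphismdef}, $H$ is a morphism of the fans $(\Delta_h,e_I)\to(\Delta'_{h'},e_{I'})$ that sends each $e_i$ with $i\not\in J$ into $\bigoplus_{j\not\in J'}\Z e_j$ and the virtual generators onto virtual generators through $s$; hence $(H,H)$ is a morphism of calibrated Quantum Fans from $(\Delta_h,Id)$ to $(\Delta'_{h'},Id)$ (the commutation $H\circ Id=Id\circ H$ being trivial), and one checks with Theorem \ref{theoremdectoricmorphismcondition} that the calibrated toric morphism it induces is precisely $\mathscr H$.

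Then I would compose pairs. By the calibrated analogue of Lemma \ref{stackmorphismlemma}, composition of calibrated toric morphisms corresponds to componentwise composition of inducing pairs. Thus the upper-right path $\text{\sl h'}^{cal}\circ\mathscr H$ is induced by $(h',Id)\circ(H,H)=(h'\circ H,\,H)$, while the lower-left path $\mathscr{L}^{cal}\circ\text{\sl h}^{cal}$ is induced by $(L,H)\circ(h,Id)=(L\circ h,\,H)$. By \eqref{CDLH} we have $L\circ h=h'\circ H$, so the two inducing pairs coincide; since a calibrated toric morphism with prescribed standard source and target is determined by the explicit formulas \eqref{Ldec}--\eqref{mtildeprimedec} in terms of its inducing pair, the two compositions agree, which is exactly the asserted commutativity.

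The main obstacle is purely bookkeeping: one must have the calibrated composition formula at hand (the text leaves its verification to the reader) and must check the identifications above carefully — in particular that passing to the trivially calibrated fan $(\Delta_h,Id)$ keeps track correctly of the virtual-generator set $J$ and of the index map $\imath$ of \eqref{imath} relating $I$ to $\{1,\dots,p\}$, so that $\mathscr H$, $\text{\sl h}^{cal}$ and their composite all share the same legitimate source $(\Delta_h,Id)$ and the same target $(\Delta',h')$. Once these compatibilities are in place, the proof collapses to the one-line comparison of pairs used for Lemma \ref{lemmaLhhH}.
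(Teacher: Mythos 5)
Your proof is correct and follows essentially the same route as the paper: the paper's own argument likewise identifies the two composites as calibrated toric morphisms corresponding to the pairs $(h'\circ H,\,H)$ and $(L\circ h,\,H)$ and concludes from the equality $L\circ h=h'\circ H$ of \eqref{CDLH}. The extra bookkeeping you carry out (realizing $\mathscr H$ as induced by $(H,H)$ on the trivially calibrated fan and invoking the calibrated analogue of Lemma \ref{stackmorphismlemma}) is just a more explicit version of what the paper leaves implicit.
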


\begin{proof}
	The composition $\text{(\slshape{h'})}^{cal}\circ\mathscr{H}$, resp. $\mathscr{L}\circ \text{\slshape{h}}^{cal}$, restricts to a calibrated torus morphism, hence is a toric morphism. It corresponds to the linear mappings $(h'\circ H,H)$, resp. $(L\circ h,H)$. But these mappings are equal by \eqref{CDLH}.
\end{proof}
We finish this subsection with a characterization of toric morphisms.
\begin{theorem}
	\label{theoremmorphismaffdec}
	There is a $1:1$ correspondence between morphisms $\mathscr{L}^{cal}$ from $Q^{cal}_{k,d,h}$ to $Q^{cal}_{k',d',h'}$ and pairs $(\mathscr{L},\mathscr{H})$ such that
	\begin{enumerate}
		\item [\rm i)] $\mathscr L$ is a morphism of affine Quantum varieties from $Q_{k,d,\Gamma}$ to $Q_{k',d',\Gamma'}$.
		\item[\rm ii)] $\mathscr H$ is a morphism of classical toric varieties from $\C^k\times \Torus^{n-k}$ to $\C^{k'}\times\Torus^{n'-k'}$.
		\item [\rm iii)] The following diagram is commutative
		\begin{equation}
		\label{cddectoricmorphism}
		\begin{tikzcd}
		\C^k\times \Torus^{n-k}\arrow[d,"{\text{\sl h}}"']\arrow[r,"\mathscr H"]&\C^{k'}\times \Torus^{n'-k'}\arrow[d,"{\text{\sl h}}'"]\\
		Q_{k,d,\Gamma}\arrow[r,"\mathscr L"]&Q_{k',d',\Gamma'}
		\end{tikzcd}
		\end{equation}
		\item[\rm iv)] For $j\not \in J'$, the $j$-th component $\mathscr{H}_j$ is a monomial with integer coefficients in the variables $(w_i)_{i\not\in J}$.
		\item[\rm v)] There exists a map $s$ from $J$ to $J'$ such that, for $j\in J'$, the $j$-th component $\mathscr{H}_{j}$ satisfies 
		\begin{equation*}
		\mathscr{H}_{j}(w)=\left \{
		\begin{aligned}
		\prod_{i\in s^{-1}(j)}&w_i\qquad\text{ if }s^{-1}(j)\not =\emptyset\cr
		&\ 1 \qquad \text{ if }s^{-1}(j)=\emptyset
		\end{aligned}
		\right .
		\end{equation*}
	\end{enumerate}
\end{theorem}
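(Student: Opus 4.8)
The plan is to establish the bijection by chasing through the three commutative diagrams produced in Theorems \ref{theoremtoricmorphismcondition} and \ref{theoremdectoricmorphismcondition} and the data of a calibrated Quantum Fan morphism. First I would treat the forward direction: given a toric morphism $\mathscr{L}^{cal}$ from $Q^{cal}_{k,d,h}$ to $Q^{cal}_{k',d',h'}$, by Definition \ref{defmorphismstdec} it restricts to a calibrated torus morphism $\nctorusgerbe{h}{J}\to\nctorusgerbe{h'}{J'}$, which by Definition \ref{deftorusmorphismdec} determines the pair $(L,H)$. Theorem \ref{theoremdectoricmorphismcondition} ii) then guarantees that $(L,H)$ is a calibrated Quantum Fan morphism between the standard cones $(C_{k,d},h)$ and $(C_{k',d'},h')$; in particular $L$ alone is a Quantum Fan morphism between the underlying cones, so Theorem \ref{theoremtoricmorphismcondition} ii) produces the morphism $\mathscr{L}$ of Quantum Toric Varieties, and $H$ induces $\mathscr{H}$ as recorded in the discussion preceding Lemma \ref{lemmaLhhH}. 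Commutativity of \eqref{cddectoricmorphism} is exactly Lemma \ref{lemmaLhhH}. Properties iv) and v) are then read off directly from conditions iv) and v) of Definition \ref{decQFmorphismdef} (i.e. from \eqref{Hrestric} and \eqref{Hs}): the block form $H=\begin{pmatrix} H_1 & 0\\ 0 & H_2\end{pmatrix}$ forces $H(e_i)$ for $i\not\in J$ to lie in $\bigoplus_{j\not\in J'}\Z e_j$, which translates under $E$ (via the multiplicative form $[\Torus^d/\Z^{n-d}]$ of \eqref{CDaddvsmultdectorus}) into the statement that $\mathscr{H}_j$, $j\not\in J'$, is a monomial with integer exponents in the variables indexed outside $J$; and $H(e_i)=e_{s(i)}$ for $i\in J$ gives the monomial formula for $\mathscr{H}_j$, $j\in J'$, with the empty-product convention when $j$ is not in the image of $s$.

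For the reverse direction, suppose $(\mathscr{L},\mathscr{H})$ is a pair satisfying i)–v). By Theorem \ref{prisoQTV} (applied to the affine situation via Theorem \ref{theoremtoricmorphismcondition}) the morphism $\mathscr{L}$ corresponds to a linear map $L$ which is a Quantum Fan morphism of the underlying cones, and $\mathscr{H}$, being a classical toric morphism of $\C^k\times\Torus^{n-k}$ to $\C^{k'}\times\Torus^{n'-k'}$, corresponds to an integer matrix $H$ on $\Z^n\to\Z^{n'}$ (using that $\C^k\times\Torus^{n-k}$ is the toric variety of the cone $C_{k,n}$ with lattice $\Z^n$). Conditions iv) and v), read back through the exponential dictionary, are precisely \eqref{Hrestric} and \eqref{Hs}, so $H$ has the block form and preserves the virtual generators via some $s:J\to J'$. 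The commutativity of \eqref{cddectoricmorphism}, combined with Lemma \ref{stackmorphismlemma}, forces the corresponding linear maps $L\circ h$ and $h'\circ H$ to agree, which is exactly \eqref{CDLH} (equivalently \eqref{commut}); hence $(L,H)$ is a morphism of calibrated Quantum Fans in the sense of Definition \ref{decQFmorphismdef}. Theorem \ref{theoremdectoricmorphismcondition} ii) then delivers a toric morphism $\mathscr{L}^{cal}$ from $Q^{cal}_{k,d,h}$ to $Q^{cal}_{k',d',h'}$. Finally I would check that the two constructions are mutually inverse: starting from $\mathscr{L}^{cal}$, passing to $(L,H)$, then to $(\mathscr{L},\mathscr{H})$, and then back, returns the same $(L,H)$ by the uniqueness clauses in Theorems \ref{theoremtoricmorphismcondition} and \ref{theoremdectoricmorphismcondition}, and hence the same $\mathscr{L}^{cal}$ by the explicit formulas \eqref{Ldec}–\eqref{mtildeprimedec}; conversely, a pair $(\mathscr{L},\mathscr{H})$ is recovered from the associated $\mathscr{L}^{cal}$ by Corollary \ref{corLdecetL} together with the definition of $\mathscr{H}$.

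The main obstacle I expect is the bookkeeping translating the lattice-level condition \eqref{Hrestric} on $H$ into the monomial statements iv) and v) for $\mathscr{H}$, and in particular keeping careful track of which coordinates are "$\Torus$-type" versus "$\C$-type" under the partial exponential maps $E_k$, $\bar E_k$ of \eqref{expk}–\eqref{expbark}: the issue flagged in Remark \ref{rkaddvsmultmorphism} (that a torus morphism need not lift multiplicatively) means one must verify that the specific $H$ arising here \emph{does} descend to a well-defined monomial map on $\C^k\times\Torus^{n-k}$, which is precisely what the block-diagonal structure and the virtual-generator condition ensure. A secondary subtlety is verifying that the empty-product convention in v) correctly matches the case $s^{-1}(j)=\emptyset$, i.e.\ that when no virtual generator of the source maps to the virtual generator $e_j$ of the target, the map $H$ sends $e_j$'s preimage-free slot to $0\in\Z^{n'}$ in the relevant component, producing the constant $1$; this follows from $H_2$ being the "permutation-with-possible-deletions" matrix forced by v) of Definition \ref{decQFmorphismdef}, but deserves an explicit sentence. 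Everything else is routine diagram-chasing that I would leave to the reader, in keeping with the paper's style.
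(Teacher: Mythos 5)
Your proposal is correct and follows essentially the same route as the paper's (much terser) proof: associate $(L,H)$ to $\mathscr{L}^{cal}$ and read off $(\mathscr{L},\mathscr{H})$, get iii) from Lemma \ref{lemmaLhhH} and iv)–v) from Definition \ref{decQFmorphismdef}, and conversely recover $(L,H)$ satisfying Definition \ref{decQFmorphismdef} from the pair and apply Theorem \ref{theoremdectoricmorphismcondition}. Your added detail on the exponential dictionary for \eqref{Hrestric}–\eqref{Hs} and on mutual inversion merely spells out what the paper leaves implicit.
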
 

\begin{proof}
	To $\mathscr{L}^{cal}$, we associate $(L,H)$ and $(\mathscr{L},\mathscr{H})$ as above. Point iii) is Lemma \ref{lemmaLhhH} and points iv) and v) follow from points iv) and v) of Definition \ref{decQFmorphismdef}. Conversely, $(\mathscr{L},\mathscr{H})$ induces $(L,H)$ satisfying Definition \ref{decQFmorphismdef} and $\mathscr{L}^{cal}$ through Theorem \ref{theoremdectoricmorphismcondition}.
\end{proof}
Finally, we note the obvious following fact.
\begin{corollary}
	\label{corcaliso}
	The following points are equivalent.
	\begin{enumerate}[i)]
		\item The stack isomorphism $\mathscr{L}^{cal}$ is an toric isomorphism from $Q^{cal}_{k,d,h}$ to $Q^{cal}_{k',d',h'}$ (that is both $\mathscr{L}^{cal}$ and its inverse are toric morphisms).
		\item The associated Quantum Fan morphism $(L,H)$ is an isomorphism.
		\item The restriction of the stack isomorphism $\mathscr{L}^{cal}$ to the Quantum tori is an isomorphism.
	\end{enumerate}
\end{corollary}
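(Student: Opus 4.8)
The plan is to deduce the corollary from Theorem \ref{theoremdectoricmorphismcondition} and from the functoriality of the assignment $(L,H)\mapsto\mathscr{L}^{cal}$, splitting the argument as (ii) $\Leftrightarrow$ (iii) and then (ii) $\Leftrightarrow$ (i). The equivalence (ii) $\Leftrightarrow$ (iii) is essentially a comparison of definitions. By construction (and by Definition \ref{defmorphismstdec}), the restriction of the toric morphism $\mathscr{L}^{cal}$ to the quantum tori is the calibrated torus morphism attached to the very same triple $(L,H,s)$ that underlies $\mathscr{L}^{cal}$. Now Definition \ref{deftorusmorphismdec} declares a calibrated torus morphism to be an isomorphism exactly when $L$, $H$ and $s$ are all isomorphisms, and Definition \ref{decQFmorphismdef} makes the identical declaration for a morphism of calibrated Quantum Fans; hence the condition that $\restriction{\mathscr{L}^{cal}}{\nctorusgerbe{h}{J}}$ be an isomorphism is verbatim the condition that $(L,H)$ be an isomorphism of calibrated Quantum Fans.

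For (ii) $\Rightarrow$ (i): if $(L,H,s)$ are isomorphisms, then $(L^{-1},H^{-1},s^{-1})$ is again a morphism of calibrated Quantum Fans, now from $(C_{k',d'},h')$ to $(C_{k,d},h)$, so by Theorem \ref{theoremdectoricmorphismcondition} it induces a toric morphism $\mathscr{M}^{cal}$ from $Q^{cal}_{k',d',h'}$ to $Q^{cal}_{k,d,h}$. By the calibrated analogue of Lemma \ref{stackmorphismlemma}, the composites $\mathscr{M}^{cal}\circ\mathscr{L}^{cal}$ and $\mathscr{L}^{cal}\circ\mathscr{M}^{cal}$ correspond respectively to $(L^{-1}L,H^{-1}H)$ and $(LL^{-1},HH^{-1})$, that is, to the identity fan morphisms, and are therefore the identity toric morphisms; thus $\mathscr{M}^{cal}$ is a two-sided inverse and $\mathscr{L}^{cal}$ is a toric isomorphism. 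For the converse (i) $\Rightarrow$ (ii): both $\mathscr{L}^{cal}$ and its inverse $\mathscr{M}^{cal}$ are toric, so they carry associated fan morphisms $(L,H)$ and $(L^{*},H^{*})$; functoriality again shows that the two composites correspond to $(L^{*}L,H^{*}H)$ and $(LL^{*},HH^{*})$, which must be the identity fan morphisms since the composites of the stack morphisms are the identities, so $L$ and $H$ are invertible; and since $H$ carries $\{e_i\}_{i\in J}$ bijectively onto $\{e_i\}_{i\in J'}$ by point v) of Definition \ref{decQFmorphismdef}, the induced map $s\colon J\to J'$ is a bijection, whence $(L,H)$ is a fan isomorphism.

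I do not anticipate a genuine obstacle; this is the kind of result that becomes routine once Theorem \ref{theoremdectoricmorphismcondition} and the functoriality of $(L,H)\mapsto\mathscr{L}^{cal}$ are available. The one point that deserves care is the identification, used for (ii) $\Leftrightarrow$ (iii), of the triple $(L,H,s)$ defining $\mathscr{L}^{cal}$ with the triple defining its restriction $\restriction{\mathscr{L}^{cal}}{\nctorusgerbe{h}{J}}$; this is guaranteed by the construction preceding Theorem \ref{theoremdectoricmorphismcondition} together with Definition \ref{defmorphismstdec}. If one preferred to read \emph{isomorphism} in (iii) as meaning only an isomorphism of the underlying analytic stacks, the extra work would be to check that such an abstract isomorphism of calibrated quantum tori is necessarily linear and band-preserving, so that $L$ and the induced $K\colon\Xi\to\Xi'$ of \eqref{completemorphismCD} are isomorphisms and hence $H$ is one as well by the five lemma; but under the natural reading that step is unnecessary.
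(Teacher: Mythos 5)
Your overall architecture---(ii) $\Leftrightarrow$ (iii) by matching Definitions \ref{deftorusmorphismdec} and \ref{decQFmorphismdef}, then passing between (ii) and (i) via Theorem \ref{theoremdectoricmorphismcondition} and functoriality---is the implicit argument behind the statement (the paper gives no proof and introduces the corollary as an obvious fact). However, one step you assert is not a consequence of the definitions you cite: that if $L$, $H$, $s$ are bijective then $(L^{-1},H^{-1},s^{-1})$ is again a morphism of calibrated Quantum Fans from $(C_{k',d'},h')$ to $(C_{k,d},h)$. The cone conditions for the inverse (that $L^{-1}$ maps $C_{k',d'}$ into $C_{k,d}$ with the $\N$-integrality of Definition \ref{QFmorphismdef}) do not follow from bijectivity alone. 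Concretely, take $d=d'=n=n'=2$, $h=h'=\mathrm{Id}$, $J=J'=\emptyset$, $k=1$, $k'=2$, and $L=H$ the unimodular map $e_1\mapsto e_1+e_2$, $e_2\mapsto e_2$: this is a morphism of calibrated Quantum Fans from $(C_{1,2},\mathrm{Id})$ to $(C_{2,2},\mathrm{Id})$ with $L$, $H$, $s$ bijective, and its restriction to the quantum tori, $(z,w)\mapsto (z,zw)$, is even an isomorphism; yet $L^{-1}$ does not map $C_{2,2}$ into $C_{1,2}$, and the induced toric morphism $\C\times\Torus\to\C^2$, $(z,w)\mapsto(z,zw)$, is not a stack isomorphism. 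So the implication you invoke is false in general; within the corollary it holds only because of the standing hypothesis---which your (ii) $\Rightarrow$ (i) argument never uses---that $\mathscr{L}^{cal}$ is a stack isomorphism.

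The repair is short and shows exactly where that hypothesis must enter. Since (ii) makes the torus restriction of $\mathscr{L}^{cal}$ an isomorphism whose inverse is the calibrated torus morphism attached to $(L^{-1},H^{-1},s^{-1})$ (no cone conditions are involved for tori), and since $\mathscr{L}^{cal}$ is a stack isomorphism, the inverse stack morphism $(\mathscr{L}^{cal})^{-1}$ carries the target quantum torus onto the source one and restricts there to that calibrated torus morphism; by Definition \ref{defmorphismstdec} it is therefore a toric morphism, which is (i). The only-if direction of Theorem \ref{theoremdectoricmorphismcondition} then yields, a posteriori, that $(L^{-1},H^{-1})$ is indeed a fan morphism, so your route through an explicitly constructed inverse $\mathscr{M}^{cal}$ can be salvaged if you prefer it. (If instead you read \emph{isomorphism} of calibrated fans as categorical invertibility, in the spirit of Lemmas \ref{isoQFlemma} and \ref{isoDQFlemma}, then your (ii) $\Rightarrow$ (i) is fine by definition, but the same gap reappears in (iii) $\Rightarrow$ (ii) and is closed by the same use of the stack-isomorphism hypothesis.) The remaining parts of your proof---the definitional match for (ii) $\Leftrightarrow$ (iii), and (i) $\Rightarrow$ (ii) via the uniqueness of the associated pair guaranteed by Theorem \ref{theoremmorphismaffdec}---are sound.
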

From this Corollary, we define
\begin{definition}
	\label{defisomarked}
	A stack isomorphism $\mathscr{L}^{cal}$ is a {\it marked isomorphism} if the associated Quantum Fan morphism $(L,H)$ is a marked isomorphism, or, equivalently, if its restriction to the Quantum tori is a marked isomorphism.
\end{definition}
\subsection{Forgetting Calibrations}
\label{subsecf}
Given any calibrated standard affine  Quantum Toric Variety $Q^{cal}_{k,d,h}$ we can canonically associate to it its standard non calibrated form $Q_{k,d,\Gamma}$. More precisely, there is a natural forgetful functor $\text{\sl f}$ from $Q^{cal}_{k,d,h}$ onto $Q_{k,d,\Gamma}$. To write it down explicitly we make use of the kernel of $h$. Consider as in \eqref{completemorphismCD}
\begin{equation}
\label{Kernelexactseq}
\begin{tikzcd}
0 \arrow[r] &\Xi \arrow[r,"i"] &\Z^n\arrow[r,"h"] &\Gamma \arrow[r] &0
\end{tikzcd}
\end{equation}
 Observe that 
 \begin{equation*}
 h\circ i(k)=h(i_1(k),i_2(k))=i_1(k)+\hbar(i_2(k))=0
 \end{equation*}
 so 
 \begin{equation}
 \label{inZ}
 \hbar(i_2(k))\in\Z^d.
 \end{equation}
 
 Given an object $(\tilde T^{cal},m^{cal})$ over $T$, then $\Xi$ injects via $i$ in its deck transformation group, hence acts freely and properly on it. So we send it to $(\tilde T,m)$ where
 $\tilde T=\tilde T^{cal}/\Xi$
 and
 \begin{equation*}
 m(\tilde t)=m^{cal}(\tilde t^{cal})\qquad\text{ with }\qquad \tilde t^ {cal}\in\tilde T^{cal}\mapsto \tilde t\in\tilde T
 \end{equation*}
 which is well defined since, using \eqref{inZ}, 
 \begin{equation*}
 m^{cal}(i(k)\cdot\tilde t^{cal})=E(\hbar\circ i_2)\cdot m^{cal}(\tilde t^{cal})=m^{cal}(\tilde t^{cal})
 \end{equation*}
 So we have for the objects
 \begin{equation}
 \begin{tikzcd}[row sep=small]
  &  \C^k\times \Torus^{d-k}&&\\      
 \tilde T^{cal}\arrow[rr,"\text{\sl f}"]\arrow[rd, bend right=15, end anchor=real west]\arrow[ru,end anchor=west,"m^{cal}", bend left=20]&&\tilde T\kern15pt\arrow[ld, bend left=15, start anchor=210, end anchor=5] \arrow[lu,bend right=20,start anchor=130, end anchor=0, "m"']\\
 &T&
 \end{tikzcd}
 \end{equation}
 and for the morphisms,
 \begin{equation}
 \begin{tikzcd}
 &&&&\C^k\times\Torus^{d-k}\arrow[from=llllddd,bend right=25, start anchor=10, end anchor=205, "m" near start]\\
 \tilde T^{cal}\arrow[rrrru,bend left=15,start anchor=north east, end anchor=west,"m^{cal}"]\arrow[rrr, crossing over]\arrow[rd]\arrow[dd] &&&\tilde S^{cal}\arrow[ur,start anchor=north east, end anchor=south west,"n^{cal}"]\arrow[dd, crossing over]\arrow[dl]&\\
 &T\arrow[r]&S&&\\
 \tilde T\arrow[ur]\arrow[rrr]&&&\tilde S\arrow[lu, crossing over]\arrow[ruuu,"n"']
 \end{tikzcd}
 \end{equation}
  
Given a calibrated toric morphism $\mathscr{L}^{cal}$, it induces by Corollary \ref{corLdecetL} a toric morphism $\mathscr{L}$. They are related through $\text{\slshape f}$. Indeed,

 \begin{lemma}
	\label{lemmadecoublicom}
	We have a commutative diagram
	\begin{equation}
	\label{cddecoublicom}
	\begin{tikzcd}
	Q^{cal}_{k,d,h}\arrow[d,"\text{\sl f }"']\arrow[r,"\mathscr{L}^{cal}"]&Q^{cal}_{k',d',\Gamma'}\arrow[d,"\text{\sl f}"]\\
	Q_{k,d,\Gamma}\arrow[r,"\mathscr{L}"]&Q_{k',d',\Gamma'}
	\end{tikzcd}
	\end{equation}
\end{lemma}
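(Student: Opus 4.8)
The statement to prove is Lemma \ref{lemmadecoublicom}: the square relating $\mathscr{L}^{cal}$, $\mathscr{L}$, and the forgetful functor $\text{\slshape f}$ commutes. My plan is to verify commutativity at the level of objects and then at the level of morphisms, chasing the explicit descriptions through the fiber-product constructions recalled above (equations \eqref{fpdec}, \eqref{Ldec}, \eqref{mtildeprimedec} for $\mathscr{L}^{cal}$; equations \eqref{fp}, \eqref{Ekaction}, \eqref{fp2ndaction} for $\mathscr{L}$; and the explicit formula for $\text{\slshape f}$ as quotient by $\Xi = \ker h$). Both composites send an object $(\tilde T^{cal}, m^{cal})$ over $T$ to an object of $Q_{k',d',\Gamma'}$ over $T$, so the task is to produce a canonical isomorphism between the two outputs that is compatible with morphisms.

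First I would spell out $\text{\slshape f}\circ\mathscr{L}^{cal}$ applied to $(\tilde T^{cal}, m^{cal})$: one forms $\hat T^{cal} = \tilde T^{cal}\,_{m^{cal}}\!\times_{\bar E_k}\C^d$, then $\mathscr{L}^{cal}(\tilde T^{cal}) = \hat T^{cal}\times_{pr_{k'}\circ H_2}\Z^{n'-d'}$ with equivariant map $(\tilde t,z,q)\mapsto \bar E'_{k'}\overline{L}(z)\cdot(E'\hbar'(q))$, and finally one quotients by $\Xi' = \ker h'$. Going the other way, $\mathscr{L}\circ\text{\slshape f}$ first forms $\tilde T = \tilde T^{cal}/\Xi$ with $m$ induced by $m^{cal}$, then applies the $\mathscr L$ construction: $\hat T = \tilde T\,_m\!\times_{\bar E_k}\C^d$, followed by $\tilde T' = \hat T\times_{\bar E'_{k'}\overline{L}}E'(\Gamma')$ with equivariant map $(\tilde t,z,E'\gamma')\mapsto \bar E'_{k'}\overline{L}(z)\cdot E'(\gamma')$. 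The key bookkeeping observation is that $\Xi$ and $\Xi'$ fit into the commutative ladder \eqref{completemorphismCD} with $K\colon\Xi\to\Xi'$, and that $h'$ induces a bijection $\Z^{n'}/\Xi' \cong \Gamma'$; under this identification the twisting datum $pr_{k'}\circ H_2$ followed by $h'$ becomes exactly $\overline{L}$ restricted appropriately, because $L\circ h = h'\circ H$. So I would exhibit the isomorphism between the two outputs as the map induced by $h'$ on the $\Z^{n'-d'}$-coordinate — i.e. $(\tilde t,z,q)\mapsto(\tilde t, z, h'(q))$ (more precisely $\hbar'(q)\in\Gamma'$), checking that it is well-defined after quotienting by $\Xi$ on the source and that it intertwines the two equivariant maps into $\C^k\times\Torus^{d-k}$; this is immediate since both are $\bar E'_{k'}\overline{L}(z)$ times the image of the last coordinate under $E'\hbar'$, and $E'\hbar'(q)$ for $q\in\Z^{n'-d'}$ equals $E'(\gamma')$ for the corresponding $\gamma'\in\Gamma'$.

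The morphism level is then a routine diagram chase: given $g\colon\tilde T^{cal}\to\tilde S^{cal}$ over $T\to S$, both composites produce the map sending (the class of) $(\tilde t,z,q)$ to (the class of) $(g(\tilde t),z,q)$, by the explicit formulas \eqref{fprime} and its calibrated analogue $g'(\tilde t,z,q)=(g(\tilde t),z,q)$, and the isomorphism constructed above visibly commutes with these. Finally I would remark that the whole argument is, as usual, reduced to the affine-chart level and that associativity/compatibility with further composition is inherited from Lemma \ref{stackmorphismlemma} and its calibrated version. The main obstacle — really the only subtle point — is checking that the canonical isomorphism $\Z^{n'}/\Xi'\cong\Gamma'$ matches the two different twisting cocycles ($pr_{k'}\circ H_2$ composed with $h'$ on one side, $\overline L$ on the other); this amounts precisely to unwinding \eqref{commut} / \eqref{CDLH}, so once that compatibility is made explicit the rest is formal.
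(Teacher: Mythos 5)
Your proposal is correct and takes essentially the same route as the paper's proof: both unwind the two composites on an object through the explicit fiber-product constructions, identify $\text{\slshape f}(\mathscr{L}^{cal}(\tilde T^{cal}))$ with $\mathscr{L}(\text{\slshape f}(\tilde T^{cal}))$ via $h'$ (equivalently $E'\hbar'$) on the last coordinate using $L\circ h = h'\circ H$, and then verify that the two equivariant maps into $\C^{k'}\times\Torus^{d'-k'}$ coincide. The morphism-level check you sketch is exactly the routine chase the paper leaves to the reader, so there is no gap.
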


\begin{proof}
	We just check it on the objects and left the verification on the morphisms to the reader. Start with an object \eqref{affQTVobjectdec} and set $\text{\sl f }(\tilde T^{cal})=\tilde T$. Then we have
	\begin{equation*}
	\text{\sl f }(\hat T^{cal})=\hat T
	\end{equation*}
	and
	\begin{equation*}
	\begin{aligned}
	\text{\sl f }(\mathscr{L}^{cal}(\tilde T^{cal}))=&\text{\sl f }(\hat T^{cal}\times_{s\circ pr_k}\Z^{n'-d'})\\
	=&\hat T\times_{\bar E'_{k'}{\overline{L}}}E'(\Gamma')\\
	=&\mathscr{L}(\tilde T)\\
	=&\mathscr{L}(\text{\sl f }(\tilde T^{cal})).
	\end{aligned}
	\end{equation*}
	and finally
	\begin{equation*}
	\begin{aligned}
	\text{\sl f }(\mathscr{L}^{cal}(m^{cal}))(\tilde t,z,E'\hbar'(q))=&\mathscr{L}^{cal}(m^{cal})(\tilde t^{cal},z,q)\\
	=&\bar E'_{k'}\overline{L}(z)\cdot \left (E'\hbar'(q)\right )\\
	=&\mathscr{L}(m)(\tilde t,z,E'\hbar'(q))\\
	=&\mathscr{L}(\text{\sl f }(m^{cal}))(\tilde t,z,E'\hbar'(q))
	\end{aligned}
	\end{equation*}
	proving the lemma.
\end{proof}

In particular, $\text{\slshape{h}}$ is the composition with $\text{\sl f}$ of $\text{\slshape h}^{cal}$. In other words, a calibrated affine quantum toric variety is indeed a triple
\begin{equation}
\label{cdtriple}
\begin{tikzcd}[row sep=tiny]
\C^k\times\Torus^{n-k}\arrow[dr,bend left=10,"\text{\sl h}"]\arrow[rr,bend left=20,"\text{\sl h}^{cal}"]&&Q^{cal}_{k,d,h}\arrow[ld,bend right=10,"\text{\sl f}"']\\
&Q_{k,d,\Gamma}&\\
\end{tikzcd}
\end{equation} 
We can give a much concrete description of $\text{\slshape{h}}$ and $\text{\slshape h}^{cal}$. The morphism $\text{\sl h}$ fits into the diagram, cf. \eqref{Lextension}
\begin{equation}
\label{hmorphismCD}
\begin{tikzcd}
\C^d\times\C^{n-d} \arrow[r,"h"]\arrow[d,"E_k"']&\C^{d}\arrow[d,"E_{k}"]\\
\Torus^{k}\times\C^{n-k}\arrow[r,"\overline{{h}}"]\arrow[d,hook]&\Torus^{k}\times\C^{d-k}\arrow[d,hook]\\
\C^d\times\C^{n-d}\arrow[d]\arrow[r,"\overline{{h}}"]&\C^d\arrow[d]\\
(\C^k\times\Torus^{d-k})\times \C^{n-d}\arrow[d,"\bar E_d"]\arrow[r,"\hat h"]&\C^k\times\Torus^{d-k}\arrow[d]\\
\C^k\times \Torus^{n-d}\arrow[r,"\text{\sl h}"]&Q_{k,d,\Gamma}
\end{tikzcd}
\end{equation}
Taking into account \eqref{h}, we may write down $\overline{h}$ as
\begin{equation}
\label{hbar}
\begin{aligned}
(z,w)\in\C^d\times\C^{n-d}\longmapsto \Big (E(&\hbar_1(w))\cdot z_1,\hdots,\\ E(\hbar_k(w))\cdot z_k,\
&z_{k+1}+\hbar_{k+1}(w), \hdots, z_d+\hbar_d(w)\Big )
\end{aligned}
\end{equation}
and $\hat h$ as
\begin{equation}
\label{hhat}
\begin{aligned}
(z,w)\in(\C^k\times\Torus^{d-k})\times\C^{n-d}\longmapsto E(\hbar(w))\cdot z
\end{aligned}
\end{equation}
Hence, we see that 
\begin{equation}
\label{decoupasdec}
\hat h(z,w+p)=E(\hbar(w+p))\cdot z=E(\hbar(p))\cdot \hat h(z,w) 
\end{equation}
for $p\in\Z^{n-d}$, so $\hat h$ descends as a morphism from $(\C^k\times\Torus^{d-k})\times\Torus^{n-d}$ to $Q_{k,d,h}^{cal}$. This is $\text{\sl h}^{cal}$.

We also note the following fact. We see that {\it $\overline{h}$, resp. $\hat h$ and $h$, defines an action of the group $(\C^{n-d},+)$ on $\C^d$, resp. $(\C^k\times\Torus^{d-k})\times\C^{n-d}$} so in \eqref{hmorphismCD} we interpret $\text{\sl h}$ as describing also an action of $(\C^{n-d},+)$. 
\vspace{3pt}

We may sum up the previous commutation properties in a single commutative diagram. Recall that $Q_{k,n}$ is simply $\C^k\times \Torus^{n-k}$.
\begin{lemma}
	\label{lemmaLHhetdec}
	The following diagram is commutative
	\begin{equation}
	\label{cdLHhetdec}
	\begin{tikzcd}[row sep=scriptsize, column sep=scriptsize]
& Q_{k,n} \arrow[dl, "Id"'] \arrow[rr,"\mathscr{H}"] \arrow[dd,"\text{\sl h}^{cal}"' near start] & & Q_{k',n'} \arrow[dl,"Id"'] \arrow[dd,"(\text{\sl h'})^{cal}"] \\
Q_{k,n} \arrow[rr, crossing over,"\mathscr{H}" near end] \arrow[dd,"{\text{\sl h}}"'] & & Q_{k',n'}\\
& Q^{cal}_{k,d,h} \arrow[dl,"{\text{\sl f}}"'] \arrow[rr,"\mathscr{L}^{cal}" near start] & & Q^{cal}_{k',d',h'} \arrow[dl,"{\text{\sl f}}"'] \\
Q_{k,d,\Gamma}\arrow[rr,"\mathscr{L}"] & & Q_{k',d',\Gamma'} \arrow[from=uu, crossing over,"{\text{\sl h'}}" near start]\\
\end{tikzcd}
	\end{equation}
\end{lemma}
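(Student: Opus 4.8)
The plan is to decompose the cube \eqref{cdLHhetdec} into its six square faces, observe that each of them commutes by a result already established, and then deduce that the whole cube commutes.

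First I would dispose of the two ``easy'' faces. The top face has both of its horizontal edges equal to $\mathscr H$ and both of its vertical edges equal to an identity morphism, so it commutes tautologically. The bottom face is precisely the square of Lemma \ref{lemmadecoublicom}, which relates $\mathscr L^{cal}$ and $\mathscr L$ through the forgetful functors $\text{\sl f}$.

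Next come the four vertical faces. The back face, which caps $\mathscr H$ over $\mathscr L^{cal}$ via the calibrated morphisms $\text{\sl h}^{cal}$ and $(\text{\sl h'})^{cal}$, commutes by Lemma \ref{lemmaLhhHdec}; the front face, which caps $\mathscr H$ over $\mathscr L$ via $\text{\sl h}$ and $\text{\sl h'}$, commutes by Lemma \ref{lemmaLhhH}. The left and right faces have their horizontal edges equal to the identity, so that their commutativity is exactly the factorizations $\text{\sl h}=\text{\sl f}\circ\text{\sl h}^{cal}$ and $\text{\sl h'}=\text{\sl f}\circ(\text{\sl h'})^{cal}$ recorded in the triple \eqref{cdtriple} (and obtained from the explicit descriptions \eqref{hmorphismCD}--\eqref{decoupasdec}).

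Finally I would assemble these facts: since all six faces of the cube \eqref{cdLHhetdec} commute, the cube itself is commutative — any two of the six directed paths from the initial vertex $Q_{k,n}$ (top, back, left) to the terminal vertex $Q_{k',d',\Gamma'}$ (bottom, front, right) are joined by a sequence of elementary moves, each replacing two consecutive edges by the complementary pair of edges of one face, and every such move preserves the composite since the corresponding face commutes. The only subtlety — and the one place where a little care is needed — is that these are $2$-commutative diagrams of stacks, so one should check that the $2$-isomorphisms coming from the individual faces paste coherently around the cube; this is automatic here because each of the invoked face-commutativities is produced canonically from an equality of linear maps via \eqref{CDLH} together with the explicit fiber-product descriptions of the morphisms. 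Concretely, after restricting to atlases all the morphisms involved are given by explicit formulas, and the verification reduces, exactly as in the proofs of Lemmas \ref{lemmaLhhH}, \ref{lemmaLhhHdec} and \ref{lemmadecoublicom}, to routine bookkeeping with these formulas, which is the only real work.
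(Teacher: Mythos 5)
Your proof is correct and follows essentially the same route as the paper, which also obtains \eqref{cdLHhetdec} by superposing the already-proved commutative squares \eqref{CDLHdecstack}, \eqref{cddectoricmorphism} (equivalently Lemma \ref{lemmaLhhH}) and \eqref{cddecoublicom}, the latter applied to $\mathscr{L}^{cal}$, $\text{\sl h}^{cal}$ and $(\text{\sl h'})^{cal}$ to supply the bottom, left and right faces. Your extra remarks on the trivial top face, the factorization $\text{\sl h}=\text{\sl f}\circ\text{\sl h}^{cal}$ from \eqref{cdtriple}, and the coherence of the $2$-isomorphisms only spell out what the paper leaves implicit.
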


\begin{proof}
	This is just a superposition of \eqref{CDLHdecstack}, \eqref{cddectoricmorphism}, and \eqref{cddecoublicom} applied to $\mathscr{L}^{cal}$, $\text{\sl h}^{cal}$ and $(\text{\sl h'})^{cal}$.
\end{proof}

Let now $\mathscr Aff^{cal}$ be the category whose objects are the standard calibrated cones $(C_{k,d},h)$ and whose morphisms are the morphisms of calibrated fans $(L,H)$ between two such $(C_{k,d},h)$. Similarly to Corollary \ref{affisocat}, we have
\begin{proposition}
	\label{propisocatdec}
	The category of calibrated affine Quantum Toric Varieties and their morphisms is equivalent to the category of $\mathscr Aff^{cal}$.
\end{proposition}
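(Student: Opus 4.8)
The plan is to exhibit an explicit functor $\Phi$ from $\mathscr{Aff}^{cal}$ to the category of calibrated affine Quantum Toric Varieties and to show it is fully faithful and essentially surjective. On objects, $\Phi$ sends a standard calibrated cone $(C_{k,d},h)$ to the quotient stack $Q^{cal}_{k,d,h}$ of \eqref{decQstaffineTV}; on a morphism $(L,H)$ of calibrated Quantum Fans it sends it to the stack morphism $\mathscr{L}^{cal}$ built in \eqref{fpdec}--\eqref{affQTVmorphismdecL}. Essential surjectivity is then immediate: every calibrated affine Quantum Toric Variety is of the form $Q^{cal}_{k,d,h}$ by construction, and by Lemma \ref{lemmastandarddec} the calibrated cone $(C_{k,d},h)$ may be taken standard, i.e.\ an object of $\mathscr{Aff}^{cal}$.

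First I would verify that $\Phi$ is a functor. The identity fan morphism $(Id,Id)$ goes to the identity stack morphism, since $\overline{L}$ and $H$ in \eqref{mtildeprimedec} reduce to identities; and $\Phi$ preserves composition by the calibrated analogue of Lemma \ref{stackmorphismlemma}: given $(C_{k,d},h)\xrightarrow{(L,H)}(C_{k',d'},h')\xrightarrow{(L',H')}(C_{k'',d''},h'')$, one identifies $\mathscr{L'}^{cal}\circ\mathscr{L}^{cal}$ with $(\mathscr{L'}\circ\mathscr{L})^{cal}$ by iterating the associated-cover construction \eqref{Ldec}, using the standard associativity $\big(\hat T^{cal}\times_{\alpha}\Z^{n'-d'}\big)\times_{\beta}\Z^{n''-d''}\cong \hat T^{cal}\times_{\beta\circ\alpha}\Z^{n''-d''}$ of iterated $\Z$-covers together with the commutativity \eqref{CDLH} of the two fan morphisms, exactly as in the uncalibrated case.

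Next I would prove fullness and faithfulness. For fullness, let $\mathscr{L}^{cal}$ be any toric morphism from $Q^{cal}_{k,d,h}$ to $Q^{cal}_{k',d',h'}$ in the sense of Definition \ref{defmorphismstdec}. Its restriction to the calibrated quantum tori is a calibrated torus morphism, which by Definition \ref{deftorusmorphismdec} produces a pair of linear maps $(L,H)$ satisfying \eqref{CDLH}; since $\mathscr{L}^{cal}$ extends this restriction, Theorem \ref{theoremdectoricmorphismcondition}(ii) forces $(L,H)$ to be a morphism of calibrated Quantum Fans $(C_{k,d},h)\to(C_{k',d'},h')$, and $\mathscr{L}^{cal}=\Phi(L,H)$ by the $1{:}1$ correspondence of Theorem \ref{theoremmorphismaffdec}. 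For faithfulness, Theorem \ref{theoremmorphismaffdec} shows $\mathscr{L}^{cal}$ determines the pair $(\mathscr{L},\mathscr{H})$; the linear map $L$ is recovered from $\mathscr{L}$ via Corollary \ref{affisocat} (where $\mathscr{Aff}$-morphisms \emph{are} linear maps) and $H$ is the lattice matrix of the classical toric morphism $\mathscr{H}$ of $\C^k\times\Torus^{n-k}$, so distinct $(L,H)$ yield distinct $\mathscr{L}^{cal}$. Being fully faithful and essentially surjective, $\Phi$ is an equivalence.

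The main obstacle will be the functoriality step, i.e.\ carefully setting up the calibrated analogue of Lemma \ref{stackmorphismlemma}. Everything else is a reorganization of Theorems \ref{theoremdectoricmorphismcondition} and \ref{theoremmorphismaffdec}, but the associated-cover bookkeeping for composition is messier here than in the uncalibrated case: instead of the $E'(\Gamma')$-torsors one now carries the extra factors $\Z^{n-d}$ coming from the calibration through the fiber product \eqref{fpdec} and its two commuting group actions, and one must check that the block splittings $L=\mathrm{diag}(L_1,L_2)$ and $H=\mathrm{diag}(H_1,H_2)$ are respected at each stage and glue compatibly with the forgetful functor (Lemma \ref{lemmadecoublicom}) and with $\text{\sl h}$, $\text{\sl h}^{cal}$ (Lemma \ref{lemmaLhhHdec}).
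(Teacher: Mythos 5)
Your proposal is correct and takes essentially the same approach as the paper: both rest on the correspondence of Theorem \ref{theoremmorphismaffdec} (together with Theorem \ref{theoremdectoricmorphismcondition} and Corollary \ref{affisocat}) to match toric morphisms $\mathscr{L}^{cal}$ with calibrated fan morphisms $(L,H)$, the paper merely phrasing this as a direct bijection on objects and morphisms rather than as a fully faithful, essentially surjective functor. Your extra attention to functoriality of composition is the calibrated adaptation of Lemma \ref{stackmorphismlemma} that the paper states earlier in Section \ref{NDstandardConedec} and explicitly leaves to the reader, so it is a welcome but not divergent addition.
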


\begin{proof}
	The proof is straightforward using Corollary \ref{affisocat} and the isomorphism between the category of cones in a lattice and the category of affine toric varieties in the classical case. An affine Quantum calibrated toric variety $Q^{cal}_{k,d,h}$ is by definition associted to a cone $C_{k,d,\Gamma}$. Then, starting with a morphism $(\mathscr L,\mathscr H)$ of calibrated affine Quantum Toric Varieties, we associate to it $(L,H)$ with $L$ morphism of Quantum Fans sending $(C_{k,d},e_1,\hdots,e_k)$ into $(C_{k',d'},e'_1,\hdots,e'_{k'})$ and $H$ morphism of classical fans. Now it follows from Theorem \ref{theoremmorphismaffdec} that such a linear map $H$ must send each $e_i$ corresponding to a virtual $1$-cone $e'_{s(i)}$. Hence $(L,H)$ satisfies the requirements of Definition \ref{decQFmorphismdef}. Conversely, given $(L,H)$ a morphism of Quantum Fans, we build $\mathscr{L}$ and $\mathscr{H}$ satisfying \eqref{cddectoricmorphism}. Finally condition iv) of Definition \ref{decQFmorphismdef} immediatly implies condition iv) of Theorem \ref{theoremmorphismaffdec}.
\end{proof}

\subsection{Calibrated Simplicial Quantum Toric Varieties}
\label{CCGcalibrated}
Let $(\Delta,h)$ be a simplicial standard calibrated Quantum Fan in $\Gamma$. Let $v_I$ be a maximal cone of $(\Delta,h)$. Define $A_I$ as in \eqref{AI}. Let
$H_I$ be the permutation of $\Z^n$ that sends $(e_1,\hdots, e_{\vert I\vert})$ onto the $(e_I)$. Let $h_I$ be the calibration which makes the following diagram commutative
\begin{equation}
\label{CDcalibratedCone}
\begin{tikzcd}
\Gamma \arrow[r,"A_I"] &A_I\Gamma\\
\Z^n \arrow[u,"h"']\arrow[r,"H_I"] &\Z^n\arrow[u,"h_I"]
\end{tikzcd}
\end{equation}

As usual, we decompose $h_I$ as $(Id, \hbar_I)$ with $\hbar_I$ being a $((n-d)\times d)$-matrix
and let $\Z^{n-d}\ni x$ act onto
$\C^{\vert I\vert} \times \Torus^{d-\vert I\vert}\ni z$ by
\begin{equation}
\label{calibratedConeAction}
L_I(x,z):=E(\hbar_I(x))\cdot z
\end{equation}
as in \eqref{Zaction}. 

We associate to $v_I$ the calibrated affine quantum toric variety
\begin{equation}
\label{QaffineTVdec}
Q_I^{cal}=\left [
\C^{\vert I\vert} \times \Torus^{d-\vert I\vert}/\langle L_I\rangle
\right ]
\end{equation}

It is isomorphic to $Q^{cal}_{k,d,h}$ by \eqref{CDcalibratedCone}. We note the obvious fact
\begin{lemma}
	\label{indepAIbis}
	Let $(A_I,H_I)$ and $(A'_I,H'_I)$ be two linear maps as in {\rm \eqref{AIg}} and {\rm \eqref{CDcalibratedCone}}. Let $Q_I^{cal}$ and $(Q')_I^{cal}$ be the associated calibrated affine quantum toric varieties. Then the linear maps $(A'_I\circ A_I^{-1}, H'_I\circ H_I^{-1})$ induces an isomorphism between $Q_I^{cal}$ and $(Q')_I^{cal}$.
\end{lemma}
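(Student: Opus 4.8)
The plan is to mirror the proof of Lemma~\ref{indepAI}, using the calibrated categorical equivalence of Proposition~\ref{propisocatdec} in place of Corollary~\ref{affisocat}. Concretely, I would set $L:=A'_I\circ A_I^{-1}$ and $H:=H'_I\circ H_I^{-1}$ and show that the pair $(L,H)$ is an isomorphism of calibrated Quantum Fans between $(C_{\vert I\vert,d},h_I)$ and $(C_{\vert I\vert,d},h'_I)$, where $h_I,h'_I$ are the calibrations produced from $(A_I,H_I)$, resp. $(A'_I,H'_I)$, through \eqref{CDcalibratedCone}. Proposition~\ref{propisocatdec} together with Corollary~\ref{corcaliso} then immediately upgrades such a calibrated fan isomorphism to a toric isomorphism $Q_I^{cal}\to (Q')_I^{cal}$, which is the assertion.

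The verification that $(L,H)$ is a morphism of calibrated Quantum Fans breaks into the conditions of Definition~\ref{decQFmorphismdef}. Condition iii) (the commutation $L\circ h_I=h'_I\circ H$, i.e. \eqref{CDLH}) is the heart of the computation and follows by substituting the two instances $h_I\circ H_I=A_I\circ h$ and $h'_I\circ H'_I=A'_I\circ h$ of \eqref{CDcalibratedCone}: both sides reduce to $A'_I\,h\,H_I^{-1}$. For condition i), since $A_I$ and $A'_I$ each send the cone $v_I$ onto the standard cone $C_{\vert I\vert,d}$ and $\Gamma$ onto $A_I\Gamma$, resp. $A'_I\Gamma$, the map $L$ sends $C_{\vert I\vert,d}$ to itself and $A_I\Gamma$ to $A'_I\Gamma$; combined with condition iii) this identifies the marked $1$-cone generators of the two underlying Quantum Fans, so $L$ is a Quantum Fan isomorphism by Lemma~\ref{isoQFlemma}. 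For condition ii) and conditions iv)--v), one observes that $H=H'_IH_I^{-1}$ is again a permutation matrix of the block shape prescribed before the lemma, hence is a fan isomorphism of $(\Delta_{h_I},e_I)$ and preserves the block of virtual generators, yielding the required map $s$ on $J$ (the identity when $H_I=H'_I$). Invertibility of $(L,H)$ is then automatic, being built from invertible linear maps and permutations, and the characterization of isomorphisms (Lemma~\ref{isoDQFlemma}) confirms $(L,H)$ is an isomorphism of calibrated Quantum Fans.

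I expect the only genuinely delicate point to be the bookkeeping of normalizations: one must check that the "standard" forms of $h_I$ and $h'_I$ coming out of \eqref{CDcalibratedCone} have their generators-of-$1$-cones blocks and virtual-generator blocks aligned, so that the pair $(L,H)$ lands in $Q^{cal}$ with the correct decorations and Corollary~\ref{corcaliso} applies verbatim. Once this matching is in place the remaining steps are formal, and the conclusion follows from Proposition~\ref{propisocatdec} exactly as in the non-calibrated Lemma~\ref{indepAI}.
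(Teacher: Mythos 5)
Your proposal is correct and follows essentially the same route as the paper: the paper's proof simply observes that $(A'_I\circ A_I^{-1},\,H'_I\circ H_I^{-1})$ is an isomorphism in the category $\mathscr{A}ff^{cal}$ of standard calibrated cones and invokes Proposition \ref{propisocatdec}. You merely make explicit the verification (via Definition \ref{decQFmorphismdef} and the two instances of \eqref{CDcalibratedCone}, both sides reducing to $A'_I\,h\,H_I^{-1}$) that the paper leaves implicit.
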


\begin{proof}
	The couple $(A'_I\circ A_I^{-1}, H'_I\circ H_I^{-1})$ is an isomorphism of the category $\mathscr{A}ff^{cal}$ of standard cones. Use Proposition \ref{propisocatdec}.
\end{proof}

The same type of description works for non-maximal cones. Starting with $J=i_1\cdots i_l$ labelling a non-maximal cone $v_J$, let $I\supset J$ such that $v_I$ is a maximal cone. Then we associate to $v_J$ the stack 
\begin{equation}
\label{QaffineTVdecnotmax}
Q_{I\supset J}^{cal}=\left [
\C^{\vert J\vert }\times \Torus^{d-\vert J\vert}/\langle L_I\rangle
\right ]
\end{equation}

Inclusion $J\subset I$ gives a calibrated toric inclusion $Q^{cal}_{I\supset J}\subset Q_I^{cal}$.
Besides, let $v_I$ and $v_{I'}$ be two maximal cones with non-empty intersection and let $J=I\cap I'$. Then the linear isomorphisms $A_{II'}:=A_JA_I^{-1}$ and $H_{II'}:=H_JH_I^{-1}$ define an isomorphism $\mathscr{A}_{II'}^{cal}$ from $Q_{I\supset J}^{cal}$ to $Q_{I'\supset J}^{cal}$, which indicates how to glue the calibrated affine quantum toric varieties $Q_I^{cal}$ and $Q_{I'}^{cal}$ along $Q_J^{cal}$. 

As in Section \ref{Cgluings}, we may thus define the calibrated Quantum Toric Variety $\nctoricgerbe{\Delta}{h}{J}$ by descent.

\begin{definition}
	\label{QToricdecObjects}
	Let $T\in\mathfrak{A}$. An {\it object of $\nctoricgerbe{\Delta}{h}{J}$ over $T$} is a covering $(T_I)_{I\in {\mathscr I_{max}}}$ of $T$ indexed over the set of maximal cones $\mathscr I_{max}$ together with an object 
	\begin{equation*}
	\begin{tikzcd}
	\tilde T_I^{cal} \arrow[r,"m_I^{cal}"]\arrow[d] &\C^{\vert I\vert}\times \Torus^{d-\vert I\vert}\\
	T_I & &
	\end{tikzcd}
	\end{equation*}
	of $Q_I^{cal}$ for any $I\in\mathscr I_{max}$ satisfying the descent datum condition
	\begin{equation*}
	\mathscr{A}^{cal}_{II'}\left ( 
	\begin{tikzcd}
	\tilde T_{I\supset J}^{cal} \arrow[r,"m_I^{cal}"]\arrow[d] &\C^{\vert J\vert}\times \Torus^{d-\vert J\vert}\\
	T_I & &
	\end{tikzcd}\kern-25pt
	\right )=
	\begin{tikzcd}
	\tilde T_{I'\supset J}^{cal} \arrow[r,"m_{I'}^{cal}"]\arrow[d] &\C^{\vert J\vert}\times \Torus^{d-\vert J\vert}\\
	T_I & &      
	\end{tikzcd}
	\end{equation*} 
	above any couple $(I,I')$ of maximal cones with non-empty intersection $J$, and setting $\tilde T_{I\supset J}^{cal}:=(m_I^{cal})^{-1}(\C^{\vert J\vert}\times \Torus^{d-\vert J\vert})$ and $\tilde T_{I'\supset J}^{cal}:=(m_{I'}^{cal})^{-1}(\C^{\vert J\vert}\times \Torus^{d-\vert J\vert})$.
\end{definition}  

\begin{definition}
	\label{QToricsdecMorphisms}
	Let $T\to S$ be a morphism from $T\in\mathfrak{A}$ and $S\in\mathfrak{A}$. A {\it morphism of $\nctoricgerbe{\Delta}{h}{J}$ over $T\to S$} is a collection of morphisms
	\begin{equation*}
	\begin{tikzcd}
	&            &\C^{\vert I\vert }\times \Torus^{d-\vert I\vert}\\
	\tilde T_I^{cal} \arrow[r, "F_I"]\arrow[rru,bend left=20,start anchor =60, end anchor=real west,"m_I^{cal}"]\arrow[d]          &\tilde S_I^{cal}  \arrow[d]\arrow[ru,bend right=10,"n_I^{cal}"']      &                       \\
	T_I        \arrow[r,"f"]         &S_I       &          
	\end{tikzcd}
	\end{equation*}
	satisfying the compatibility conditions: 
	\begin{equation*}
	\mathscr{A}^{cal}_{II'}\left (
	\begin{tikzcd}
	&            &\C^{\vert J\vert }\times \Torus^{d-\vert J\vert}\\
	\tilde T_{I\supset J}^{cal} \arrow[r, "F_I"]\arrow[rru,bend left=20,start anchor =60, end anchor=real west,"m_I^{cal}"]\arrow[d]          &\tilde S_{I\supset J}^{cal}  \arrow[d]\arrow[ru,bend right=10,"n_I^{cal}"']      &                       \\
	T_J        \arrow[r,"f"]         &S_J       &          
	\end{tikzcd}
	\right )
	\end{equation*}
	is equal to
	\begin{equation*}
	\begin{tikzcd}
	&            &\C^{\vert J\vert }\times \Torus^{d-\vert J\vert}\\
	\tilde T_{I'\supset J}^{cal} \arrow[r, "F_{I'}"]\arrow[rru,bend left=20,start anchor =60, end anchor=real west,"m_{I'}^{cal}"]\arrow[d]          &\tilde S_{I'\supset J}^{cal}  \arrow[d]\arrow[ru,bend right=10,"n_{I'}^{cal}"']      &                       \\
	T_J        \arrow[r,"f"]         &S_J       &          
	\end{tikzcd}
	\end{equation*}
\end{definition}

Let us compare with \eqref{QaffineTV}. Since $h$ and thus $h_I$ is an epimorphism, the difference sits in the kernel of these maps. Of special interest is therefore the short exact sequence \eqref{Kernelexactseq}
and its variation
\begin{equation}
\label{KernelexactseqI}
\begin{tikzcd}
0 \arrow[r] &\Xi_I:=\text{Ker }h_I \arrow[r] &\Z^n \arrow[r,"h_I"]  &A_I\Gamma\arrow[r] &0
\end{tikzcd}
\end{equation}
Let $v_I$ and $v_J$ be two maximal cones with non-empty intersection. From \eqref{CDcalibratedCone} and \eqref{KernelexactseqI}, we obtain the following commutative diagram
\begin{equation}
\label{bigCD}
\begin{tikzcd}
0 \arrow[r] &\Xi_I\arrow[r]\arrow[d,"K_{IJ}"] &\Z^n \arrow[r,"h_I"]\arrow[d,"H_{IJ}"] &A_I\Gamma \arrow[r]\arrow[d,"A_{IJ}"]& 0\\
0 \arrow[r] &\Xi_J \arrow[r] &\Z^{n} \arrow[r,"h_J"'] &A_J\Gamma \arrow[r] &0
\end{tikzcd}
\end{equation}
with $K_{IJ}$ uniquely defined by the pair $(A_{IJ},H_{IJ})$.

Using $K_{IJ}$ and \eqref{gluingmonomial}, we can define a calibrated toric variety more concretely using gluings of quasifolds. This is similar to what we did in Section \ref{Cgluings}, but this time the gluing \eqref{gluingmonomial} is defined on the calibrated torus $\nctorusgerbe{d}{h}\subset Q^{cal}_I$ through the diagram
\begin{equation}
\label{CDdecgluing}
\begin{tikzcd}
0 \arrow[r] &\Xi_I \arrow[r]\arrow[d,"K_{IJ}"]&\Z^n \arrow[d,"H_{IJ}"]\arrow[r,"h_I"] &\C^d \arrow[r]\arrow[d,"A_{IJ}"]&\nctorusgerbe{\Gamma}{h_I}\arrow[r]\arrow[d,"\mathscr{A}_{IJ}^{cal}"]&0\\
0 \arrow[r] &\Xi_J \arrow[r] &\Z^{n} \arrow[r,"h_J"]&\C^d \arrow[r] &\nctorusgerbe{\Gamma}{h_J}\arrow[r] &0
\end{tikzcd}
\end{equation}
We note that
\begin{equation}
\label{decgluing}
\begin{aligned}
A_{IJ}(z+ h_I(p))=&A_{IJ}z+A_{IJ}h_I(p)\\
&{A_{IJ}}z + h_J(H_{IJ}(p))
\end{aligned}
\end{equation}
hence is $\Z^n$-equivariant and descends as $\mathscr{A}_{IJ}^{cal}$. In other words, and passing in exponential form, the gluing
\begin{equation}
	\label{gluingmonomialdec}
	[z]\in Q^{cal}_{I\supset J}\longmapsto [z^{{A_{I'}}{A_I}^{-1}}]\in Q^{cal}_{I'\supset J}
\end{equation}
is well defined.
Then, it is extended to $Q_{I\supset I\cap J}$ by noting that the first $\vert I\cap J\vert$ rows of the matrix $A_J^{-1}A_I$ correspond to the first $\vert I\cap J\vert$ rows of the identity.

Hence, we define alternatively

\begin{definition}
\label{decQTVdef}
We call calibrated Quantum Toric Variety associated to the simplicial calibrated Quantum Fan $(\Delta,{\Gamma}, h)$ the stack obtained by gluing all the $Q^{cal}_I$ associated to maximal cones $C_I$ along their intersections through \eqref{CDdecgluing}. We denote it by $\nctoricgerbe{\Delta}{h}{J}$.
\end{definition}

Using \eqref{gluingmonomial}, we may also construct the quantum toric variety $\nctoric{\Delta}{\Gamma}{v}$. 	We have
\begin{proposition}
	\label{propQTVetQTVdec}
	The functor $\text{\sl f}$ defined on the affine Quantum calibrated cones extends as a morphism from $\nctoricgerbe{\Delta}{h}{J}$ to $\nctoric{\Delta}{\Gamma}{v}$.
\end{proposition}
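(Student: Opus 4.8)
The plan is to leverage the local statement that has already been proved---namely that on each standard affine calibrated chart the forgetful functor $\text{\sl f}\colon Q^{cal}_{k,d,h}\to Q_{k,d,\Gamma}$ is defined (Subsection \ref{subsecf}), and that it is compatible with calibrated toric morphisms (Lemma \ref{lemmadecoublicom})---and then to check that these chartwise functors glue into a single morphism of the descent data defining $\nctoricgerbe{\Delta}{h}{J}$ and $\nctoric{\Delta}{\Gamma}{v}$. Concretely, for each maximal cone $v_I$ of $(\Delta,h)$ we have the chart $Q^{cal}_I$ of \eqref{QaffineTVdec}, obtained from $Q^{cal}_{k,d,h_I}$ via \eqref{CDcalibratedCone}, and its non-calibrated counterpart $Q_I$ of \eqref{QaffineTV}. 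The functor $\text{\sl f}$ on the standard model transports, via the isomorphism of Lemma \ref{indepAIbis} (calibrated side) and Lemma \ref{indepAI} (non-calibrated side), to a functor $\text{\sl f}_I\colon Q^{cal}_I\to Q_I$ that on objects sends $(\tilde T^{cal}_I,m^{cal}_I)$ to $(\tilde T^{cal}_I/\Xi_I,\,m_I)$, quotienting by the kernel $\Xi_I$ of $h_I$ from \eqref{KernelexactseqI}; this is well defined exactly as in the affine case because of \eqref{inZ} applied to $h_I$. The same recipe on non-maximal faces $Q^{cal}_{I\supset J}\to Q_{I\supset J}$ is compatible with the inclusions, so the $\text{\sl f}_I$ restrict correctly to overlaps.

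The one point that needs genuine verification is that the chartwise functors commute with the gluing isomorphisms, i.e. that for two maximal cones $v_I,v_{I'}$ with $J=I\cap I'\neq\emptyset$ the square
\begin{equation*}
\begin{tikzcd}
Q^{cal}_{I\supset J}\arrow[r,"\mathscr{A}^{cal}_{II'}"]\arrow[d,"\text{\sl f}_I"']&Q^{cal}_{I'\supset J}\arrow[d,"\text{\sl f}_{I'}"]\\
Q_{I\supset J}\arrow[r,"\mathscr{A}_{II'}"]&Q_{I'\supset J}
\end{tikzcd}
\end{equation*}
commutes. This is precisely the globalization of Lemma \ref{lemmadecoublicom}: the calibrated gluing $\mathscr{A}^{cal}_{II'}$ is induced by the pair $(A_{IJ},H_{IJ})$ through \eqref{CDdecgluing}, the non-calibrated gluing $\mathscr{A}_{II'}$ by $A_{IJ}$ through \eqref{gluingmonomial}, and the compatibility $K_{IJ}$ of kernels in the commutative diagram \eqref{bigCD} is exactly what is needed to see that passing to the $\Xi$-quotient intertwines the two gluings. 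In exponential (monomial) form both gluings are given by the same matrix $A_{I'}A_I^{-1}$, so on the common quantum-torus locus $\nctorusgerbe{\Gamma}{h_I}$ the square commutes by \eqref{CDdecgluing}, and it then extends to all of $Q^{cal}_{I\supset J}$ by the same ``first $|I\cap J|$ rows are the identity'' argument used to extend \eqref{gluingmonomialdec}. Granting this, the collection $(\text{\sl f}_I)_I$ is a morphism of descent data, hence descends to a morphism $\text{\sl f}\colon\nctoricgerbe{\Delta}{h}{J}\to\nctoric{\Delta}{\Gamma}{v}$, and one reads off from Definitions \ref{QToricdecObjects}--\ref{QToricObjects} and \ref{QToricsdecMorphisms}--\ref{QToricsMorphisms} that it acts on objects over $T$ by $(\tilde T^{cal}_I,m^{cal}_I)_I\mapsto(\tilde T^{cal}_I/\Xi_I,m_I)_I$ and compatibly on morphisms.

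I expect the main obstacle to be purely bookkeeping: tracking how $\Xi_I$ sits inside the deck group of $\tilde T^{cal}_I$ after the change-of-chart matrices $A_I,H_I$ have been applied, and checking that the identification $K_{IJ}\colon\Xi_I\to\Xi_J$ of \eqref{bigCD} really is the one induced by the restricted gluing on the deck groups, so that the $\Xi$-quotients are canonically identified across overlaps. Once \eqref{bigCD} is in hand this is forced, but it is the step where a careless sign or a transposed block would break commutativity; everything else (freeness and properness of the $\Xi_I$-actions, functoriality on morphisms, effectiveness of the resulting descent datum via Proposition \ref{propstacks}) is routine and can be left to the reader exactly as in the affine case treated in Subsection \ref{subsecf}.
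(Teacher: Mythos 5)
Your proposal is correct and follows essentially the same route as the paper, whose proof is simply the observation that the statement is a direct consequence of Lemma \ref{lemmadecoublicom}: applying that lemma to the gluing isomorphisms $\mathscr{A}^{cal}_{II'}$ (which are calibrated toric morphisms with associated data $(A_{IJ},H_{IJ})$) yields exactly your commuting square, so the chartwise forgetful functors glue by descent. Your write-up just makes explicit the bookkeeping with $\Xi_I$ and \eqref{bigCD} that the paper leaves implicit.
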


\begin{proof}
	This is a direct consequence of Lemma \ref{lemmadecoublicom}.
\end{proof}
Now, observe that from diagram \eqref{CDdecgluing}, we can also glue together the affine classical toric varieties $\C^{\vert I\vert}\times\Torus^{n-\vert I\vert}$ using the cocyle $(H_{IJ})$. In this way, we obtain a classical toric variety $\mathscr{S}$. Moreover, we have
\begin{proposition}
	\label{propglobalh}
	The functor $\text{\sl h}$, resp.  $\text{\sl h}^{cal}$, extends as a morphism from $\mathscr{S}$ to $\nctoric{\Delta}{\Gamma}{v}$,  resp. to $\nctoricgerbe{\Delta}{h}{J}$.
\end{proposition}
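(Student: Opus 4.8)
The plan is to glue the locally-defined morphisms $\text{\sl h}$ (resp. $\text{\sl h}^{cal}$) from Corollary \ref{corhmorphism} (resp. Corollary \ref{corhdecmorphism}) into a single morphism out of $\mathscr{S}$, using the compatibility already recorded in Lemma \ref{lemmadecoublicom} and Lemma \ref{lemmaLHhetdec}. Recall that $\mathscr{S}$ was just defined by gluing the affine toric charts $\C^{\vert I\vert}\times\Torus^{n-\vert I\vert}$ along the cocycle $(H_{IJ})$ coming from the top row of \eqref{bigCD}, and that each such affine chart is (as noted after Theorem \ref{theoremdectoricmorphismcondition}) nothing but the affine toric variety $Q_{\vert I\vert,n}$ attached to the calibrated cone $(C_{\vert I\vert, n}, Id)$ with its set $J$ of virtual generators. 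So on each chart $I$ we already have the affine morphism $\text{\sl h}^{cal}_I\colon \C^{\vert I\vert}\times\Torus^{n-\vert I\vert}\to Q^{cal}_{I}$ induced, via Theorem \ref{theoremdectoricmorphismcondition}, by the calibrated quantum fan morphism $(h_I,Id)$ of Lemma \ref{hdecmorphism} (applied to the cone $v_I$, with $h_I$ as in \eqref{CDcalibratedCone}); composing with $\text{\sl f}$ gives $\text{\sl h}_I\colon \C^{\vert I\vert}\times\Torus^{n-\vert I\vert}\to Q_I$.

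First I would check that these $\text{\sl h}_I$ (resp. $\text{\sl h}^{cal}_I$) are compatible with the two gluing cocycles. On the source side the gluing is by $H_{IJ}=H_JH_I^{-1}$ acting on $\C^n$; on the target side for $\nctoric{\Delta}{\Gamma}{v}$ it is by $\mathscr{A}_{IJ}$ (the stack morphism induced by $A_{IJ}=A_JA_I^{-1}$), and for $\nctoricgerbe{\Delta}{h}{J}$ it is by $\mathscr{A}^{cal}_{IJ}$. The required identity $\mathscr{A}^{cal}_{IJ}\circ \text{\sl h}^{cal}_I = \text{\sl h}^{cal}_J\circ \mathscr{H}_{IJ}$ (where $\mathscr H_{IJ}$ is the torus/toric map induced by $H_{IJ}$) is exactly the instance of Lemma \ref{lemmaLhhHdec} (equivalently the front face of the cube in Lemma \ref{lemmaLHhetdec}) applied to the triple $(A_{IJ},H_{IJ},K_{IJ})$ of \eqref{bigCD}; at the level of fans it reduces to the commutativity $h_J\circ H_{IJ}=A_{IJ}\circ h_I$, which is precisely \eqref{CDcalibratedCone} (or its two-chart form \eqref{decgluing}). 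For the un-calibrated statement one uses Lemma \ref{lemmaLhhH} instead, i.e. the bottom face of the cube, together with Lemma \ref{lemmadecoublicom} relating $\text{\sl h}_I$, $\text{\sl h}^{cal}_I$ and $\text{\sl f}$. Then one verifies the cocycle/associativity conditions on triple overlaps $I,I',I''$: the source cocycle $(H_{IJ})$ is a cocycle because the $(H_I)$ are honest permutation matrices and $H_{II'}H_{I'I''}=H_{II''}$; the target cocycles $(\mathscr A_{IJ})$, $(\mathscr A^{cal}_{IJ})$ are cocycles by the very construction of $\nctoric{\Delta}{\Gamma}{v}$ and $\nctoricgerbe{\Delta}{h}{J}$ in Definitions \ref{QTVdef} and \ref{decQTVdef} (this is the "obvious cocycle" remark there, via Lemma \ref{stackmorphismlemma}); and $\text{\sl h}_I$ intertwines them chart by chart by the previous step. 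Hence, by descent in the stack $\nctoric{\Delta}{\Gamma}{v}$ (resp. $\nctoricgerbe{\Delta}{h}{J}$) over the site $\mathfrak A$, the collection $(\text{\sl h}_I)$ (resp. $(\text{\sl h}^{cal}_I)$) glues to a unique stack morphism $\text{\sl h}\colon \mathscr S\to \nctoric{\Delta}{\Gamma}{v}$ (resp. $\text{\sl h}^{cal}\colon \mathscr S\to \nctoricgerbe{\Delta}{h}{J}$), and $\text{\sl f}\circ\text{\sl h}^{cal}=\text{\sl h}$ follows from the pointwise identity of Proposition \ref{propQTVetQTVdec}/Lemma \ref{lemmadecoublicom}.

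Finally one should also check that $\text{\sl h}$ so obtained is a toric morphism in the precise sense used here, i.e. that its restriction to the quantum tori is a torus morphism — but that is immediate from the affine case, since on the open orbit charts $\text{\sl h}_I$ restricts to $\text{\sl h}$ of Corollary \ref{corhmorphism} which is toric. The main obstacle is none of the above formalities but rather bookkeeping: one must make sure the chart-wise morphisms $\text{\sl h}^{cal}_I$ are defined using mutually compatible normalizations (the matrices $A_I$, the permutations $H_I$, and the induced calibrations $h_I$ from \eqref{CDcalibratedCone}), so that the cocycle $(H_{IJ})$ used to build $\mathscr S$ is literally the one appearing in \eqref{bigCD}. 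Lemma \ref{indepAIbis} guarantees that different admissible choices give canonically isomorphic data, so the resulting $\text{\sl h}$ (resp. $\text{\sl h}^{cal}$) is independent of these auxiliary choices; once this is observed, the gluing goes through verbatim and there is nothing deeper to prove.
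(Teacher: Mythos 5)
Your proposal is correct and follows essentially the same route as the paper: the paper's proof simply says the statement is ``a direct consequence of diagram \eqref{cdLHhetdec}'', and your argument unpacks exactly that, applying the faces of that cube (Lemmas \ref{lemmaLhhH} and \ref{lemmaLhhHdec}) to the gluing data $(A_{IJ},H_{IJ},K_{IJ})$ of \eqref{bigCD} and concluding by descent. The extra checks you carry out (cocycle conditions, toricity on the open orbit, independence of the normalizations via Lemma \ref{indepAIbis}) are details the paper leaves implicit, not a different method.
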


\begin{proof}
	This is a direct consequence of diagram \eqref{cdLHhetdec}.
\end{proof}

As a consequence, we obtain a triple describing $\nctoricgerbe{\Delta}{h}{J}$ as in \eqref{cdtriple}
\begin{equation}
\label{cdtriplebis}
\begin{tikzcd}[row sep=tiny]
\mathscr{S}\arrow[dr,bend left=10,"\text{\sl h}"]\arrow[rr,bend left=20,"\text{\sl h}^{cal}"]&&\nctoricgerbe{\Delta}{h}{J}\arrow[ld,bend right=10,"\text{\sl f}"']\\
&\nctoric{\Delta}{\Gamma}{v}&\\
\end{tikzcd}
\end{equation} 
We shall see in Section \ref{QGIT} that a calibrated Quantum Toric Variety can be realized as the quotient space of $\mathscr{S}$ by a global holomorphic $\C^{n-d}$-action; and, under some additional hypotheses, as the quotient space of a non-K\"ahler complex manifold by a global holomorphic action.

\subsection{Gerbe Structure of a calibrated Quantum Toric Variety}
\label{AdditionalS}

Recall \eqref{Kernelexactseq}. Let $a$ such that $\Xi$ is isomorphic to $\Z^a$. Observe that $a$ is reduced to zero as soon as there is no $\Z$-linear relation between the $h(e_i)$, which is the generic case. In that case $\Xi$ is reduced to zero and \eqref{Kernelexactseq} reduces to $h$ being an isomorphism from $\Z^n$ to $\Gamma$. As a consequence $\nctoricgerbe{\Delta}{h}{J}$ and $\nctoric{\Delta}{\Gamma}{v}$ are the same stack. 

Assume now that $a$ is maximal, that is equal to $n-d$. Then, the matrices $A_I$ have integer entries and action \eqref{Zaction} is trivial so that $Q_I^{cal}$ is a gerbe over the classical affine toric variety $Q_I=\C^k\times\Torus^{d-k}$ with band $\Z^{n-d}$. 

In the intermediate case $0<a<n-d$, then $Q_I^{cal}$ is still a gerbe over the classical affine toric variety $Q_I=\C^k\times\Torus^{d-k}$ but with band $\Z^a$.  Globally we thus have by functoriality of the construction

\begin{proposition}
	\label{propgerbe}
	The calibrated Quantum Torics $\nctoricgerbe{\Delta}{h}{J}$ is a gerbe over $\nctoric{\Delta}{\Gamma}{v}$ with band $\Z^{a}$. In particular, if $a=0$, $\nctoricgerbe{\Delta}{h}{J}$ and $\nctoric{\Delta}{\Gamma}{v}$ are isomorphic.
\end{proposition}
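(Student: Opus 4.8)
The plan is to reduce the global statement to the local/affine statement already assembled in the preceding subsections, and then to package the local gerbe structures via descent. First I would recall the short exact sequence \eqref{Kernelexactseq}
\begin{equation*}
0 \arrow{\Xi} \arrow{\Z^n} \arrow[h]{\Gamma} \arrow{0},
\end{equation*}
oops — let me restate it in plain prose instead: the sequence $0\to\Xi\xrightarrow{i}\Z^n\xrightarrow{h}\Gamma\to0$ identifies $\Xi$ with the free abelian group $\Z^a$, where $a=n-\mathrm{rank}_\Z(\Gamma)$. For each maximal cone $v_I$ one has the analogous sequence \eqref{KernelexactseqI} with kernel $\Xi_I$, and since $A_I$ is a linear isomorphism, $\Xi_I\cong\Xi\cong\Z^a$. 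The key local fact I would isolate is: for each $I$, the forgetful morphism $\text{\sl f}\colon Q_I^{cal}\to Q_I$ exhibits $Q_I^{cal}=[\C^{\vert I\vert}\times\Torus^{d-\vert I\vert}/\langle L_I\rangle]$ as a gerbe over $Q_I=[\C^{\vert I\vert}\times\Torus^{d-\vert I\vert}/E(A_I\Gamma)]$ with band $\Z^a$. This is essentially the content of the discussion preceding the statement: the group $\langle L_I\rangle\cong\Z^{n-d}$ acting on $\C^{\vert I\vert}\times\Torus^{d-\vert I\vert}$ maps onto $E(A_I\Gamma)\cong\Z^{n-d}/\Xi_I$ with kernel $\Xi_I\cong\Z^a$, which acts trivially because $\hbar_I\circ i_2$ takes values in $\Z^d$ by \eqref{inZ}; hence the quotient stack is a trivial $B\Z^a$-gerbe banded over $Q_I$ on each chart.

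The second step is to check that these local gerbe structures are compatible with the gluing data, so that they descend to a global gerbe structure. Concretely, over the overlap $v_J=v_I\cap v_{I'}$ the gluing isomorphisms $\mathscr{A}_{II'}^{cal}$ on the calibrated side and $\mathscr{A}_{II'}$ on the uncalibrated side fit into the commutative diagram \eqref{bigCD}, where $K_{IJ}\colon\Xi_I\to\Xi_J$ is the induced isomorphism of the bands. Thus $\text{\sl f}$ intertwines $\mathscr{A}_{II'}^{cal}$ with $\mathscr{A}_{II'}$ on the overlaps — this is exactly Lemma \ref{lemmadecoublicom} (whose global version is Proposition \ref{propQTVetQTVdec}). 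Since the band-isomorphisms $K_{IJ}$ form a cocycle compatible with the $(A_{IJ},H_{IJ})$ cocycle, the local gerbes with band $\Z^a$ glue to a global object over $\nctoric{\Delta}{\Gamma}{v}$, and the morphism $\text{\sl f}\colon\nctoricgerbe{\Delta}{h}{J}\to\nctoric{\Delta}{\Gamma}{v}$ of Proposition \ref{propQTVetQTVdec} is this gerbe. The claim that each fiber is $B\Z^a$ and that $\text{\sl f}$ is a locally trivial gerbe (in the sense that it becomes trivial on the atlas charts $Q_I$) then follows chart by chart from the local analysis.

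Finally, when $a=0$ the kernel $\Xi$ is trivial, $h\colon\Z^n\to\Gamma$ is an isomorphism, each $\langle L_I\rangle$ acts on $\C^{\vert I\vert}\times\Torus^{d-\vert I\vert}$ exactly as $E(A_I\Gamma)$ does (the actions being identified through the isomorphism $h_I$), so $Q_I^{cal}\cong Q_I$ canonically and compatibly with gluings; hence $\nctoricgerbe{\Delta}{h}{J}\cong\nctoric{\Delta}{\Gamma}{v}$. I expect the only real work is the bookkeeping in the second step: verifying that the collection $(K_{IJ})$ really is a cocycle of band-isomorphisms and that the resulting descent datum does reassemble $\text{\sl f}$ as a gerbe rather than merely a morphism with $B\Z^a$-fibers — i.e. checking local triviality over the atlas. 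All the ingredients for this are already in diagrams \eqref{bigCD}, \eqref{CDdecgluing} and Lemma \ref{lemmadecoublicom}; the argument is a standard-though-tedious descent verification, and I would present it as such, referring back to those diagrams rather than re-deriving them.
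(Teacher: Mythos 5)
Your proposal is correct and is essentially the paper's own argument: the paper deduces the proposition from the chart-level observation that the kernel $\Xi\cong\Z^a$ of $h$ (equivalently each $\Xi_I$) acts trivially thanks to \eqref{inZ}, so that every $Q_I^{cal}$ is a $\Z^a$-gerbe over $Q_I$, and then concludes globally ``by functoriality of the construction'' --- which is exactly the descent bookkeeping you make explicit via \eqref{bigCD}, \eqref{CDdecgluing} and Lemma \ref{lemmadecoublicom}. The only wording to adjust is the claim of triviality ``on each chart'': the extension $0\to i_2(\Xi_I)\to\Z^{n-d}\to E(A_I\Gamma)\to 0$ need not split (e.g.\ in the rational case $E(A_I\Gamma)$ has torsion), so the gerbe need not be trivial over the stack $Q_I$ itself, but it is trivial after pulling back to the atlas $\C^{\vert I\vert}\times\Torus^{d-\vert I\vert}$, which is all your argument actually uses.
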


Homotopically, such a gerbe is given by a map in the classifying space $\mathcal B\mathcal B (\Z^a)$, which is nothing else than $(\mathbb P^{\infty})^{a}$. In other words, such a gerbe defines a principal $\Torus^{a}$ bundle over $\nctoric{\Delta}{\Gamma}{v}$ up to homotopy. We can describe this bundle as $\text{\sl h}$.

We first assert that there exists a natural morphism $\mathscr{P}$ from $\C^k\times\Torus^{n-k}$ to $Q^{cal}_{k,d,h}$. To see this,
let $\Z^{n-d}$ acts on $\C^k\times\Torus^{d-k}\times \C^{n-d}$ as
\begin{equation}
\label{actionconeV}
(p,(z,w))\in\Z^{n-d}\times (\C^k\times\Torus^{d-k})\times\C^{n-d}\longmapsto (E(\hbar(p))\cdot z,w+p)
\end{equation}
Then the projection map $P:(\C^k\times\Torus^{d-k})\times\C^{n-d}\to \C^k\times\Torus^{d-k}$ is equivariant for action \eqref{actionconeV} on the left and for action \eqref{Zaction}, hence it descends as a morphism
\begin{equation}
\label{morphismVdec}
\begin{tikzcd}
(\C^k\times\Torus^{d-k})\times\C^{n-d}\arrow[r,"P"]\arrow[d] &\C^k\times\Torus^{d-k}\arrow[d]\\
(\C^k\times\Torus^{d-k}\times\C^{n-d})/\Z^{n-d}\arrow[r,"{\mathscr{P}}"] &Q^{cal}_{k,d,h}
\end{tikzcd}
\end{equation}
We think of \eqref{morphismVdec} as a trivialization diagram for the morphism $\mathscr{P}$, making of $\C^k\times\Torus^{n-k}$ a trivial principal $\C^{n-d}$-bundle above $Q^{cal}_{k,d,h}$.
We then notice that $P$ is related to $\hat h$ through
\begin{equation}
\label{trivCmbundle}
\begin{tikzcd}
\C^k\times\Torus^{d-k}\times\C^{n-d}\arrow[dd,"(\hat h^{-1}\text{,}Id)"']\arrow[dr,"P"]&\\
&\C^k\times\Torus^{d-k}\\
(\C^k\times\Torus^{d-k})\times\C^{n-d}\arrow[ur,"\hat h"]
\end{tikzcd}
\end{equation}
Observe that $(\hat h^{-1},Id)$ is an isomorphism which is moreover equivariant for the action of $\Z^{n-d}$ on the source by \eqref{actionconeV} and on the target by integer translations on the $\C^{n-d}$-factor. So it descends as
\begin{equation}
\label{trivCmbundlebis}
\begin{tikzcd}
(\C^k\times\Torus^{d-k}\times\C^{n-d})/\Z^{n-d}\arrow[dd,"(\hat h^{-1}\text{,}Id)"']\arrow[dr,"\mathscr{P}"]&\\
&Q^{cal}_{k,d,h}\\
(\C^k\times\Torus^{d-k})\times\Torus^{n-d}\arrow[ur,"\text{\sl h}^{cal}"]
\end{tikzcd}
\end{equation}
By functoriality,
 \begin{proposition}
	\label{propliftPdec}
	The morphism $\text{\sl h}^{cal}$ from $\mathscr{S}$ to $\nctoric{\Delta}{\Gamma}{v}$ makes of $\mathscr{S}$ a principal $\C^{n-d}$ principal bundle over $\nctoricgerbe{\Delta}{h}{J}$.
\end{proposition}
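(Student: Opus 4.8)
The plan is to reduce the global statement to the affine case, which has already been settled, and then to check that the local trivializations are compatible with the gluings. Concretely, I would start by recalling that $\mathscr{S}$ is obtained by gluing the classical affine charts $Q_{I,n}=\C^{\vert I\vert}\times\Torus^{n-\vert I\vert}$ along the cocycle $(H_{IJ})$, that $\nctoricgerbe{\Delta}{h}{J}$ is obtained by gluing the charts $Q_I^{cal}$ along $(\mathscr{A}_{IJ}^{cal})$, and that $\text{\sl h}^{cal}$ is by construction the gluing of the local morphisms $\text{\sl h}^{cal}_I\colon Q_{I,n}\to Q_I^{cal}$. So it suffices to produce, on each chart, an equivariant trivialization of $\text{\sl h}^{cal}_I$ and to check that these glue.

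For the local triviality I would quote diagrams \eqref{morphismVdec} and \eqref{trivCmbundlebis}: the translation action of $\C^{n-d}$ on the last factor of $(\C^{\vert I\vert}\times\Torus^{d-\vert I\vert})\times\C^{n-d}$ commutes with the $\Z^{n-d}$-action \eqref{actionconeV}, hence descends to a free and proper $\C^{n-d}$-action on $(\C^{\vert I\vert}\times\Torus^{d-\vert I\vert}\times\C^{n-d})/\Z^{n-d}$ whose quotient stack is $Q_I^{cal}$; thus $\mathscr{P}$ is a trivial principal $\C^{n-d}$-bundle, and $(\hat h_I^{-1},\mathrm{Id})$ transports it isomorphically onto $\text{\sl h}^{cal}_I$. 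From this I extract a canonical local trivialization $\tau_I\colon\mathscr{S}\vert_{Q_I^{cal}}\xrightarrow{\ \sim\ }Q_I^{cal}\times\C^{n-d}$.

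It then remains to check that the transition maps $\mathscr{H}_{IJ}$ of $\mathscr{S}$ are isomorphisms of principal $\C^{n-d}$-bundles over the transitions $\mathscr{A}_{IJ}^{cal}$ of $\nctoricgerbe{\Delta}{h}{J}$. That $\mathscr{H}_{IJ}$ covers $\mathscr{A}_{IJ}^{cal}$ is Lemma \ref{lemmaLhhHdec} (equivalently, a face of the cube \eqref{cdLHhetdec}) applied to the gluing isomorphism $\mathscr{A}_{IJ}^{cal}$ in the role of $\mathscr{L}^{cal}$. For the fibrewise equivariance I would use the block decomposition $H_{IJ}=\mathrm{diag}(H_1,H_2)$ (the paragraph following Definition \ref{decQFmorphismdef}, and Lemma \ref{isoDQFlemma} in the maximal case) together with points iv)--v) of Theorem \ref{theoremmorphismaffdec}: the $\C^{n-d}$-action is carried on precisely the last $n-d$ coordinates through \eqref{actionconeV}, and on those coordinates $\mathscr{H}_{IJ}$ is a monomial map with integer exponents which is moreover the identity on the virtual block, so substituting into \eqref{actionconeV} shows at once that $\mathscr{H}_{IJ}$ intertwines the two $\C^{n-d}$-actions. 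Hence the $\tau_I$ glue to a global principal $\C^{n-d}$-bundle structure on $\mathscr{S}$ with projection $\text{\sl h}^{cal}$; freeness and properness of the resulting global action and the identification of its quotient stack with $\nctoricgerbe{\Delta}{h}{J}$ are inherited from the local statements via the cocycle condition.

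The main obstacle is this last verification: that the cocycle $(H_{IJ})$ not only covers the base cocycle $(\mathscr{A}_{IJ}^{cal})$ but genuinely respects the fibrewise $\C^{n-d}$-structure. This is exactly where the combinatorial constraints built into the definition of a morphism of calibrated quantum fans (the identity-on-virtual-generators condition, and the monomial form of the non-generator components) are really used; everything else is formal functoriality on top of the affine calibrated case treated in Section \ref{NDstandardConedec}.
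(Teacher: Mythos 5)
Your proof is correct and is essentially the paper's argument: the paper establishes the local (affine) statement via the trivialization diagrams \eqref{morphismVdec}, \eqref{trivCmbundle} and \eqref{trivCmbundlebis} and then concludes the global claim with the single phrase ``by functoriality''. What you add is merely an explicit unwinding of that functoriality — checking via Lemma \ref{lemmadecpreserve}-type compatibility, Lemma \ref{lemmaLhhHdec} and the block/monomial constraints on the cocycle $(H_{IJ})$ that the local trivializations glue — which is exactly the content the paper leaves implicit.
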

 Because of Proposition \ref{propgerbe}, when considering $\text{\slshape{h}}$ and no more $\text{\slshape{h}}^{cal}$, we have immediatly
 
 \begin{proposition}
 	\label{propliftP}
 	The morphism $\text{\sl h}$ from $\mathscr{S}$ to $\nctoric{\Delta}{\Gamma}{v}$ makes of $\mathscr{S}$ a principal $\Torus^{a}\times \C^{n-d-a}$ principal bundle over $\nctoric{\Delta}{\Gamma}{v}$.
 \end{proposition}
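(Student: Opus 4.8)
The plan is to deduce this from Propositions \ref{propliftPdec} and \ref{propgerbe} by tracking how the structure group changes when one composes the $\C^{n-d}$-bundle $\text{\sl h}^{cal}$ with the gerbe map $\text{\sl f}$. From the triple \eqref{cdtriplebis} one has $\text{\sl h}=\text{\sl f}\circ\text{\sl h}^{cal}$, where $\text{\sl h}^{cal}\colon\mathscr{S}\to\nctoricgerbe{\Delta}{h}{J}$ is a principal $\C^{n-d}$-bundle (Proposition \ref{propliftPdec}) and $\text{\sl f}\colon\nctoricgerbe{\Delta}{h}{J}\to\nctoric{\Delta}{\Gamma}{v}$ is a gerbe with band $\Z^a$ (Proposition \ref{propgerbe}) which, on each affine chart, is the quotient by the kernel $\Xi$ of $h$ from \eqref{Kernelexactseq}. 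The crucial observation is that, since $\mathscr{S}$ is an honest complex manifold, the band $\Xi$ cannot act trivially on the total space: it must be realized as a discrete subgroup of the structure group $\C^{n-d}$, and then the composite $\text{\sl h}$ is again a principal bundle, now with structure group $\C^{n-d}/\Xi$.

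The second step is to pin down the embedding of $\Xi$ into $\C^{n-d}$ and compute the quotient. Writing $i=(i_1,i_2)\colon\Xi\hookrightarrow\Z^d\times\Z^{n-d}$, relation \eqref{inZ} gives $\hbar(i_2(k))\in\Z^d$ for $k\in\Xi$, and $i_2$ is injective (if $i_2(k)=0$ then $h(i(k))=i_1(k)=0$, hence $k=0$); thus $i_2$ identifies $\Xi$ with a rank-$a$ --- hence discrete and closed --- subgroup of $\Z^{n-d}\subset\C^{n-d}$. Comparing the definition of $\text{\sl f}$ on objects with the action \eqref{actionconeV} and the trivializations \eqref{trivCmbundle}--\eqref{trivCmbundlebis} shows that the band acts on $\mathscr{S}$ exactly through $i_2$, that is, by integer translations of the $\C^{n-d}$-factor of the fibres of $\text{\sl h}^{cal}$; so $\text{\sl h}$ is a principal $(\C^{n-d}/i_2(\Xi))$-bundle. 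The elementary divisor theorem furnishes a $\Z$-basis $(f_1,\dots,f_{n-d})$ of $\Z^{n-d}$ and positive integers $n_1,\dots,n_a$ with $i_2(\Xi)=\bigoplus_{j\le a}\Z\,n_jf_j$; the $\C$-linear automorphism of $\C^{n-d}$ carrying $f_j$ to $e_j$ then gives $\C^{n-d}/i_2(\Xi)\cong\bigl(\prod_{j\le a}\C/n_j\Z\bigr)\times\C^{n-d-a}\cong\Torus^a\times\C^{n-d-a}$, the last step via $\zeta\mapsto\exp(2i\pi\zeta/n_j)$ on each compact factor.

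It remains to globalize: the chart-wise identifications are compatible with the gluing cocycles $(H_{IJ})$ on $\mathscr{S}$, $(A_{IJ})$ on $\nctoric{\Delta}{\Gamma}{v}$ and $(K_{IJ})$ on the bands --- these are exactly the data fitting into diagram \eqref{bigCD} --- so the local principal-bundle structures patch to a global principal $\Torus^a\times\C^{n-d-a}$-bundle structure for $\text{\sl h}$.

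I expect the main obstacle to be the middle step: making rigorous the statement ``a principal $G$-bundle followed by a band-$A$ gerbe, with manifold total space, is a principal $(G/A)$-bundle'', and especially checking that the band $\Xi$ of $\text{\sl f}$ genuinely sits inside the structure group $\C^{n-d}$ as $i_2(\Xi)$ rather than as an abstract copy of $\Z^a$ acting trivially. This is precisely where the honesty of $\mathscr{S}$ (versus the stacky $\nctoricgerbe{\Delta}{h}{J}$) is used, and it requires a careful chase through the defining diagrams \eqref{Kernelexactseq}, \eqref{actionconeV} and \eqref{trivCmbundlebis}.
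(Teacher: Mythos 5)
Your proposal is correct and takes essentially the same route as the paper: the paper obtains Proposition \ref{propliftP} directly by combining Proposition \ref{propliftPdec} with Proposition \ref{propgerbe}, i.e.\ exactly the composition $\text{\sl h}=\text{\sl f}\circ\text{\sl h}^{cal}$ you use, with the band $\Z^a\cong\Xi$ absorbed into the structure group $\C^{n-d}$ as the rank-$a$ lattice $i_2(\Xi)$, so that the quotient group becomes $\Torus^{a}\times\C^{n-d-a}$. Your write-up merely supplies the details (injectivity of $i_2$, the translation action via \eqref{actionconeV} and \eqref{trivCmbundlebis}, Smith normal form, and cocycle compatibility via \eqref{bigCD}) that the paper declares immediate.
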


This bundle is a concrete realization of the gerbe structure described in Proposition \ref{propgerbe}.

 \begin{example}
 \label{QP1dec}
 Let us go back to the calibrated Quantum Fan of Example \ref{P1QFans}. If $a$ is an integer, then $\Gamma$ is $\Z$ and it encodes the standard projective line $\mathbb P^1$. The calibrated Quantum Toric Variety is a $\Z^2$-gerbe over $\mathbb P^1$. The toric variety $\mathscr{S}$ is $\C^2\setminus\{(0,0)\}\times\Torus$, associated to the fan $(\R^+\cdot e_1,\R^+\cdot e_2)$ in $\R^3$ equipped with the standard integer lattice. Thus, the $\Torus^2$-bundle of Proposition \ref{propliftP} is just $(x,y,z)\in \mathscr{S}\mapsto [x,y]\in\mathbb P^1$, that is $\mathcal O(-1)\setminus\{0\}\oplus\mathcal O\setminus\{0\}$. It becomes a $\C^2$-bundle when replacing $\mathbb P^1$ with a $\Z^2$-gerbe over $\mathbb P^1$. For $a$ irrational, we still have a gerbe, but it has now band $\Z$.
 \end{example}
 
 \subsection{Morphisms of (calibrated) Quantum Toric Varieties}
 \label{QTVmorphismsection}
 A morphism between two simplicial calibrated Quantum Toric Varieties $\nctoricgerbe{\Delta}{h}{J}$ and $\nctoricgerbe{\Delta'}{h'}{J'}$ is simply a collection of calibrated affine toric morphisms $\mathscr{L}^{cal}_{II'}$ between the affine toric varieties $Q^{cal}_I$ and $(Q')^{cal}_{I'}$ that are compatible with the gluings. 
 Compatibility means that $\mathscr{A'}^{cal}_{I'J'}\circ \mathscr{L}^{cal}_{II'}$ and $\mathscr{L}^{cal}_{JJ'}\circ \mathscr{A}^{cal}_{IJ}$ are equal on $Q^{cal}_{J\supset{I}}$.
 Recalling that $\mathscr{L}_{II'}$ is encoded in $(L_{II'},H_{II'})$ and recalling \eqref{CDdecgluing}, we get easily that compatibility means
 \begin{equation*}
 A'_{I'J'}L_{II'}=L_{JJ'}A_{IJ}\qquad\text{ and }\qquad H'_{I'J'}H_{II'}=H_{JJ'}H_{IJ}
 \end{equation*}
 In other words, the linear maps $(A'_{I'}L_{II'}A_{I},H'_{I'}H_{II'}H_I)$ glue together in a Quantum Fan morphism $(L,H)$.
 
 Conversely, it is straightforward to check that a morphism of calibrated Quantum Fans induces a morphism of the corresponding calibrated Quantum Toric Varieties. From this, we obtain (compare with Theorem \ref{theoremtoricmorphismcondition})
 
 \begin{theorem}
 	\label{prisodecQTV}\textcolor{white}{fff}
 	\begin{enumerate}[\rm i)]\kern-5pt
 		\item A stack morphism between calibrated Quantum Toric Varieties is a toric morphism if and only if its restriction to the underlying Quantum tori is a calibrated torus morphism.
 		\item Let $\mathscr Q^{cal}$ be the category of simplicial calibrated Quantum Toric Varieties. Then $\mathscr Q^{cal}$ is equivalent to the category of quantum toric fans $Q^{cal}$.
 	\end{enumerate}
 \end{theorem}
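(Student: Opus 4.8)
The plan is to mimic exactly the strategy used for the uncalibrated Theorem \ref{prisoQTV}: bootstrap from the affine equivalence of Proposition \ref{propisocatdec} by a gluing/descent argument, and settle part i) chart by chart using Theorem \ref{theoremdectoricmorphismcondition}.

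\emph{Part i).} Suppose $\mathscr F\colon\nctoricgerbe{\Delta}{h}{J}\to\nctoricgerbe{\Delta'}{h'}{J'}$ is a stack morphism whose restriction to the open substacks $\nctorusgerbe{h}{J}$ and $\nctorusgerbe{h'}{J'}$ is a calibrated torus morphism; by Definition \ref{deftorusmorphismdec} it determines linear data $(L,H)$ with $Lh=h'H$ preserving virtual generators. Covering both sides by the affine charts $Q^{cal}_I$, resp. $(Q')^{cal}_{I'}$, the morphism $\mathscr F$ restricts on each $Q^{cal}_I$ to a stack morphism into some $(Q')^{cal}_{I'}$ whose restriction to the quantum torus is a calibrated torus morphism; by part ii) of Theorem \ref{theoremdectoricmorphismcondition} this restriction is automatically a calibrated toric morphism, and moreover $(A'_{I'}LA_I^{-1},H'_{I'}HH_I^{-1})$ must be a morphism of the calibrated cone fans $(C_{|I|,d},h_I)\to(C_{|I'|,d'},h'_{I'})$. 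Since this holds on every chart, $\mathscr F$ is toric. The converse is immediate because a calibrated toric morphism restricts to a calibrated torus morphism by definition.

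\emph{Part ii), construction of the functors.} On objects, send a simplicial calibrated quantum fan $(\Delta,h)$ with virtual set $J$ to $\nctoricgerbe{\Delta}{h}{J}$ of Definition \ref{decQTVdef}; by Lemma \ref{lemmastandarddec} and Definition \ref{defstandarddecfan} we may assume standard form, and every calibrated quantum toric variety arises this way. On morphisms, given $(L,H)$ a morphism of calibrated quantum fans and a pair of maximal cones $v_I$ of $\Delta$, $v_{I'}$ of $\Delta'$ with $L(v_I)\subset v_{I'}$ — equivalently (Lemma \ref{lemmadecpreserve}) $H\imath^*v_I\subset(\imath')^*v_{I'}$ — form the affine calibrated toric morphism $\mathscr L^{cal}_{II'}$ attached by Theorem \ref{theoremdectoricmorphismcondition} to the conjugated pair $(A'_{I'}LA_I^{-1},H'_{I'}HH_I^{-1})$. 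The cocycle relations $A_{JK}A_{IJ}=A_{IK}$, $H_{JK}H_{IJ}=H_{IK}$ for the gluing data, together with $Lh=h'H$ and diagram \eqref{CDdecgluing}, give $\mathscr{A'}^{cal}_{I'J'}\circ\mathscr L^{cal}_{II'}=\mathscr L^{cal}_{JJ'}\circ\mathscr{A}^{cal}_{IJ}$ on overlaps, so the $\mathscr L^{cal}_{II'}$ glue into a morphism of calibrated quantum toric varieties; functoriality in composition follows from Lemma \ref{stackmorphismlemma}, adapted to the calibrated case as in the proof of Theorem \ref{theoremdectoricmorphismcondition}. Conversely, as recalled just before the theorem, any morphism of calibrated quantum toric varieties is a compatible collection of affine calibrated morphisms encoded by pairs $(L_{II'},H_{II'})$; compatibility reads $A'_{I'J'}L_{II'}=L_{JJ'}A_{IJ}$ and $H'_{I'J'}H_{II'}=H_{JJ'}H_{IJ}$, which says the conjugated maps $A'_{I'}L_{II'}A_I$ and $H'_{I'}H_{II'}H_I$ agree on overlaps and hence define global $(L,H)$; that this is a morphism of calibrated quantum fans — conditions i)–v) of Definition \ref{decQFmorphismdef}, in particular the block-diagonal form of $H$ preserving virtual generators — follows by applying Theorem \ref{theoremmorphismaffdec} on a single chart. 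These two assignments are inverse to each other on morphisms and inverse up to the canonical isomorphisms of Lemma \ref{indepAIbis} (the choices of $A_I$, $H_I$) on objects, whence the equivalence $\mathscr Q^{cal}\simeq Q^{cal}$.

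The main obstacle is keeping the descent bookkeeping straight: one must track simultaneously the $\Z^n$-cocycle $(H_{IJ})$, the $\Gamma$-cocycle $(A_{IJ})$, and the induced kernel cocycle $(K_{IJ})$ of \eqref{bigCD}, and verify that the virtual-generator conditions (the block form of $H$) are stable both under restriction to affine charts and under the gluing — this is precisely where the calibrated case genuinely differs from Theorem \ref{prisoQTV}, and where Remark \ref{rkaddvsmultmorphism} warns that one cannot argue naively in multiplicative coordinates but must stay on the additive side and use the fibered-product constructions of Section \ref{decodef}. Everything else is a routine transcription of the uncalibrated argument.
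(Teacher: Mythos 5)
Your proposal is correct and follows essentially the same route as the paper: the paper's argument in Section \ref{QTVmorphismsection} likewise reduces everything to the affine calibrated case (Theorem \ref{theoremdectoricmorphismcondition}, Proposition \ref{propisocatdec}), observes that compatibility with the gluings reads $A'_{I'J'}L_{II'}=L_{JJ'}A_{IJ}$ and $H'_{I'J'}H_{II'}=H_{JJ'}H_{IJ}$, so that the conjugated linear data glue into a global calibrated fan morphism $(L,H)$, and checks the converse chart by chart. Your extra bookkeeping (Lemma \ref{lemmadecpreserve}, Lemma \ref{indepAIbis}, Theorem \ref{theoremmorphismaffdec}) only makes explicit steps the paper leaves as ``straightforward''.
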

 
  Finally, we may sum up the properties of calibrated toric morphisms in the following two theorems. They are obtained respectively from Theorem \ref{theoremmorphismaffdec} and Lemma \ref{lemmaLHhetdec}.
 
 \begin{theorem}
 	\label{thmdecmorphisms2}
 	There is a $1:1$ correspondence between morphisms $\mathscr{L}^{cal}$ from $\nctoricgerbe{\Delta}{h}{J}$ to $\nctoricgerbe{\Delta'}{h'}{J'}$ and pairs $(\mathscr{L},\mathscr{H})$ such that
 	\begin{enumerate}
 		\item [\rm i)] $\mathscr L$ is a morphism of affine Quantum varieties from $\nctoric{\Delta}{\Gamma}{v}$ to $\nctoric{\Delta'}{\Gamma'}{v'}$.
 		\item[\rm ii)] $\mathscr H$ is a morphism of classical toric varieties from $\mathscr{S}$ to $\mathscr{S}'$.
 		\item [\rm iii)] The following diagram is commutative
 		\begin{equation}
 		\label{cddectoricmorphismfull}
 		\begin{tikzcd}
 		\mathscr{S}\arrow[d,"{\text{\sl h}}"']\arrow[r,"\mathscr H"]&\mathscr{S}'\arrow[d,"{\text{\sl h}}'"]\\
 		\nctoric{\Delta}{\Gamma}{v}\arrow[r,"\mathscr L"]&\nctoric{\Delta'}{\Gamma'}{v'}
 		\end{tikzcd}
 		\end{equation}
 		\item[\rm iv)] For $j\not \in J'$, the $j$-th component $\mathscr{H}_j$ is a monomial with integer coefficients in the variables $(w_i)_{i\not\in J}$.
 		\item[\rm v)] There exists a map $s$ from $J$ to $J'$ such that, for $j\in J'$, the $j$-th component $\mathscr{H}_{j}$ satisfies 
 		\begin{equation*}
 		\mathscr{H}_{j}(w)=\left \{
 		\begin{aligned}
 		\prod_{i\in s^{-1}(j)}&w_i\qquad\text{ if }s^{-1}(j)\not =\emptyset\cr
 		&\ 1 \qquad \text{ if }s^{-1}(j)=\emptyset
 		\end{aligned}
 		\right .
 		\end{equation*}
 	\end{enumerate}
 \end{theorem}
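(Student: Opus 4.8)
The statement to be proved is Theorem~\ref{thmdecmorphisms2}: the $1{:}1$ correspondence between calibrated toric morphisms $\mathscr{L}^{cal}\colon \nctoricgerbe{\Delta}{h}{J}\to\nctoricgerbe{\Delta'}{h'}{J'}$ and pairs $(\mathscr L,\mathscr H)$ satisfying (i)--(v). The overall strategy is to reduce the global statement to the affine one, which is Theorem~\ref{theoremmorphismaffdec}, by using the fact (established in Subsection~\ref{QTVmorphismsection}, just before the theorem) that a global calibrated toric morphism is exactly a collection of affine calibrated toric morphisms $\mathscr{L}^{cal}_{II'}$ compatible with the gluing cocycles, i.e.\ such that $A'_{I'J'}L_{II'}=L_{JJ'}A_{IJ}$ and $H'_{I'J'}H_{II'}=H_{JJ'}H_{IJ}$, equivalently such that the linear data $(A'_{I'}L_{II'}A_I,\,H'_{I'}H_{II'}H_I)$ glue together into a single morphism of calibrated Quantum Fans $(L,H)$; and that $\nctoricgerbe{\Delta}{h}{J}$ is itself patched together (Definition~\ref{decQTVdef}) from the affine pieces $Q^{cal}_I$, with $\mathscr S$ patched from the $\C^{\vert I\vert}\times\Torus^{n-\vert I\vert}$ via the cocycle $(H_{IJ})$ (the paragraph preceding Proposition~\ref{propglobalh}).

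\textbf{Step 1 (from $\mathscr{L}^{cal}$ to $(\mathscr L,\mathscr H)$).} Given $\mathscr{L}^{cal}$, restrict it to each affine chart to obtain $\mathscr{L}^{cal}_{II'}$ and apply Theorem~\ref{theoremmorphismaffdec} to get, for each pair $(I,I')$, a pair $(\mathscr L_{II'},\mathscr H_{II'})$ satisfying the local versions of (iii)--(v), together with the commutative cube of Lemma~\ref{lemmaLHhetdec}. The point is that these local pieces are compatible with the gluings: the $\mathscr L_{II'}$ glue to a global $\mathscr L\colon\nctoric{\Delta}{\Gamma}{v}\to\nctoric{\Delta'}{\Gamma'}{v'}$ by Theorem~\ref{prisodecQTV} (equivalently Theorem~\ref{theoremtoricmorphismcondition} applied chart-by-chart together with the fan-morphism compatibility), giving (i); the $\mathscr H_{II'}$ glue to a global morphism $\mathscr H\colon\mathscr S\to\mathscr S'$ of classical toric varieties because on overlaps the relation $H'_{I'J'}H_{II'}=H_{JJ'}H_{IJ}$ says exactly that the local $H$'s are intertwined by the defining cocycles of $\mathscr S$ and $\mathscr S'$, giving (ii). The commutativity of \eqref{cddectoricmorphismfull} is the global version of Lemma~\ref{lemmaLHhetdec}'s front face, obtained by gluing the trivialization diagrams \eqref{cdLHhetdec}; and properties (iv) and (v) are local conditions on the monomial components of $\mathscr H$, hence immediately inherited from Theorem~\ref{theoremmorphismaffdec}(iv)--(v) via points (iv) and (v) of Definition~\ref{decQFmorphismdef}.

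\textbf{Step 2 (from $(\mathscr L,\mathscr H)$ back to $\mathscr{L}^{cal}$).} Conversely, given a pair $(\mathscr L,\mathscr H)$ satisfying (i)--(v), one recovers chart-by-chart the linear data: $\mathscr L$ being a toric morphism of $\nctoric{\Delta}{\Gamma}{v}$ restricts on charts to torus morphisms, hence (by Theorem~\ref{theoremtoricmorphismcondition}, since it extends) yields linear maps $L_I$ which assemble into a Quantum Fan morphism $L$; condition (iii) together with the commutation $\text{\sl h}'\circ\mathscr H=\mathscr L\circ\text{\sl h}$ pins down, on each chart, a lift $H_I$ of $L_I$ (using the explicit form \eqref{hmorphismCD}--\eqref{hhat} of $\text{\sl h}$), and conditions (iv)--(v) guarantee that the resulting $H$ respects the splitting $H=\mathrm{diag}(H_1,H_2)$ and fixes/permutes the virtual generators as required by Definition~\ref{decQFmorphismdef}. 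Thus $(L,H)$ is a morphism of calibrated Quantum Fans; applying Theorem~\ref{theoremdectoricmorphismcondition} on each chart produces $\mathscr{L}^{cal}_I$, and these automatically agree on overlaps because the fan morphism $(L,H)$ is a single global datum and the gluings $\mathscr A^{cal}_{IJ}$, $(\mathscr A')^{cal}_{I'J'}$ are built from $(A_{IJ},H_{IJ})$ via \eqref{CDdecgluing}; hence they glue to a global $\mathscr{L}^{cal}$. Finally one checks that the two constructions are mutually inverse, which reduces to the affine bijectivity of Theorem~\ref{theoremmorphismaffdec} together with the uniqueness (up to unique isomorphism) of descent, already guaranteed by Proposition~\ref{propstacks}.

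\textbf{Main obstacle.} The routine part is the affine case, which is already done; the genuinely delicate point is verifying that the gluing/cocycle compatibilities on the $\mathscr L$-side and on the $\mathscr H$-side are \emph{simultaneously} consistent, i.e.\ that the commutative cube \eqref{cdLHhetdec} globalizes without a sign/ordering discrepancy between the two cocycles $(A_{IJ})$ and $(H_{IJ})$ and the induced kernel cocycle $(K_{IJ})$ from \eqref{bigCD}. Concretely, one must make sure that the diagram \eqref{CDdecgluing} is respected by $\mathscr H$ in the same chart convention in which it is respected by $\mathscr L$ and $\text{\sl h}$; this is where the explicit block structure of $H$ and the relation $L\circ h=h'\circ H$ do the work, but it requires a careful bookkeeping of which indices in $\{1,\dots,n\}$ are generators, which are virtual, and how the permutation $H_I$ reshuffles them relative to $A_I$. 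Once that bookkeeping is set up as in the proof of Proposition~\ref{propisocatdec}, the argument proceeds by pure functoriality.
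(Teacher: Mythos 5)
Your proposal is correct and follows essentially the same route as the paper, which obtains Theorem \ref{thmdecmorphisms2} precisely by globalizing the affine statement of Theorem \ref{theoremmorphismaffdec} through the chart-by-chart description of calibrated toric morphisms and the gluing compatibility $(A'_{I'J'}L_{II'}=L_{JJ'}A_{IJ}$, $H'_{I'J'}H_{II'}=H_{JJ'}H_{IJ})$ of Subsection \ref{QTVmorphismsection}. You merely spell out in more detail the descent bookkeeping that the paper leaves implicit.
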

 and
 \begin{theorem}
 	\label{thmdecmorphisms3}
 	The following diagram is commutative
 	\begin{equation}
 	\label{cdLHhetdecglob}
 	\begin{tikzcd}[row sep=scriptsize, column sep=scriptsize]
 	& \mathscr{S} \arrow[dl, "Id"'] \arrow[rr,"\mathscr{H}"] \arrow[dd,"\text{\sl h}^{cal}"' near start] & & \mathscr{S}' \arrow[dl,"Id"'] \arrow[dd,"(\text{\sl h'})^{cal}"] \\
 	\mathscr{S} \arrow[rr, crossing over,"\mathscr{H}" near end] \arrow[dd,"{\text{\sl h}}"'] & & \mathscr{S}'\\
 	& \nctoricgerbe{\Delta}{h}{J} \arrow[dl,"{\text{\sl f}}"'] \arrow[rr,"\mathscr{L}^{cal}" near start] & & \nctoricgerbe{\Delta'}{h'}{J'} \arrow[dl,"{\text{\sl f}}"'] \\
 	\nctoric{\Delta}{\Gamma}{v}\arrow[rr,"\mathscr{L}"] & & \nctoric{\Delta'}{\Gamma'}{v'} \arrow[from=uu, crossing over,"{\text{\sl h'}}" near start]\\
 	\end{tikzcd}
 	\end{equation}
 \end{theorem}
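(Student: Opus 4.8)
The plan is to obtain the global cube \eqref{cdLHhetdecglob} by gluing together the affine cubes \eqref{cdLHhetdec}, exactly as the reference to Lemma \ref{lemmaLHhetdec} suggests. Recall that all the stacks occurring in \eqref{cdLHhetdecglob} are built over the site $\mathfrak A$ by descent: $\nctoricgerbe{\Delta}{h}{J}$ and $\nctoric{\Delta}{\Gamma}{v}$ are glued from the affine charts $Q^{cal}_I$ and $Q_I$ indexed by the maximal cones $v_I$ of $(\Delta,h)$ (Definitions \ref{decQTVdef} and \ref{QTVdef}), while $\mathscr S$ is glued from the affine classical toric varieties $\C^{\vert I\vert}\times\Torus^{n-\vert I\vert}$ via the cocycle $(H_{II'})$ of \eqref{bigCD}; likewise for $\mathscr S'$, $\nctoricgerbe{\Delta'}{h'}{J'}$, $\nctoric{\Delta'}{\Gamma'}{v'}$. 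Moreover, by Propositions \ref{propQTVetQTVdec} and \ref{propglobalh} together with Theorems \ref{prisodecQTV} and \ref{thmdecmorphisms2}, every arrow of \eqref{cdLHhetdecglob} is a morphism of stacks obtained by gluing its restrictions to these affine charts (the $Id$ maps being literally the identity chart by chart). So the whole cube is assembled from local data, and it remains to verify: (i) each local cube commutes, and (ii) the locally defined cubes agree on overlaps.

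For (i), I would fix a maximal cone $v_I$ of $(\Delta,h)$ and restrict \eqref{cdLHhetdecglob} to the chart $Q^{cal}_I$ (and to the corresponding image chart of $\nctoricgerbe{\Delta'}{h'}{J'}$ under $\mathscr L^{cal}$). This produces precisely the affine cube \eqref{cdLHhetdec} for the standard calibrated cones $(C_{\vert I\vert,d},h_I)$ and $(C_{\vert I'\vert,d'},h'_{I'})$ of \eqref{CDcalibratedCone} and the induced calibrated cone morphism; it commutes by Lemma \ref{lemmaLHhetdec}.

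For (ii), on an overlap $Q^{cal}_{I\supset I'}$ of two charts the transition isomorphisms are $\mathscr A^{cal}_{II'}$, encoded by $(A_{II'},H_{II'},K_{II'})$ as in \eqref{bigCD}, for $\nctoricgerbe{\Delta}{h}{J}$; $\mathscr A_{II'}$ for $\nctoric{\Delta}{\Gamma}{v}$; and $\mathscr H_{II'}$ for $\mathscr S$. Their mutual compatibility with the vertical morphisms $\text{\sl f}$, $\text{\sl h}$, $\text{\sl h}^{cal}$ is exactly the commutativity of \eqref{CDcalibratedCone} and \eqref{bigCD}; equivalently, it is Lemma \ref{lemmaLHhetdec} applied to the calibrated cone isomorphism underlying $\mathscr A^{cal}_{II'}$ itself. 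Hence the local cubes patch, and since isomorphisms of objects of a stack form a sheaf and a morphism of stacks is determined by a compatible family over an open cover (Proposition \ref{propstacks}), the two composite morphisms $\mathscr S\to\nctoric{\Delta'}{\Gamma'}{v'}$, and all remaining faces of the cube, coincide globally.

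The main obstacle is the overlap bookkeeping of step (ii); but as indicated it reduces entirely to the functoriality of the affine constructions already established in Proposition \ref{propisocatdec} and Lemma \ref{lemmaLHhetdec}, so no genuinely new computation is required. Alternatively, one may package $\mathscr S$, $\mathscr H$, $\text{\sl f}$, $\text{\sl h}$, $\text{\sl h}^{cal}$, $\mathscr L$, $\mathscr L^{cal}$ as functors on the category $Q^{cal}$ of calibrated quantum fans through Theorems \ref{prisodecQTV} and \ref{thmdecmorphisms2}; then \eqref{cdLHhetdecglob} is the image under these functors of the evident commuting diagram of linear maps governed by the relation $L\circ h=h'\circ H$ of \eqref{CDLH} (with $H=H$ and $Id=Id$ on the two top squares), and commutativity becomes formal.
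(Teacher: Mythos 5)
Your proposal is correct and follows essentially the same route as the paper, which obtains Theorem \ref{thmdecmorphisms3} directly from the affine statement of Lemma \ref{lemmaLHhetdec}: since all the global morphisms are defined by descent from affine calibrated toric morphisms compatible with the gluing cocycles $(A_{II'},H_{II'},K_{II'})$ of \eqref{bigCD}, the cube \eqref{cdLHhetdecglob} commutes chart by chart and the local cubes patch. Your write-up merely makes explicit the overlap bookkeeping that the paper leaves implicit.
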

 Also, the following straight adaptations of Corollary \ref{corcaliso} and Definitions \ref{defisomarked} are valid.
 \begin{corollary}
 	\label{corcalisofull}
 	The following points are equivalent.
 	\begin{enumerate}[i)]
 		\item The stack isomorphism $\mathscr{L}^{cal}$ is an toric isomorphism $\nctoricgerbe{\Delta}{h}{J}$ to $\nctoricgerbe{\Delta'}{h'}{J'}$ (that is both $\mathscr{L}^{cal}$ and its inverse are toric morphisms).
 		\item The associated Quantum Fan morphism $(L,H)$ is an isomorphism.
 		\item The restriction of the stack isomorphism $\mathscr{L}^{cal}$ to the Quantum tori is an isomorphism.
 	\end{enumerate}
 \end{corollary}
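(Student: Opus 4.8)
The plan is to deduce this three-way equivalence from the global categorical equivalence $\mathscr Q^{cal}\cong Q^{cal}$ of Theorem \ref{prisodecQTV}, together with the affine statement Corollary \ref{corcaliso} and the description of morphisms of calibrated Quantum Toric Varieties by gluing data from Subsection \ref{QTVmorphismsection}. I would prove the cycle of implications ii) $\Rightarrow$ i) $\Rightarrow$ iii) $\Rightarrow$ ii).

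For ii) $\Rightarrow$ i): an equivalence of categories preserves and reflects isomorphisms, and under the equivalence $\mathscr Q^{cal}\cong Q^{cal}$ of Theorem \ref{prisodecQTV} the toric morphism $\mathscr{L}^{cal}$ corresponds to the associated calibrated Quantum Fan morphism $(L,H)$; hence if $(L,H)$ is an isomorphism in $Q^{cal}$ then $\mathscr{L}^{cal}$ is an isomorphism in $\mathscr Q^{cal}$, i.e. a toric isomorphism. Concretely, if $(L,H)$ is a calibrated Quantum Fan isomorphism then, using the block decomposition of $H$ into $H_1$ on the non-virtual generators and the permutation matrix $H_2$ of $s$ on the virtual ones (recorded after Definition \ref{decQFmorphismdef}), one checks directly that $(L^{-1},H^{-1})$ again satisfies the five conditions of Definition \ref{decQFmorphismdef} (Lemma \ref{isoDQFlemma} covers the maximal case), so by functoriality the induced toric morphism is a two-sided inverse of $\mathscr{L}^{cal}$.

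For i) $\Rightarrow$ iii): the calibrated quantum torus $\nctorusgerbe{h}{J}$ is the open substack $Q^{cal}_{0,d,h}$ sitting inside every chart $Q^{cal}_I$ of $\nctoricgerbe{\Delta}{h}{J}$, and by Theorem \ref{prisodecQTV} i) every toric morphism carries it into the corresponding substack of the target, restricting there to a calibrated torus morphism. Restriction to the quantum tori is functorial, so if both $\mathscr{L}^{cal}$ and its inverse are toric morphisms their restrictions are mutually inverse calibrated torus morphisms, whence the restriction is an isomorphism. For iii) $\Rightarrow$ ii): by the gluing analysis of Subsection \ref{QTVmorphismsection}, the restriction of $\mathscr{L}^{cal}$ to $\nctorusgerbe{h}{J}$ is precisely the calibrated torus morphism attached to the linear data $(L,H)$ (with its map $s$ on virtual generators) assembled from the charts; since $\nctorusgerbe{h}{J}=Q^{cal}_{0,d,h}$, the affine Corollary \ref{corcaliso} (its equivalence iii) $\Leftrightarrow$ ii)) forces $L$, $H$ and $s$ to be isomorphisms. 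On the other hand, $\mathscr{L}^{cal}$ being a toric morphism between $\nctoricgerbe{\Delta}{h}{J}$ and $\nctoricgerbe{\Delta'}{h'}{J'}$ guarantees, again by Subsection \ref{QTVmorphismsection}, that the collection $(A'_{I'}L_{II'}A_I,\ H'_{I'}H_{II'}H_I)$ glues into a genuine morphism of calibrated Quantum Fans $(L,H)$ from $(\Delta,h)$ to $(\Delta',h')$; with $L$, $H$, $s$ all isomorphisms this morphism is, by the last sentence of Definition \ref{decQFmorphismdef}, a calibrated Quantum Fan isomorphism, which is ii).

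The main obstacle is the implication iii) $\Rightarrow$ ii): one must ensure that a stack isomorphism of calibrated quantum tori genuinely forces the three combinatorial pieces $L$, $H$, $s$ to be bijective, rather than merely producing an abstract inverse stack morphism whose linear data need not respect the virtual generators — this is exactly what the affine Corollary \ref{corcaliso} supplies. With that in hand, the remaining work is the routine bookkeeping identifying the torus-level data $(L,H,s)$ with the data assembled from the gluing cocycle and verifying it lands in $Q^{cal}$.
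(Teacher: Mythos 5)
Your proposal is correct and follows essentially the route the paper intends: the paper states Corollary \ref{corcalisofull} as a "straight adaptation" of the affine Corollary \ref{corcaliso}, obtained by combining it with the gluing description of global morphisms and the equivalence of categories in Theorem \ref{prisodecQTV}, which is exactly the machinery your cycle of implications spells out. The only difference is that you make explicit the bookkeeping (restriction to the open torus substack, reassembling $(L,H,s)$ from the cocycle $(A'_{I'}L_{II'}A_I,H'_{I'}H_{II'}H_I)$) that the paper leaves implicit.
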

 From this Corollary, we define
 \begin{definition}
 	\label{defisomarkedfull}
 	A stack isomorphism $\mathscr{L}^{cal}$ is a {\it marked isomorphism} if the associated Quantum Fan morphism $(L,H)$ is a marked isomorphism, or, equivalently, if its restriction to the Quantum tori is a marked isomorphism.
 \end{definition}

\begin{example}
We go back to Example \ref{P1P2morphisms} and we want to describe the associate morphism between calibrated Quantum Toric Varieties. We assume $x=a\alpha$, $y=b\alpha$, with $\alpha\geq\beta\geq 0$ integers. The calibrated toric variety $\nctoricgerbe{\Delta}{\Gamma}{h_a}$ is described in Examples \ref{quantumP1} and \ref{QP1dec} and is a calibrated version of a quantum projective line. It has two charts corresponding to the two $1$-cones.
In the same way, $\nctoricgerbe{\Delta_2}{\Gamma}{h}$ is a calibrated version of a Quantum projective space. 
The morphism splits as two morphisms 
\begin{equation*}
[z]\in [\C/\exp(2i\pi\Gamma_a)]\longmapsto [z^\alpha,z^\beta]\in[\C^2/\exp(2i\pi\Gamma)]
\end{equation*}
and
\begin{equation*}
[w]\in [\C/\exp(2i\pi(-\Gamma_a))]\longmapsto [z^{\alpha-\beta},z^\alpha]\in[\C^2/\exp(2i\pi A_{23}^{-1}\Gamma)]
\end{equation*}
with
\begin{equation*}
A_{23}^{-1}=
\begin{pmatrix}
-1 &1\\
-1 &0
\end{pmatrix}
\end{equation*}
with corresponding homomorphisms $H$ described in Example \ref{P1P2morphisms} from the one hand and
\begin{equation*}
\begin{pmatrix}
\alpha-\beta &\beta &0\\
\alpha &0 &0\\
0 &\alpha &0\\
0 &0 &1
\end{pmatrix}
\end{equation*}
from the other hand.
\end{example}

\begin{example}
	\label{exqdP2dec}
	We construct the trivially calibrated $\Gamma$-complete quantum deformation of $\mathbb P^2$ whose calibrated fan is given in Example \ref{exP2dec}. It is important to compare with the construction of Example \ref{exqdP2}. 
	
	The three matrices $H_{12}$, respectively $H_{23}$ and $H_{31}$ corresponding to \eqref{AIP2} are just the permutation matrices of the permutation $(12)$, resp. $(23)$ and $(31)$. The associated $\hbar$ functions can be read off from \eqref{chartsQDP2}. It gives
	\begin{equation}
	\label{hslashP2}
	\left\{
	\begin{aligned}
	&\hbar_{12}(x)=(xa,xb)\\
	&\hbar_{23}(x)=(-xb/a,x/a)\\
	&\hbar_{31}(x)=(x/b,-xa/b) 
	\end{aligned}
	\right .
	\end{equation}
	The main difference with Example \ref{exqdP2} is that the three charts are now modelled on the quotient of $\C^2$ by a $\Z$-action, regardless of the rationality or irrationality of $a$ and $b$.
	
	However, there still exists a difference between the rational and the irrational case. If $a$ and/or $b$ is irrational, then all $\hbar$ functions in \eqref{hslashP2} are injective and the calibrated quantum deformation of $\mathbb P^2$ is isomorphic to the corresponding non-calibrated quantum deformation described in Example \ref{exqdP2}. But if $a$ and $b$ are rational, the calibrated quantum deformation of $\mathbb P^2$ is a $\Z$-gerbe over the corresponding non-calibrated quantum deformation. In particular, $a=b=-1$ does not correspond here to $\mathbb P^2$ but to a $\Z$-gerbe over $\mathbb P^2$.
	
	Finally, note that $\mathscr S$ is $\C^3\setminus\{0\}\}$, so the $\Z$-gerbe corresponds to the bundle $\mathcal O(-1)$.
\end{example}

\section{Quantum GIT}
\label{QGIT}
In this section, we will generalize the classical construction of toric varieties as GIT quotients to the quantum case; in particular, we show that Quantum toric stacks can be constructed as global quotients.

\subsection{Quantum calibrated GIT}
\label{QGITdec}
We first deal with the calibrated case. So start now from a simplicial calibrated Quantum Fan $(\Delta,h)$ in $\Gamma$. Set $v_i=h(e_i)$ for $i=1,\hdots,n$. We will show in this section how to construct the calibrated quantum toric variety $\nctoricgerbe{\Delta}{h}{J}$ as a quotient stack.
 
To do this, we make use of Gale transforms, a classical tool in convex geometry. We thus perform a Gale transform of $v=(v_1,\hdots,v_{n})$, that is we choose some vectors $A=(A_1,\hdots,A_{n})$ of $\R^{n-d}$ such that
\begin{equation}
\label{GTdec}
h(x)=\sum_{i=1}^{n}x_iv_i=0
\iff \left\{
\begin{aligned}
x_1=&\langle A_1,t\rangle\\
\vdots&\\
x_{n}=&\langle A_{n},t\rangle
\end{aligned}
\right .
\quad\text{ for some }t\in\R^{n-d}
\end{equation}
This leads to a short exact sequence
\begin{equation}
\label{sesGale}
\begin{tikzcd}
0\arrow[r]&\R^{n-d}\arrow[r,"k"]&\R^n\arrow[r,"h"]&\R^d\arrow[r]&0
\end{tikzcd}
\end{equation}
for
\begin{equation}
\label{k}
k(t_1,\hdots, t_{n-d})=k(t)=(\langle A_1,t\rangle, \hdots, \langle A_n,t\rangle)
\end{equation}
As usual, we assume that the fan is standard and decompose $h$ as in \eqref{h}. We note the following easy fact

\begin{lemma}
	\label{lemmah}
	Set $\hbar=(\hbar_1,\hdots,\hbar_d)$. Then, for all $i$ between $1$ and $d$ and forall $t\in\R^{n-d}$, one has
	\begin{equation}
	\label{Aandh}
	0=\langle A_i,t\rangle +\hbar_i(\langle A_{d+1},t\rangle,\hdots,\langle A_n,t\rangle)
	\end{equation}
\end{lemma}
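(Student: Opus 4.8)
The plan is to deduce \eqref{Aandh} directly from the two short exact sequences already in play, namely the Gale sequence \eqref{sesGale} together with the standard decomposition \eqref{h} of $h$. Recall that by \eqref{k} the map $k:\R^{n-d}\to\R^n$ sends $t$ to $(\langle A_1,t\rangle,\dots,\langle A_n,t\rangle)$, and that exactness at $\R^n$ in \eqref{sesGale} says precisely that $h\circ k=0$, i.e. $h(k(t))=0$ for every $t\in\R^{n-d}$. So the first step is simply to write out what $h(k(t))=0$ means componentwise, using the standard form \eqref{h} of the calibration.

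First I would set $x=k(t)$, so that $x_i=\langle A_i,t\rangle$ for $i=1,\dots,n$, and split the index set as $\{1,\dots,d\}\sqcup\{d+1,\dots,n\}$ exactly as in \eqref{h}. Writing $x=(p,q)$ with $p=(x_1,\dots,x_d)\in\R^d$ and $q=(x_{d+1},\dots,x_n)\in\R^{n-d}$, the decomposition \eqref{h} gives
\begin{equation*}
h(x)=p+\hbar(q)=\bigl(x_1+\hbar_1(q),\dots,x_d+\hbar_d(q)\bigr).
\end{equation*}
Now $h(x)=h(k(t))=0$ forces each coordinate to vanish, i.e. $x_i+\hbar_i(q)=0$ for $i=1,\dots,d$. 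Substituting back $x_i=\langle A_i,t\rangle$ and $q=(\langle A_{d+1},t\rangle,\dots,\langle A_n,t\rangle)$ yields exactly \eqref{Aandh}. Since $t\in\R^{n-d}$ was arbitrary, this holds for all $t$.

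Honestly, there is no real obstacle here: the lemma is a bookkeeping identity that unwinds the definition of the Gale transform against the chosen standard form of $h$. The only point requiring a word of care is making sure the index conventions match — that the Gale sequence \eqref{sesGale} is built from the \emph{same} ordered system $v=(v_1,\dots,v_n)=(h(e_1),\dots,h(e_n))$ used in \eqref{h}, so that the first $d$ of the $v_i$ correspond to the $\Z^d$-part and the last $n-d$ to the $\Z^{n-d}$-part on which $\hbar$ is defined; this is guaranteed by the standing assumption (stated just before the lemma) that the fan is standard and $h$ is decomposed as in \eqref{h}. With that alignment in place the computation above is complete, so I would present the proof essentially as the three lines: apply $h\circ k=0$, expand via \eqref{h}, read off coordinate $i$.
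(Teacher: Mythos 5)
Your proof is correct and is exactly the paper's argument: the paper's one-line proof ("just compute $h\circ k$ using \eqref{h}") is precisely your computation, since exactness of \eqref{sesGale} gives $h\circ k=0$ and the standard decomposition \eqref{h} yields \eqref{Aandh} coordinatewise. Nothing is missing.
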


\begin{proof}
	Just compute $h\circ k$ using \eqref{h}.
\end{proof}
\noindent and its corollary
\begin{corollary}
		\label{corh}
		Assume that $\langle A_i,t\rangle$ belongs to $\Z$ for all $i$ between $d+1$ and $n$. Then $(\langle A_{1},t\rangle,\hdots,\langle A_d,t\rangle)$ belongs to $\Gamma$.
\end{corollary}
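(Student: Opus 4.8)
The plan is to deduce the statement directly from Lemma~\ref{lemmah}; there is no real difficulty, so the only thing to be careful about is the bookkeeping of what $\Gamma$ is once the fan is put in standard form. Recall that, the fan being standard, we decomposed $h$ as in \eqref{h}, so $\Gamma=\Z^d+\hbar(\Z^{n-d})$, where $\hbar=(\hbar_1,\hdots,\hbar_d)$ is (the restriction to $\{0\}\times\Z^{n-d}$ of) the group homomorphism $h$, hence is itself additive; in particular $\hbar(\Z^{n-d})\subseteq\Gamma$ and $-\hbar(q)=\hbar(-q)$ for every $q$.

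I would then set $q:=(\langle A_{d+1},t\rangle,\hdots,\langle A_n,t\rangle)$, which lies in $\Z^{n-d}$ by hypothesis, and apply \eqref{Aandh} for each $i$ between $1$ and $d$ to get $\langle A_i,t\rangle=-\hbar_i(q)$, so that
\begin{equation*}
(\langle A_1,t\rangle,\hdots,\langle A_d,t\rangle)=-\hbar(q)=\hbar(-q)\in\hbar(\Z^{n-d})\subseteq\Gamma,
\end{equation*}
which is exactly the claim.

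I do not expect any genuine obstacle here; the corollary is really just a restatement of Lemma~\ref{lemmah} together with the description of $\Gamma$ in standard form. If one preferred to bypass Lemma~\ref{lemmah} altogether, the same computation can be phrased by feeding the vector $x:=k(t)=(\langle A_1,t\rangle,\hdots,\langle A_n,t\rangle)$ — which satisfies $h(x)=0$ by \eqref{GTdec} — into the decomposition \eqref{h}: writing $x=(x',q)$ with $x'\in\R^d$ and $q:=(\langle A_{d+1},t\rangle,\hdots,\langle A_n,t\rangle)\in\Z^{n-d}$, the identity $0=h(x)=x'+\hbar(q)$ yields $x'=-\hbar(q)\in\Gamma$. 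The one point worth stressing in the write-up is simply that $\hbar$ is a homomorphism, so that $-\hbar(q)$ is again of the form $\hbar(\text{integer vector})$ and hence lands in $\Gamma$.
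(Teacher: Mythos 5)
Your proof is correct and follows exactly the paper's route: the corollary is deduced immediately from Lemma~\ref{lemmah} (equivalently, from $h\circ k=0$ and the standard decomposition \eqref{h}), with the only observation needed being that $-\hbar(q)\in\Gamma$ for $q\in\Z^{n-d}$, which you justify properly. The paper leaves this step implicit, so your write-up matches its intended argument with no gap.
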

 \noindent Define
 \begin{equation}
 \label{Iz}
 i\in I_z\iff z_i\not =0.
 \end{equation}
 and set
\begin{equation}
\label{S}
\mathscr S=\{z\in\C^{n}\mid \{1,\hdots,n+1\}\setminus I_z\text{ is a cone of }\Delta\}
\end{equation}
Observe the
\begin{lemma}
	\label{lemmaSaffine}
	The set $\mathscr{S}$ is an affine toric variety.
\end{lemma}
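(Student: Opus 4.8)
The plan is to realize $\mathscr{S}$ as an open $\Torus^n$-invariant subvariety of $\C^n$ and then to identify it, via the orbit--cone dictionary, with the toric variety of an explicit subfan of the face fan of the positive orthant. First, note that whether $z\in\mathscr{S}$ depends only on the support $I_z=\{i\mid z_i\neq 0\}$ of $z$; hence $\mathscr{S}$ is stable under the coordinatewise $\Torus^n$-action on $\C^n$ and is a union of $\Torus^n$-orbits. Recall that $\C^n$ is the affine toric variety of the cone $C_n=\mathrm{cone}(e_1,\dots,e_n)\subset\R^n$ with lattice $\Z^n$, whose orbits are the sets $O(\tau_A)=\{z\mid z_i=0\iff i\in A\}$ for $A\subseteq\{1,\dots,n\}$, where $\tau_A:=\mathrm{cone}(e_i\mid i\in A)$. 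By \eqref{S} we have $\mathscr{S}=\bigsqcup_{A}O(\tau_A)$, the union taken over those $A$ for which $\mathrm{cone}(v_i\mid i\in A)$ is a cone of $\Delta$.

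Set $\widehat\Delta:=\{\tau_A\mid \mathrm{cone}(v_i\mid i\in A)\in\Delta\}$. The central step is that $\widehat\Delta$ is a genuine subfan of the face fan of $C_n$, and here simpliciality of $\Delta$ (the standing hypothesis in this subsection) is exactly what is used. For face-closedness: a face of $\tau_A$ is $\tau_{A'}$ with $A'\subseteq A$; since $\mathrm{cone}(v_i\mid i\in A)$ is a simplicial cone of $\Delta$, the cone $\mathrm{cone}(v_i\mid i\in A')$ is one of its faces, hence again a cone of $\Delta$, so $\tau_{A'}\in\widehat\Delta$. For intersection-closedness one uses that in a simplicial fan the intersection of two cones is the common face spanned by their common rays, which translates to $\tau_A\cap\tau_B=\tau_{A\cap B}\in\widehat\Delta$. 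Therefore $X_{\widehat\Delta}$ is well-defined and is an open toric subvariety of $\C^n=X_{C_n}$; since the chart attached to $\tau_A$ is $\{z\mid z_j\neq 0\text{ for all }j\notin A\}$ and $\widehat\Delta$ is face-closed, one checks directly that $X_{\widehat\Delta}=\mathscr{S}$. As $0\in\Delta$ we have $\tau_\emptyset\in\widehat\Delta$, i.e. the big torus $\Torus^n=O(\tau_\emptyset)$ sits inside $\mathscr{S}$ as a dense orbit; in particular $\mathscr{S}$ is a toric variety, open in $\C^n$.

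To conclude affineness, one appeals to the standard criterion that the toric variety of a fan is affine precisely when the fan consists of all the faces of a single one of its cones, equivalently when the complement $\C^n\setminus\mathscr{S}$ is a union of coordinate hyperplanes (so that $\mathscr{S}$ is a basic open set $\{z_{j_1}\cdots z_{j_k}\neq 0\}$, with coordinate ring the corresponding localization of $\C[z_1,\dots,z_n]$). The task is therefore to exhibit a maximal cone $\tau_{A_{\max}}$ of $\widehat\Delta$ containing all the others; I would extract it from the normalization of the calibrated quantum fan (Definitions \ref{decQFdef} and \ref{defstandarddecfan}), distinguishing the $1$-cone generators $v_i$ for $i\in I$ from the virtual generators $v_j$ for $j\in J$, and tracking exactly which index sets $A$ can fail to span a cone of $\Delta$.

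I expect this last identification to be the main point: for a general simplicial $\Delta$ the subfan $\widehat\Delta$ carries one maximal cone per maximal cone of $\Delta$, so the affineness genuinely uses the precise way $\mathscr{S}$ is assembled from the standardized calibrated data (in particular the role of the virtual generators $J$), not formal nonsense. Once the shape of $\widehat\Delta$ has been pinned down, the remaining verifications — that $\widehat\Delta$ is a fan, that $X_{\widehat\Delta}=\mathscr{S}$, and that the $\Torus^n$-action has a dense orbit — are the routine cone/affine-toric-variety dictionary recalled above.
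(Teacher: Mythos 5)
Your first two paragraphs are fine and in fact anticipate material the paper proves separately: the identification of $\mathscr{S}$ with the toric variety of the subfan $\widehat\Delta$ of the face fan of the positive orthant is exactly Lemma \ref{lemmaDHetS} (there $\widehat\Delta$ is called $\Delta_H$). The genuine gap is the step you defer to the end, and it cannot be filled: there is no maximal cone $\tau_{A_{\max}}$ of $\widehat\Delta$ containing all the others unless $\Delta$ itself has a unique maximal cone. Since $\widehat\Delta$ has one maximal cone per maximal cone of $\Delta$, for any $\Delta$ with at least two maximal cones (in particular every complete fan, which is the case of main interest in the GIT section) the set $\mathscr{S}$ is \emph{not} affine in the strict algebro-geometric sense. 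Concretely, for the quantum $\mathbb{P}^1$ fan of Example \ref{P1QFans} one gets $\mathscr{S}=(\C^2\setminus\{0\})\times\Torus$ (Example \ref{QP1dec}), and in Example \ref{exampleGIT} $\mathscr{S}$ is the complement of codimension-two coordinate subspaces; neither is affine. Your hope that the virtual generators $J$ rescue affineness is misplaced: indices in $J$ only force the corresponding coordinates to be nonzero, contributing $\Torus$-factors, while the remaining coordinates still see all maximal cones of $\Delta$.

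Compare with the paper's own proof, which is a one-liner: $\mathscr{S}$ is the complement in $\C^n$ of a union of coordinate vector subspaces, and $\Torus^n$ acts multiplicatively on it with Zariski dense (indeed open) orbit, namely that of $(1,\hdots,1)$. In other words, ``affine toric variety'' is being used loosely for a $\Torus^n$-invariant quasi-affine open subset of $\C^n$ with dense torus orbit; this is precisely the structure needed downstream, namely to view $\mathscr{S}$ as an object of the category $\mathfrak{G}$ and to form the quotient stack $[\mathscr{S}/\mathcal{A}]$ via Proposition \ref{propstacks} (its fan is then pinned down in Lemma \ref{lemmaDHetS}). So if you stop after your second paragraph you have proved everything the paper's proof establishes (with more detail); the third paragraph pursues a stronger statement that is false in general, and the proposed route to it would fail at exactly the point you flagged as ``the main point.''
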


\begin{proof}
	The set $\mathscr{S}$ is the complement in $\C^n$ of a union of coordinate vector subspaces and the classical torus $\Torus^n$ acts multiplicatively on it with a Zariski dense orbit, that of $(1,\hdots,1)$. 
\end{proof}
\noindent Define finally a (classical) fan $\Delta_H$ in $\R^n$ with the lattice of integer points as follows
\begin{equation}
\label{deltaH}
\R^+\cdot e_I\in\Delta_H\iff I\text{ is a cone of }\Delta
\end{equation}
We have
\begin{lemma}
	\label{lemmaDHetS}
	The fan $\Delta_H$ is the fan of $\mathscr{S}$.
\end{lemma}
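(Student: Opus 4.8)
The plan is to recognise this as the classical dictionary between fans assembled from coordinate rays and open toric subvarieties of $\C^n$ (see \cite{CoxBook}). Recall from \eqref{S} that $\mathscr S$ consists of the $z\in\C^n$ such that $\bar I_z:=\{1,\hdots,n\}\setminus I_z$ is a cone of $\Delta$, and from \eqref{deltaH} that $\Delta_H$ is the collection of cones $\sigma_I:=\R^+\cdot e_I\subset\R^n$ (with the lattice $\Z^n$) indexed by the subsets $I$ that are cones of $\Delta$. Since the $e_i$ are linearly independent, distinct such $I$ give distinct $\sigma_I$, the assignment $I\mapsto\sigma_I$ is order preserving, and $\Delta_H$ is a genuine (smooth, simplicial) fan: it is closed under faces because $\Delta$ is simplicial, so a subset of the generators of a cone of $\Delta$ spans a face of that cone, hence again a cone of $\Delta$.

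First I would describe the affine chart of $\Delta_H$ at $\sigma_I$. Its dual cone is $\sigma_I^\vee=\{m\in(\R^n)^*\mid \langle m,e_i\rangle\geq 0\text{ for }i\in I\}$, and $\sigma_I^\vee\cap(\Z^n)^*$ is the semigroup generated by $\{e_i^*\mid i\in I\}$ together with $\{\pm e_j^*\mid j\notin I\}$. Consequently, inside the common torus $(\C^*)^n\subset\C^n$, the affine toric variety of $\sigma_I$ is
\begin{equation*}
U_{\sigma_I}=\mathrm{Spec}\,\C[\sigma_I^\vee\cap(\Z^n)^*]=\{z\in\C^n\mid z_j\neq 0\text{ for all }j\notin I\}=\{z\in\C^n\mid \bar I_z\subseteq I\},
\end{equation*}
endowed with the coordinatewise $\Torus^n$-action. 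The toric variety of $\Delta_H$ is then the union $X_{\Delta_H}=\bigcup_{\sigma_I\in\Delta_H}U_{\sigma_I}$, the charts being glued inside $\C^n$ along $(\C^*)^n$ in the tautological way.

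Next I would match this union with $\mathscr S$. Because the cones of $\Delta$ are stable under passing to faces, for $z\in\C^n$ one has $\bar I_z\subseteq I$ for some cone $I$ of $\Delta$ if and only if $\bar I_z$ is itself a cone of $\Delta$. Hence
\begin{equation*}
X_{\Delta_H}=\bigcup_{I\text{ a cone of }\Delta}U_{\sigma_I}=\{z\in\C^n\mid \bar I_z\text{ is a cone of }\Delta\}=\mathscr S .
\end{equation*}
This identification is tautologically compatible with the coordinatewise $\Torus^n$-actions and carries the big torus of $X_{\Delta_H}$ onto the dense orbit $(\C^*)^n$ of $\mathscr S$ exhibited in Lemma \ref{lemmaSaffine}; it is therefore an isomorphism of toric varieties, so $\Delta_H$ is the fan of $\mathscr S$.

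There is no real obstacle here: all of the content is the standard correspondence just recalled, and the only steps deserving attention — the bijection between cones of $\Delta$ and cones of $\Delta_H$, and the face-closure argument in the last display — are immediate from the simpliciality of $\Delta$ and the fan axioms.
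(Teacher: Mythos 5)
Your proof is correct and follows essentially the same route as the paper: both identify the affine chart of $\sigma_I=\R^+\cdot e_I$ with $\C^I\times\Torus^{\bar I}\subset\C^n$ and observe that such a chart lies in $X(\Delta_H)$ exactly when $I$ is a cone of $\Delta$, which by \eqref{S} gives $X(\Delta_H)=\mathscr{S}$. You merely spell out the dual-cone/semigroup computation and the face-closure step (using simpliciality) that the paper leaves implicit, so there is nothing to correct.
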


\begin{proof}
	By definition, $\R^+\cdot e_I$ is a cone of $\Delta_H$ if and only if 
	\begin{equation}
	\label{CI}
	\C^I\times\Torus^{\bar I}:=\{z\in\C^n\mid z_i\not =0\text{ for } i\not\in I\}
	\end{equation}
	is included in $X(\Delta_H)$, the toric variety associated to $\Delta_H$. But this occurs if and only if $I$ is a cone of $\Delta$ by \eqref{deltaH}, hence $X(\Delta_H)$ is $\mathscr{S}$ by \eqref{S}.
\end{proof}

We let now $\C^{n-d}\ni T$ act holomorphically on $\C^n\ni z$ through
\begin{equation}
\label{completeactionnp}
T\cdot z:=\left (
z_iE({\langle A_i,T\rangle})
\right )_{i=1} ^n
\end{equation}
Note that
\begin{lemma}
	\label{lemmacommutes}
	Action {\rm \eqref{completeactionnp}} preserves $\mathscr{S}$ and commutes with the action of $\Torus^n$ on $\mathscr{S}$.
\end{lemma}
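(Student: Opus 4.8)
The plan is to verify both assertions directly, the key point being that the $\C^{n-d}$-action \eqref{completeactionnp} and the $\Torus^n$-action on $\C^n$ are both coordinatewise multiplications by nonzero scalars, while $\mathscr S$ is defined purely by the vanishing pattern of the coordinates.

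First I would record that for every $i$ and every $T\in\C^{n-d}$ the factor $E(\langle A_i,T\rangle)=e^{2i\pi\langle A_i,T\rangle}$ lies in $\C^*$, since the complex exponential is nowhere zero, and that it depends holomorphically on $T$; this already shows \eqref{completeactionnp} is a holomorphic map, and one also checks the additivity $E(\langle A_i,T+T'\rangle)=E(\langle A_i,T\rangle)E(\langle A_i,T'\rangle)$ if the group-action axioms are wanted. Consequently, with $I_z=\{i\mid z_i\neq 0\}$ as in \eqref{Iz}, one has $I_{T\cdot z}=I_z$ for all $z$ and all $T$, because multiplying a coordinate by a unit does not change whether it vanishes. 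Since, by \eqref{S}, membership $z\in\mathscr S$ depends only on the set $\{1,\dots,n\}\setminus I_z$, this immediately yields $z\in\mathscr S\iff T\cdot z\in\mathscr S$, so $\mathscr S$ is preserved.

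Then, for the commutation statement, I would write the $\Torus^n$-action as $w\cdot z=(w_iz_i)_{i=1}^n$ and, using only commutativity of multiplication in $\C$, compute
\begin{equation*}
T\cdot(w\cdot z)=\big(E(\langle A_i,T\rangle)\,w_i z_i\big)_{i=1}^{n}=\big(w_i\,E(\langle A_i,T\rangle)\,z_i\big)_{i=1}^{n}=w\cdot(T\cdot z).
\end{equation*}
I expect no genuine obstacle here: the lemma is purely formal, and the only point deserving a word is that $E$ takes values in $\C^*$, which is exactly what makes the zero-pattern of $z$ — hence the defining condition of $\mathscr S$ — invariant under \eqref{completeactionnp}.
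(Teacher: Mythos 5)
Your argument is correct and is exactly the (routine) verification the paper leaves implicit — indeed the paper states Lemma \ref{lemmacommutes} without proof, taking for granted that multiplication by the units $E(\langle A_i,T\rangle)\in\C^*$ preserves the zero-pattern $I_z$, hence the defining condition \eqref{S}, and commutes with the coordinatewise $\Torus^n$-action. Nothing is missing.
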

We denote by $\mathcal{A}$ this action and we form the global quotient $[\mathscr{S}/\mathcal{A}]$. We consider it as a stack over the site $\mathfrak{A}$. This is possible and pertinent thanks to Lemmas \ref{lemmaSaffine} and \ref{lemmacommutes} and proposition \ref{propstacks}.

\medskip

 We have
 \begin{theorem}
 	\label{TheoremGITdec}
 	The stacks $[\mathscr{S}/\mathcal{A}]$ and $\nctoricgerbe{\Delta}{h}{J}$ are isomorphic.
 \end{theorem}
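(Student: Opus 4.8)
The plan is to construct an explicit equivalence of stacks by comparing the two global quotient presentations on the level of atlases, using the short exact sequence from the Gale transform as the bridge. First I would recall that both $[\mathscr S/\mathcal A]$ and $\nctoricgerbe{\Delta}{h}{J}$ come with a natural atlas: the former is presented directly as a quotient of the affine toric variety $\mathscr S$ by the holomorphic $\C^{n-d}$-action $\mathcal A$ of \eqref{completeactionnp}, while the latter is glued from the charts $Q^{cal}_I$ of \eqref{QaffineTVdec} via the cocycle $(A_{IJ},H_{IJ})$ from \eqref{bigCD}. The key observation is that $\mathscr S$ decomposes as $\mathscr S = \bigcup_I \mathscr S_I$ where $\mathscr S_I := \C^I\times\Torus^{\bar I} = \{z\mid z_i\neq 0 \text{ for } i\notin I\}$ ranges over maximal cones $v_I$ of $\Delta$ (by Lemma \ref{lemmaDHetS}, these $\C^I\times\Torus^{\bar I}$ are exactly the affine pieces of $\mathscr S$). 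So I would show that the restriction of the $\mathcal A$-action to each $\mathscr S_I$ yields a quotient stack isomorphic to $Q^{cal}_I$, and that these isomorphisms are compatible with the gluings.

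The heart of the argument is the chart-by-chart comparison. Fix a maximal cone $v_I$ with $\vert I\vert = \vert I\vert$ and write $\mathscr S_I \cong (\C^{\vert I\vert}\times\Torus^{d-\vert I\vert})\times \Torus^{n-d}$, splitting the coordinates indexed by $I$ (which contribute the $\C^{\vert I\vert}$), by the complementary generators, and by the $n-d$ Gale directions. Using the exact sequence \eqref{sesGale} and Lemma \ref{lemmah}/Corollary \ref{corh}, the $\C^{n-d}$-action \eqref{completeactionnp} restricted to $\mathscr S_I$ has the feature that the sub-$\Z^{n-d}\subset \C^{n-d}$ acts on the $\Torus^{n-d}$-factor by translations (killing it in the quotient) while the residual $\R^{n-d}$-part acts through the matrix $\hbar_I$ on $\C^{\vert I\vert}\times\Torus^{d-\vert I\vert}$ — precisely as the action $L_I$ of \eqref{calibratedConeAction} defining $Q^{cal}_I$. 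I would make this precise by exhibiting an $\mathcal A$-equivariant isomorphism $\mathscr S_I\to (\C^{\vert I\vert}\times\Torus^{d-\vert I\vert})\times\C^{n-d}$ intertwining with action \eqref{actionconeV}, and then invoke the trivialization diagram \eqref{morphismVdec}–\eqref{trivCmbundlebis} together with Proposition \ref{propliftPdec} to identify $[\mathscr S_I/\mathcal A]$ with $Q^{cal}_{\vert I\vert,d,h_I}$; by Lemma \ref{indepAIbis} this is $Q^{cal}_I$.

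Next I would check the gluings. On the overlap $\mathscr S_{I}\cap\mathscr S_{I'} = \mathscr S_{I\cap I'}$, the transition is built into the toric structure of $\mathscr S$ (it is the change of affine chart in the classical toric variety $X(\Delta_H)$), and this transition is governed by the permutation-type matrices $H_{IJ}$ of \eqref{bigCD}; applying the forgetful/quotient data shows it descends to exactly $\mathscr A^{cal}_{II'}$ of \eqref{CDdecgluing} under the identifications of the previous step. Since the $(A_{IJ},H_{IJ})$ form a cocycle (as already used to define $\nctoricgerbe{\Delta}{h}{J}$ by descent), the chart-wise isomorphisms $[\mathscr S_I/\mathcal A]\cong Q^{cal}_I$ assemble into a global stack isomorphism $[\mathscr S/\mathcal A]\cong\nctoricgerbe{\Delta}{h}{J}$; one finally verifies it is an isomorphism of stacks over $\mathfrak A$ by checking it is fully faithful and essentially surjective on objects/morphisms as described in Definitions \ref{QToricdecObjects} and \ref{QToricsdecMorphisms}, which is routine once the local picture is in place.

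The main obstacle I anticipate is the bookkeeping in the chart identification $[\mathscr S_I/\mathcal A]\cong Q^{cal}_I$: one must carefully separate the three groups of coordinates, track how the Gale matrix $k$ (equivalently the vectors $A_i$) interacts with the standardized calibration $h_I=(Id,\hbar_I)$, and verify that the $\Z^{n-d}$-subgroup of the $\C^{n-d}$-action really does act freely and properly by translations on the $\Torus^{n-d}$-factor so that the quotient collapses correctly — this is where Lemma \ref{lemmah} and Corollary \ref{corh} do the essential work, and where a sign or index error would propagate. The compatibility with gluings, by contrast, should follow formally from the functoriality already established (Lemma \ref{lemmadecoublicom}, Proposition \ref{propisocatdec}) once the local statement is nailed down. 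A secondary point requiring care is that $\mathscr S$ is in general non-Hausdorff as a quotient, so all arguments must stay at the level of stacks (fibered categories over $\mathfrak A$) rather than topological quotients, exactly as set up in Section \ref{stacks}.
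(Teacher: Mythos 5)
Your proposal is correct and follows the same overall strategy as the paper: decompose $\mathscr{S}$ into the pieces $\C^I\times\Torus^{\bar I}$ indexed by the maximal cones, identify each quotient $[(\C^I\times\Torus^{\bar I})/\mathcal{A}]$ with $Q^{cal}_I$ via the Gale identity of Lemma \ref{lemmah} (the paper's \eqref{hIetGamma}), and then check compatibility of these identifications on overlaps. The one substantive divergence is in how the two steps are executed. For the chart identification the paper works with an explicit transverse section $T_I=\{z_i=1,\ i\in\bar I\setminus J\}$ and the residual group $\mathcal{A}_I$ (Lemmas \ref{lemmaGITfirstiso} and \ref{lemmaGITsecondiso}), whereas you route through the trivialization diagrams \eqref{morphismVdec}--\eqref{trivCmbundlebis}; these are equivalent, though your phrase ``$\mathcal{A}$-equivariant isomorphism $\mathscr S_I\to(\C^{\vert I\vert}\times\Torus^{d-\vert I\vert})\times\C^{n-d}$'' should really be an identification of $\mathscr S_I$ with the $\Z^{n-d}$-quotient of that product (action \eqref{actionconeV}), since $\mathscr S_I$ has a $\Torus^{n-d}$-factor, and you still need to note that $(A_i)_{i\in\bar I\setminus J}$ is a basis of $\R^{n-d}$ so that $\mathcal A_I\cong\Z^{n-d}$ and the torus factor is genuinely absorbed. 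For the gluings, the paper does not take them as formal: it follows an $\mathcal{A}$-orbit from $T_I$ to $T_{I'}$ (the composition \eqref{compo1}--\eqref{compo3}) and computes via \eqref{Ecompo} that the induced overlap map is exactly the monomial gluing $z\mapsto z^{A_{II'}}$ of \eqref{gluingmonomialdec}; you instead invoke the global morphism $\text{\slshape h}^{cal}$ and Propositions \ref{propglobalh} and \ref{propliftPdec}. That shortcut is legitimate provided your chart identifications are precisely the ones induced by $\text{\slshape h}^{cal}$ (so that gluing compatibility is inherited), but be aware that Proposition \ref{propliftPdec} is itself only justified ``by functoriality'' in the paper, so your route shifts rather than eliminates the computational burden that the paper discharges explicitly in its proof of Theorem \ref{TheoremGITdec}; also note the transition maps of $\mathscr{S}$ as a subset of $\C^n$ are trivial, and the nontrivial monomial gluing only appears after choosing the chart-wise identifications, which is exactly why some version of the overlap computation (or the equivalent content of Lemma \ref{lemmaLhhHdec}) must be carried out.
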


\begin{proof}
	We decompose $\mathscr{S}$ as the union of affine toric varieties $\C^I\times \Torus^{\bar I}$
	for every maximal cone $I$ of $\Delta$, cf. \eqref{CI}. We will show that 
	\begin{enumerate}[a)]
		\item every $[(\C^I\times \Torus^{\bar I})/\mathcal{A}]$ is isomorphic to $Q_I^{cal}$, 
		\item and that the natural gluings between such pieces, say encoded by $I$ and $I'$ correspond through these isomorphisms to gluings of $Q_I^{cal}$ and $Q_{I'}^{cal}$ through $\mathscr{A}^{cal}_{II'}$
	\end{enumerate}
Hence, up to isomorphism, both stacks are obtained by descent of the same collection $(Q_I^{cal},\mathscr{A}^{cal}_{II'})$, proving Theorem \ref{TheoremGITdec}.

Let $I=\{i_1,\hdots, i_{\vert I\vert}\}$ be a maximal cone of $\Delta$. If $\vert I\vert$ is strictly less than $d$, we choose some subset $J=\{j_1,\hdots,j_{\vert J\vert}\}$ of $\{1,\hdots,n\}\setminus I$ of complementary cardinal $d-\vert I\vert$. 
\\
Let
\begin{equation}
\label{AIcal}
\mathcal A_I=\{T\in\C^{n-d}\mid \langle A_i,T\rangle\in \Z\text{ for } i\in \bar I\setminus J\}
\end{equation}
and consider the action of $\mathcal A_I\ni T$ on $\C^{\vert I\vert}\times\Torus^{d-\vert I\vert}\ni z$ given by
\begin{equation}
\label{AIaction}
\begin{aligned}
zE(\langle A_{I\cup J},T\rangle)&:=(z_1E(\langle A_{i_1},T\rangle),\hdots,z_{\vert I\vert}E(\langle A_{\vert I\vert},T\rangle),\\
& z_{\vert I\vert+1}E(\langle A_{j_1},T\rangle),\hdots,z_dE(\langle A_{j_{\vert J\vert}},T\rangle))\end{aligned}
\end{equation}
The first key remark is the following
\begin{lemma}
	\label{lemmaGITfirstiso}
	The stacks $[(\C^I\times \Torus^{\bar I})/\mathcal{A}]$ and $[(\C^{\vert I\vert}\times \Torus^{d-\vert I\vert})/\mathcal{A}_I]$ are isomorphic.
\end{lemma}

\begin{proof}[Proof of Lemma \ref{lemmaGITfirstiso}]
We consider the transverse section 
\begin{equation}
T_I:=\{z\in\C^{I}\times\Torus^{\bar I}\mid z_i=1\text{ for }i\in\bar I\setminus J\}
\end{equation}
and will compute its intersection with an orbit of $\C^{I}\times\Torus^{\bar I}$ through the action \eqref{AIaction}. Hence we write
\begin{equation*}
z_iE(\langle A_{i},T\rangle)=1\leqno{i\in\bar I\setminus J}
\end{equation*}
This gives a solution which is unique up to addition of an element of $\mathcal{A}_I$, i.e.
the map 
\begin{equation}
\label{transversaliso}
z\in T_I\longmapsto (z_{i_1},\hdots, z_{i_{\vert I\vert}}, \hdots, z_{j_{\vert J\vert}})\in \C^I\times \Torus^{\bar I}
\end{equation}
defines an isomorphism between the stack $[(\C^I\times \Torus^{\bar I})/\mathcal{A}]$ and the stack $[(\C^{\vert I\vert}\times \Torus^{d-\vert I\vert})/\mathcal{A}_I]$ as wanted.
\end{proof}
 Let $A_I$ send $v_{i_k}$ onto $e_k$ and $v_{j_k}$ onto $e_{\vert I\vert +k}$. Let $H_I$ be an isomorphism of $\R^n$ sending $e_{i_k}$ onto $e_k$ and $e_{j_k}$ onto $e_{\vert I\vert +k}$ and making the diagram \eqref{CDcalibratedCone} commutative. Then, defining $L_I$ and $Q_I^{cal}$ as in \eqref{calibratedConeAction} and \eqref{QaffineTVdec},
\begin{lemma}
\label{lemmaGITsecondiso}
The stacks $Q_I^{cal}$ and $[(\C^{\vert I\vert}\times \Torus^{d-\vert I\vert})/\mathcal{A}_I]$ are equal.
\end{lemma}

\begin{proof}[Proof of Lemma \ref{lemmaGITsecondiso}]
	Setting $h_I=Id+\hbar_I$, it follows from \eqref{CDcalibratedCone} and Lemma \ref{lemmah} that
	\begin{equation}
	\label{hIetGamma}
	\langle A_i,T\rangle=-(\hbar_I)_i(\langle A_{\bar I\setminus J},T\rangle )\qquad \text{ for }i\in I\cup J
	\end{equation}
	hence, action $L_I$ and \eqref{AIaction} are exactly the same action. 
\end{proof}
Lemmas \ref{lemmaGITfirstiso} and \ref{lemmaGITsecondiso} proves a). \vspace{3pt}\\
Let now $I$ and $I'$ two maximal cones (associated to $J$ and $J'$ if necessary) with non-empty intersection. We need to understand the gluing between $[(\C^{\vert I\cap I'\vert}\times \Torus^{d-\vert I\cap I'\vert})/\mathcal{A}_I]$ and $[(\C^{\vert I\cap I'\vert}\times \Torus^{d-\vert I\cap I'\vert})/\mathcal{A}_{I'}]$. In other words, starting from a point $z\in T_I$, we need to understand the intersection of its orbit with the transverse section $T_{I'}$. \\
This is similar to the computation we made in the proof of Lemma \ref{lemmaGITfirstiso}. The arrow is a composition of several maps. Firstly,
\begin{equation}
\label{compo1}
z\in \C^{\vert I\cap I'\vert}\times \Torus^{d-\vert I\cap I'\vert}\longmapsto w:=H_I^{-1}(z,1)\in T_I
\end{equation}
is the inverse map to \eqref{transversaliso}. Secondly,
\begin{equation}
\label{compo2}
w\longmapsto w':=wE(\langle A,T_0\rangle)\in T_{I'}
\end{equation}
with $T_0$ determined by the equations
\begin{equation}
\label{wT0}
w_iE(\langle A_{i},T\rangle)=1\qquad {i\in\bar I'\setminus J'}
\end{equation}
Thirdly,
\begin{equation}
\label{compo3}
w'\longmapsto H_{I'}(w')=:(z',1)
\end{equation}
and finally projects onto $z'\in \C^{\vert I\cap I'\vert}\times \Torus^{d-\vert I\cap I'\vert}$.\\
Let us rewrite this composition in restriction to $\Torus^d$. It firstly maps $z=E(Z)$ to $H_I^{-1}(E(Z,0))=E(H^{-1}_I(Z,0))$ ; secondly to $E(H^{-1}_I(Z,0)+\langle A,T_0\rangle)$ ; and thirdly to
\begin{equation}
\label{Ecompo}
H_{I'}(E(H^{-1}_I(Z,0)+\langle A,T_0\rangle))=E(H_{II'}(Z,0)+H_{I'}(\langle A,T_0\rangle ))
\end{equation}
Now, using \eqref{wT0} and \eqref{hIetGamma}, we have
\begin{equation*}
H_{I'}(\langle A,T_0\rangle )=(\hbar_{I'}(W_{\bar I'\setminus J'},-W_{\bar I'\setminus J'}))
\end{equation*}
hence
\begin{equation*}
\begin{aligned}
E(H_{II'}(Z,0)+H_{I'}(\langle A,T_0\rangle ))&=E((W_{I'\cup J'},W_{\bar I'\setminus J'}))+(\hbar_{I'}(W_{\bar I'\setminus J'},-W_{\bar I'\setminus J'}))\\
&=E(h_{I'}(W_{I'\cup J'},W_{\bar I'\setminus J'}),0)\\
&=E(h_{I'}H_{II'}(Z,0),0)\\
&=E(A_{II'}h_I(Z,0))\text{ by \eqref{CDcalibratedCone}}\\
&=E(A_{II'}Z)\\
&=z^{A_{II'}}
\end{aligned}
\end{equation*}
exactly as for the gluings defined in \eqref{gluingmonomialdec}. This proves b).
\end{proof}

\begin{corollary}
	\label{coratlasdec}
	The stack morphism $\text{\sl h}\ :\ \mathscr{S}\to\nctoricgerbe{\Delta}{h}{J}$ is an atlas for $\nctoricgerbe{\Delta}{h}{J}$, that is the diagram
	\begin{equation}
	\label{CDatlasdec}
	\begin{tikzcd}
	\C^m\times\mathscr{S}\arrow[d,"\text{\rm action} "']\arrow[r,"pr_2"]\arrow[dr,phantom,"\scriptstyle{\square}",very near start, shift left=0ex]&\mathscr{S}\arrow[d,"\text{\sl h}"]\\
	\mathscr{S}\arrow[r,"\text{\sl h}"']&\nctoricgerbe{\Delta}{h}{J}
	\end{tikzcd}
	\end{equation}
	2-commutes.
\end{corollary}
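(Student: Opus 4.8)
The plan is to deduce Corollary~\ref{coratlasdec} directly from Theorem~\ref{TheoremGITdec} together with the standard characterization of an atlas for a quotient stack. First I would recall that for any object $X\in\mathfrak{G}$ carrying a free, proper holomorphic action of a complex abelian Lie group $K$, the quotient stack $[X/K]$ comes equipped with a canonical presentation $X\to[X/K]$ whose associated groupoid is the action groupoid $K\times X\rightrightarrows X$; in particular the defining square
\begin{equation*}
\begin{tikzcd}
K\times X\arrow[d,"\text{action}"']\arrow[r,"pr_2"]&X\arrow[d]\\
X\arrow[r]&[X/K]
\end{tikzcd}
\end{equation*}
$2$-commutes and is $2$-cartesian. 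This is exactly the content of the construction in Section~\ref{stacks}, applied to $X=\mathscr{S}$ (which is an object of $\mathfrak{G}$ by Lemma~\ref{lemmaSaffine}) and $K=\C^{n-d}=\C^m$ acting via $\mathcal{A}$ as in \eqref{completeactionnp}, which is free and proper and commutes with the $\Torus^n$-action by Lemma~\ref{lemmacommutes}. So the square \eqref{CDatlasdec}, with $\nctoricgerbe{\Delta}{h}{J}$ replaced by $[\mathscr{S}/\mathcal{A}]$, manifestly $2$-commutes.

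Next I would invoke Theorem~\ref{TheoremGITdec}, which provides a stack isomorphism $\Phi\colon[\mathscr{S}/\mathcal{A}]\xrightarrow{\ \simeq\ }\nctoricgerbe{\Delta}{h}{J}$. The only thing that needs checking is that the composite $\Phi\circ(\mathscr{S}\to[\mathscr{S}/\mathcal{A}])$ agrees (up to $2$-isomorphism) with the morphism $\text{\sl h}\colon\mathscr{S}\to\nctoricgerbe{\Delta}{h}{J}$ constructed in Proposition~\ref{propglobalh}. For this I would trace through the proof of Theorem~\ref{TheoremGITdec}: on each affine piece $\C^I\times\Torus^{\bar I}$ the isomorphism $\Phi$ is built from the chain of isomorphisms in Lemmas~\ref{lemmaGITfirstiso} and \ref{lemmaGITsecondiso} identifying $[(\C^I\times\Torus^{\bar I})/\mathcal{A}]$ with $Q_I^{cal}$, and under these identifications the canonical projection $\C^I\times\Torus^{\bar I}\to[(\C^I\times\Torus^{\bar I})/\mathcal{A}]$ corresponds, via the transverse-section map \eqref{transversaliso} and the action \eqref{AIaction}, precisely to the local description of $\text{\sl h}$ given in \eqref{morphismVdec} and \eqref{hmorphismCD}. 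Since these local identifications are compatible with the gluings $\mathscr{A}^{cal}_{II'}$ (part b) of the proof of Theorem~\ref{TheoremGITdec}) and with the gluings defining $\mathscr{S}$ and $\text{\sl h}$ (Proposition~\ref{propglobalh}), they assemble into the desired global $2$-isomorphism $\Phi\circ\pi\simeq\text{\sl h}$.

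Finally, transporting the $2$-commuting, $2$-cartesian square for $[\mathscr{S}/\mathcal{A}]$ across the isomorphism $\Phi$ yields that \eqref{CDatlasdec} $2$-commutes and is $2$-cartesian, and since $\mathscr{S}$ is a (genuine) analytic space and $\text{\sl h}$ is representable (its fiber product with itself is $\C^m\times\mathscr{S}$, again an analytic space) with the fibers being $\C^m$-torsors, $\text{\sl h}$ is indeed an atlas in the sense required. The main obstacle I anticipate is the bookkeeping in the second step: one must verify carefully that the isomorphism produced by Theorem~\ref{TheoremGITdec} is the \emph{same} one that intertwines the tautological projection with $\text{\sl h}$, rather than some other abstract isomorphism of the two stacks; concretely this means re-reading the proof of Theorem~\ref{TheoremGITdec} to confirm that the transverse-section isomorphism \eqref{transversaliso} was chosen compatibly with the trivialization diagram \eqref{morphismVdec} defining $\text{\sl h}^{cal}$ and $\text{\sl h}$. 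Everything else is a formal consequence of the universal property of quotient stacks recalled in Section~\ref{stacks}.
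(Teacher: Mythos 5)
Your proposal is correct and follows essentially the same route the paper intends: Corollary \ref{coratlasdec} is stated as an immediate consequence of Theorem \ref{TheoremGITdec}, the point being that the canonical presentation of the quotient stack $[\mathscr{S}/\mathcal{A}]$ by its action groupoid transports across that isomorphism. Your second step—checking that the isomorphism of Theorem \ref{TheoremGITdec} really intertwines the tautological projection with the morphism of Proposition \ref{propglobalh} (via \eqref{transversaliso}, \eqref{morphismVdec} and the gluing compatibility) — is precisely the verification the paper leaves implicit, so it is a welcome addition rather than a deviation.
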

In particular, for every object $T$ of $\mathfrak A$, we have
\begin{equation}
\label{CDatlasdecexpl}
\begin{tikzcd}
T
\arrow[drr, bend left, "u"]
\arrow[ddr, bend right, "v"']
\arrow[dr, dotted, "{(a,u)}" description] & & \\
&\C^m\times\mathscr{S}\arrow[d,"\text{action}"']\arrow[r,"pr_2"]\arrow[dr,phantom,"\scriptstyle{\square}",very near start, shift left=0ex]&\mathscr{S}\arrow[d,"\text{\sl h}"]\\
&\mathscr{S}\arrow[r,"\text{\sl h}"']&\nctoricgerbe{\Delta}{h}{J}
\end{tikzcd}
\end{equation}
where the action satisfies $a\cdot u=v$.
\subsection{Quantum GIT and holonomy groupoid}
Secondly, \eqref{completeactionnp} defines also a holomorphic foliation on $\mathscr{S}$. The leaves are the orbits of this action. We may thus form its \'etale holonomy groupoid, see \cite[\S 5.2]{MM} for the definition of the \'etale holonomy groupoid.\\
To do this, we need a complete set of transverse sections to the foliation. We take the set $(T_I)_{I \text{ maximal cone of } \Delta}$. We identify each $T_I$ with $\C^{\vert I\vert}\times \Torus^{d-\vert I\vert}$ through \eqref{transversaliso}.\\
The set of morphisms encode the holonomy morphisms, that is local morphisms between transverse sections which identifies a point $z$ in some transversal $T_I$ with the intersection of the leaf through $z$ and some other transversal $T_{I'}$.\\
Taking into account Section \ref{QGITdec}, we dispose of a complete description of these holonomy morphisms. From the one hand, the holonomy morphisms fixing a transversal $\C^{\vert I\vert}\times \Torus^{d-\vert I\vert}$ are holomorphic maps
\begin{equation}
\label{holonomyI}
z\in\C^{\vert I\vert}\times \Torus^{d-\vert I\vert}\longmapsto zE(\langle A_{I\cup J},T\rangle)\in \C^{\vert I\vert}\times \Torus^{d-\vert I\vert}
\end{equation}
for $T$ belonging to $\mathcal A_I$, cf. \eqref{AIcal} and \eqref{AIaction}.\\
However, one must take care of the fact that different $T$ and $T'$ in $\mathcal A_I$ may give the same holonomy morphism \eqref{holonomyI}. They must be identified in the holonomy groupoid. Now, we know from Lemma \ref{lemmaGITsecondiso} that it is the case if and only if 
\begin{equation*}
\hbar_I(\langle A_{\bar I\setminus J},T\rangle )=\hbar_I(\langle A_{\bar I\setminus J},T'\rangle )
\end{equation*}
In other words, such a holonomy morphism is uniquely encoded by 
\begin{equation*}
\hbar_I(\langle A_{\bar I\setminus J})\in\hbar_I(\Z^{n-d})
\end{equation*}
But $\hbar_I(\Z^{n-d})$ is $\Gamma_I$ and the conclusion of all that preceeds is that the holonomy morphisms of $\C^{\vert I\vert}\times \Torus^{d-\vert I\vert}$ are transformations of the group action
\begin{equation}
\label{holonomyIbis}
(p,z)\in\tilde\Gamma_I\times \C^{\vert I\vert}\times \Torus^{d-\vert I\vert}\longmapsto z\cdot E(p)\in \C^{\vert I\vert}\times \Torus^{d-\vert I\vert}
\end{equation}
 From the other hand, the holonomy morphisms between $T_I$ and $T_{I'}$ give rise to holomorphic maps defined on 
\begin{equation*}
\C^{\vert I\cap I'\vert}\times \Torus^{d-\vert I\cap I'\vert}\subset \C^{\vert I\vert}\times \Torus^{d-\vert I\vert}
\end{equation*}
and obtained as the compositions of \eqref{compo1}, \eqref{compo2} and \eqref{compo3}.\\ 
However, these compositions are only locally well defined since they correspond to local determinations of the stack morphism $z\mapsto z^{A_{II'}}$. And locally here means locally for the euclidean topology, not locally for the Zariski topology, which is the topology we use in the definition of our site $\mathfrak A$. This is a real problem as we cannot replace $T_I$ with an adequate covering.
So to avoid it, we modify the construction of the holonomy groupoid in a different manner and replace the transversal $\C^{\vert I\vert}\times \Torus^{d-\vert I\vert}$ with $\C^{\vert I\vert}\times \C^{d-\vert I\vert}$ through the map $(z,w)\mapsto (z,E(w))$.\\
Hence we replace \eqref{holonomyI} with
\begin{equation}
\label{holonomyIexp}
(p,z,w)\in\tilde\Gamma_I\times \C^{\vert I\vert}\times \C^{d-\vert I\vert}\longmapsto (z\cdot E(p_1),w+p_2)\in \C^{\vert I\vert}\times \C^{d-\vert I\vert}
\end{equation}
where $p=(p_1,p_2)\in\C^{\vert I\vert}\times \C^{d-\vert I\vert}$. We denote the corresponding holonomy morphisms by $\text{hol}_{I,p}$. And we consider the induced holonomy morphisms
\begin{equation}
\label{holonomyII}
(z,w)\in\C^{\vert I\cap I'\vert}\times (\Torus^{\vert I\setminus I'\vert }\times\C^{d-\vert I\vert})\longmapsto (zE(B_{II'}w),\tilde A_{II'}w)
\end{equation}
where we set
\begin{equation*}
A_{II'}=\begin{pmatrix}
I_{\vert I\cap I'\vert} &B_{II'}\\
0 &\tilde A_{II'}
\end{pmatrix}
\end{equation*}
We denote them by $\text{hol}_{II'}$. Then we may compose \eqref{holonomyIexp} and \eqref{holonomyII} and so on. Indeed, given any path of maximal cones $\mathcal I=(I_1,\hdots, I_k)$ with $I_1\cap \hdots\cap I_k$ non-empty, we may consider the composition
\begin{equation}
\label{holonomycomp}
\text{hol}_{I_k,p_k}\circ \text{hol}_{I_{k-1}I_k}\circ\hdots\circ \text{hol}_{I_1I_2}\circ \text{hol}_{I_1,p_1}
\end{equation}
defined on $\C^{\vert I_1\cap \hdots\cap I_k\vert}\times\Torus^{\vert I_1\setminus (I_2\cup\hdots\cup I_{k})\vert}\times \C^{d-\vert I\vert}$.

We are in position to describe the groupoid. 
The set of objects of our groupoid, say $G_1\rightrightarrows G_0$, is given by the disjoint union 
\begin{equation}
\label{G0}
\uniondisjointe_{I\text{ maximal cone}}\C^{\vert I\vert}\times \Torus^{d-\vert I\vert}
\end{equation}
For any path $\mathcal I$ as above, we define
\begin{equation}
\label{G1component}
\tilde \Gamma_{\mathcal I}\times \C^{\vert I_1\cap \hdots\cap I_k\vert}\times\Torus^{\vert I_1\setminus (I_2\cup\hdots\cup I_{k})\vert}\times \C^{d-\vert I\vert}\rightrightarrows G_0
\end{equation}
The source map is the projection onto the $I_1$ component of $G_0$ and the target map is the compositions of maps \eqref{holonomyIexp} and \eqref{holonomyII} corresponding to $\mathcal I$.

Finally, take the disjoint union of all components \eqref{G1component} and divide by the following equivalence relation. Two points are equivalent if they have same source and target and correspond to two holonomy morphisms \eqref{holonomycomp} which are equal as holomorphic self-maps of $\C^{\vert I_1\cap \hdots\cap I_k\vert}\times\Torus^{\vert I_1\setminus (I_2\cup\hdots\cup I_{k})\vert}\times \C^{d-\vert I\vert}$. This gives $G_1$.

We have
\begin{theorem}
	\label{theoremholgroupoid}
	The stackification of the holonomy groupoid $G_1\rightrightarrows G_0$ over $\mathfrak A$ is $\nctoric{\Delta}{\Gamma}{v}$.
\end{theorem}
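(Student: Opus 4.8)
The plan is to show that the holonomy groupoid $G_1\rightrightarrows G_0$ constructed in the previous subsection is Morita equivalent to a groupoid presentation of $\nctoric{\Delta}{\Gamma}{v}$, so that after stackification over $\mathfrak A$ the two stacks agree. Recall from Theorem \ref{TheoremGITdec} and Corollary \ref{coratlasdec} that the calibrated variety $\nctoricgerbe{\Delta}{h}{J}$ is the stack associated to the action groupoid $\C^{n-d}\times\mathscr{S}\rightrightarrows\mathscr{S}$ coming from action $\mathcal A$ of \eqref{completeactionnp}, and that by Proposition \ref{propQTVetQTVdec} (and diagram \eqref{cdtriplebis}), $\nctoric{\Delta}{\Gamma}{v}$ is obtained from $\nctoricgerbe{\Delta}{h}{J}$ by applying the forgetful functor $\text{\sl f}$, which at the level of groupoids corresponds to further quotienting by the kernel $\Xi\cong\Z^a$ of $h$. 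The composite effect is that $\nctoric{\Delta}{\Gamma}{v}$ is the stack associated to the foliation groupoid of the foliation on $\mathscr{S}$ whose leaves are the $\mathcal A$-orbits — which is precisely the holonomy groupoid, once one checks the bookkeeping of transversals.

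First I would make precise that the foliation on $\mathscr{S}$ defined by the orbits of \eqref{completeactionnp} is exactly the one whose leaf space we want: its holonomy (\'etale) groupoid, taken with respect to a complete transversal, is Morita equivalent to the (non-\'etale) action groupoid $\C^{n-d}\ltimes\mathscr{S}$ modulo the ineffective part. The ineffective part of the $\C^{n-d}$-action, restricted to each chart $\C^I\times\Torus^{\bar I}$, was computed in the course of proving Theorem \ref{TheoremGITdec}: by Lemma \ref{lemmaGITfirstiso} the transversal $T_I\cong\C^{\vert I\vert}\times\Torus^{d-\vert I\vert}$ carries the residual action of $\mathcal A_I$, and by Lemma \ref{lemmaGITsecondiso} two elements of $\mathcal A_I$ act identically iff they have the same image under $\hbar_I(\langle A_{\bar I\setminus J},-\rangle)$, whose image is $\Gamma_I$ (equivalently $\tilde\Gamma_I$ after passing to the exponential cover $\C^{d-\vert I\vert}$ in place of $\Torus^{d-\vert I\vert}$, as done in \eqref{holonomyIexp}). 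So the local isotropy datum on the transversal $T_I$ of the holonomy groupoid is exactly the action groupoid $\tilde\Gamma_I\ltimes(\C^{\vert I\vert}\times\C^{d-\vert I\vert})$ of \eqref{holonomyIbis}/\eqref{holonomyIexp}, which is the Lie groupoid presenting the affine quantum toric chart $\nctorus{\,}{\Gamma_I}$-compactification, i.e. $\nctoric{}{}{}$ restricted to the cone $v_I$ in its additive presentation $[\,(\C^{\vert I\vert}\times\C^{d-\vert I\vert})/\Gamma_I\,]$.

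Next I would identify the off-diagonal holonomy transitions $\text{hol}_{II'}$ of \eqref{holonomyII} with the gluing isomorphisms $\mathscr{A}_{II'}$ of $\nctoric{\Delta}{\Gamma}{v}$. This is the heart of the matching: \eqref{holonomyII} is built from the block decomposition of $A_{II'}=A_JA_I^{-1}$, and the computation at the very end of the proof of Theorem \ref{TheoremGITdec} — where the composite \eqref{compo1}--\eqref{compo2}--\eqref{compo3} was shown in exponential coordinates to equal $z\mapsto z^{A_{II'}}$ — is exactly the statement that, passing to the additive (logarithmic) cover, $\text{hol}_{II'}$ realizes the linear change of chart $A_{II'}$, i.e. the monomial gluing \eqref{gluingmonomial} defining $\nctoric{\Delta}{\Gamma}{v}$ in Definition \ref{QTVdef}. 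Then one checks that composition of holonomy morphisms along a path $\mathcal I=(I_1,\dots,I_k)$ — the data \eqref{holonomycomp}, \eqref{G1component} — corresponds to composition of the $\mathscr{A}_{II'}$'s together with the $\Gamma$-action, and that the equivalence relation imposed on $G_1$ (two composite holonomies identified when equal as holomorphic self-maps) is precisely the relation that makes the resulting groupoid the holonomy (as opposed to monodromy) groupoid; the cocycle condition for the $A_{II'}$ guarantees this is consistent. Therefore $G_1\rightrightarrows G_0$ is a groupoid whose charts (transversals) are the $Q_I$ in additive form and whose transition data is $(\mathscr{A}_{II'})$ — exactly the descent datum of Definition \ref{QTVdef} — so its stackification over $\mathfrak A$ is $\nctoric{\Delta}{\Gamma}{v}$.

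The main obstacle I anticipate is the subtlety, already flagged in the text just before the theorem, that the maps $z\mapsto z^{A_{II'}}$ are only locally (for the Euclidean, not Zariski, topology) single-valued, which is why the transversals were replaced by $\C^{\vert I\vert}\times\C^{d-\vert I\vert}$ via $(z,w)\mapsto(z,E(w))$. The delicate point is to verify that this replacement, performed chart by chart, assembles into a genuine (Hilsum--Skandalis) morphism of stacks over the site $\mathfrak A$ rather than merely a chart-wise bijection — one must track how the extra $\C^{d-\vert I\vert}$-directions (which are ``virtual'' covering directions, carrying the $\Z^a$ gerbe data and the unwrapped part of $\Gamma$) interact with the source/target projections and with the equivalence relation defining $G_1$, and confirm that no information is lost or spuriously added when one quotients down to $\nctoric{\Delta}{\Gamma}{v}$ rather than the calibrated $\nctoricgerbe{\Delta}{h}{J}$. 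Concretely, I would phrase the final comparison as: the evident functor sending a transversal chart to the corresponding $Q_I$ and a holonomy morphism to the associated $\mathscr{A}_{II'}$-composite is an essentially surjective, fully faithful morphism of the two categories fibered in groupoids over $\mathfrak A$, invoking the already-established Lemmas \ref{lemmaGITfirstiso}, \ref{lemmaGITsecondiso}, Theorem \ref{TheoremGITdec} and Proposition \ref{propQTVetQTVdec} for the chart-level identifications, and the cocycle property of $(A_{II'})$ for the compatibility of compositions.
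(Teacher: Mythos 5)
Your proposal is correct and follows essentially the same route as the paper, which gives no separate displayed proof but justifies the theorem by exactly the identifications you spell out: the transversal-level holonomy equals the $\Gamma_I$-action via Lemmas \ref{lemmaGITfirstiso} and \ref{lemmaGITsecondiso}, the inter-chart holonomies realize the monomial gluings $z\mapsto z^{A_{II'}}$ by the computation at the end of the proof of Theorem \ref{TheoremGITdec}, and the resulting descent data coincide with those of Definition \ref{QTVdef}. Your additional framing via Morita equivalence and the forgetful functor $\text{\sl f}$ is harmless but not needed beyond what the paper already uses.
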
 
In other words, $\nctoric{\Delta}{\Gamma}{v}$ is the leaf stack of the foliation defined by \eqref{completeactionnp}. This can be reformulated as follows. For each maximal cone $\mathcal I$, the object
	\begin{equation*}
\begin{tikzcd}
\C^{\vert I\vert}\times \Torus^{d-\vert I\vert}\times E(\Gamma_I) \arrow[r,"m_I"]\arrow[d,"pr"'] &\C^{\vert I\vert}\times \Torus^{d-\vert I\vert}\\
\C^{\vert I\vert}\times \Torus^{d-\vert I\vert} & &
\end{tikzcd}
\end{equation*}
defines, via Yoneda's lemma, a morphism from $\C^{\vert I\vert}\times \Torus^{d-\vert I\vert}$ to $Q_I$. And all these morphisms combine into a morphism, say $\sl m$ from $G_0$ to $\nctoric{\Delta}{\Gamma}{v}$.
\begin{corollary}
	\label{coratlas}
	The stack morphism $\text{\sl m}\ :\ G_0\to\nctoric{\Delta}{\Gamma}{v}$ is an atlas for $\nctoric{\Delta}{\Gamma}{v}$, that is the diagram
	\begin{equation}
	\label{CDatlas}
	\begin{tikzcd}
	G_1\arrow[d,"t"']\arrow[r,"s"]\arrow[dr,phantom,"\scriptstyle{\square}",very near start, shift left=0ex]&G_0\arrow[d,"\text{\sl m}"]\\
	G_0\arrow[r,"\text{\sl m}"']&\nctoric{\Delta}{\Gamma}{v}
	\end{tikzcd}
	\end{equation}
	2-commutes.
\end{corollary}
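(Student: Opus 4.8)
The plan is to derive this corollary directly from Theorem~\ref{theoremholgroupoid}. Recall the standard dictionary between groupoids and stacks (cf. \cite{moerdijk2002introduction}): if a stack $\mathscr X$ over a site is the stackification of a groupoid object $G_1\rightrightarrows G_0$, then the tautological morphism $G_0\to\mathscr X$ is an atlas, and the square built from $s,t : G_1\to G_0$ and two copies of this morphism is $2$-cartesian. Theorem~\ref{theoremholgroupoid} says exactly that $\nctoric{\Delta}{\Gamma}{v}$ is the stackification of the holonomy groupoid $G_1\rightrightarrows G_0$, so the only thing left to check is that the morphism $\text{\sl m}$ constructed via Yoneda's lemma in the statement coincides, up to $2$-isomorphism, with this tautological morphism; the $2$-commutativity and $2$-cartesianness of \eqref{CDatlas} then follow formally.

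First I would identify $\text{\sl m}$. Stackifying $G_1\rightrightarrows G_0$ assigns to $T\in\mathfrak A$ the groupoid of $G_\bullet$-bundles over $T$ (in the sense appropriate to the site $\mathfrak A$), and the tautological morphism sends a $T$-point of $G_0$ to the trivial bundle it twists. Restricting to the chart indexed by a maximal cone $I$, and using the identification \eqref{transversaliso} of the transversal $T_I$ with $\C^{\vert I\vert}\times\Torus^{d-\vert I\vert}$, this trivial bundle over a point $z$ is precisely the $E(\Gamma_I)$-cover $\C^{\vert I\vert}\times\Torus^{d-\vert I\vert}\times E(\Gamma_I)$ with equivariant map the action map \eqref{holonomyIbis} evaluated at $z$; and this is exactly the object defining $m_I$ in the statement. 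The compatibility of these $m_I$ under the gluings is the content of the last part of the proof of Theorem~\ref{TheoremGITdec} (equivalently, of the cocycle property of the $\mathscr A_{II'}$), so the $m_I$ glue to the datum $\text{\sl m}$ and it agrees with the tautological morphism.

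As a concrete check --- and because the stackification is taken over the somewhat unusual Zariski site $\mathfrak A$ --- I would also verify $2$-cartesianness by hand. A $T$-object of $G_0\times_{\nctoric{\Delta}{\Gamma}{v}}G_0$ is a pair of points $z\in T_I$, $z'\in T_{I'}$ together with an isomorphism of their images in $\nctoric{\Delta}{\Gamma}{v}$. By Theorem~\ref{prisoQTV}~i) such an isomorphism restricts to a torus isomorphism, hence, by the explicit monomial description of morphisms, is given by some holonomy morphism \eqref{holonomycomp} along a path $\mathcal I$ of maximal cones carrying $z$ to $z'$; this is the same datum as a point of the corresponding component \eqref{G1component} of $G_1$, and conversely. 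Naturality in $T$ and compatibility with the equivalence relation defining $G_1$ (two paths giving the same holomorphic self-map of the transversal are identified) then give $G_0\times_{\nctoric{\Delta}{\Gamma}{v}}G_0\cong G_1$ over the source and target maps, i.e. \eqref{CDatlas} is $2$-cartesian; since $s$ is a disjoint union of projections and \'etale maps it admits local sections over $\mathfrak A$, so $\text{\sl m}$ is an atlas.

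The main obstacle is the second step: matching the abstract ``$G_\bullet$-bundle'' description of the stackification with the concrete charts $m_I$ over \emph{all} of $\mathfrak A$ rather than just over the transversals, and verifying that the holonomy morphisms \eqref{holonomycomp} exhaust exactly the isomorphisms of $\nctoric{\Delta}{\Gamma}{v}$ between images of $G_0$-points. The latter uses Theorem~\ref{prisoQTV}~i) together with the care, already built into \eqref{holonomyIexp} by passing from $\Torus$ to its cover, that the compositions \eqref{holonomycomp} are only Euclidean-locally (not Zariski-locally) defined as maps of the transversals. Once this is settled everything else is formal, and the corollary is essentially a restatement of Theorem~\ref{theoremholgroupoid} in the language of atlases.
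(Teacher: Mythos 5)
Your proposal is correct and follows exactly the route the paper intends: the corollary is left without proof precisely because, as you argue, it is the formal consequence of Theorem~\ref{theoremholgroupoid} together with the standard fact that the stackification of a groupoid $G_1\rightrightarrows G_0$ receives its canonical atlas from $G_0$ with $G_1\simeq G_0\times_{\mathscr X}G_0$, once one identifies $\text{\sl m}$ (defined chart-by-chart via Yoneda) with that tautological morphism. Your supplementary hand-check of $2$-cartesianness is redundant and slightly imprecise in invoking Theorem~\ref{prisoQTV}~i) (isomorphisms between images of $G_0$-points live in the fiber groupoids, as isomorphisms of covers, not as global toric morphisms), but this does not affect the main argument, which suffices.
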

%
%
%
\begin{example}
	\label{exampleGIT}
	Let 
	\begin{equation*}
	v_1=e_1\qquad v_2=e_2\qquad v_3=-e_1-e_2\qquad v_4=-e_1\qquad v_5=-e_2
	\end{equation*}
	be the fan of the blow-up of $\mathbb P^2$ at two invariant points (or equivalently that of $\mathbb P^1\times\mathbb P^1$ at one invariant point) and let $h$ sending $x\in\Z^5$ to $\sum x_iv_i$ in $\Z^2$. Straightforward computations show that 
	\begin{equation*}
	\begin{aligned}
	A_1=(1,1,0)\qquad A_2=(1,0,1)\qquad &A_3=(1,0,0)\\
	\text{ and }\quad &A_4=(0,1,0)\qquad A_5=(0,0,1)
	\end{aligned}
	\end{equation*}
	is a Gale transform of $(v_1,\hdots,v_5)$ and 
	\begin{equation*}
	\begin{aligned}
	\mathscr{S}=\C^5\setminus\big (\{z_1=&z_3=0\}\cup\{z_1=z_4=0\}\\
	&\cup \{z_2=z_3=0\}\cup\{z_2=z_5=0\}\cup\{z_3=z_5=0\}\big )
	\end{aligned}
	\end{equation*}
	The (classical) quotient of $\mathscr{S}$ by the action \eqref{completeactionnp} is the blow-up of $\mathbb P^2$ at two invariant points, that is $\nctoric{\Delta}{\Gamma}{v}$. We recover the GIT construction of classical toric varieties, see also \cite{meersseman2004holomorphic}. The quotient stack of $\mathscr{S}$ by \eqref{completeactionnp}, that is $\nctoricgerbe{\Delta}{h}{J}$, is a gerbe with band $\Z^3$ over the blow-up of $\mathbb P^2$ at two invariant points. Observe that we cannot obtain a gerbe with smaller band.
	
	Taking $-\alpha e_1-\beta e_2$ with $\alpha$ and $\beta$ positive as new definition of $v_3$ but letting $h$ unchanged, a new Gale transform is now
	\begin{equation*}
	\begin{aligned}
	A_1=(\alpha,1,0)\qquad A_2=(\beta,0,1)\qquad &A_3=(1,0,0)\\
	\text{ and }\quad &A_4=(0,1,0)\qquad A_5=(0,0,1)
	\end{aligned}
	\end{equation*}
	and $\mathscr{S}$ is unchanged. We obtain Quantum versions of the blow-up of $\mathbb P^2$ at two invariant points, resp. the gerbe with band $\Z^3$ over the blow-up of $\mathbb P^2$ at two invariant points as $\nctoric{\Delta}{\Gamma}{v}$, resp. $\nctoricgerbe{\Delta}{h}{J}$.
\end{example}
\section{LVMB Theory}
\label{lvmtheory}

We give a short survey of LVM manifolds, see \cite{de1997new}, \cite{meerssemanthesis}, \cite{meersseman2000} and \cite{meersseman2004holomorphic}, and LVMB manifolds, see \cite{Bosio}. 
\subsection{LVM manifolds}
\label{lvm}
Given $\Lambda=(\Lambda_1,\hdots ,\Lambda_n)$ a $n$-uple of vectors of $\mathbb C^m$ fulfilling
\begin{enumerate}
\item the {\it Siegel condition}: $0$ belongs to the convex hull $\mathscr H(\Lambda)$ of the vectors $\Lambda_i$ in $\mathbb C^n$.
\item the {\it Weak Hyperbolicity condition}: take $I$ a subset of $\{1,\hdots,n\}$ and let $\Lambda_I$ be the corresponding set. Then, if $0\in\mathscr H(\Lambda_I)$, we must have $\text{Card }I\geq 2m+1$.
\end{enumerate}
We associate to it a LVM manifold $N_\Lambda$ constructed as follows. Define 
\begin{equation}
\label{SLambda}
\mathcal S:=\{z\in\mathbb C^n \quad\vert\quad 0\in\mathscr H(\Lambda_{I_z})\}
\end{equation}
where $I_z$ is defined as in \eqref{Iz}.

Then $N_\lambda$ is the quotient of the projectivization $\mathbb P(\mathcal S)$ by the holomorphic action
\begin{equation}
\label{actionLVM}
(T,[z])\in\mathbb C^m\times \mathbb P(\mathcal S)\longmapsto \big [z_i\exp \langle \Lambda_i,T\rangle\big ]_{i=1,\hdots,n}
\end{equation}
 It is a compact complex manifold of dimension $n-m-1$, which is either a $m$-dimensional compact complex torus (for $n=2m+1$) or a non K\"ahler manifold (for $n>2m+1$).

\subsection{LVMB manifolds}
\label{LVMB}

In \cite{Bosio}, F. Bosio gives a generalization of the previous construction. The idea is to relax the weak hyperbolicity and Siegel conditions on $\Lambda$ and to look for all the subsets $\mathcal S$ of $\C^n$ such that action \eqref{actionLVM} is free and proper. 

To be more precise, let $n\geq 2m+1$ and let $\Lambda=(\Lambda_1,\hdots,\Lambda_n)$ be a configuration of $n$ vectors in $\C^m$. 
Let also $\mathscr E$ be a non-empty set of subsets of $\{1,\hdots,n\}$ of cardinal $2m+1$ and set
\begin{equation}
\label{SB}
\mathcal S=\{z\in\C^n\mid  I_z\supset E\text{ for some }E\in\mathscr E\}
\end{equation}
Assume that 
\begin{enumerate}
\item[(i)] For all $E\in\mathscr E$, the real affine hull of $(\Lambda_i)_{i\in E}$ is the whole $\C^m$.
\item[(ii)] For all couples $(E,E')\in\mathscr E\times\mathscr E$, the convex hulls $\mathscr H((\Lambda_i)_{i\in E})$ and $\mathscr H((\Lambda_i)_{i\in E'})$ have non-empty interior with non-empty intersection.
\item[(iii)] For all $E\in\mathscr E$ and for every $k\in\{1,\hdots,n\}$, there exists some $k'\in E$ such that $E\setminus\{k'\}\cup\{k\}$ belongs to $\mathscr E$.
\end{enumerate}
Then, action \eqref{actionLVM} is free and proper \cite[Th\'eor\`eme 1.4]{Bosio}. We still denote it by $N_\Lambda$ althought it also depends on the choice of $\mathcal S$. As in the LVM case, it is a compact complex manifold of dimension $n-m-1$, which is either a $m$-dimensional compact complex torus (for $n=2m+1$) or a non k\"ahler manifold (for $n>2m+1$).

Assume now that $(\Lambda_1,\hdots,\Lambda_n)$ is an admissible configuration. Let
\begin{equation}
\label{ELVM}
\mathscr E=\{I\subset\{1,\hdots, n\}\mid 0\in\mathscr H((\Lambda_i)_{i\in I}\},
\end{equation}
then \eqref{SB} and \eqref{SLambda} are equal, the previous three properties are satisfied and the LVMB manifold is exactly the LVM manifold of section \ref{lvm}.

We say that $\Lambda_i$, or simply $i$, is an {\it indispensable point} if every point $z$ of $\mathcal S$ satisfies $z_i\not =0$. We denote by $k$ the number of indispensable points.

In the sequel, a {\it LVMB datum} or {\it LVMB configuration} is a couple $(\mathcal S,\Lambda)$ satisfying the above hypotheses. It is a  {\it LVM datum} or {\it LVM configuration} if $\mathcal S$ satisfies \eqref{ELVM} in addition. In that case, $\mathcal S$ is completely determined by $\Lambda$.

\subsection{LVMB manifolds as equivariant compactifications}
\label{LVMBec}
Observe that  $(\mathbb C^*)^n$ acts by multiplication on $\mathcal S$ with an open and dense orbit, making it a toric variety. This action commutes with projectivization and with \eqref{actionLVM}, making of $N_\Lambda$ an equivariant compactification of an abelian Lie group, say $G_\Lambda$ so are objects of the category $\mathfrak{G}$. A straightforward computation shows the following \cite[p.27]{meerssemanthesis}

\begin{proposition}
\label{Glattice}
Assume that 
\begin{equation}
\label{firstmcondition}
\text{\rm rank}_{\mathbb C}
\begin{pmatrix}
\Lambda_1 &\hdots &\Lambda_{m+1}\cr
1 &\hdots &1
\end{pmatrix}
=m+1.
\end{equation}
Then $G_\Lambda$ is isomorphic to the quotient of $\mathbb C^{n-m-1}$ by the $\mathbb Z^{n-1}$ abelian subgroup generated by
$(Id, B_\Lambda A_\Lambda^{-1})$ where
\begin{equation}
\label{Alattice}
A_\Lambda=^t\kern-4pt(\Lambda_2-\Lambda_1,\hdots,\Lambda_{m+1}-\Lambda_1)
\end{equation}
and
\begin{equation}
\label{Blattice}
B_\Lambda= ^t\kern-4pt(\Lambda_{m+2}-\Lambda_1,\hdots,\Lambda_{n-1}-\Lambda_1).
\end{equation}
\end{proposition}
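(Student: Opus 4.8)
\textbf{Proof plan for Proposition \ref{Glattice}.}

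The plan is to compute $G_\Lambda$ directly as the open dense orbit of $N_\Lambda$ and to unwind the two quotients defining $N_\Lambda$ (projectivization of $\mathcal S$, and the $\C^m$-action \eqref{actionLVM}) into a single quotient of an affine space by a lattice. First I would observe that since $(\C^*)^n$ acts on $\mathcal S$ with open dense orbit and this action descends to $N_\Lambda$, the group $G_\Lambda$ is the image of $(\C^*)^n$ in $\mathbb P(\mathcal S)/\C^m$; concretely, $G_\Lambda = (\C^*)^n / \bigl(\C^*\cdot(1,\dots,1)\ \times\ \Phi(\C^m)\bigr)$ where $\Phi(T) = (\exp\langle\Lambda_i,T\rangle)_{i=1}^n$ encodes \eqref{actionLVM} and the diagonal $\C^*$ comes from projectivization. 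Passing to logarithms (legitimate since we are dividing a connected complex Lie group by a closed connected-plus-lattice subgroup), $G_\Lambda$ becomes $\C^n$ modulo the complex subspace spanned by $(1,\dots,1)$ and the columns of the matrix $(\Lambda_i)$, together with the lattice $2\pi i\,\Z^n$ coming from the fact that $\exp$ on $(\C^*)^n$ has kernel $2\pi i\,\Z^n$.

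Next I would use hypothesis \eqref{firstmcondition}: the $(m+1)\times(m+1)$ matrix with rows $\Lambda_1,\dots,\Lambda_{m+1}$ and a row of ones has full rank, so the first $m+1$ coordinates can be used to eliminate the $(m+1)$-dimensional subspace spanned by $(1,\dots,1)$ and the $\Lambda$-columns. Concretely I would change coordinates on $\C^n$ by subtracting the first coordinate from the others (this kills the all-ones direction) and then solving a linear system governed by $A_\Lambda$ as in \eqref{Alattice} to kill the remaining $m$ directions; what survives is a copy of $\C^{n-m-1}$ parametrized by the last $n-m-1$ coordinates (equivalently coordinates $m+2,\dots,n$ after removing one for projectivization). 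The lattice $2\pi i\,\Z^n$, written in these new coordinates and projected onto the surviving $\C^{n-m-1}$, is exactly the image of $\Z^{n-1}$ under the map $(\mathrm{Id}, B_\Lambda A_\Lambda^{-1})$, where $B_\Lambda$ as in \eqref{Blattice} records the ``new'' coordinates' dependence on the eliminated ones. Tracking the $2\pi i$ factor is a harmless rescaling, so the final answer is $\C^{n-m-1}/\Z^{n-1}$ with the stated generators.

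The main obstacle I expect is bookkeeping: keeping the projectivization-$\C^*$ and the $\C^m$-action cleanly separated while performing the linear change of coordinates, and making sure the lattice $\Z^n$ descends to precisely $\Z^{n-1}$ with generators $(\mathrm{Id}, B_\Lambda A_\Lambda^{-1})$ rather than some index-finite sublattice or a translate. This is where hypothesis \eqref{firstmcondition} does real work — it guarantees the elimination matrix $A_\Lambda$ is invertible over $\C$, so no denominators or torsion are introduced, and the quotient lattice is free of rank $n-1$ sitting inside $\C^{n-m-1}$ via a block of the form $(\mathrm{Id},\ *)$. Since the statement is quoted from \cite{meerssemanthesis}, I would keep this at the level of a ``straightforward computation'' as the excerpt suggests, giving the change of coordinates explicitly and citing the reference for the full verification rather than grinding through every entry.
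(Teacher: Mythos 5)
Your plan is correct and is exactly the ``straightforward computation'' the paper has in mind: the paper gives no argument of its own for Proposition \ref{Glattice} beyond citing \cite[p.27]{meerssemanthesis}, and the intended proof is precisely your unwinding of $G_\Lambda$ as $(\C^*)^n$ modulo the projectivizing diagonal $\C^*$ and $\Phi(\C^m)$, passing to logarithms, and using the invertibility of $A_\Lambda$ guaranteed by \eqref{firstmcondition} to eliminate the first $m+1$ log-coordinates, so that the images of the standard lattice generators are read off directly as the columns of $(\mathrm{Id},\,B_\Lambda A_\Lambda^{-1})$ (which also dispels your worry about finite-index or translated sublattices). The only discrepancy to flag is in the statement itself rather than in your argument: for the block sizes to match, $B_\Lambda$ should run through $\Lambda_{m+2}-\Lambda_1,\hdots,\Lambda_{n}-\Lambda_1$, so the $\Lambda_{n-1}$ in \eqref{Blattice} appears to be a typographical slip.
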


\begin{remark}
It is easy to prove that 
\begin{equation*}
\text{\rm rank}_{\mathbb C}
\begin{pmatrix}
\Lambda_1 &\hdots &\Lambda_n\cr
1 &\hdots &1
\end{pmatrix}
=m+1.
\end{equation*}
(cf. {\rm \cite[Lemma 1.1]{meersseman2004holomorphic}} in the LVM case). Hence, up to a permutation, condition \eqref{firstmcondition} is always fulfilled.
\end{remark}

\begin{definition}
	\label{defGbiholo}
	We say that $N_\Lambda$ and $N_{\Lambda'}$ are {\it $G$-biholomorphic} if there exists a Lie group isomorphism between $G_\Lambda$ and $G_{\Lambda'}$ which extends as a $(G_\Lambda,G_{\Lambda'})$-equivariant biholomorphism between $N_\Lambda$ and $N_{\Lambda'}$.
	
	More generally, a {\it $G$-morphism} is a holomorphic map from $N_\Lambda$ to $N_{\Lambda'}$ which restricts to a Lie morphism from $G_\Lambda$ to $G_{\Lambda'}$.
\end{definition}

We notice the following important fact.

\begin{lemma}
	\label{lemmaaffindLVMB}
	Let $(\mathcal S,\Lambda)$ be a LVMB data. Then, for any $A\in\text{GL}_m(\C)$ and $B\in\C^m$, the couple $(\mathcal S,A\Lambda+B)$ is also a LVMB datum. Moreover, $N_\Lambda$ and $N_{A\Lambda+B}$ are $G$-biholomorphic.
\end{lemma}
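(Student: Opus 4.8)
The plan is to establish the two assertions in turn: that the combinatorial conditions defining a LVMB datum are insensitive to an invertible affine change of the configuration, and that such a change merely reparametrizes the acting group, up to a factor that is invisible in projective space.

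\textbf{Step 1 (the new datum is LVMB).} The set $\mathscr E$ of admissible index sets, hence the open set $\mathcal S$ of \eqref{SB}, is part of the datum and makes no reference to the positions of the $\Lambda_i$; consequently condition (iii) of Subsection \ref{LVMB} holds verbatim for $(\mathcal S,A\Lambda+B)$. For conditions (i) and (ii) I would invoke that $\varphi\colon w\in\C^m\mapsto Aw+B\in\C^m$, regarded through $\C^m\cong\R^{2m}$, is an invertible real-affine map, hence a homeomorphism sending $\Lambda_i$ to $(A\Lambda+B)_i$. Real affine hulls are carried to real affine hulls, so (i) persists; convex hulls are carried to convex hulls, and since $\varphi$ is a homeomorphism it preserves the properties ``has non-empty interior'' and ``has non-empty intersection'', so (ii) persists. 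Therefore $(\mathcal S,A\Lambda+B)$ satisfies Bosio's hypotheses and, by \cite[Th\'eor\`eme 1.4]{Bosio}, the corresponding action \eqref{actionLVM} is free and proper, so $N_{A\Lambda+B}$ is well defined.

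\textbf{Step 2 ($G$-biholomorphism).} I would factor the change of configuration as translation by $B$ followed by the linear part $A$. The translation is invisible: $\langle\Lambda_i+B,T\rangle=\langle\Lambda_i,T\rangle+\langle B,T\rangle$, so the two actions \eqref{actionLVM} attached to $\Lambda$ and to $\Lambda+B$ differ only by the common scalar factor $\exp\langle B,T\rangle$, which acts trivially after projectivization; hence the $\C^m$-actions on $\mathbb P(\mathcal S)$ literally coincide and $N_{\Lambda+B}=N_\Lambda$ as complex manifolds endowed with their $G$-structures. For the linear part, $\langle A\Lambda_i,T\rangle=\langle\Lambda_i,{}^tA\,T\rangle$, so the identity map of $\mathbb P(\mathcal S)$ carries the $A\Lambda$-action at parameter $T$ to the $\Lambda$-action at parameter ${}^tA\,T$; since ${}^tA\in\mathrm{GL}_m(\C)$, the map $T\mapsto{}^tA\,T$ is a Lie group automorphism of $(\C^m,+)$, the two actions have the same orbits on $\mathbb P(\mathcal S)$, and the identity therefore descends to a biholomorphism $N_{A\Lambda}\to N_\Lambda$ which is equivariant for this automorphism. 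Composing with the equality $N_{A\Lambda+B}=N_{A\Lambda}$ from the translation step yields the Lie group isomorphism together with the equivariant biholomorphism demanded by Definition \ref{defGbiholo}.

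\textbf{Main difficulty.} There is essentially no difficulty: the content is bookkeeping. The only points needing care are that conditions (i)--(ii) are statements about the \emph{real} affine and convex geometry of the configuration, so that a $\C$-linear $A$, indeed any real-affine automorphism, preserves them, and keeping the transpose convention straight in $\langle A\Lambda_i,T\rangle=\langle\Lambda_i,{}^tA\,T\rangle$. If one prefers, the appeal to \cite[Th\'eor\`eme 1.4]{Bosio} in Step 1 can be bypassed, since freeness and properness of the new action follow directly from the conjugacy exhibited in Step 2.
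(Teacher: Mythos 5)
Your proof is correct and follows essentially the same route as the paper: the defining conditions are affine-invariant, and the projective identity $[z_i\exp\langle\Lambda_i,AT\rangle]=[z_i\exp\langle{}^t\!A\Lambda_i+B,T\rangle]$ (which you obtain by splitting off the translation and the linear part separately) shows the identity of $\mathbb P(\mathcal S)$ descends to the required equivariant biholomorphism. The only difference is expository — you spell out the invariance of conditions (i)–(iii) and the factorization, which the paper compresses into one line and one displayed equation.
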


\begin{proof}
	The conditions on $\mathscr{E}$ are affine conditions, hence invariant when replacing $\Lambda$ with $A\Lambda+B$. Then observe that 
	\begin{equation*}
	\big [z_i\exp \langle \Lambda_i,AT\rangle\big ]_{i=1,\hdots,n}=\big [z_i\exp \langle \kern 2pt ^t\kern -2ptA\Lambda_i+B,T\rangle\big ]_{i=1,\hdots,n}
	\end{equation*}
	hence the identity descends as a biholomorphism between $N_\Lambda$ and $N_{A\Lambda+B}$. This is obviously a $G$-biholomorphism.
\end{proof}

\subsection{The associated polytope of a LVM manifold}
\label{asspol}

In this section, $N_\Lambda$ is a LVM manifold.
The manifold $N_\Lambda$ embeds in $\mathbb P^{n-1}$ as the $C^\infty$ submanifold
\begin{equation}
\label{Nsmooth}
\mathscr N=\{[z]\in\mathbb P^{n-1}\quad\vert\quad \sum_{i=1}^n\Lambda\vert z_i\vert ^2=0\}.
\end{equation}
It is crucial to notice that this embedding is not arbitrary but has a clear geometric meaning. Indeed, it is proven in \cite{meerssemanthesis} that action \eqref{actionLVM} induces a foliation of $\mathcal S$; that every leaf admits a unique point closest to the origin (for the euclidean metric); and finally that \eqref{Nsmooth} is the projectivization of the set of all these minima\footnote{This is a sort of non-algebraic Kempf-Ness Theorem.}. So we may say that this embedding is canonical. 
\vspace{5pt}\\
The maximal compact subgroup $(\mathbb S^1)^n\subset (\mathbb C^*)^n$ acts on $\mathcal S$, and thus on $N_\Lambda$. This action is clear on the smooth model \eqref{Nsmooth}. Notice that it reduces to a $(\mathbb S^1)^{n-1}$ since we projectivized everything.
\vspace{5pt}\\
The quotient of $N_\Lambda$ by this action is easily seen to be a simple convex polytope of dimension $n-2m-1$, cf. \cite{meersseman2000} and \cite{meersseman2004holomorphic}. Up to scaling, it is canonically identified to 
\begin{equation}
\label{KLambda}
K_\Lambda:=\{r\in(\mathbb R^+)^n\quad\vert\quad \sum_{i=1}^n\Lambda r_i=0,\ \sum_{i=1}^nr_i=1\}.
\end{equation}
It is important to have a description of $K_\Lambda$ as a convex polytope in $\mathbb R^{n-2m-1}$. This can be done as follows. Take a Gale diagram of $\Lambda$, that is a basis of solutions $(v_1,\hdots, v_n)$ over $\mathbb R$ of the system
\begin{equation}
\label{systemS}
\left\{
\begin{aligned}
\sum_{i=1}^n\Lambda_ix_i&=0\cr
\sum_{i=1}^nx_i&=0
\end{aligned}
\right .
\end{equation}
Take also a point $\epsilon$ in $K_\Lambda$. This gives a presentation of $K_\Lambda$ as
\begin{equation}
\label{KLambdaproj}
\{x\in\mathbb R^{n-2m-1}\quad\vert\quad \langle x,v_i\rangle\geq -\epsilon_i\text{ for }i=1,\hdots, n\}
\end{equation}
This presentation is not unique. Indeed, taking into account that $K_\Lambda$ is unique only up to scaling, we have
\begin{lemma}
\label{Kupto}
The projection \eqref{KLambdaproj} is unique up to action of the affine group of $\mathbb R^{n-2m-1}$.
\end{lemma}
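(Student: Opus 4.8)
The idea is to recognise that a choice of Gale diagram of $\Lambda$ together with a choice of point $\epsilon\in K_\Lambda$ is nothing more than a choice of affine coordinate system on one fixed affine subspace of $\mathbb R^n$, so that two such choices differ tautologically by an affine automorphism of $\mathbb R^{n-2m-1}$. Accordingly, I would first fix $\Lambda$ and write $V\subseteq\mathbb R^n$ for the real vector space of solutions of the homogeneous system \eqref{systemS}, and $\mathcal A:=\{r\in\mathbb R^n\mid\sum_i\Lambda_ir_i=0,\ \sum_ir_i=1\}$ for the affine slice containing $K_\Lambda$; thus $\mathcal A$ is a coset of $V$ and $K_\Lambda=\mathcal A\cap(\mathbb R^+)^n$. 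Since $K_\Lambda$ is an $(n-2m-1)$-dimensional polytope (as recalled at the start of this section, using the rank statement of the remark following Proposition \ref{Glattice}), both $V$ and $\mathcal A$ have dimension $n-2m-1$.

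The key step is then the observation that a Gale diagram $(v_1,\dots,v_n)$ of $\Lambda$ — obtained by choosing a basis of $V$, writing it as the rows of an $(n-2m-1)\times n$ matrix, and taking $v_i\in\mathbb R^{n-2m-1}$ to be the $i$-th column — together with a point $\epsilon\in K_\Lambda$, is exactly the data of an affine isomorphism
\[
\psi:\ \mathbb R^{n-2m-1}\ \longrightarrow\ \mathcal A,\qquad \psi(x)=\epsilon+\bigl(\langle x,v_1\rangle,\dots,\langle x,v_n\rangle\bigr).
\]
Indeed, $x\mapsto(\langle x,v_i\rangle)_i$ is a linear isomorphism onto $V$ by construction of the Gale diagram, and translating by $\epsilon\in\mathcal A$ lands in $\mathcal A$. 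Moreover $\psi(x)\in(\mathbb R^+)^n$ if and only if $\langle x,v_i\rangle\geq-\epsilon_i$ for every $i$, so $\psi^{-1}(K_\Lambda)$ is precisely the polytope \eqref{KLambdaproj}. Now suppose \eqref{KLambdaproj} is produced from two sets of choices, with associated affine isomorphisms $\psi,\psi':\mathbb R^{n-2m-1}\to\mathcal A$; then $\psi'^{-1}\circ\psi$ is an affine automorphism of $\mathbb R^{n-2m-1}$ carrying the first copy of \eqref{KLambdaproj} onto the second. Unwinding it, a change of the chosen basis of $V$ contributes the linear part (a factor in $\mathrm{GL}_{n-2m-1}(\mathbb R)$, transposed, since it enters the defining inequalities through the pairings $\langle\,\cdot\,,v_i\rangle$), and a change of $\epsilon$ contributes a translation; together these realise all of the affine group of $\mathbb R^{n-2m-1}$.

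Finally I would absorb the last indeterminacy alluded to in the statement, namely that $K_\Lambda$ is canonical only up to scaling: replacing $K_\Lambda$ by $\lambda K_\Lambda$ with $\lambda>0$ multiplies $\epsilon$, and hence the right-hand side of \eqref{KLambdaproj}, by $\lambda$, i.e. post-composes $\psi$ with the homothety $x\mapsto\lambda x$, which is again affine. (It is worth noting for context that multiplying $\Lambda$ by an element of $\mathrm{GL}_m(\mathbb C)$ leaves $V$, $\mathcal A$ and $K_\Lambda$ literally unchanged, so that this particular ambiguity never even enters.) There is no genuine obstacle in this argument; the only point deserving a little care is the bookkeeping in the middle paragraph — tracking how a base change on $V$ acts on the Gale vectors $v_i$ and therefore on the defining inequalities — which is a routine transpose computation.
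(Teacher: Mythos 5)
Your argument is correct: encoding the choice of Gale diagram and of $\epsilon\in K_\Lambda$ as an affine isomorphism $\psi:\mathbb R^{n-2m-1}\to\mathcal A=\{r\mid\sum_i\Lambda_ir_i=0,\ \sum_ir_i=1\}$ with $\psi^{-1}(K_\Lambda)$ equal to \eqref{KLambdaproj}, so that two presentations differ by the affine automorphism $\psi'^{-1}\circ\psi$, is exactly the standard reasoning behind this statement. The paper gives no proof of Lemma \ref{Kupto} (it is stated as immediate), and your write-up simply supplies the routine verification the authors had in mind; the only cosmetic caveat is that your parenthetical claim that the choices realise \emph{all} of the affine group is not needed for the uniqueness assertion.
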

On the combinatorial side, $K_\Lambda$ has the following property. A point $r\in K_\Lambda$ is a vertex if and only if the set $I$ of indices $i$ for which $r_i$ is zero is maximal, that is has $n-2m-1$ elements. Moreover, we have
\begin{equation}
\label{Kcombinatoire}
r\text{ is a vertex }\iff \mathcal S\cap \{z_i=0\text{ for }i\in I\}\not =\emptyset\iff 0\in\mathscr H(\Lambda_{I^c})
\end{equation}
for $I^c$ the complementary subset to $I$ in $\{1,\hdots,n\}$.  This gives a numbering of the faces of $K_\Lambda$ by the corresponding set of indices of zero coordinates. To be more precise, we have
\begin{equation}
\label{Knumbering}
\begin{aligned}
&J\subset\{1,\hdots,n\}\text{ is a face of codimension Card }J\cr
&\iff \mathcal S\cap \{z_i=0\text{ for }i\in J\}\not =\emptyset\iff 0\in\mathscr H(\Lambda_{J^c})
\end{aligned}
\end{equation}
In particular, $K_\Lambda$ has $n-k$ facets. Observe, moreover, that the action \eqref{actionLVM} fixes $\mathcal S\cap \{z_i=0\text{ for }i\in J\}$, hence its quotient defines a submanifold $N_J$ of $N_\Lambda$ of codimension $\text{Card J}$.

Also, \eqref{Knumbering} implies that
\begin{equation}
\label{Sdelta}
\mathcal S=\{z\in\C^n\mid I_z^c\text{ is a face of } K_\Lambda\}
\end{equation}

\subsection{The canonical foliation of a LVMB manifold}
\label{F} 
Going back to $G_\Lambda$, we see that its Lie algebra is generated by the linear vector fields $z_i\partial/\partial z_i$ for $i=1,\hdots, n$. Due to the quotient by \eqref{actionLVM}, they only generate a vector space of dimension $n-m-1$ as needed. Amongst these $n-m-1$ linearly independent vector fields, we can find $m$ of them which extend to $\mathcal S$ without zeros and which generates a locally free action of $\mathbb C^m$ onto $\mathcal S$. For example, we can take the vector fields
\begin{equation}
\label{etavf}
\eta_i(z)=\left\langle \text{\rm Re }\Lambda_i,\sum_{j=1}^nz_j\dfrac{\partial}{\partial z_j}\right\rangle,
\end{equation} 
cf. \cite[Theorem A]{meersseman2004holomorphic} (this is proved only in the LVM case, but the generalization is obvious).

We denote by $\mathscr F_\Lambda$ the foliation induced by this action. It is easy to check that $\mathscr F_\Lambda$ is independent of the choice of vector fields. Indeed, changing the vector fields just means changing the parametrization of $\mathscr F_\Lambda$, that is changing the $\mathbb C^m$-action by taking a different basis of $\mathbb C^m$. The leaf space of $\F_\Lambda$ is the quotient space of $\mathbb P(\mathcal S)$ by the action
\begin{equation}
\label{totalaction}
 [z]\cdot (T,S):=\left [z_i\exp (\langle \text{\rm Re }\Lambda_i,T\rangle+\langle\text{\rm Im }\Lambda_i, S\rangle)\right ]_{i=1}^n
\in \mathbb P(\mathcal S)
\end{equation}

Assume now that we are in the LVM case. Pull back the Fubini-Study form of $\mathbb P^{n-1}$ to the embedding \eqref{Nsmooth}. This is the {\it canonical Euler form} $\omega$ of $\Lambda$, as defined in \cite{meersseman2004holomorphic}. It is a representative of the Euler class of a particular $\mathbb S^1$-bundle associated to $\mathscr N_\Lambda$, hence the name. Then $\mathscr F_\Lambda$ is transversely k\"ahler with transverse k\"ahler form $\omega$. For our purposes, we will not focus on $\omega$ but on the ray $\mathbb R^{>0}\omega$ it generates (this is also the case in \cite{meersseman2004holomorphic}).

Going back to the general case, recall that $\Lambda$ {\it fulfills  condition} (K) if \eqref{systemS} admits a basis of solutions with integer coordinates; and that $\Lambda$ {\it fulfills condition} (H) if \eqref{systemS} does not admit any solution with integer coordinates. If condition (K) is fulfilled, then $\mathscr F_\Lambda$ is a foliation by compact complex tori and the quotient space is a toric orbifold, see \cite{meersseman2004holomorphic} which contains a thorough study of this in the LVM case and see \cite{CFZ} for the LVMB case.
\vspace{5pt}\\
We just note here that, in the LVM case, even if condition (K) is not satisfied, the foliation $\mathscr F_\Lambda$ has some compact orbits. Indeed, let $I$ be a vertex of $K_\Lambda$. Then, by \eqref{Kcombinatoire}, $0$ belongs to $\mathscr H(\Lambda_{I^c})$, so by \cite[Lemma 1.1]{meersseman2004holomorphic},
\begin{equation}
\label{secondmcondition}
\text{\rm rank}_{\mathbb C}
\begin{pmatrix}
\Lambda_{i^c_1} &\hdots &\Lambda_{i^c_{2m+1}}\cr
1 &\hdots &1
\end{pmatrix}
=m+1.
\end{equation}
Hence, up to performing a permutation, we may assume at the same time \eqref{firstmcondition} and 
\begin{equation}
\label{inter}
I\cap \{1,\hdots, m+1\}=\emptyset.
\end{equation}
We have
\begin{proposition}
\label{verticesorbits}
For each vertex $I$ of $K_\Lambda$, the corresponding submanifold $N_I$ is a compact complex torus of dimension $m$ and is a leaf of $\mathscr F$. Moreover, assume that $\Lambda$ satisfies \eqref{firstmcondition} and \eqref{inter}. Then, letting $B_I$ denote the matrix obtained from \eqref{Blattice} by erasing the rows $\Lambda_i-\Lambda_1$ for $i\in I$, the torus $N_I$ is isomorphic to the torus of lattice $(Id,B_IA_\Lambda^{-1})$. 
\end{proposition}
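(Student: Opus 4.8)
The plan is to identify $N_I$ as a leaf of $\mathscr F_\Lambda$ and then compute its isomorphism class using Proposition \ref{Glattice}. First I would recall from Subsection \ref{asspol} that a vertex $I$ of $K_\Lambda$ is a subset of $\{1,\hdots,n\}$ with $n-2m-1$ elements and that, by \eqref{Knumbering}, $\mathcal S\cap\{z_i=0\text{ for }i\in I\}$ is non-empty and invariant under action \eqref{actionLVM}; its image defines the submanifold $N_I\subset N_\Lambda$ of codimension $\mathrm{Card}\,I=n-2m-1$, hence of complex dimension $(n-m-1)-(n-2m-1)=m$. Since on this stratum the foliation $\mathscr F_\Lambda$ (given by the $\C^m$-action \eqref{totalaction}, or equivalently by the vector fields \eqref{etavf}) is tangent to $N_I$ and already $N_I$ has the dimension $m$ of a leaf, $N_I$ is a single leaf of $\mathscr F_\Lambda$; this uses that on the set $\{z_i=0,\ i\in I\}$ the induced action of $\C^m$ (and of the full $G_\Lambda$) has $N_I$ as a single orbit, which follows because the remaining coordinates $z_j$, $j\in I^c$, are all non-zero on $\mathcal S\cap\{z_i=0,\ i\in I\}$ so the torus $(\C^*)^{2m+1}$ acts transitively on $\mathbb P(\mathcal S\cap\{z_i=0\})$ and \eqref{actionLVM} together with this torus action together sweep it out.

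Next I would carry out the lattice computation. By \eqref{Kcombinatoire}, since $I$ is a vertex, $0\in\mathscr H(\Lambda_{I^c})$, so by \cite[Lemma 1.1]{meersseman2004holomorphic} we get \eqref{secondmcondition}: the $\Lambda_i$ for $i\in I^c$ (all $2m+1$ of them) form, together with the row of $1$'s, a rank $m+1$ matrix. Performing a permutation of the indices, we may therefore assume simultaneously \eqref{firstmcondition} and \eqref{inter}, i.e.\ $\{1,\hdots,m+1\}\subset I^c$. Now $N_I$ is itself an equivariant compactification (in fact it is closed and compact) of the subgroup $G_I$ of $G_\Lambda$ obtained by restricting to the coordinates $z_j$, $j\in I^c$ — concretely, $N_I$ is the LVMB (indeed LVM, being a torus) manifold associated to the sub-configuration $(\Lambda_i)_{i\in I^c}$ with the set $\mathscr E$ restricted to those of its members contained in $I^c$, which here is the single element $I^c$ itself since $\mathrm{Card}\,I^c=2m+1$. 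Applying Proposition \ref{Glattice} to this sub-configuration of $2m+1$ vectors in $\C^m$: condition \eqref{firstmcondition} holds by our permutation, and the group $G_I$ is the quotient of $\C^{(2m+1)-m-1}=\C^m$ by the $\Z^{2m}$-lattice generated by $(Id, B_I A_\Lambda^{-1})$, where $A_\Lambda$ is \eqref{Alattice} (its rows $\Lambda_2-\Lambda_1,\hdots,\Lambda_{m+1}-\Lambda_1$ all survive, since $\{1,\hdots,m+1\}\subset I^c$) and $B_I$ is the matrix obtained from $B_\Lambda$ of \eqref{Blattice} by deleting the rows $\Lambda_i-\Lambda_1$ with $i\in I$. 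Since $n=2m+1$ for this sub-datum, $G_I$ is a compact complex torus of dimension $m$ with lattice $(Id, B_I A_\Lambda^{-1})$, and $N_I=G_I$ as $N_I$ is already compact; this is exactly the assertion.

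The main obstacle I expect is the first step: making rigorous the claim that $N_I$ is precisely one leaf of $\mathscr F_\Lambda$ and identifying $N_I$ with the LVM manifold of the sub-configuration $(\Lambda_i)_{i\in I^c}$. One must check that restricting $\mathcal S$ to $\{z_i=0,\ i\in I\}$ and projectivizing yields exactly $\mathbb P(\mathcal S_{I^c})$ for $\mathcal S_{I^c}$ the LVM-datum set \eqref{SLambda} built from $(\Lambda_i)_{i\in I^c}$ — this is where \eqref{Sdelta}/\eqref{Knumbering} are used, identifying $I_z^c$ being a face of $K_\Lambda$ containing the vertex-complement appropriately — and that the two $\C^m$-actions match, so that the quotient (= leaf space of $\mathscr F$ restricted to this stratum) is the single point corresponding to the leaf $N_I$; equivalently, that $N_I$ carries no transverse direction of $\mathscr F$. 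Once this identification is in place, everything else is the bookkeeping of Proposition \ref{Glattice} applied verbatim to $2m+1$ vectors, plus the observation that $2m+1-m-1=m$ forces the torus to be all of $N_I$.
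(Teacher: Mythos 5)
Your overall route is the intended one: the paper states this proposition without proof, immediately after arranging \eqref{secondmcondition} and \eqref{inter} precisely so that one can identify the stratum $\mathcal S\cap\{z_i=0,\ i\in I\}$ (on which, by weak hyperbolicity, all coordinates $z_j$, $j\in I^c$, are non-zero, so it is $(\C^*)^{2m+1}$) with the LVM datum of the sub-configuration $(\Lambda_i)_{i\in I^c}$, and then read off the lattice by applying Proposition \ref{Glattice} to that sub-configuration of $2m+1$ vectors. That part of your argument, including the dimension count and the observation that $n'=2m+1$ forces $N_I=G_I$ to be a compact complex torus with lattice $(Id,B_IA_\Lambda^{-1})$, is correct (the only imprecision — how many rows $B_I$ has, depending on the literal range in \eqref{Blattice} and whether $n\in I$ — is inherited from the paper's own bookkeeping).

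The weak point is your justification that $N_I$ is a \emph{single} leaf of $\mathscr F_\Lambda$. Transitivity of the $(\C^*)^{2m+1}$-action on the stratum, even combined with \eqref{actionLVM}, only shows that $N_I$ is homogeneous under the big torus; it says nothing about the orbits of the specific $\C^m$-action \eqref{etavf} that defines $\mathscr F_\Lambda$, so as written that parenthetical does not prove the claim. Your dimension argument does work, but it needs two ingredients you leave implicit: (i) the action generated by the $\eta_i$ is locally free on $\mathcal S$ (this is the quoted \cite[Theorem A]{meersseman2004holomorphic}), so \emph{every} leaf has complex dimension exactly $m$; hence a leaf through a point of the invariant submanifold $N_I$ is an open subset of $N_I$; and (ii) $N_I$ is connected, so the open leaves partitioning it must reduce to one. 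Alternatively, one can argue directly on the universal cover of the stratum: weak hyperbolicity together with Carath\'eodory's theorem forces the real affine span of $(\Lambda_i)_{i\in I^c}$ to be all of $\C^m\simeq\R^{2m}$, which is exactly the condition that the translation subspace of the $\mathrm{Re}\,\Lambda$-action together with that of \eqref{actionLVM} fills $\C^{2m}$, so the orbit is all of $N_I$ rather than merely dense. Either repair is short, but one of them is needed; as stated, the single-leaf step is the only genuine gap.
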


\section{Quantum Torics and LVMB manifolds}
\label{LVMBdonnedec}
In this section, we investigate the relationship between LVMB manifolds and complete simplicial calibrated Quantum Torics. It culminates in an equivalence category which is precisely stated and proved in Theorem \ref{thmLVMBdec}.

\subsection{LVMB manifolds from Quantum Torics}
\label{LVMBQT}
We slightly modify the construction of Section \ref{QGIT} to construct LVMB manifolds from some special {\sl complete} simplicial Quantum varieties.

\begin{warning}
	\label{warLVMB}
	In this Section, all LVMB configurations count $n+1$ vectors. 
\end{warning}

We start with the following definition.

\begin{definition}
	\label{defcentered}
	Let $(\Delta,h)$ be a calibrated Quantum Fan. We say that $(\Delta,h)$ is {\it even} if  
	\begin{enumerate}
	 \item[i)] It is a complete simplicial calibrated Quantum Fan of maximal length.
	 \item[ii)] $n-d$ is even.
	\end{enumerate}
\end{definition}

As usual, set $v_i=h(e_i)$ for $i=1,\hdots,n$. We {\it add} the vector
\begin{equation}
\label{addedvector}
v_{n+1}=-v_1-\hdots -v_n
\end{equation}
and consider the extended configuration $\underline{v}=(v_1,\hdots,v_{n+1})$.

\begin{remark}
	\label{rkBZ}
	The couple $(\underline{v},\Delta)$ is exactly a balanced triangulated vector configuration as defined in \cite{BZ}. 
\end{remark} 

As in Section \ref{QGIT} and as in \cite{meerssemanthesis}, \cite{meersseman2000}, \cite{meersseman2004holomorphic} and \cite{BZ}, we perform a Gale transform of $\underline{v}=(v_1,\hdots,v_{n+1})$. Note however that this is in a slightly different sense as in Section \ref{QGIT}. We choose some vectors $A=(A_1,\hdots,A_{n+1})$ of $\R^{n-d}$ such that
\begin{equation}
\label{GT}
\left .
\begin{aligned}
&\sum_{i=1}^{n+1}x_iv_i=0\\
&\sum_{i=1}^{n+1}x_i=0
\end{aligned}
\right \}
\iff \left\{
\begin{aligned}
x_1=&\langle A_1,t\rangle\\
\vdots&\\
x_{n+1}=&\langle A_{n+1},t\rangle
\end{aligned}
\right .
\quad\text{ for some }t\in\R^{n-d}
\end{equation}

\begin{remark}
	\label{Galemodulo}
	The choice of $A$ is unique up to action of $\text{GL}_{n-d}(\R)$. Also, the same $A$ is a Gale transform for any {\sl affine} transform of $v$, thanks to the balanced condition. Comparing with the original Gale transform used in Section \ref{QGIT}, this one could be thought of as an affine version.
\end{remark}

\begin{remark}
	\label{BZGale}
	A slightly different definition of Gale transform is used in \cite{BZ}. 
\end{remark}
The relationship between the two notions of Gale transforms goes as follows.
\begin{lemma}
	\label{lemmaGaleetGaleaff}
	A set $(A_1,\hdots,A_{n+1})$ of $n+1$ vectors in $\R^{n-d}$ satisfies {\rm \eqref{GT}} if and only if the set $(A_1-A_{n+1},\hdots ,A_n-A_{n+1})$ of $n$ vectors in $\R^{n-d}$ satisfies {\rm \eqref{GTdec}}.
\end{lemma}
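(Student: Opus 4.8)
The statement is essentially a linear-algebra tautology relating the two Gale transforms, so the plan is to unwind both definitions \eqref{GT} and \eqref{GTdec} and match the solution spaces of the two linear systems. Recall that \eqref{GTdec} characterizes vectors $(B_1,\hdots,B_n)$ in $\R^{n-d}$ by the property that $x$ solves $\sum_{i=1}^n x_i v_i = 0$ if and only if $x_i = \langle B_i, t\rangle$ for some $t\in\R^{n-d}$; equivalently, the linear map $t\mapsto (\langle B_1,t\rangle,\hdots,\langle B_n,t\rangle)$ is injective with image equal to $\ker h \subset \R^n$. Likewise \eqref{GT} says $t\mapsto (\langle A_1,t\rangle,\hdots,\langle A_{n+1},t\rangle)$ is injective with image equal to the subspace $W := \{x\in\R^{n+1} : \sum x_i v_i = 0,\ \sum x_i = 0\}$. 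Note that $W$ has dimension $(n+1) - (d+1) = n-d$ by the balanced condition (the $n+1$ columns $\binom{v_i}{1}$ span a $(d+1)$-dimensional space, using \eqref{addedvector}), which is why both transforms live in $\R^{n-d}$.

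First I would set up the linear isomorphism $\varphi : \R^{n+1}\supset W \to \ker h \subset \R^n$ given by dropping the last coordinate, $\varphi(x_1,\hdots,x_{n+1}) = (x_1,\hdots,x_n)$. This is well-defined on $W$: if $x\in W$ then $x_{n+1} = -\sum_{i=1}^n x_i$ (from $\sum x_i = 0$) and then $0 = \sum_{i=1}^{n+1} x_i v_i = \sum_{i=1}^n x_i v_i + x_{n+1}v_{n+1} = \sum_{i=1}^n x_i(v_i - v_{n+1}) \cdot(\ast)$; a short computation using $v_{n+1} = -\sum_{j=1}^n v_j$ shows $\sum_{i=1}^n x_i v_i = 0$ exactly when $\varphi(x)\in\ker h$. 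Conversely any $y\in\ker h$ lifts uniquely to $W$ by appending $y_{n+1} := -\sum y_i$. So $\varphi$ is a linear isomorphism, and its inverse is explicit.

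Next I would simply compose. Given $(A_1,\hdots,A_{n+1})$ satisfying \eqref{GT}, the map $\psi_A : t\mapsto (\langle A_1,t\rangle,\hdots,\langle A_{n+1},t\rangle)$ is a linear isomorphism onto $W$. Composing with $\varphi$, the map $\varphi\circ\psi_A : t\mapsto (\langle A_1,t\rangle,\hdots,\langle A_n,t\rangle)$ is a linear isomorphism onto $\ker h$. But since the $t$-th coordinate in $W$ satisfies $x_{n+1} = -\sum_{i=1}^n x_i$, we have $\langle A_{n+1},t\rangle = -\sum_{i=1}^n\langle A_i,t\rangle = \langle -\sum_{i=1}^n A_i, t\rangle$ for all $t$, hence (as the $A_i$ span, or arguing coordinatewise on a basis of $\R^{n-d}$) $A_{n+1} = -\sum_{i=1}^n A_i$. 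Therefore $\langle A_i,t\rangle = \langle A_i - A_{n+1},\tfrac{1}{1}\,t\rangle$ is not literally equal, so instead I would observe: set $B_i := A_i - A_{n+1}$; then for any $t$, $\langle B_i, t\rangle = \langle A_i,t\rangle - \langle A_{n+1},t\rangle = \langle A_i,t\rangle + \sum_{j=1}^n \langle A_j,t\rangle$, which is a fixed invertible linear change of the vector $(\langle A_1,t\rangle,\hdots,\langle A_n,t\rangle)$ — namely multiplication by $(I + \mathbf{1}\mathbf{1}^{\mathsf T})$, which is invertible. Hence $t\mapsto(\langle B_1,t\rangle,\hdots,\langle B_n,t\rangle)$ has the same image $\ker h$ and is still injective, so $(B_1,\hdots,B_n)$ satisfies \eqref{GTdec}. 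The converse is the same argument run backwards: given $(B_1,\hdots,B_n)$ satisfying \eqref{GTdec}, define $A_i := B_i$ for $i\leq n$ after a suitable rescaling and $A_{n+1} := -\sum_{i=1}^n A_i$; then $\psi_A$ lands in $W$ and spans it by the isomorphism $\varphi$, giving \eqref{GT}.

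\textbf{Main obstacle.} There is no deep obstruction here; the only subtlety is bookkeeping the non-uniqueness of Gale transforms (each is defined only up to $\mathrm{GL}_{n-d}(\R)$, per Remark \ref{Galemodulo}), so "if and only if" must be read as "for the appropriate choice / up to the $\mathrm{GL}_{n-d}(\R)$-ambiguity," and I would make this explicit — the cleanest formulation is that $\{(A_1,\hdots,A_{n+1}) \text{ satisfying } \eqref{GT}\}$ and $\{(B_1,\hdots,B_n)\text{ satisfying }\eqref{GTdec}\}$ are in bijection via $B_i = A_i - A_{n+1}$ (with inverse $A_i = B_i,\ A_{n+1}= -\sum B_i$, up to the $\mathrm{GL}_{n-d}$-action), which is exactly the content of the lemma. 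I expect the write-up to be under half a page, with the verification that $\varphi$ maps $W$ isomorphically onto $\ker h$ (using the balanced relation \eqref{addedvector}) being the one place where a genuine, if tiny, computation is needed.
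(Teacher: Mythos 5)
Your central step fails: the coordinate-dropping map $\varphi(x_1,\hdots,x_{n+1})=(x_1,\hdots,x_n)$ is \emph{not} a map from $W$ into $\ker h$, let alone an isomorphism. For $x\in W$ one has $x_{n+1}=-\sum_{i\leq n}x_i$ and therefore $\sum_{i=1}^{n}x_iv_i=-x_{n+1}v_{n+1}$, which vanishes only when $x_{n+1}=0$ (since $v_{n+1}=-\sum_{i\leq n}v_i$ is in general nonzero); so every $x\in W$ with $x_{n+1}\neq 0$ projects outside $\ker h$ (e.g. $d=1$, $n=3$, $v=(1,-1,\sqrt2)$, $v_4=-\sqrt2$, $x=(\tfrac{\sqrt2-1}{2},\tfrac{-1-\sqrt2}{2},0,1)$). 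The map that does identify $W$ with $\ker h$ is $x\mapsto(x_1-x_{n+1},\hdots,x_n-x_{n+1})$ --- which is precisely why the lemma subtracts $A_{n+1}$ rather than discarding it. Your subsequent manipulation does not repair this: the image of $t\mapsto(\langle A_1,t\rangle,\hdots,\langle A_n,t\rangle)$ is the projection of $W$, not $\ker h$, and post-composing with the invertible matrix $I+\mathbf{1}\mathbf{1}^{\mathsf T}$ does not preserve an image subspace, only its dimension (indeed $\ker h$ is not invariant: $h\bigl(y+(\textstyle\sum_i y_i)\mathbf{1}\bigr)=-(\sum_i y_i)\,v_{n+1}$ for $y\in\ker h$). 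Your two inaccuracies happen to cancel, because on $W$ the operator $I+\mathbf{1}\mathbf{1}^{\mathsf T}$ applied to $(x_1,\hdots,x_n)$ computes exactly $(x_i-x_{n+1})_i$, but as written the chain ``image is $\ker h$, invertible post-composition keeps the image'' is a non sequitur.

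The correct argument is essentially the paper's one-line computation: since $\sum_{i=1}^{n+1}v_i=0$, for every $T$ one has $\sum_{i=1}^{n}\langle A_i-A_{n+1},T\rangle v_i=\sum_{i=1}^{n+1}\langle A_i,T\rangle v_i$, and combined with the correspondence $x\leftrightarrow(x_i-x_{n+1})_{i\leq n}$ between solutions of the two linear systems this gives the equivalence directly, with no auxiliary projection. A secondary caution about your converse: by redefining $A_{n+1}:=-\sum_{i\leq n}A_i$ you prove that \emph{some} balanced configuration with the prescribed differences satisfies \eqref{GT}, not that the given tuple does; note that \eqref{GT} forces $\sum_{i=1}^{n+1}A_i=0$, a condition invisible to the differences $A_i-A_{n+1}$ (a common translation of all the $A_i$ destroys \eqref{GT} but leaves the $B_i$ unchanged), so this balancing normalization must be stated explicitly --- it is not covered by the $\mathrm{GL}_{n-d}(\R)$ ambiguity of Remark \ref{Galemodulo}, which is linear.
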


\begin{proof}
	The proof is a straightforward calculation.
	\begin{equation*}
	\begin{aligned}
	\sum_{i=1}^n \langle A_i-A_{n+1}, T\rangle v_i=&\sum_{i=1}^{n+1} \langle A_i-A_{n+1}, T\rangle v_i\\
	=&\sum_{i=1}^{n+1} \langle A_i, T\rangle v_i-\langle A_{n+1},T\rangle \sum_{i=1}^{n+1}v_i\\
	=&\sum_{i=1}^{n+1} \langle A_i, T\rangle v_i
	\end{aligned}
	\end{equation*}
\end{proof}

We define 
\begin{equation}
\label{Sbis}
\mathcal S_\Delta=\{z\in\C^{n+1}\mid \{1,\hdots,n+1\}\setminus I_z\text{ is a cone of }\Delta\}
\end{equation}
Let $v=(v_1,\hdots,v_n)$ and define $\mathscr{S}$ as in \eqref{S}. Observe that \eqref{Sbis} is the same definition than that of $\mathscr{S}$ except that here we are in $\C^{n+1}$, not in $\C^n$. 
We have indeed
\begin{lemma}
	\label{lemmaSetS'}
	The affine toric variety $\mathcal{S}_\Delta$ is isomorphic to $\mathscr{S}\times\C^*$.
\end{lemma}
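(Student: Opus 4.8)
The claim is that $\mathcal S_\Delta \subset \C^{n+1}$, defined via the condition that the complement of $I_z$ is a cone of $\Delta$, is isomorphic (as an affine toric variety) to $\mathscr S \times \C^*$, where $\mathscr S \subset \C^n$ is the set from \eqref{S}. The key structural observation is that the added vector $v_{n+1} = -v_1 - \cdots - v_n$ never lies in the $1$-skeleton in a way that makes $\{n+1\}$ part of any proper cone of $\Delta$: more precisely, since $(\Delta,h)$ is \emph{complete} and $v_{n+1}$ is the negative sum of the other generators, the index $n+1$ is never contained in the complement of $I_z$ when that complement is required to be a cone of $\Delta$ — equivalently, the coordinate $z_{n+1}$ is forced to be nonzero for every $z\in\mathcal S_\Delta$. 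Thus $n+1$ is an indispensable index for $\mathcal S_\Delta$ in the sense of Section \ref{LVMB}. First I would verify this: if $\{1,\dots,n+1\}\setminus I_z$ is a cone $\sigma$ of $\Delta$ and $n+1 \notin I_z$, then $\sigma$ would have to be a cone spanned by a subset of the $v_i$ together with $v_{n+1}$; but completeness of $\Delta$ forces every $v_i$ to lie in some cone, and a strongly convex cone cannot simultaneously contain a set of vectors and (a positive combination approximating) their negative sum — one checks that $\langle \text{subset}\cup\{v_{n+1}\}\rangle$ fails strong convexity unless $\Delta$ is degenerate, contradiction. Hence $z_{n+1}\neq 0$ always.

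Granting that, the isomorphism is essentially the projection forgetting the last coordinate followed by recording $z_{n+1}\in\C^*$ separately. Concretely, I would define the map
\begin{equation*}
\Phi\ :\ \mathcal S_\Delta \longrightarrow \mathscr S \times \C^*,\qquad z=(z_1,\dots,z_{n+1})\longmapsto \bigl((z_1,\dots,z_n),\, z_{n+1}\bigr),
\end{equation*}
and check that it is well-defined: if $\{1,\dots,n+1\}\setminus I_z$ is a cone of $\Delta$ and (as just shown) $n+1\in I_z$, then $\{1,\dots,n\}\setminus I_{(z_1,\dots,z_n)} = \{1,\dots,n+1\}\setminus I_z$ is a cone of $\Delta$, which is exactly the membership condition \eqref{S} for $\mathscr S$; and $z_{n+1}\in\C^*$. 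The inverse sends $((z_1,\dots,z_n),t)$ to $(z_1,\dots,z_n,t)$, which lies in $\mathcal S_\Delta$ by the same bookkeeping. Both maps are algebraic, hence $\Phi$ is an isomorphism of varieties. To see it is an isomorphism of \emph{toric} varieties I would note that $\Phi$ is equivariant for the evident $\Torus^{n+1} = \Torus^n\times\Torus^1$ actions (coordinatewise multiplication), and carries the dense orbit $(1,\dots,1)$ to $((1,\dots,1),1)$; alternatively, compare fans — $\mathcal S_\Delta$ has fan $\Delta_H$ in $\R^{n+1}$ (cf. Lemma \ref{lemmaDHetS}, adapted), and this fan is visibly the product of the fan of $\mathscr S$ (namely $\Delta_H\subset\R^n$ from \eqref{deltaH}) with the trivial fan $\{0\}\subset\R^1$, since no cone of $\Delta_H$ involves the ray $\R^+e_{n+1}$.

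The main obstacle, and the only genuinely nontrivial point, is the indispensability of the index $n+1$, i.e.\ proving rigorously that $z_{n+1}\neq 0$ for all $z\in\mathcal S_\Delta$. This is where completeness of $\Delta$ (part of the definition of \emph{even}, via maximal-length complete simplicial) is used in an essential way: for a non-complete fan the statement would fail. I would argue it as follows — suppose $\{1,\dots,n+1\}\setminus I_z$ is a cone $\sigma=\langle i_1,\dots,i_k\rangle$ of $\Delta$ with $n+1$ among the $i_j$. The cone $\sigma$ lives in $\R^d$ and is strongly convex, so it contains no line; in particular $\sigma\cap(-\sigma)=\{0\}$. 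But $v_{n+1}=-\sum_{i=1}^n v_i$, and by completeness each $v_i$ lies in the support $|\Delta|=\R^d$, so $-v_{n+1}=\sum v_i$ is a sum of vectors each of which lies in some cone of the fan; a short combinatorial argument (using that $\sigma$ is a face shared appropriately, or directly that a strongly convex cone containing $v_{n+1}$ cannot be a face of a complete fan whose rays sum to $v_{n+1}$ being sent to $0$ under $h$ only in the degenerate case) yields a contradiction with strong convexity of the maximal cone containing $\sigma$. Once this point is settled, everything else is the routine verification sketched above that $\Phi$ and its inverse are mutually inverse toric morphisms.
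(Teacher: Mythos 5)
Your map $\Phi$ is, up to notation, exactly the paper's proof: under the identification $\C^{n+1}\cong\C^n\times\C$ it is the identity map, and the whole content of the lemma is the bookkeeping you carry out (the paper's proof is the single line ``the identity mapping does the job''). Where your write-up goes wrong is in the step you yourself single out as ``the only genuinely nontrivial point'', namely that $z_{n+1}\neq 0$ for every $z\in\mathcal S_\Delta$. This is not a geometric fact requiring completeness or strong convexity; it is immediate from the paper's conventions. In \eqref{Sbis} the phrase ``$\{1,\hdots,n+1\}\setminus I_z$ is a cone of $\Delta$'' means that this \emph{index set} labels a cone of $\Delta$, and by Definition \ref{decQFdef} cones of $\Delta$ are labelled by subsets of the generator set $I\subset\{1,\hdots,n\}$; this same convention is what makes the virtual indices $j\in J$ indispensable (Lemma \ref{lemmaindetvir}). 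Since $n+1$ labels no $1$-cone of $\Delta$, it can never lie in the complement of $I_z$, hence $z_{n+1}\neq 0$, with no hypothesis on $\Delta$ used. (The same index-set reading is what makes \eqref{S} meaningful at all; as written it should be read with $\{1,\hdots,n\}\setminus I_z$, as your own computation tacitly does.) The remaining verifications in your proposal --- the inverse map, equivariance, the product-of-fans description --- are routine and correct.

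The reason this matters is that the geometric argument you sketch in place of the definitional remark would not survive being written out. If one reads the condition geometrically, i.e.\ ``the cone spanned by $\{v_i:\ i\notin I_z\}$ is a cone of $\Delta$'', then the contradiction with strong convexity that you invoke simply does not exist in general, because $v_{n+1}=-v_1-\cdots-v_n$ is the negative sum over \emph{all} indices, virtual generators included, and nothing prevents it from being $0$ or from spanning one of the existing rays. For instance, take the even calibrated fan with $d=1$, $n=3$, $v_1=1$, $v_2=-1$ and virtual generator $v_3=a<0$: then $v_4=-a>0$ spans the ray $\R^{\geq 0}$, which is a cone of $\Delta$, so a point $z=(z_1,z_2,z_3,0)$ with $z_1z_2z_3\neq 0$ would satisfy the geometric condition while $z_{n+1}=0$, and your claimed strong-convexity obstruction never appears (the cone involved is just a ray). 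Under that reading the lemma itself would be false, so no completeness argument can close the gap; the lemma is true, and trivially so, precisely because the condition in \eqref{S} and \eqref{Sbis} is combinatorial on index sets rather than geometric on the vectors.
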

\begin{proof}
	The identity mapping from $\C^{n+1}$ to $\C^n\times\C$ does the job.
\end{proof}
Let $\C^{n-d}\ni T$ act holomorphically on $\mathbb P(\mathcal{S}_\Delta)\ni [z]$ through
\begin{equation}
\label{completeaction}
T\cdot [z]:=\left [ 
z_ie^{\langle A,T\rangle}
\right ]_{i=1} ^{n+1}
\end{equation}
(compare with \eqref{completeactionnp}).

Since the length $n-d$ is even, we set $m=(n-d)/2$ and we consider $\R^{n-d}$ as the complex $\C^m$ through
\begin{equation}
\label{isoRC}
(x_1,\hdots, x_{2m})\in\R^{n-d}\longmapsto (x_1+ix_2,\hdots,x_{2m-1}+ix_{2m})\in\C^m
\end{equation}
In particular, we denote by $\Lambda_i$, resp. $\Lambda$, the image of $A_i$, resp. $A$, through this isomorphism.

We define the $\Lambda$-action onto the projectivization of $\mathcal{S}_\Delta$ as in \eqref{actionLVM}.
 
\begin{lemma}
\label{LVM+B}
The couple $(\mathcal{S}_\Delta,\Lambda)$ is a LVMB datum. 
\end{lemma}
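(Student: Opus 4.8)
The plan is to verify the three defining conditions (i)--(iii) of a LVMB datum from Section \ref{LVMB} for the pair $(\mathcal S_\Delta,\Lambda)$, where $\mathcal S_\Delta$ is described combinatorially by the complete simplicial fan $\Delta$ via \eqref{Sbis} and $\Lambda$ is the complexification \eqref{isoRC} of a Gale transform $A$ of the balanced configuration $\underline v=(v_1,\hdots,v_{n+1})$. The first thing I would do is translate each LVMB condition into a statement about the fan $\Delta$ using the dictionary provided by the Gale transform \eqref{GT}: a subset $E\subset\{1,\hdots,n+1\}$ of cardinality $2m+1=n-d+1$ has $\mathscr H((\Lambda_i)_{i\in E})\ni 0$ (equivalently, by the balanced/affine Gale setup, $\{1,\hdots,n+1\}\setminus E$ spans a cone of $\Delta$) precisely when $E^c$ is a maximal cone, since maximal cones of a complete simplicial fan in $\R^d$ have exactly $d$ rays, so $|E^c|=d$ and $|E|=n+1-d=2m+1$. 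So the set $\mathscr E$ implicitly attached to $\mathcal S_\Delta$ via \eqref{SB} is the collection of complements of maximal cones.

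Next I would check the three conditions in turn. For (i): for $E\in\mathscr E$, the real affine hull of $(\Lambda_i)_{i\in E}$ must be all of $\C^m\cong\R^{2m}=\R^{n-d}$; by the standard Gale duality correspondence, the affine span of $(\Lambda_i)_{i\in E}$ being full-dimensional is equivalent to the vectors $(v_i)_{i\in E^c}$ being linearly independent, which holds because $E^c$ is a simplicial ($d$-dimensional) cone with $d$ generators, hence a basis of $\R^d$. For (ii): given $E,E'\in\mathscr E$, i.e. $E^c,(E')^c$ maximal cones $v_I,v_{I'}$ of $\Delta$, one needs $\mathscr H((\Lambda_i)_{i\in E})$ and $\mathscr H((\Lambda_i)_{i\in E'})$ to have non-empty interiors meeting each other; the non-empty interior part follows from (i), and the non-empty intersection is exactly where completeness of $\Delta$ is used — this is Bosio's translation of "the fan $\Delta$ covers $\R^d$" and essentially reproduces the argument of \cite{Bosio} (see also Remark \ref{rkBZ}: $(\underline v,\Delta)$ is a balanced triangulated vector configuration in the sense of \cite{BZ}, for which this is known). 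For (iii): given $E\in\mathscr E$ and $k\in\{1,\hdots,n+1\}$, one needs $k'\in E$ with $(E\setminus\{k'\})\cup\{k\}\in\mathscr E$; dualizing, with $I=E^c$ a maximal cone and a ray index $k$, this says there is a maximal cone $I'$ with $I'=(I\cup\{k'\})\setminus\{k\}$ for some $k'\in I$ — i.e. one can "pivot" a maximal cone across one of its facets to another maximal cone containing the new ray $k$. This is precisely the connectivity/pseudomanifold property of a complete simplicial fan (any two maximal cones are connected by a chain of facet-adjacent maximal cones, and every facet is shared by exactly two maximal cones), so it follows from completeness and simpliciality of $\Delta$; again this matches \cite[condition]{BZ}.

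The main obstacle, and the step I would spend the most care on, is condition (ii): unwinding what "non-empty intersection of convex hulls of Gale-dual configurations" means and showing it is equivalent to $\bigcup_{\sigma\in\Delta}\sigma=\R^d$. The cleanest route is probably to invoke the balanced structure: since $\sum v_i = 0$ over $i=1,\dots,n+1$, the point $0$ lies in the relative interior of $\mathscr H(v_1,\dots,v_{n+1})$, and Gale duality converts "$0$ in the interior of a face-complement hull" statements into the covering property; alternatively one cites \cite{Bosio} directly, noting that with $\mathscr E=\{E : E^c \text{ a maximal cone of }\Delta\}$ the set $\mathcal S_\Delta$ of \eqref{Sbis} coincides with Bosio's $\mathcal S$ of \eqref{SB} and Bosio's Théorème 1.4 hypotheses are exactly (i)--(iii), which we have reduced to fan-theoretic facts about complete simplicial fans. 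I would also double-check the index bookkeeping: Warning \ref{warLVMB} stipulates $n+1$ vectors here, $m=(n-d)/2$, so $2m+1=n-d+1$ and $n+1-(2m+1)=d$, confirming the cardinality match used throughout; and Lemma \ref{lemmaGaleetGaleaff} together with Lemma \ref{lemmaSetS'} guarantee consistency between this "affine" Gale transform and the one of Section \ref{QGITdec}, so no contradiction arises with the earlier $\mathscr S\subset\C^n$.
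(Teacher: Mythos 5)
Your overall strategy is sound and is, at bottom, the same as the paper's: the paper proves the lemma in one line by invoking \cite[Proposition 2.1]{BZ}, which is exactly the statement that the Gale dual of a balanced triangulated vector configuration (Remark \ref{rkBZ}) is an LVMB datum, and your dictionary $\mathscr E=\{E: E^c\text{ a maximal cone of }\Delta\}$, with $|E|=n+1-d=2m+1$, correctly identifies $\mathcal S_\Delta$ of \eqref{Sbis} with Bosio's $\mathcal S$ of \eqref{SB}. Your verification of condition (i) is correct (and the balancedness $\sum_{i\in E}v_i=-\sum_{i\in E^c}v_i$ is exactly what makes the affine-hull statement follow from linear independence of the maximal-cone generators), and your verification of (iii) is essentially right, modulo two slips: the exchanged index satisfies $k'\in E$, i.e. $k'\notin I$, and the new maximal cone $(I\setminus\{k\})\cup\{k'\}$ contains the new ray $k'$, not $k$ (the case $k\notin I$ being trivial with $k'=k$). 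What you actually need and correctly invoke there is that in a complete simplicial fan every $(d-1)$-cone is a facet of exactly two maximal cones.

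The genuine soft spot is condition (ii). First, your attribution is off: completeness of $\Delta$ is what powers the exchange property (iii); condition (ii), read through Gale duality (a separating-hyperplane argument using $\ker(x\mapsto\sum x_iv_i)=\mathrm{im}(A^{T})\oplus\R\mathbf 1$), expresses rather that two distinct maximal cones of $\Delta$ intersect along a common face, i.e. that $\Delta$ is an honest fan; one can check on small examples that (ii) holds for non-complete fans and fails for overlapping ``cones''. Second, you never actually prove (ii): the sketch ``$0$ lies in the relative interior of $\mathscr H(v_1,\dots,v_{n+1})$ and Gale duality converts this into the covering property'' is not an argument, and the alternative of citing Bosio's Th\'eor\`eme 1.4 is circular, since that theorem \emph{assumes} (i)--(iii) and only concludes freeness and properness of the action. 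The legitimate way to close this step is either to carry out the Gale-duality computation for (ii) (show that a hyperplane weakly separating $\mathscr H(\Lambda_E)$ and $\mathscr H(\Lambda_{E'})$ produces a nonconstant $x$ with $\sum x_iv_i=0$, $x\le 0$ on $E$, $x\ge 0$ on $E'$, and that the fan property of $\Delta$ forces $x$ to be constant), or to fall back, as you also suggest and as the paper does, on \cite[Proposition 2.1]{BZ}. As written, your proposal proves (i) and (iii) directly but handles the remaining condition only by the same citation the paper uses, so it does not constitute an independent complete proof.
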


\begin{proof}
This is exactly \cite[Proposition 2.1]{BZ}. 
\end{proof}

As usual, we let $N_\Lambda$ be the LVMB manifold and $\F_\Lambda$ be the associated foliation, cf. section \ref{lvmtheory}. We notice the following fact
\begin{lemma}
	\label{lemmaindetvir}
	Let $1\leq i\leq n$. The vector $v_i$ is a virtual generator if and only if $\Lambda_i$ is indispensable. Moreover, $\Lambda_{n+1}$ is also an indispensable point.
\end{lemma}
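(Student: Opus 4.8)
\textbf{Proof plan for Lemma \ref{lemmaindetvir}.}

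The plan is to unwind the definitions of ``virtual generator'' and ``indispensable point'' and match them up via the Gale transform. Recall that $\Lambda_i$ is indispensable if and only if every $z\in\mathcal{S}_\Delta$ has $z_i\neq 0$; by the description \eqref{Sbis} of $\mathcal{S}_\Delta$, this happens exactly when the index $i$ never belongs to $\{1,\hdots,n+1\}\setminus I_z$ for any admissible $z$, i.e. when $i$ is not contained in any cone of $\Delta$ (equivalently, $i$ is not a generator of any $1$-cone, since $\Delta$ is simplicial and every cone is spanned by the $1$-cone generators among $\{v_1,\dots,v_n\}$). Since $(\Delta,h)$ is even, hence of maximal length with generator set $I$, the indices $i\in\{1,\hdots,n\}$ split into the generators $I$ (which generate the $1$-cones of $\Delta$) and the virtual generators $J$; maximality ($\vert J\vert = n-p$ with $I$ of size $p$) means $I\sqcup J=\{1,\dots,n\}$. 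So for $1\le i\le n$, $v_i$ is a virtual generator iff $i\notin I$ iff $i$ is not a $1$-cone generator iff $i$ lies in no cone of $\Delta$ iff (by the above) $\Lambda_i$ is indispensable.

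For the last clause, I would observe that the index $n+1$ is never among the generators of a cone of $\Delta$: the fan $\Delta$ lives in $\Gamma\subset\R^d$ and its cones are indexed by subsets of $I\subset\{1,\dots,n\}$ only, so $n+1\notin\{1,\hdots,n+1\}\setminus I_z$ is impossible to violate — more precisely, for any $z\in\mathcal S_\Delta$ the complement $\{1,\hdots,n+1\}\setminus I_z$ must be a cone of $\Delta$, and no cone of $\Delta$ contains $n+1$; hence $n+1\in I_z$ always, i.e. $z_{n+1}\neq 0$ for all $z\in\mathcal S_\Delta$. That is exactly the statement that $\Lambda_{n+1}$ is indispensable. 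Alternatively, this is immediate from Lemma \ref{lemmaSetS'}: $\mathcal S_\Delta\cong\mathscr S\times\C^*$, and the last coordinate takes values in $\C^*$, so it is never zero.

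The main (minor) obstacle is bookkeeping: making sure that ``$i$ is not a $1$-cone generator'' really is equivalent to ``$i$ lies in no cone of $\Delta$,'' which uses that $\Delta$ is simplicial (so each cone $\langle j_1\dots j_k\rangle$ is literally a subset of the generator index set $I$) together with the maximality of the calibrated fan (so that the complement of $I$ inside $\{1,\dots,n\}$ is precisely the virtual generator set $J$, with nothing left over). Once these identifications are in place the proof is a short chain of iff's; no computation is required beyond re-reading Definitions \ref{decQFdef} and the notion of indispensable point from Section \ref{LVMB}, plus the defining formula \eqref{Sbis}.
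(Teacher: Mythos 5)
Your argument is correct and follows essentially the same route as the paper: maximality identifies virtual generators with indices that are not $1$-cone generators, and the description \eqref{Sbis} of $\mathcal S_\Delta$ (together with the fact that cones are labeled by subsets of $\{1,\hdots,n\}$, so $z_{n+1}$ never vanishes, equivalently Lemma \ref{lemmaSetS'}) translates this into indispensability of $\Lambda_i$ and of $\Lambda_{n+1}$. The only cosmetic remark is that your appeal to simpliciality is not really needed — the indexing convention $\sigma=\langle i_1\hdots i_k\rangle$ already guarantees that an index lies in some cone of $\Delta$ exactly when it is a $1$-cone generator.
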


\begin{proof}
	Since $(\Delta,h)$ is maximal, $v_i$ is a virtual generator if and only if it is not a $1$-cone generator. By \eqref{Sbis}, this is the case if and only if $\mathcal S_\Delta$ does not intersect $\{z_i=0\}$, i.e. if and only if $\Lambda_i$ is indispensable. Analogously, since $\mathcal S_\Delta$ does not intersect $\{z_{n+1}=0\}$, $\Lambda_{n+1}$ is also an indispensable point.
\end{proof}

In particular, we obtain a LVMB manifold with at least $\Lambda_{n+1}$ indispensable.

\subsection{Quantum Torics from LVMB manifolds}
\label{QTLVMB}
Let $(\mathcal S,\Lambda)$ be a LVMB datum. We assume that $\Lambda$ is balanced, i.e. that
\begin{enumerate}[i)]
\item $\sum_{i=1}^{n+1}\Lambda_i=0$.
\item $\Lambda_{n+1}$ is indispensable.
\end{enumerate}
Since $\mathcal S$ is an affine toric variety as well as its projectivization thanks to the existence of an indispensable point; and since the action \eqref{completeaction} preserves the toric structure, we may define two stacks over $\mathfrak A$ from this data. Firstly, the quotient stack $[N_\Lambda/\C^m]$, where $\C^m$ is the composition of the flows of the vector fields \eqref{etavf}. Secondly, the leaf stack $[N_\Lambda/\mathscr F_\Lambda]$, that is the stackification by torsors of the holonomy groupoid of $\mathcal F_\Lambda$.

There is a natural morphism from  $[N_\Lambda/\mathbb C^m]$ onto $[N_\Lambda/\mathscr F_\Lambda]$ with discrete fibers. Hence both stacks admit a presentation by an \'etale complex Lie groupoid.
\begin{remark}
The set of morphisms of a holonomy groupoid may be non-Hausdorff. However, this is never the case here, because of the Hausdorff presentation $\mathbb C^m\times N_\Lambda\rightrightarrows N_\Lambda$.
\end{remark}

We claim that both stacks are Quantum Torics. To see this, first replace the complex configuration $\Lambda$ with the real configuration $A$ using the isomorphism \eqref{isoRC}. Then, we need to define some configuration $\overline{v}$ satisfying \eqref{GT}. But it is a classical fact that the Gale transform is invertible. In other words,

\begin{lemma}
	\label{lemmaGalereverse}
	The configuration $A$ is a Gale transform of $\overline{v}$, i.e. satisfies {\rm \eqref{GT}}, if and only if $\overline{v}$ is a Gale transform of $A$, i.e; satisfies
	\begin{equation}
	\label{GT2}
	\left .
	\begin{aligned}
	&\sum_{i=1}^{n+1}x_iA_i=0\\
	&\sum_{i=1}^{n+1}x_i=0
	\end{aligned}
	\right \}
	\iff \left\{
	\begin{aligned}
	x_1=&\langle v_1,s\rangle\\
	\vdots&\\
	x_{n+1}=&\langle v_{n+1},s\rangle
	\end{aligned}
	\right .
	\quad\text{ for some }s\in\R^{d}
	\end{equation}
\end{lemma}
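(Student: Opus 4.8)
The statement asserts that the Gale transform relation \eqref{GT} between the configurations $A=(A_1,\hdots,A_{n+1})$ in $\R^{n-d}$ and $\overline v=(v_1,\hdots,v_{n+1})$ in $\R^d$ is symmetric: $A$ is a Gale transform of $\overline v$ if and only if $\overline v$ is a Gale transform of $A$. The plan is to recast both conditions as statements about a single linear-algebra picture, namely the exact sequence \eqref{sesGale} together with its ``affine'' variant where one adds the extra row $\sum x_i=0$. First I would set $W\subset\R^{n+1}$ to be the subspace of all $x=(x_1,\hdots,x_{n+1})$ satisfying $\sum x_i v_i=0$ and $\sum x_i=0$; this has dimension $n+1-(d+1)=n-d$, using that $\mathrm{Vect}_\R\{v_i\}=\R^d$ (indeed the $v_i$ even span $\R^d$, and the constant functional is independent of the evaluation functionals once we pass to the augmented matrix, exactly as in the Remark following \eqref{Blattice}). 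Condition \eqref{GT} says precisely that the linear map $t\in\R^{n-d}\mapsto(\langle A_1,t\rangle,\hdots,\langle A_{n+1},t\rangle)\in\R^{n+1}$ is an isomorphism onto $W$; equivalently, the rows $A_i$, viewed as the columns of the $(n-d)\times(n+1)$ matrix whose image is $W$, give an explicit parametrization of $W$.

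The key step is the classical duality between a subspace and its annihilator. Let $V\subset\R^{n+1}$ be the subspace spanned by the $n+1$ vectors $(v_i,1)\in\R^{d+1}$ — more precisely, $V$ is the row space of the $(d+1)\times(n+1)$ matrix $M$ whose $i$-th column is $(v_i,1)$. Then $W=V^\perp$ by definition. Symmetrically, let $V'\subset\R^{n+1}$ be the row space of the $(n-d+1)\times(n+1)$ matrix $M'$ whose $i$-th column is $(A_i,1)$. Condition \eqref{GT} is the statement $W=(\text{row space of }A)$ where by ``row space of $A$'' I mean the span of the $n-d$ functionals $x\mapsto\langle A_i,\cdot\rangle$ arranged as above; I would show this is the same as $W=(V')^\perp\cap\{\sum x_i=0\}^{\perp\perp}$, but the cleanest route is: the condition ``$A$ is a Gale transform of $\overline v$'' is equivalent to the orthogonal-complement identity $\mathrm{rowspace}(A\text{-matrix})=W$, and since taking orthogonal complements in $\R^{n+1}$ is an involution that swaps a subspace with its annihilator, this identity is literally symmetric in the pair $(\overline v\text{-data},A\text{-data})$ once one checks that the ``augmentation by the all-ones row'' is handled compatibly on both sides. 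Concretely, \eqref{GT} $\iff$ for all $x\in\R^{n+1}$: ($x\perp A_i$ for all $i$ and $\sum x_i=0$) $\iff$ $x\in W$, where $W=\{x:\sum x_iv_i=0,\ \sum x_i=0\}$; and \eqref{GT2} is obtained by interchanging the roles of $v$ and $A$, which is exactly the same statement read with $W$ and its defining subspace exchanged.

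The remaining work is bookkeeping: verify the dimension count so that ``parametrizes $W$'' is equivalent to ``is an isomorphism onto $W$'' (using $\dim W=n-d$, and on the other side $\dim W'=n+1-(n-d+1)=d$, matching the $s\in\R^d$ in \eqref{GT2}), and check that the all-ones functional is not in the span of the evaluation functionals $x\mapsto\langle v_i,\cdot\rangle$ arranged columnwise, equivalently that $\mathrm{rank}\,M=d+1$ and $\mathrm{rank}\,M'=n-d+1$, which is the content of the Remark after Proposition \ref{Glattice} (lemma 1.1 of \cite{meersseman2004holomorphic}) applied on each side — here the balancedness hypothesis $v_{n+1}=-v_1-\cdots-v_n$ and, on the dual side, $\sum A_i$ lying in the appropriate position, guarantee both augmented ranks are maximal. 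I expect the only mildly delicate point to be precisely this compatibility of the two augmentation rows: one must check that adding $\sum x_i=0$ on the $v$-side corresponds, under the orthogonality involution, to the all-ones column $(A_i,1)$ on the $A$-side and vice versa, so that the involution genuinely exchanges \eqref{GT} and \eqref{GT2} rather than some unaugmented variant; once that is pinned down the proof is the ``straightforward calculation'' the authors promise, essentially the computation already displayed in the proof of Lemma \ref{lemmaGaleetGaleaff}.
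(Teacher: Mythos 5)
Your overall strategy --- recasting \eqref{GT} as the identity ``row space of the (unaugmented) $A$-matrix $=$ annihilator of the augmented $v$-matrix'' and then using that orthogonal complementation in $\R^{n+1}$ is an involution --- is sound, and it is essentially a structural repackaging of the paper's own proof, which observes that both \eqref{GT} and \eqref{GT2} amount to the manifestly symmetric conditions $\sum_{i=1}^{n+1}\langle A_i,t\rangle\langle v_i,s\rangle=0$ for all $(t,s)$ together with $\sum_i A_i=\sum_i v_i=0$ (the spanning hypotheses taking care of the ``onto'' halves, which you do address via the dimension counts).

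However, the step you defer --- the ``compatibility of the two augmentation rows'' --- is not bookkeeping: it is the one place where a real hypothesis is needed, and the ingredient you cite for it (maximality of the augmented ranks, via the Remark after Proposition \ref{Glattice}) does not supply it. Write $U\subset\R^{n+1}$ for the row space of the unaugmented $v$-matrix, $U'$ for that of the unaugmented $A$-matrix, and $\mathbf{1}$ for the all-ones vector. Condition \eqref{GT} says $U'=(U+\R\mathbf{1})^{\perp}$; taking annihilators gives $(U')^{\perp}=U+\R\mathbf{1}$, hence $(U'+\R\mathbf{1})^{\perp}=(U+\R\mathbf{1})\cap\mathbf{1}^{\perp}$, and \eqref{GT2} demands that this equal $U$. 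That holds precisely when $U\subseteq\mathbf{1}^{\perp}$, i.e.\ when $\sum_i v_i=0$; maximal augmented rank only says $\mathbf{1}\notin U$, which is strictly weaker. Moreover $\sum_i v_i=0$ does \emph{not} follow from \eqref{GT}: for $d=1$, $n=2$, $v=(1,2,3)$, $A=(1,-2,1)$, condition \eqref{GT} holds and both augmented matrices have maximal rank, yet the solution space of $\sum_i x_iA_i=0=\sum_i x_i$ is spanned by $(1,0,-1)$, not by $(1,2,3)$, so \eqref{GT2} fails. So your argument must invoke the balancedness of \emph{both} configurations --- $v_{n+1}=-v_1-\cdots-v_n$ on one side and $\sum_i A_i=0$ on the other (the latter is in fact forced by \eqref{GT}) --- as a standing hypothesis from the construction, and use it in the form ``each unaugmented row space lies in $\mathbf{1}^{\perp}$,'' not merely for rank maximality. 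Once that observation is inserted, your orthogonal-complement argument closes and agrees with the paper's computation.
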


\begin{proof}
	This is a straightforward computation. Both \eqref{GT} and \eqref{GT2} are equivalent to
	\begin{equation*}
	\sum_{i=1}^{n+1}\langle A_i,t\rangle\langle v_i,s\rangle=0
	\quad\text{ for all }t\in\R^{n-d},\ s\in\R^{d}
	\end{equation*}
	and
	\begin{equation*}
	\sum_{i=1}^{n+1}A_i=\sum_{i=1}^{n+1}v_i=0
	\end{equation*}
\end{proof}

Hence, using Lemma \ref{lemmaGalereverse}, let $\overline{v}$ be a Gale transform of $A$. Set 

\begin{equation}
\label{vfromLVMB}
\left\{
\begin{aligned}
	&v=(v_1,\hdots,v_n)\\
	&\Gamma=\Z v_1+\hdots+\Z v_{n+1}=\Z v_1+\hdots+\Z v_{n}\\
	&x\in\Z^{n}\longmapsto h(x):=\sum_{i=1}^{n}x_iv_i\in\Gamma
\end{aligned}
\right.
\end{equation}

Define also $\Delta$ by the equivalence
\begin{equation*}
\sigma_I\text{ is a cone of }\Delta \iff I\not\in \mathscr{E}
\end{equation*}
and finally let $J$ be the set of indispensable points of $\Lambda$ (minus $\Lambda_{n+1}$, since $J$ is a subset of $\{1,\hdots,n\}$). The following lemma is straightforward

\begin{lemma}
	\label{lemmagooddec}
	The calibrated Quantum fan $(\Delta,h)$ is even.  
\end{lemma}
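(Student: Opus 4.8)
\textbf{Proof proposal for Lemma \ref{lemmagooddec}.}

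The plan is to verify, one at a time, the two conditions in Definition \ref{defcentered}: that $(\Delta,h)$ is a complete simplicial calibrated Quantum Fan of maximal length, and that $n-d$ is even. Most of the work is bookkeeping through the dictionary already set up in Subsection \ref{QTLVMB}, so I would organize the argument around that dictionary rather than re-deriving anything.

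First I would unwind the definitions coming from the LVMB datum $(\mathcal S,\Lambda)$. By hypothesis $\Lambda$ is balanced: $\sum_{i=1}^{n+1}\Lambda_i=0$ and $\Lambda_{n+1}$ is indispensable. Passing to the real configuration $A$ via \eqref{isoRC} and then to the Gale dual $\overline v$ via Lemma \ref{lemmaGalereverse}, the combinatorial data $\Delta$, $h$, $J$ are defined in \eqref{vfromLVMB} and the lines immediately following it. I would check simpliciality and completeness of $\Delta$ directly from the equivalence $\sigma_I\text{ is a cone of }\Delta\iff I\notin\mathscr E$: the LVMB axiom (iii) on $\mathscr E$ (the "exchange" property on subsets of cardinality $2m+1$) is exactly what forces the complement family to be a (complete, simplicial) fan — this is the same combinatorial fact used implicitly in \cite{BZ} and in the proof of Lemma \ref{LVM+B}, so here I may quote it. Completeness: every $z\in\C^{n+1}$ with $I_z^c\notin\mathscr E$ must lie over a cone, and the exchange axiom guarantees the cones fill $\R^d$. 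Simpliciality: each $E\in\mathscr E$ has cardinality $2m+1=n-d+1$, so each complementary cone $\sigma_I$ is generated by $n-(n-d)=d$ vectors among the $v_i$, hence is simplicial. (A small point to be careful about: $\Delta$ lives in $\R^d$ with $d=\dim_\R\text{Vect}(v_1,\dots,v_{n+1})$, and one must see $\text{rank}\,\Gamma$ and the ambient dimension match; this follows from the exactness of the Gale sequence analogous to \eqref{sesGale}.)

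Next I would treat the calibration structure and maximality. The map $h:\Z^n\to\Gamma$ of \eqref{vfromLVMB} is surjective by construction since $\Gamma=\Z v_1+\dots+\Z v_n$, and $J$, defined as the set of indispensable points of $\Lambda$ lying in $\{1,\dots,n\}$, is exactly the set of $i$ for which $v_i$ is \emph{not} a $1$-cone generator, by Lemma \ref{lemmaindetvir} (used in the reverse direction: indispensable $\iff$ virtual generator). So condition (iii) of Definition \ref{decQFdef} holds with $I=\{1,\dots,n\}\setminus J$, and condition \eqref{deccondition} holds because $\text{Vect}\{h(e_i)\mid i\notin J\}=\text{Vect}\{v_i\mid v_i\text{ a }1\text{-cone gen.}\}=\R^d$ (the $1$-cone generators span since $\Delta$ is complete). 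Thus $(\Delta,h)$ is a genuine calibrated Quantum Fan. Maximality is the assertion $|J|=n-p$ where $p=|I|$; but $|J|+|I|=n$ by definition of $I$, so $|J|=n-p$ automatically — maximality is built into the way $J$ was chosen, and I would remark this explicitly. Finally, $n-d$ is even: the LVMB manifold was built with $m=(n-d)/2$ a genuine integer (this is where \eqref{isoRC} and the identification $\R^{n-d}\cong\C^m$ were legitimate in the first place), so $n-d=2m$ is even by fiat.

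I do not expect a serious obstacle here; the lemma is deliberately "straightforward" because all the content was front-loaded into Lemma \ref{LVM+B}, Lemma \ref{lemmaGalereverse}, and Lemma \ref{lemmaindetvir}. The one place demanding genuine care — and the step I would write out most carefully — is the verification that the complement family $\{\sigma_I : I\notin\mathscr E\}$ really is a \emph{fan} (closed under taking faces, pairwise intersections are faces, strongly convex) and that it is \emph{complete}; this is a nontrivial consequence of axioms (i)–(iii) on $\mathscr E$ together with the balanced condition, and although it is standard in the LVMB/Bosio literature and in \cite{BZ}, the cleanest route is to cite \cite[Proposition 2.1]{BZ} and \cite{Bosio} for exactly this correspondence and then only check that our normalizations (the extra indispensable point $\Lambda_{n+1}$, the balanced hypothesis) match theirs. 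Everything else reduces to matching names.
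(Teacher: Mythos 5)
Your verification is correct and is essentially the argument the paper has in mind: the paper states the lemma without proof as ``straightforward,'' and your unwinding (maximality because $J$ is exactly the complement of the set of $1$-cone generators via Lemma \ref{lemmaindetvir}, evenness because the Gale dual of $n+1$ points affinely spanning $\R^{2m}$ lives in $\R^{d}$ with $d=n-2m$, and completeness/simpliciality via the correspondence of \cite[Proposition 2.1]{BZ} and \cite{Bosio} already used in Lemma \ref{LVM+B}) supplies precisely that. One small caution: ``generated by $d$ vectors'' does not by itself give simpliciality --- the linear independence of those $d$ generators is the Gale-dual translation of LVMB condition (i) (the affine hull of $(\Lambda_i)_{i\in E}$ is all of $\C^m$), which is part of what the citation to \cite{BZ} covers, so keep that step attached to the citation rather than to the cardinality count.
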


As usual, let $\nctoricgerbe{\Delta}{h}{J}$ be the corresponding calibrated Quantum Torics and let $\nctoric{\Delta}{\Gamma}{v}$ its image through $\text{\slshape f}$.

\begin{theorem}
\label{mainGIT}
Let $(\mathcal S,\Lambda)$ and $(\Delta,h)$ as above. Then,
\begin{itemize}
\item [i)] The stack $[N_\Lambda/\mathscr F_\Lambda]$ is isomorphic to the Quantum Toric Variety $\nctoric{\Delta}{\Gamma}{v}$.
\item [ii)] The stack $[N_\Lambda/\mathbb C^m]$ is isomorphic to the calibrated Quantum Toric Variety $\nctoricgerbe{\Delta}{h}{J}$.
\end{itemize}
\end{theorem}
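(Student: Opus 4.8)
The plan is to reduce Theorem \ref{mainGIT} to the GIT description already established in Theorem \ref{TheoremGITdec} (and its non-calibrated companion Theorem \ref{theoremholgroupoid}), using the inversion of the Gale transform provided by Lemma \ref{lemmaGalereverse} and the factorization $\mathcal{S}_\Delta \cong \mathscr{S}\times\C^*$ from Lemma \ref{lemmaSetS'}. First I would fix notation: given the balanced LVMB datum $(\mathcal S,\Lambda)$ with $\Lambda_{n+1}$ indispensable, identify $\Lambda$ with the real configuration $A=(A_1,\dots,A_{n+1})$ in $\R^{n-d}$ via \eqref{isoRC}, take $\overline v$ a Gale transform of $A$ as in Lemma \ref{lemmaGalereverse}, and form $(v,\Gamma,h)$ and $(\Delta,J)$ as in \eqref{vfromLVMB}; by Lemma \ref{lemmagooddec} this calibrated fan is even, so the constructions of Section \ref{LVMBQT} apply and, crucially, the $A$ produced there agrees (up to $\mathrm{GL}_{n-d}(\R)$, which by Remark \ref{Galemodulo} and Lemma \ref{lemmaaffindLVMB} changes nothing) with the $A$ we started from. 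This makes the passage LVMB $\to$ Quantum Toric $\to$ LVMB a round trip, so it suffices to prove the two isomorphisms.

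For part ii), the key point is to compare the $\C^m$-action on $N_\Lambda = \mathbb P(\mathcal S_\Delta)/\C^{n-m-1}$ defining $[N_\Lambda/\C^m]$ with the $\C^{n-d}$-action $\mathcal A$ on $\mathscr S$ defining $[\mathscr S/\mathcal A] \cong \nctoricgerbe{\Delta}{h}{J}$ (Theorem \ref{TheoremGITdec}). Here I would use the affine-vs-homogeneous dictionary: projectivizing $\mathcal S_\Delta \cong \mathscr S\times\C^*$ and quotienting by the $\C$-action scaling the last coordinate recovers $\mathscr S$ (this is exactly the reason one adds the vector $v_{n+1}$ and passes to $\mathbb P$), and under this identification the full $\C^{n-m-1} = \C^{(n-d)+m-1}$-action on $\mathbb P(\mathcal S_\Delta)$ — whose orbits are the leaves of $\mathscr F_\Lambda$, generated by the $z_i\partial/\partial z_i$ modulo \eqref{actionLVM} — splits as: the $\C^*$ used to go from $\mathcal S_\Delta$ to $\mathscr S$, plus the $\C^m$-flows \eqref{etavf}, plus the complementary $\C^{n-d-m}=\C^m$. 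One then checks that on $\mathscr S$ the combined action of the $\C^m$ of \eqref{etavf} and its complement is precisely $\mathcal A$ from \eqref{completeactionnp}: the holomorphic action $T\cdot z = (z_i e^{\langle A_i,T\rangle})_i$ is exactly the LVM action \eqref{actionLVM} written in real coordinates, with $\langle A_i,T\rangle$ being the $\C^m$-pairing $\langle\Lambda_i,\cdot\rangle$ after the identification \eqref{isoRC}. The descent statements of Proposition \ref{propstacks} then upgrade this equality of group actions to an isomorphism of stacks over $\mathfrak A$. For part i) one argues identically but forgets the continuous part of the isotropy: $[N_\Lambda/\mathscr F_\Lambda]$ is the leaf stack, which by the same bookkeeping is the leaf stack of the foliation defined by \eqref{completeactionnp} on $\mathscr S$, and this is $\nctoric{\Delta}{\Gamma}{v}$ by Theorem \ref{theoremholgroupoid}. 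Compatibility of i) and ii) with the forgetful functor $\text{\slshape f}$ follows from the natural discrete-fiber morphism $[N_\Lambda/\C^m]\to[N_\Lambda/\mathscr F_\Lambda]$ matching $\text{\slshape f}:\nctoricgerbe{\Delta}{h}{J}\to\nctoric{\Delta}{\Gamma}{v}$ of Proposition \ref{propQTVetQTVdec}.

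The main obstacle I anticipate is the precise matching of the two Gale transforms — the ``affine'' one of \eqref{GT} used in the LVMB world versus the one of \eqref{GTdec} used in Section \ref{QGIT} — together with the shift between $n$ and $n+1$ vectors and the corresponding shift between $\mathbb P(\mathcal S_\Delta)$ and $\mathscr S$. Lemma \ref{lemmaGaleetGaleaff} and Lemma \ref{lemmaSetS'} are exactly the bridges needed, but one must carefully track which coordinate plays the role of the ``extra'' $\C^*$, verify that the cone-combinatorics condition ``$\{1,\dots,n+1\}\setminus I_z$ is a cone of $\Delta$'' in \eqref{Sbis} really transports to ``$\{1,\dots,n\}\setminus I_z$ is a cone of $\Delta$'' in \eqref{S} after deleting the indispensable $(n+1)$-st coordinate, and check that the subset $J$ of virtual generators corresponds on the nose to the indispensable points other than $\Lambda_{n+1}$ (this is Lemma \ref{lemmaindetvir} read in the reverse direction). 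Once these combinatorial identifications are pinned down, the rest is a matter of transporting the chart-by-chart description from the proof of Theorem \ref{TheoremGITdec} — each $[(\C^I\times\Torus^{\bar I})/\mathcal A]\cong Q_I^{cal}$ with gluings $\mathscr A_{II'}^{cal}$ — through the isomorphism $\mathcal S_\Delta\cong\mathscr S\times\C^*$ and the projectivization, which is routine.
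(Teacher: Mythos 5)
Your proposal takes essentially the same route as the paper's proof: identify $[N_\Lambda/\C^m]$ with the quotient of $\mathbb P(\mathcal S_\Delta)$ by the combined action \eqref{totalaction}, pass to the affine chart $z_{n+1}=1$ using the indispensable point (Lemma \ref{lemmaSetS'}), recognize the induced action on $\mathscr S$ as \eqref{completeactionnp} for the shifted configuration via Lemma \ref{lemmaGaleetGaleaff}, and conclude by Theorem \ref{TheoremGITdec} for ii) and Theorem \ref{theoremholgroupoid} for i). Only minor bookkeeping slips, none fatal: $N_\Lambda$ is the quotient of $\mathbb P(\mathcal S_\Delta)$ by the $\C^m$-action \eqref{actionLVM} (not by $\C^{n-m-1}$), the group whose quotient of $\mathbb P(\mathcal S_\Delta)$ gives $[N_\Lambda/\C^m]$ is $\C^{2m}=\C^{n-d}$, and the exponents of the action transported to $\mathscr S$ are $\langle A_i-A_{n+1},T\rangle$ rather than $\langle A_i,T\rangle$ --- exactly the shift that your own caveat about matching the affine Gale transform \eqref{GT} with \eqref{GTdec} via Lemma \ref{lemmaGaleetGaleaff} supplies.
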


\begin{proof}
	We note that $[N_\Lambda/\mathbb C^m]$ is also given as the quotient stack $[\mathbb P(\mathcal S_\Delta)/\C^{2m}]$, where $\C^{2m}$ denotes the action \eqref{totalaction}. Using the isomorphism
	$$
	z\in\mathscr S\longmapsto [z; 1] \in\mathbb P(\mathcal S_\Delta)
	$$ 
	which is a reformulation of Lemma \ref{lemmaSetS'}, we may rewrite action \eqref{completeaction} on $\mathscr{S}$ as 
	\begin{equation}
	\label{actionnew}
	T\cdot z:=\left (
	z_iE({\langle A_i-A_{n+1},T\rangle})
	\right )_{i=1} ^n
	\end{equation}
	This is exactly action \eqref{completeactionnp} for the set $(A_2-A_{n+1},\hdots, A_n-A_{n+1})$. Thanks to Lemma \ref{lemmaGaleetGaleaff} and Theorem \ref{TheoremGITdec}, we obtain that this stack is isomorphic to $\nctoricgerbe{\Delta}{h}{J}$. This proves ii).\\
	As for i), rerunning the previous line of arguments for the leaf stack $[N_\Lambda/\F_\Lambda]$, we see that it is isomorphic, firstly, to the leaf stack associated to action \eqref{completeaction} on $\mathcal S_\Delta$, and, secondly, to the leaf stack of action \eqref{actionnew} on $\mathscr{S}$, and, finally, to $\nctoricgerbe{\Delta}{h}{J}$ by use of Theorem \ref{theoremholgroupoid}.
\end{proof}
\begin{remark}
	\label{rkindispoint}
	In the GIT construction of this section, the projective sets $\mathbb P(\mathcal S_\Delta)$ are always isomorphic to affine ones by Lemma \ref{lemmaSetS'}. Indeed this lemma tells us that the corresponding LVMB has always an indispensable point. This is due to the fact that at the beginning of our construction we {\it add} the vector $v_{n+1}$ before making a Gale transform, creating thus such an indispensable point $\Lambda_{n+1}$.
	
	Now, {\it starting from} any LVMB manifold $N_\Lambda$ and its canonical foliation $\mathscr F_\Lambda$, we may consider the leaf stack $[N_\Lambda/\mathscr F_\Lambda]$ and the quotient stack $[N_\Lambda/\mathbb C^m]$. If $\Lambda$ has at least an indispensable point, we obtain a quantum toric variety, resp. a calibrated quantum toric variety. But if it has no, we are in a slightly more general situation. For example, we may obtain a $\Z^2$-gerbe over the blow up of $\mathbb{P}^2$ at two invariant points, cf. \cite{meersseman2004holomorphic} and not a $\Z^3$-gerbe as in Example \ref{exampleGIT}.
	
	However, this does not produce really different stacks. As shown in the proof of Theorem \ref{mainGIT}, a stack $[N_\Lambda/\C^m]$ can also be obtained as $[\mathbb P(\mathcal S_\Delta)/\C^{2m}]$. But we may form the stack $[\mathcal S_\Delta/\C^{2m+1}]$ where the additional factor $t\in\C$ acts through $z\mapsto zE(t)$ gives a $\Z$-gerbe over $[\mathbb P(\mathcal S_\Delta)/\C^{2m}]$. And this stack is a calibrated quantum toric. The situation is even better for $[N_\Lambda/\mathscr{F}_\Lambda]$ since it will always give a quantum toric.
	
	Still it highlights a difference of nature between Quantum Torics' GIT construction of Section \ref{QGIT} and LVMB construction of Section \ref{lvmtheory}. The first one starts from an open set of the {\it affine} space, whereas the second uses an open set of the {\it projective} space.
\end{remark}

\begin{remark}
	\label{Battisti}
	In \cite{Battisti}, L. Battisti shows that a LVMB datum $(\mathscr E,\Lambda)$ is completely encoded in a pair $(E,\Delta)$, where $E$ is a $2m$ linear subspace of $\R^n$ and $\Delta$ a subfan of the fan of $\mathbb P^n$ such that the projection map $\pi : \R^n\to \R^n/E$ is injective on the support of $\Delta$, and such that the fan $\pi(\Delta)$ is complete in $\R^n/E$. This results can be recovered from Theorem \ref{mainGIT} and Remark \ref{rkindispoint} identifying $\Delta$ with the fan of $\mathbb P(\mathcal S)$, and $\pi(\Delta)$ with the associated Quantum Fan. 
\end{remark}

\begin{remark}
	\label{Ishida}
	In \cite{IshidaMax}, H. Ishida undertakes a thorough study of compact complex manifolds $X$ with a maximal torus action. Here the torus $T$ is a real torus - a product of circles - acting by biholomorphisms on $X$. Maximal means that the action is effective and that the standard inequality 
	\begin{equation*}
	\dim T_x+\dim T\leq \dim X
	\end{equation*}
	is an equality for some $x\in X$ ($T_x$ is the isotropy group at $x$). He proves that the category $\mathscr{C}_{max}$ of compact complex manifolds $X$ with a maximal torus action (with equivariant mappings as morphisms) is equivalent to a category $\mathscr{F}_{max}$ of rational fans $\Delta$ together with a projection $\pi$ onto a complete, maybe irrational, fan.  Complete toric varieties and LVMB manifolds belong to $\mathscr{C}_{max}$ and form the building blocks of compact complex manifolds $X$ with a maximal torus action. The combinatorial datum in $\mathscr{F}_{max}$ associated to a LVMB manifold is exactly Battisti's encoding described in Remark \ref{Battisti}. The difference between the LVMB case and the general case is that the rational fan is no more supposed to be a subfan of a fan of $\mathbb P^n$.
	
	Like LVMB manifolds, compact complex manifolds $X$ with a maximal torus action are endowed with a canonical holomorphic foliation and one can look at the leaf stack. It follows from Theorems 11.1 and 11.2 of \cite{IshidaMax} and Theorem 3.3 of \cite{IshidaTowards} that all these leaf stacks are Quantum Torics. The marked fan of \cite{IshidaTowards} is nothing else than the associated Quantum Fan and is given by the complete fan $\pi(\Delta)$. 
	
	In other words, leaf stacks of compact complex manifolds $X$ with a maximal torus action give atlases for Quantum Torics, even if they are not LVMB; but do not give quotients different from Quantum Torics.
\end{remark}

\subsection{Equivalence of category between LVMB and Quantum Torics}
\label{EqCat}
We begin with some definitions.
\begin{definition}
	\label{defmarked}
	A {\it marked LVMB datum} or {\it marked LVMB configuration} is a triple $(\mathcal S,\Lambda,i)$ where $(\mathcal S,\Lambda)$ is a LVMB datum and $i\in\{1,\hdots,n+1\}$ is an indispensable point.
\end{definition}
Given a marked LVMB datum $(\mathcal S,\Lambda,i)$, then the map
\begin{equation}
\label{Si}
z=(z_1,\hdots,z_{n+1})\in\mathcal S\longmapsto (z_1/z_i,\hdots,\widehat{z_i/z_i},\hdots z_{n+1}/z_i)\in\C^n
\end{equation}
descends as a toric isomorphism between $\mathbb P(\mathcal S)$ and a toric affine open subset of $\C^n$. Call this open set $\mathscr{S}_i$.

We denote by $J$ the set of indispensable points of $\mathscr{S}_i$, that is $j\in\{1,\hdots,n\}$ is indispensable if $\{w_j=0\}$ does not intersect $\mathscr S_i$.
\begin{definition}
	\label{defSmorphism}
	A {\it $\mathcal S$-morphism} between two marked LVMB data $(\mathcal S,\Lambda,i)$ and $(\mathcal S',\Lambda',i')$ is a a toric morphism $\Phi$ from $\mathscr{S}_i$ to $\mathscr{S'}_{i'}$ such that
	\begin{enumerate}[\rm i)]
		\item For $j\not \in J'$, the $j$-th component $\Phi_j$ is a monomial with integer coefficients in the variables $(w_i)_{i\not\in J}$.
		\item There exists a map $s$ from $J$ to $J'$ such that, for $j\in J'$, the $j$-th component $\Phi_{j}$ satisfies 
		\begin{equation*}
		\Phi_{j}(w)=\left \{
		\begin{aligned}
		\prod_{i\in s^{-1}(j)}&w_i\qquad\text{ if }s^{-1}(j)\not =\emptyset\cr
		&\ 1 \qquad \text{ if }s^{-1}(j)=\emptyset
		\end{aligned}
		\right .
		\end{equation*}
	\end{enumerate}
\end{definition}
Let $\mathscr{V}^*_{LVMB}$ be the category whose objects are marked LVMB data and whose morphisms are $\mathcal S$-morphisms. The main result of this section is the following theorem.

\begin{theorem}
	\label{thmLVMBdec}
	The category $\mathscr{V}^*_{LVMB}$ is equivalent to the full subcategory $\mathscr{Q}^{cal}_{even}$ of $\mathscr{Q}^{cal}$ formed by calibrated Quantum Torics associated to an even calibrated Quantum Fan.
\end{theorem}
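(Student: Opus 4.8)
The strategy is to exhibit mutually quasi-inverse functors between $\mathscr{V}^*_{LVMB}$ and $\mathscr{Q}^{cal}_{even}$ by combining the two constructions already set up: the construction of Section \ref{LVMBQT} (which, from an even calibrated Quantum Fan $(\Delta,h)$, produces a marked LVMB datum $(\mathcal S_\Delta,\Lambda,n+1)$) and the construction of Section \ref{QTLVMB} (which, from a balanced LVMB datum with an indispensable point, produces an even calibrated Quantum Fan, by Lemma \ref{lemmagooddec}). First I would promote both constructions to functors. On objects this is done; what remains is to say what they do on morphisms. For a morphism of even calibrated Quantum Fans $(L,H)$, Theorem \ref{thmdecmorphisms2} gives the associated pair $(\mathscr{L},\mathscr{H})$ with $\mathscr H$ a toric morphism of the $\mathscr S$'s satisfying conditions iv) and v) there; since $\mathscr S_{n+1}\cong \mathscr S$ via Lemma \ref{lemmaSetS'} and \eqref{Si}, this $\mathscr H$ is exactly an $\mathcal S$-morphism in the sense of Definition \ref{defSmorphism} (conditions iv), v) of Theorem \ref{thmdecmorphisms2} match conditions i), ii) of Definition \ref{defSmorphism} verbatim). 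Conversely, an $\mathcal S$-morphism $\Phi:\mathscr S_i\to\mathscr S'_{i'}$ between marked LVMB data is, after the identifications, a toric morphism of the spaces $\mathscr S$; I would recover $(L,H)$ from it by reversing the Gale-transform dictionary (Lemma \ref{lemmaGalereverse}, Lemma \ref{lemmaGaleetGaleaff}), obtaining $H$ on lattices and $L$ on the quotient, and check that conditions i), ii) of Definition \ref{defSmorphism} give back iv), v) of Definition \ref{decQFmorphismdef}; functoriality (composition, identities) is then a routine check because Gale transform is linear and invertible up to $\mathrm{GL}$.

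\textbf{Second step: the unit and counit.} I would show the two round trips are naturally isomorphic to the identity. Starting from an even calibrated Quantum Fan $(\Delta,h)$, form $(\mathcal S_\Delta,\Lambda,n+1)$ and then run the reverse construction \eqref{vfromLVMB}: one must check that the Gale transform of a Gale transform returns the original configuration up to the allowed ambiguity, that the induced $\Gamma$, $h$, $J$ and $\Delta$ agree with the originals, and that the $J$ produced (set of indispensable points minus $\Lambda_{n+1}$) coincides with the virtual generators of $(\Delta,h)$ — this is precisely Lemma \ref{lemmaindetvir}. The maximal-length and completeness conditions are preserved by inspection, and $n-d$ even is symmetric in both directions, so evenness is preserved. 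In the other round trip, starting from a marked LVMB datum $(\mathcal S,\Lambda,i)$, by Lemma \ref{lemmaaffindLVMB} we may normalize $\Lambda$ to be balanced with $i=n+1$ without changing the $G$-biholomorphism class; then forming $(\Delta,h)$ and re-running Section \ref{LVMBQT} returns $(\mathcal S_\Delta,\Lambda',n+1)$ with $\Lambda'$ a Gale transform of a Gale transform of $A$, hence affinely equivalent to the original $\Lambda$, so Lemma \ref{lemmaaffindLVMB} again provides the natural isomorphism. Compatibility of these isomorphisms with morphisms follows from the fact that all identifications in play ($\mathscr S\cong \mathscr S\times\C^*$ restricted to a slice, passage to projective space, the linear Gale duality) are natural in the configuration.

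\textbf{The main obstacle.} The delicate point is the bookkeeping of the marking/indispensable data under morphisms, and making sure the correspondence on morphisms is genuinely bijective rather than merely surjective. Concretely: an $\mathcal S$-morphism is defined purely as a toric morphism $\Phi$ of affine toric varieties $\mathscr S_i\to\mathscr S'_{i'}$ satisfying the monomial conditions i), ii), whereas a morphism of calibrated fans carries the extra linear data $(L,H)$; I must verify that $\Phi$ determines $(L,H)$ uniquely — this uses that $\mathscr S$ has a Zariski-dense torus $\Torus^n$ (Lemma \ref{lemmaSaffine}) so a toric morphism is determined by a lattice map, and that the Gale dual recovers $L$ canonically up to nothing because the fan data fixes it — and conversely that distinct $(L,H)$ give distinct $\Phi$, which is the content of the $1{:}1$ correspondence in Theorem \ref{thmdecmorphisms2} combined with Corollary \ref{corcalisofull}. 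A secondary subtlety is that Section \ref{QTLVMB}'s construction requires the normalization ``balanced with $\Lambda_{n+1}$ indispensable'', while a general marked datum only has \emph{some} indispensable point $i$; handling the re-indexing $i\mapsto n+1$ and the affine normalization coherently across a diagram of morphisms (so that the equivalence is a genuine equivalence of categories, not just a bijection on iso-classes) is where most of the care goes. Once these identifications are pinned down, the equivalence follows formally from Theorems \ref{TheoremGITdec}, \ref{theoremholgroupoid}, \ref{mainGIT}, \ref{prisodecQTV} and \ref{thmdecmorphisms2}.
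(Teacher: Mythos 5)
Your proposal is correct and follows essentially the same route as the paper: normalize marked data to balanced configurations with $i=n+1$ via Lemma \ref{lemmaaffindLVMB}, use the Gale-transform dictionary together with Lemma \ref{lemmaindetvir} to match indispensable points with virtual generators, identify morphisms through the correspondence of Theorem \ref{thmdecmorphisms2} with Definition \ref{defSmorphism}, and invoke Theorem \ref{mainGIT} for essential surjectivity. The only cosmetic difference is packaging: the paper fixes a normalized Gale transform, defines a single functor $\text{\slshape g}$ on the full subcategory of $\mathcal S$-normal data and proves it fully faithful and essentially surjective, rather than constructing an explicit quasi-inverse and checking unit and counit as you propose.
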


\begin{proof}
	Thanks to Lemma \ref{lemmaaffindLVMB}, every marked LVMB datum is $\mathcal S$-isomorphic to a marked LVMB datum $(\mathcal S,\Lambda,n+1)$ such that $\Lambda$ is balanced. Moreover, identifying $\Lambda$ with $A$ as usual using \eqref{isoRC}, we may assume, performing another $\mathcal S$-isomorphism if necessary that
	\begin{equation}
	\label{Snormal}
	\det (A_1,\hdots,A_{n-d})\not =0
	\end{equation}
	Call {\it $\mathcal S$-normal} a marked LVMB datum $(\mathcal S,\Lambda,n+1)$ satisfying \eqref{Snormal} and with $\Lambda$ balanced. The previous remark can be reformulated by saying that the natural injection of the set of $\mathcal S$-normal configurations into the set of marked LVMB configurations yields an equivalence category between $\mathscr{V}^*_{LVMB}$ and the full subcategory of $\mathscr{V}^*_{LVMB}$ formed by the $\mathcal S$-normal LVMB data. Call $\mathscr{V}^{\mathcal S}_{LVMB}$ this subcategory.
	
	Define a functor $\text{\slshape g}$ from $\mathscr{V}^{\mathcal S}_{LVMB}$ to $\mathscr{Q}^{cal}_{even}$ as follows. Given $(\mathcal S,\Lambda,n+1)$ a $\mathcal S$-normal LVMB configuration, we associate to it the unique Gale transform $\underline{v}$ satisfying
	\begin{equation}
	\label{vnorm}
	v_{n-d+1}=e_1,\hdots, v_n=e_d
	\end{equation}
	in addition to \eqref{GT}. Then define $(v,\Gamma,h,\Delta)$ as in \eqref{vfromLVMB} and the lines below. We set 
	\begin{equation}
	\label{gobjects}
	\text{\slshape g}(\mathcal S,\Lambda,n+1)=\nctoricgerbe{\Delta}{h}{J}
	\end{equation}
	As for the morphisms, let $\Phi$ be a $\mathcal S$-morphism from $(\mathcal S,\Lambda,n+1)$ to $(\mathcal S',\Lambda',n+1)$. Since $\Phi$ is given by monomials (see condition ii) of Definition \ref{defSmorphism}), it is equivariant with respect to the total action \eqref{completeactionnp} and descends as a morphism $\mathcal L$ between the Quantum Torics $\nctoric{\Delta}{\Gamma}{v}=\text{\slshape fg}(\mathcal S,\Lambda,n+1)$ and $\nctoric{\Delta'}{\Gamma'}{v'}$ making the following diagram commutative
	\begin{equation}
	\label{cdLVMBcateqbis}
	\begin{tikzcd}[row sep=normal, column sep=scriptsize]
	\mathscr{S}_{n+1} \arrow[rr,"\mathscr{H}:=\Phi"] \arrow[dd,"{\text{\sl h}}"' near start] \arrow[dr]& & \mathscr{S'}_{n+1}\arrow[dr]\arrow[dd, crossing over,"{\text{\sl h'}}" near start]\\
	& N_\Lambda\arrow[dl,"{\text{\sl fg}}"']  & & N_{\Lambda'} \arrow[dl,"{\text{\sl fg}}"'] \\
	\nctoric{\Delta}{\Gamma}{v}\arrow[rr,"\mathscr{L}"] & & \nctoric{\Delta'}{\Gamma'}{v'} \\
	\end{tikzcd}
	\end{equation}
	Since $\mathscr{S}_{n+1}$, respectively $\mathscr{S'}_{n+1}$, is nothing else than the open set $\mathscr{S}$ of \eqref{S}, resp. $\mathscr{S'}$, and since the set of indispensable points of $\mathscr{S}_{n+1}$, respectively $\mathscr{S'}_{n+1}$, is the same that the set of virtual generators by Lemma \ref{lemmaindetvir}, then the couple $(\mathscr L,\mathscr{H})$ satisfies the conditions of Theorem \ref{thmdecmorphisms2} and thus defines a calibrated toric morphism $\mathscr{L}^{cal}$ between $\nctoricgerbe{\Delta}{h}{J}$ and $\nctoricgerbe{\Delta'}{h'}{J'}$. Set
	\begin{equation}
	\label{gmorphisms}
	\text{\slshape g}(\Phi,\phi)=\mathscr{L}^{cal}
	\end{equation}
	Now, starting with $\mathscr{L}^{cal}$ between $\nctoricgerbe{\Delta}{h}{J}$ and $\nctoricgerbe{\Delta'}{h'}{J'}$, we associate to it a unique couple $(\mathscr L,\mathscr{H})$ by Theorem \ref{thmdecmorphisms2}. Comparing Theorem \ref{thmdecmorphisms2} and Definition \ref{defSmorphism} shows that $\Phi$ is a $\mathcal S$-morphism.
	
	As a consequence, the functor $\text{\slshape g}$ is fully faithful. But it is also essentially surjective by Theorem \ref{mainGIT}. Hence it is an equivalence of categories.
\end{proof}

We emphasize that, in diagram \eqref{cdLVMBcateqbis}, there is no arrow between $N_\Lambda$ and $N_{\Lambda'}$. In other words, a $\mathcal S$ morphism does not descend as a $G$-morphism and two $\mathcal S$-isomorphic LVMB data may be associated to two non-biholomorphic LVMB manifolds.

For later use in Section \ref{Qmoduli}, we also define the more natural category $\mathscr{N}_{LVMB}$ of LVMB data and $G$-morphisms, and the full subcategory  $\mathscr{N}^*_{LVMB}$ of balanced LVMB data with at least one indispensable point. They correspond to LVMB manifolds which are either 2-connected or non simply-connected.  

\section{K\"ahlerianity}
\label{Qproj}
The Quantum Torics of this paper are constructed as stacks over the category of affine toric varieties. However, they are not algebraic stacks, recall Warning \ref{warningstacks}. This is due to the existence of isotropy groups isomorphic to some power of $\Z$. Of course, when the fan  $\Delta$ is rational, the isotropy groups are finite and the quantum torics $\nctoric{\Delta}{\Gamma}{v}$ is equivalent to a Toric Deligne-Mumford stack of \cite{BCS}, hence to an algebraic stack. But note that the atlas construction of Section \ref{Qatlas} does not recover the structure of algebraic stack because of the extensive use of the exponential map we make throughout the whole construction.

As an alternative to algebraicity, it is suggested in \cite[\S 4.6]{KLMV} that Quantum Torics are K\"ahler. This can be made precise as follows. In the complete case, thanks to Section \ref{LVMBdonnedec}, we can realize a Quantum Torics as the leaf stack of a LVMB manifold by choosing a calibration satisfying the extra-conditions detailed at the beginning of this section. 

When $\nctoric{\Delta}{\Gamma}{v}$ is equivalent to a classical complete simplicial toric variety, it has a natural K\"ahler structure, to wit K\"ahler metrics and $2$-forms. This structure can be pulled-back to the above LVMB manifold as a transversely K\"ahler metric/form. This motivates the following definition.

\begin{definition}
	\label{defkahler}
	A Quantum toric $\nctoric{\Delta}{\Gamma}{v}$ with $\Delta$ complete is {\it K\"ahler} if it is isomorphic to the leaf stack of a LVMB manifold with transversely K\"ahler canonical foliation, that is to say, it is the leaf stack of a LVM manifold.
\end{definition}

By a Theorem of Ishida \cite{Ishida}, the canonical foliation of a LVMB manifold $N$ is transversely k\"ahler if and only if $N$ is in fact a LVM manifold. Hence, we obtain 

\begin{theorem}
	\label{thkahler}
	A Quantum torics $\nctoric{\Delta}{\Gamma}{v}$ with $\Delta$ complete is {K\"ahler} if and only if $\Delta$ is polytopal.
\end{theorem}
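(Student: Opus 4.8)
The plan is to reduce the statement to the cited theorem of Ishida \cite{Ishida} (a LVMB manifold is a LVM manifold if and only if its canonical foliation is transversely Kähler) together with the dictionary between LVM/LVMB data and polytopes/fans from Section \ref{lvmtheory}. First I would use Definition \ref{defkahler}: a complete quantum toric $\nctoric{\Delta}{\Gamma}{v}$ is Kähler precisely when it is isomorphic to the leaf stack $[N_\Lambda/\mathscr F_\Lambda]$ of some LVM manifold $N_\Lambda$, i.e. a LVMB manifold whose canonical foliation is transversely Kähler. By Ishida's theorem this last condition is equivalent to $N_\Lambda$ being a genuine LVM manifold (that is, $\mathcal S$ satisfies \eqref{ELVM}, so $\mathcal S$ is determined by the Siegel and weak hyperbolicity conditions on $\Lambda$). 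So the task becomes: show that some even calibration of $(\Delta,v)$ produces a LVM datum via the construction of Subsection \ref{LVMBQT} if and only if $\Delta$ is polytopal.

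Next I would run the two directions through the Gale-transform correspondence. Suppose $\Delta$ is polytopal, so $\Delta$ is the fan over the faces of a convex polytope $P$ with vertices in $\Gamma$. Choose an even calibration $(\Delta,h)$ (possible after enlarging $n$ so that $n-d$ is even and the fan has maximal length; completeness of $\Delta$ is part of Definition \ref{defcentered}). Form the extended configuration $\underline v=(v_1,\dots,v_{n+1})$ with $v_{n+1}=-\sum v_i$ as in \eqref{addedvector}, and a Gale transform $A=(A_1,\dots,A_{n+1})$ as in \eqref{GT}; identify $\R^{n-d}$ with $\C^m$ via \eqref{isoRC} to get $\Lambda$. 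By Lemma \ref{LVM+B}, $(\mathcal S_\Delta,\Lambda)$ is a LVMB datum, and by Theorem \ref{mainGIT} its leaf stack is $\nctoric{\Delta}{\Gamma}{v}$. The point is that when $\Delta$ comes from a polytope $P$, one may choose the Gale transform so that $\mathcal S_\Delta$ is exactly the LVM set $\mathcal S$ of \eqref{SLambda}: indeed, by \eqref{Knumbering}–\eqref{Sdelta}, the LVM set associated to $\Lambda$ is $\{z\mid I_z^c \text{ is a face of }K_\Lambda\}$, and the standard duality between the face lattice of $P$ and that of the Gale/secondary polytope $K_\Lambda$ lets us match the cones of $\Delta$ (= faces of $P$) with the complements of faces of $K_\Lambda$; then \eqref{Sbis} and \eqref{Sdelta} coincide, i.e. $\Lambda$ satisfies condition \eqref{ELVM} and $N_\Lambda$ is a LVM manifold. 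Hence $\nctoric{\Delta}{\Gamma}{v}$ is Kähler. Conversely, if $\nctoric{\Delta}{\Gamma}{v}$ is Kähler it is the leaf stack of some LVM manifold $N_\Lambda$; by Subsection \ref{asspol} the associated polytope $K_\Lambda$ exists (it is the quotient of $N_\Lambda$ by $(\mathbb S^1)^{n-1}$, a simple convex polytope), and its presentation \eqref{KLambdaproj} via a Gale diagram $(v_1,\dots,v_n)$ of $\Lambda$, together with Lemma \ref{lemmaGalereverse} and \eqref{vfromLVMB}, shows that $\Delta$ is precisely the fan over the faces of the dual polytope $P$ of $K_\Lambda$; since the $v_i\in\Gamma$ this is a polytope with vertices in $\Gamma$, so $\Delta$ is polytopal.

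The main obstacle I anticipate is the bookkeeping in the forward direction: one must check that the \emph{freedom} in choosing the calibration and the Gale transform is enough to arrange $\mathcal S_\Delta=\mathcal S$ (equivalently, $\mathscr E=\{I : 0\in\mathscr H(\Lambda_I)\}$), and that this is compatible with the balanced/even requirements of Definition \ref{defcentered}; this is exactly the combinatorial content of "polytopal $\Rightarrow$ the Gale dual configuration has $0$ in the right convex hulls", i.e. the classical fact that a fan is the normal (or face) fan of a polytope iff its Gale transform is a "polytopal" vector configuration. I would isolate this as a lemma, proved by invoking the standard Gale duality between polytopes and vector configurations (e.g. as in \cite{meersseman2004holomorphic} and \cite{BZ}) and the numbering of faces \eqref{Knumbering}. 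Everything else — the identification of leaf stacks with quantum torics (Theorems \ref{mainGIT} and \ref{theoremholgroupoid}), and the equivalence "transversely Kähler $\iff$ LVM" — is quoted verbatim from the cited results, so the proof is essentially the assembly of these pieces around that one combinatorial lemma.
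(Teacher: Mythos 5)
Your proposal follows essentially the same route as the paper: the paper's entire argument is Definition \ref{defkahler} plus Ishida's theorem (transversely K\"ahler LVMB $=$ LVM), with the remaining identification ``LVM datum $\leftrightarrow$ polytopal fan under the Gale transform'' left implicit as classical material from \cite{meersseman2004holomorphic} and \cite{BZ}. You simply make that last combinatorial step explicit (via \eqref{Knumbering}, \eqref{Sdelta}, Lemma \ref{lemmaGalereverse} and Theorem \ref{mainGIT}), which is a faithful, if more detailed, rendering of the intended proof.
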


\begin{remark}
	\label{rkprojective}
	It may seem natural to say that a Quantum torics $\nctoric{\Delta}{\Gamma}{v}$ with $\Delta$ complete is {\it projective} if there exists a toric embedding of $\nctoric{\Delta}{\Gamma}{v}$ in a Quantum projective space. Nevertheless, this definition is quite restrictive. Indeed, let $\mathscr{L}$ be an injective  toric morphism of a Quantum torics $\nctoric{\Delta}{\Gamma}{v}$ in some Quantum projective space of dimension $N$. Because of point iii) in Definition \ref{QFmorphismdef}, the associated linear mapping $L$ must send the generators $v_1,\hdots,v_p$ onto vectors of $\R^N$ with integer coordinates. In other words, $\Z v_1+\hdots+\Z v_n$ must be discrete in $\R^d$. 
	
	Therefore this definition only works for fans that become rational after replacing $\Gamma$ with the smallest additive subgroup in which the generators $v_i$ live. So, thinking of the identity map as a fan morphism from $(\Delta,\Z v_1+\ldots+\Z v_n,v)$ to $(\Delta,\Gamma,v)$ - a fan cover, this would be a cover in the classical case -, we obtain than a projective Quantum Torics is rational up to a fan cover.  
	
	All this shows that a more general setting is needed to understand projectivity.
\end{remark}

\section{Moduli Spaces}
\label{Qmoduli}
As a reward for all the efforts done to establish the foundations of Quantum Torics, we derive now several moduli spaces of Quantum Torics. Thanks to Theorems \ref{prisoQTV} and \ref{prisodecQTV}, they can be computed using only elementary linear algebra. It should be emphasized, however, that such results cannot be obtained without the develpment of an entirely functorial point of view. This is perhaps the most prominent reason for us to have defined quantum toric varieties as stacks rather than quasifolds or diffeological spaces (even though the diffeological space can recover the full stack in this case).

\subsection{Moduli Spaces of Calibrated Quantum Tori}
\label{QmoduliQT}
Let $n>d>0$. We are interested in the moduli space $\modulitorusgerbe{d,n}{\emptyset}$ of calibrated Quantum Tori $\nctorusgerbe{h}{\emptyset}$ with fixed $d$ and $n$ and empty set of virtual generators. Putting $h$ in normal form \eqref{h}, we see that it is completely determined by fixing the values of $\hbar(e_{d+1}),\hdots,\hbar(e_n)$. Hence it is given by
\begin{equation}
\label{moduliDQT}
\modulitorusgerbe{d,n}{\emptyset}=\{(v_{d+1},\hdots,v_n)\in (\R^{d})^{n-d}\}/\sim
\end{equation}
where two $(n-d)$-uples of vectors are equivalent if and only if there exists a calibrated Quantum Fan isomorphism $(L,H)$ between the corresponding $h$ functions. In particular, we endow $\modulitorusgerbe{d,n}{\emptyset}$ with the quotient topology coming from the euclidean topology of $(\R^{d})^{n-d}$.

Now, pick some $h=Id+\hbar$ and let $H\in \text{GL}_n(\Z)$. We look for some $h'$ in normal form \eqref{h} and some $L$ such that the diagram commutes
\begin{equation}
\label{CDLHmod}
\begin{tikzcd}
\Gamma\subset\R^d \arrow[r,"L"] &\Gamma'\subset \R^d\\
\Z^n \arrow[u,"h"]\arrow[r,"H"] &\Z^n \arrow[u,"h'"']
\end{tikzcd}
\end{equation} 
Observe that $L$ is determined by $H$, $h$ and $h'$, since
\begin{equation*}
Le_i=Lhe_i=h'He_i
\eqno{1\leq i\leq d}
\end{equation*}
Moreover,
\begin{equation*}
Lv_i=Lhe_i=h'He_i
\eqno{d+1\leq i\leq n}
\end{equation*}
Decomposing
\begin{equation}
\label{Hdecompo}
H=\begin{pmatrix}
H_1 &H_2 \\
H_3 &H_4
\end{pmatrix}
\end{equation}
and considering $\hbar$, respectively $\hbar'$, as the matrix $(\hbar(e_{d+1}),\hdots,\hbar(e_n))$, resp. $(\hbar'(e_{d+1}),\hdots,\hbar'(e_n))$, the previous equations yield
\begin{equation*}
L=H_1+\hbar'H_3
\end{equation*}
and, assuming $H_1+\hbar' H_3$ is invertible
\begin{equation*}
\hbar=L^{-1}(H_2+\hbar'H_4)=(H_1+\hbar' H_3)^{-1}(H_2+\hbar'H_4)
\end{equation*}
We thus have
\begin{proposition}
	\label{propmoduliQt}
	The moduli space $\modulitorusgerbe{d,n}{\emptyset}$ of calibrated Quantum Tori $\nctorusgerbe{h}{\emptyset}$ with fixed $d$ and $n$ and empty set of virtual generators is homeomorphic to the quotient of $(\R^d)^{n-d}$ by the following equivalence relation: $v$ and $v'$ are equivalent if there exists some $H\in\text{GL}_n(\Z)$ with $H_1+\hbar' H_3$ invertible such that
	\begin{equation}
	\label{actionmoduliQt}
	v'=v\cdot H=(H_1+(v_{d+1},\hdots, v_n) H_3)^{-1}(H_2+(v_{d+1},\hdots, v_n)H_4)
	\end{equation}
	where we use decomposition {\rm \eqref{Hdecompo}}.
\end{proposition}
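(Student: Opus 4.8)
The plan is to unwind the definition of $\modulitorusgerbe{d,n}{\emptyset}$ given in \eqref{moduliDQT} and to make the equivalence relation $\sim$ completely explicit by running through the computations sketched just before the statement. First I would recall that, by Definition \ref{defcalibration} and the normalization \eqref{h}, fixing $d$, $n$ and $J=\emptyset$, a calibration $h$ is (up to standard form) exactly the datum of the matrix $\hbar=(\hbar(e_{d+1}),\hdots,\hbar(e_n))\in(\R^d)^{n-d}$; this is why the set underlying the moduli space is $(\R^d)^{n-d}$, and I would equip it with the quotient topology inherited from the Euclidean topology, as stated. Then, for the equivalence relation, by Definition \ref{deftorusmorphismdec} (together with Corollary \ref{corcaliso}) two such calibrated quantum tori $\nctorusgerbe{h}{\emptyset}$ and $\nctorusgerbe{h'}{\emptyset}$ are isomorphic precisely when there is a calibrated torus isomorphism, i.e.\ a commutative square \eqref{CDLHmod} with $H\in\text{GL}_n(\Z)$ and $L\in\text{GL}_d(\R)$, where since $J=J'=\emptyset$ conditions \eqref{Hs}, \eqref{Hrestric} impose no restriction on $H$ beyond being an integral isomorphism.

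Next I would carry out the linear-algebra computation. Because $h$ and $h'$ are in normal form, $he_i=e_i$ and $h'e_i=e_i$ for $1\le i\le d$, so evaluating the commutative square on $e_1,\hdots,e_d$ forces
\begin{equation*}
Le_i=Lhe_i=h'He_i,\qquad 1\le i\le d,
\end{equation*}
which, writing $H$ in block form \eqref{Hdecompo} and viewing $\hbar'$ as a $d\times(n-d)$ matrix, gives $L=H_1+\hbar'H_3$. Evaluating the same square on $e_{d+1},\hdots,e_n$ gives $L\hbar=H_2+\hbar'H_4$, hence, under the (necessary and sufficient) invertibility hypothesis on $L=H_1+\hbar'H_3$,
\begin{equation*}
\hbar=(H_1+\hbar'H_3)^{-1}(H_2+\hbar'H_4).
\end{equation*}
Reading this with the roles of $\hbar$ and $\hbar'$ as the pair $v,v'$ in the statement yields \eqref{actionmoduliQt}. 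Conversely, given $H\in\text{GL}_n(\Z)$ with $H_1+\hbar'H_3$ invertible, I would check that setting $L:=H_1+\hbar'H_3$ produces a genuine calibrated quantum fan isomorphism: $L$ is a linear isomorphism, $L$ sends $\Gamma=h(\Z^n)$ onto $\Gamma'=h'(\Z^n)$ because the square commutes and $H$ is invertible over $\Z$, and since $J=J'=\emptyset$ Definition \ref{deftorusmorphismdec} is satisfied. This establishes that the relation described is exactly the isomorphism relation on representatives.

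Finally I would address the homeomorphism claim. Having identified both the underlying set and the relation, the only remaining point is that the bijection between $\modulitorusgerbe{d,n}{\emptyset}$ and $\big((\R^d)^{n-d}\big)/\!\!\sim$ is a homeomorphism, which is immediate since $\modulitorusgerbe{d,n}{\emptyset}$ was \emph{defined} carrying the quotient topology from $(\R^d)^{n-d}$ and the relation is literally the one described, so the two quotient spaces coincide on the nose. I expect the genuinely delicate point to be the bookkeeping in the converse direction: one must be sure that an arbitrary $H\in\text{GL}_n(\Z)$ with $H_1+\hbar'H_3$ invertible does extend to a calibrated isomorphism and not merely to a linear map of the ambient $\R^d$ — i.e.\ that no extra compatibility with the cone structure intervenes. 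Here it is crucial that the quantum fan carries no cones beyond $\{0\}$ (we are dealing with quantum \emph{tori}, not toric varieties) and that $J=\emptyset$, so conditions ii)--iii) of Definition \ref{decQFdef} and conditions iv)--v) of Definition \ref{decQFmorphismdef} are vacuous; once this is noted, the argument closes. A minor technical care is also needed to confirm that the invertibility of $H_1+\hbar'H_3$ is not only sufficient but necessary, which follows since $L$ itself must be invertible for $(L,H)$ to be an isomorphism.
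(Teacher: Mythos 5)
Your proposal is correct and follows essentially the same route as the paper: reduce to standard form so the calibration is the matrix $\hbar\in(\R^d)^{n-d}$, impose the commutative square \eqref{CDLHmod}, evaluate on $e_1,\hdots,e_d$ and $e_{d+1},\hdots,e_n$ with the block decomposition \eqref{Hdecompo} to get $L=H_1+\hbar'H_3$ and $\hbar=(H_1+\hbar'H_3)^{-1}(H_2+\hbar'H_4)$. Your explicit check of the converse direction and of the vacuity of the virtual-generator conditions when $J=\emptyset$ is detail the paper leaves implicit, but it is the same argument.
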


\begin{example}
	\label{exmoduliQt}
	Let $n=2$ and $d=1$. In normal form, we write $\hbar (y)=ay$ for some $a\in\R$. Hence $\Gamma$ is equal to $\Z+a\Z$. Let
	\begin{equation*}
	H=\begin{pmatrix}
	p &r\\
	q &s
	\end{pmatrix}
	\in\text{GL}_2(\Z)
	\end{equation*}
	and observe that the condition $H_1+\hbar' H_3$ invertible translates into $p+aq$ is different from zero.
	
	Assume that $a$ is irrational. Then $p+aq$ is different from zero for all $(p,q)\in\Z^2\setminus\{0\}$, so \eqref{actionmoduliQt} defines an action of $\text{GL}_2(\Z)$ on $a$ through the equality
	\begin{equation}
	\label{actionmoduliQt2}
	a\cdot H=\dfrac{r+sa}{p+qa}
	\end{equation}
	Assume that $a$ is a non-zero rational number. Set $a=p/q$ with $\text{GCD}(p,q)$ equal to one. By Bezout, there exist some integers $r$ and $s$ such that $ps+rq=1$. Then
	\begin{equation*}
	\dfrac{p}{q}\cdot 
	\begin{pmatrix}
	r &-p\\
	s &q
	\end{pmatrix}=\dfrac{-p+q(p/q)}{r+s(p/q)}=0
	\end{equation*}
	that is, $a$ is equivalent to zero.
	To sum up, $\modulitorusgerbe{1,2}{\emptyset}$ contains the quotient of $\R\setminus \mathbb Q$ by the action \eqref{actionmoduliQt2} of $\text{GL}_2(\Z)$ plus $0$ whose equivalence class contains all rational numbers.
\end{example}

We want to interpretate the moduli space $\nctorusgerbe{d,n}{\emptyset}$ as a moduli space of linear $\R^{n-d}$-actions on a real torus $\R^n/\Z^n$. To do that, we make use of Gale transforms as in Section \ref{QGITdec}. So let $(A_1,\hdots,A_n)$ satisfy \eqref{GTdec}. By Theorem \ref{TheoremGITdec}, the calibrated torus $\nctorusgerbe{h}{\emptyset}$ is isomorphic to the quotient stack $[\Torus^n/\mathcal A]$.

The next step is to tropicalize the situation. Consider the log map
\begin{equation*}
w\in\Torus ^n \longmapsto \log w:=(\log \vert w_1\vert,\hdots, \log \vert w_n\vert)\in\R^n
\end{equation*}
and observe that 
\begin{equation*}
\log (wE(\langle A,T\rangle))=\log w+\langle A,-2\pi\Im T\rangle
\end{equation*}
since all the $A_i$ belong to $\R^{n-d}$. Let $t\in\R^{n-d}$ act on $x\in \R^n$ by
\begin{equation*}
\mathcal B(t,x)=x+\langle A,t\rangle
\end{equation*}
and set
\begin{equation*}
T\in\C^{n-d}\longmapsto \mu (T)=-2\pi\Im T
\end{equation*}
Then the log map is $(\mathcal A,\mathcal B)$ equivariant. More precisely
\begin{equation*}
\log (\mathcal A(w,T))=\mathcal B(\mu (T),x)
\end{equation*}
where $x=\log  w $. So it associates to a calibrated torus a unique linear $\R^{n-d}$-action on $\R^n$. Such an action descends as an action on the real torus $\R^{n}/\Z^n$. We denote the real torus endowed with this action by $\log \nctorusgerbe{h}{\emptyset}$.

Moreover, given a torus morphism $(L,H)$ between two calibrated Quantum Tori $\nctorusgerbe{h}{\emptyset}$ and $\nctorusgerbe{h'}{\emptyset}$, we consider the $\R$-version of \eqref{completemorphismCD}
\begin{equation}
\label{CDRLHK}
\begin{tikzcd}
0 \arrow[r] &\R^{n-d}\arrow[d,"K"]\arrow[r,"i"]&\R^n\arrow[d,"H"]\arrow[r,"h"]&\R^d\arrow[d,"L"]\arrow[r] &0\\
0 \arrow[r] &\R^{n-d}\arrow[r,"i'"]&\R^n\arrow[r,"h'"]&\R^d\arrow[r] &0
\end{tikzcd}
\end{equation}
Then $(H,K)$ defines a linear equivariant map between $\log \nctorusgerbe{h}{\emptyset}$ and $\log \nctorusgerbe{h'}{\emptyset}$, to wit
\begin{equation*}
H(x+\langle A,t\rangle)=H(x+i(t))=Hx+i'(K(t))=Hx+\langle A',K(t)\rangle
\end{equation*}
Hence the log map associates to a calibrated torus morphism between $\nctorusgerbe{h}{\emptyset}$ and $\nctorusgerbe{h'}{\emptyset}$ a linear equivariant map between $\log \nctorusgerbe{h}{\emptyset}$ and $\log \nctorusgerbe{h'}{\emptyset}$. Composition is obviously preserved; and isomorphisms go to isomorphisms. Let $\mathscr{Q}\mathscr{T}^{cal}_{n,d,\emptyset}$ be  the category of calibrated Quantum Tori $\nctorusgerbe{h}{\emptyset}$ with fixed dimensions $n$ and $d$. And let $\mathscr{N}\mathscr{C}_{n,d}$ be the category whose objects are $\R^n/\Z^n$ endowed with a $\R^{n-d}$-linear action and whose morphisms are linear equivariant maps. We just proved
\begin{proposition}
	\label{propCElog}
	The log map induces an equivalence of category between $\mathscr{Q}\mathscr{T}^{cal}_{n,d,\emptyset}$ and $\mathscr{N}\mathscr{C}_{n,d}$.
\end{proposition}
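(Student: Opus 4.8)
The plan is to establish the equivalence by exhibiting the log map as a functor and checking it is fully faithful and essentially surjective. First I would set up the functor $\mathscr{L}og : \mathscr{Q}\mathscr{T}^{cal}_{n,d,\emptyset}\to\mathscr{N}\mathscr{C}_{n,d}$ carefully on objects and morphisms. On objects, fix for each $h$ a Gale transform $(A_1,\hdots,A_n)$ satisfying \eqref{GTdec}; by Theorem \ref{TheoremGITdec} (in the torus case, i.e. no cones beyond $\{0\}$) we have $\nctorusgerbe{h}{\emptyset}\cong[\Torus^n/\mathcal A]$, and then the computation $\log(wE(\langle A,T\rangle))=\log w+\langle A,-2\pi\Im T\rangle$ shows that the $\log$ map descends to a linear $\R^{n-d}$-action $\mathcal B$ on $\R^n$, hence on $\R^n/\Z^n$. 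On morphisms, a calibrated torus morphism $(L,H)$ induces $K$ via the real version \eqref{CDRLHK} of \eqref{completemorphismCD}, and $H$ itself (it lies in $\text{GL}_n(\Z)$, so preserves $\Z^n$) descends to an equivariant linear map $\R^n/\Z^n\to\R^n/\Z^n$. One then verifies functoriality: $\mathscr{L}og$ respects composition (transparent from $H''=H'H$, $K''=K'K$) and identities, and I should check independence of the ambiguity: a Gale transform is unique up to the action of $\text{GL}_{n-d}(\R)$, which only reparametrizes the $\R^{n-d}$-action, so the object is well-defined up to canonical isomorphism in $\mathscr{N}\mathscr{C}_{n,d}$; strictly, one should fix a choice of Gale transform once and for all, or check the change-of-transform isomorphisms are coherent.

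Next I would prove \textbf{essential surjectivity}. Given $\R^n/\Z^n$ with a linear $\R^{n-d}$-action, the action is given by a linear injection $\iota : \R^{n-d}\hookrightarrow\R^n$ (injectivity is part of the data of a genuine $\R^{n-d}$-action; if not assumed, one works with the image), so $\iota(t)=\langle A,t\rangle$ for some $n$-tuple $A=(A_1,\hdots,A_n)$ in $\R^{n-d}$. Define $h : \Z^n\to\R^d$ by declaring $h$ to be (a choice of linear retraction realizing) the cokernel of $\iota$, i.e. choose $\Gamma$ and an epimorphism $h$ with kernel $\iota(\R^{n-d})\cap\Z^n$-related data so that \eqref{GTdec} holds with this $A$ — this is exactly inverting the Gale transform, as done in Lemma \ref{lemmaGalereverse} in the balanced setting; here one needs the unbalanced (affine-free) analogue, which is simpler. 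After putting $h$ in normal form \eqref{h} via Lemma \ref{lemmastandarddec}, the associated calibrated quantum torus $\nctorusgerbe{h}{\emptyset}$ has $\log\nctorusgerbe{h}{\emptyset}$ isomorphic to the given object by construction.

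For \textbf{full faithfulness}, I would show the map on Hom-sets is a bijection. Faithfulness: if $(L,H)$ and $(L',H')$ induce the same equivariant linear map on $\R^n/\Z^n$, then $H=H'$ (the induced map on $\R^n/\Z^n$ determines $H\in\text{GL}_n(\Z)$ by lifting to universal covers), and then $L=L'$ since $L$ is determined by $h'$, $h$ and $H$ via $L=h'\circ H\circ(\text{section of }h)$ — indeed the relation $Lh=h'H$ pins down $L$ on $\Gamma=\text{im}\,h$, which spans $\R^d$. Fullness: given an equivariant linear $\varphi:\R^n/\Z^n\to\R^n/\Z^n$, lift it to $H\in\text{GL}_n(\Z)$; equivariance means $H$ carries $\iota(\R^{n-d})=\ker h$ into $\ker h'$ (using that $A$ resp. $A'$ are Gale transforms of $v$ resp. $v'$ and \eqref{CDRLHK}), hence $H$ descends through $h,h'$ to a well-defined linear $L:\R^d\to\R^d$ with $Lh=h'H$; since $J=J'=\emptyset$ conditions \eqref{Hs} and \eqref{Hrestric} are vacuous, so $(L,H)$ is a calibrated torus morphism (Definition \ref{deftorusmorphismdec}), and by Theorem \ref{theoremtoricmorphismcondition}-style reasoning (or directly, since there are no cones) it genuinely induces a stack morphism $\mathscr{L}^{cal}$ between the quantum tori, mapping to $\varphi$ under $\mathscr{L}og$.

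The main obstacle I anticipate is the bookkeeping in \textbf{fullness}: one must be sure that an arbitrary integral matrix $H$ respecting the kernels really descends to an $L$ making a legitimate \emph{stack} morphism and not merely a linear-algebra diagram — this is where Section \ref{QTori}'s functorial apparatus (the explicit $T\times_L\Gamma'$ construction \eqref{TLG}--\eqref{torusmorphismL2}) is needed, and one has to confirm the log map is compatible with that construction. A secondary subtlety is the Gale-transform ambiguity: since the statement asserts an equivalence (not an isomorphism) of categories, it suffices that all choices give naturally isomorphic functors, but writing down the natural transformation explicitly (it is conjugation by the $\text{GL}_{n-d}(\R)$ change-of-basis on the acting group) requires a little care. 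Everything else — the computation that $\log$ intertwines the actions, functoriality, essential surjectivity via invertibility of the Gale transform — is routine given the earlier results.
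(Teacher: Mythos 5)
Your proposal follows essentially the same route as the paper: construct the functor via a Gale transform of $h$, the equivariance computation $\log(wE(\langle A,T\rangle))=\log w+\langle A,-2\pi\Im T\rangle$, and the induced pair $(H,K)$ from the real version of \eqref{completemorphismCD}. The only difference is that you spell out essential surjectivity (inverse Gale transform) and full faithfulness (lifting equivariant maps to integer matrices and descending to $L$ via $Lh=h'H$), which the paper leaves implicit after constructing the functor; these verifications are correct and consistent with the paper's intent.
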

And we have
\begin{theorem}
	\label{thmmoduliQt}
	The moduli space $\modulitorusgerbe{d,n}{\emptyset}$ is homeomorphic to
	\begin{enumerate}[\rm i)]
		\item the space of $\R^{n-d}$-linear actions on $\R^n/\Z^n$ up to linear equivariant isomorphisms.
		\item the space of real Quantum Tori $\TT_{\R^{n-d},\hbar}^n$ up to Morita equivalence.
	\end{enumerate}
\end{theorem}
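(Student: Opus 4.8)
The plan is to deduce Theorem~\ref{thmmoduliQt} almost directly from the structural results already assembled in this section, so that the ``proof'' is really a matter of chasing equivalences of categories down to their coarse moduli spaces. First I would recall the setup: by Proposition~\ref{propmoduliQt}, $\modulitorusgerbe{d,n}{\emptyset}$ is the topological quotient of $(\R^d)^{n-d}$ by the explicit $\text{GL}_n(\Z)$-action~\eqref{actionmoduliQt}, where the quotient topology is the one induced by the euclidean topology; this is the concrete model we must match to the two descriptions in the statement. The strategy for item~i) is to invoke Proposition~\ref{propCElog}: the $\log$ map gives an equivalence between $\mathscr{Q}\mathscr{T}^{cal}_{n,d,\emptyset}$ and $\mathscr{N}\mathscr{C}_{n,d}$, the category of tori $\R^n/\Z^n$ with an $\R^{n-d}$-linear action and linear equivariant maps. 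An equivalence of categories induces a bijection on isomorphism classes of objects; so the set of calibrated Quantum Tori with fixed $n,d$ up to (marked, or unmarked---here $J=\emptyset$ so the two notions coincide) isomorphism is in bijection with the set of $\R^{n-d}$-linear actions on $\R^n/\Z^n$ up to linear equivariant isomorphism. It remains to check that this bijection is a homeomorphism for the natural topologies, i.e.\ that the euclidean topology on $(\R^d)^{n-d}$ (equivalently on the space of Gale data $A$, equivalently on the space of linear actions, which is parametrized by the matrix entries of the $A_i$) descends to the same quotient topology on both sides. Since the $\log$-map construction is given by the linear formula $\mathcal B(t,x)=x+\langle A,t\rangle$ with $A$ a Gale transform of the $v_i$, and since the passage $v\mapsto A$ is a homeomorphism onto its image (Gale transform is invertible up to $\text{GL}_{n-d}(\R)$, cf.\ Lemma~\ref{lemmaGalereverse} in spirit, or directly from \eqref{GTdec}), continuity in both directions is immediate; one just has to note that the $\text{GL}_n(\Z)$-action~\eqref{actionmoduliQt} corresponds under $\log$ exactly to the change-of-basis action $(H,K)$ of \eqref{CDRLHK} on linear actions, so the quotients agree.

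For item~ii) the plan is to go through the chain of ``avatars'' set up in the introduction and in Section~\ref{stacks}: a calibrated quantum torus $\nctorusgerbe{h}{\emptyset}$ in multiplicative form is $[\Torus^d/\Z^{n-d}]$ (diagram~\eqref{CDaddvsmultdectorus}), and more relevantly, by Theorem~\ref{TheoremGITdec} applied to the trivial (empty-cone) fan, it is isomorphic to $[\Torus^n/\mathcal A]$ where $\mathcal A$ is the linear $\C^{n-d}$-action~\eqref{completeactionnp} determined by the Gale data $A$. This is precisely a translation groupoid $(\Torus^n, \C^{n-d})$, i.e.\ a higher-dimensional Kronecker-type groupoid, whose associated non-commutative space (Connes convolution algebra, up to Morita equivalence) is by definition the real Quantum Torus $\TT_{\R^{n-d},\hbar}^n$ in the sense of the introduction. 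Then I would argue that the assignment ``stack $\mapsto$ its non-commutative avatar'' sends isomorphic quantum tori to Morita-equivalent quantum tori, and---this is the delicate direction---that two calibrated quantum tori giving Morita-equivalent non-commutative tori are already isomorphic as stacks. For translation groupoids of linear foliations on tori, Morita equivalence of the convolution $C^*$-algebras is classically equivalent to equivalence of the topological groupoids (this is where one cites the foliation/non-commutative-torus literature, e.g.\ \cite{Connes, connes2008walk}), which in turn matches the linear-equivariant-isomorphism relation of item~i). So item~ii) reduces to item~i) together with this ``Morita $\iff$ groupoid equivalence'' dictionary, and again one checks the topologies agree because the Morita-equivalence classes are parametrized by exactly the same quotient of $(\R^d)^{n-d}$.

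Concretely, the steps in order would be: (1) restate the model $\modulitorusgerbe{d,n}{\emptyset} = (\R^d)^{n-d}/\text{GL}_n(\Z)$ from Proposition~\ref{propmoduliQt} and fix notations for the Gale transform $A$; (2) invoke Proposition~\ref{propCElog} to get the bijection of isomorphism classes with $\mathscr{N}\mathscr{C}_{n,d}$, then verify it is a homeomorphism by checking continuity of $v\mapsto A\mapsto(\text{linear action})$ and of its inverse, and compatibility of the two group actions via~\eqref{CDRLHK}; this proves i); (3) for ii), use Theorem~\ref{TheoremGITdec} to present $\nctorusgerbe{h}{\emptyset}$ as the translation groupoid $(\Torus^n,\mathcal A)$, identify its non-commutative avatar with $\TT_{\R^{n-d},\hbar}^n$, and cite the groupoid-Morita-equivalence correspondence to match Morita-equivalence classes with the equivalence classes of i); (4) conclude both homeomorphisms. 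The main obstacle, I expect, is step~(3): making the ``Morita equivalence of $\TT_{\R^{n-d},\hbar}^n$ $\iff$ linear equivariant isomorphism of $\log$-tori'' precise. For $n-d=1$ (the classical Kronecker/irrational rotation algebra case) this is the well-known statement that $A_\theta$ and $A_{\theta'}$ are Morita equivalent iff $\theta,\theta'$ are in the same $\text{GL}_2(\Z)$-orbit on $\R\cup\{\infty\}$---exactly the content of Example~\ref{exmoduliQt}---but for general $n-d$ one must either invoke a higher-dimensional analogue from the literature or simply note that the paper is using the non-commutative torus \emph{by definition} as the Morita class of the convolution algebra of the translation groupoid, in which case the equivalence is tautological once one knows (from general groupoid theory, e.g.\ Hilsum--Skandalis bibundles) that Morita equivalence of such convolution algebras matches equivalence of the underlying étale groupoids. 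I would take the latter, more formal route to keep the argument self-contained, flagging that the analytic subtleties of $C^*$-completions do not affect the classification at the level of the coarse moduli space.
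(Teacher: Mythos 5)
Item i) of your proposal is essentially the paper's own argument: the authors obtain it immediately from Propositions \ref{propmoduliQt} and \ref{propCElog}, and your extra verification that the bijection on isomorphism classes respects the quotient topologies (via the continuity of $v\mapsto A$ and the compatibility of \eqref{actionmoduliQt} with the action $(H,K)$ of \eqref{CDRLHK}) is harmless and in the same spirit. Presenting $\nctorusgerbe{h}{\emptyset}$ as $[\Torus^n/\mathcal A]$ via Theorem \ref{TheoremGITdec} is also exactly what the paper does before tropicalizing.

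The gap is in your step (3) for item ii). You propose to treat Morita equivalence of the real Quantum Tori $\TT_{\R^{n-d},\hbar}^n$ as ``tautologically'' the same as equivalence of the underlying translation groupoids, on the grounds that Morita equivalence of the convolution algebras matches equivalence of the \'etale groupoids. But only one direction of that dictionary is general theory: equivalence of groupoids (Hilsum--Skandalis bibundles, Muhly--Renault--Williams) implies strong Morita equivalence of the convolution $C^*$-algebras, not conversely. The statement of the theorem concerns Morita equivalence of the algebras, which is a priori a \emph{coarser} relation than isomorphism of the stacks $\nctorusgerbe{h}{\emptyset}$, i.e.\ than the $\text{GL}_n(\Z)$-relation \eqref{actionmoduliQt} of Proposition \ref{propmoduliQt}. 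The substantive content of item ii) is precisely that this coarser relation is not strictly coarser, i.e.\ that Morita equivalence of the algebras forces the fractional-linear relation on the parameters; this is Rieffel's classification theorem, which is exactly what the paper cites (\cite{Rieffel}) and what your Example \ref{exmoduliQt}-type observation records in the two-dimensional case. Your ``formal route'' therefore does not close the argument: you must either invoke Rieffel (and, for $n-d>1$, its higher-dimensional refinements) as the paper does, or give an independent proof that convolution algebras of inequivalent linear actions are never Morita equivalent (say via the order structure on $K_0$), which is a genuine theorem and not a consequence of groupoid formalism.
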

	
\begin{proof}
	The first item is a direct consequence of Propositions \ref{propmoduliQt} and \ref{propCElog}. The second item reformulates the first one using results of \cite{Rieffel} which state that Morita equivalence of real Quantum Tori is given by equivalence \eqref{actionmoduliQt} in the two-dimensional case (in (ii) the $\hbar$ refers to the `slope-parameter' for the Kronecker foliations described in the introduction).
\end{proof}
	
As a variant, we may also compute the moduli space $\modulitorusgerbe{d,n}{J}$ of calibrated Quantum Tori with fixed dimensions $d$ and $n$ and fixed non-empty set $J$. We just treat the most interesting case, that is the maximal case. For the rest of the subsection, we are thus interested in calibrated Quantum Tori with $J_{\text{max}}=\{d+1,\hdots, n\}$. As before, it is determined by the value of $\hbar$, that is by a point in $(\R^d)^{n-d}$. But the morphisms are different.  The previous computations are still valid, but recall that a morphism $H$ must preserve the set of virtual generators, hence is associated to a permutation $s$ of the set $J_{\text{max}}$. Hence, in \eqref{Hdecompo}, we have $H_2=H_3=0$ and $H_4$ is the matrix of $s$, that is
\begin{equation}
\label{smatrix}
s=(s(e_{d+1},\hdots,e_n))=(e_{s(d+1)},\hdots,e_{s(n)})
\end{equation}
so that we have 
\begin{proposition}
	\label{propmoduliQtbis}
	The moduli space $\modulitorusgerbe{d,n}{J_{\text{max}}}$ of calibrated Quantum Tori $\nctorusgerbe{h}{J_{\text{max}}}$ with fixed $d$ and $n$ and set of virtual generators $J_{\text{max}}$ is homeomorphic to the quotient of $(\R^d)^{n-d}$ by the following action of $\text{GL}_d(\Z)\times\mathfrak{S}_{n-d}$: given $v$ and $(H_1,s)$ we set
	\begin{equation}
	\label{actionmoduliQtbis}
	v\cdot (H_1,s)=H_1^{-1}v s
	\end{equation}
\end{proposition}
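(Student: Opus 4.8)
\textbf{Proof plan for Proposition \ref{propmoduliQtbis}.} The plan is to follow verbatim the structure of the proof of Proposition \ref{propmoduliQt}, keeping track of the extra constraint that the morphism $H$ must preserve the (now non-empty, maximal) set of virtual generators $J_{\text{max}}=\{d+1,\hdots,n\}$. First I would recall that, by Lemma \ref{lemmastandarddec}, a calibrated quantum torus with these fixed numerical invariants can be put in standard form, so by \eqref{h} it is completely encoded by the datum of $\hbar$, viewed as a $d\times(n-d)$ matrix $v=(v_{d+1},\hdots,v_n)=(\hbar(e_{d+1}),\hdots,\hbar(e_n))\in(\R^d)^{n-d}$; the topology is the quotient topology inherited from $(\R^d)^{n-d}$. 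Two such data are identified precisely when there is a calibrated torus isomorphism between them, i.e.\ (by Definition \ref{deftorusmorphismdec} and Corollary \ref{corcaliso}) a pair $(L,H)$ with $L\in\text{GL}_d(\R)$, $H\in\text{GL}_n(\Z)$, and a bijection $s\colon J_{\text{max}}\to J_{\text{max}}$ making \eqref{CDLHmod} commute, subject to \eqref{Hs} and \eqref{Hrestric}.

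Next I would import the elementary linear algebra already carried out before the statement of Proposition \ref{propmoduliQt}: writing $H$ in block form \eqref{Hdecompo}, commutativity of \eqref{CDLHmod} forces $L=H_1+\hbar'H_3$ and $\hbar=(H_1+\hbar'H_3)^{-1}(H_2+\hbar'H_4)$ whenever $H_1+\hbar'H_3$ is invertible. The one new input is that in the maximal case the requirement that $H$ fix the virtual generators (condition \eqref{Hs} with $s$ a permutation, together with \eqref{Hrestric}) forces the block structure
\begin{equation*}
H=\begin{pmatrix} H_1 & 0\\ 0 & H_4\end{pmatrix},
\end{equation*}
with $H_1\in\text{GL}_d(\Z)$ and $H_4$ the permutation matrix of $s\in\mathfrak S_{n-d}$ as in \eqref{smatrix}. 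Indeed $e_i$ for $i\leq d$ is a $1$-cone generator in $(C_{k,d},h)$-type normalization so $He_i$ is a $\Z$-combination of the $e_j$ with $j\notin J_{\text{max}}$, giving $H_3=0$; and $He_i=e_{s(i)}$ for $i\in J_{\text{max}}$ gives $H_2=0$ and $H_4$ the matrix of $s$. Substituting $H_2=H_3=0$ into the two displayed relations yields $L=H_1$ and $\hbar=H_1^{-1}\hbar'H_4$, i.e.\ $v'=v\cdot(H_1,s)$ with $v\cdot(H_1,s)=H_1^{-1}vs$ (reading $s$ as permutation of the columns of $v$). Conversely, any $(H_1,s)\in\text{GL}_d(\Z)\times\mathfrak S_{n-d}$ produces via $L=H_1$ an admissible pair $(L,H)$ (one checks \eqref{CDLHmod}, \eqref{Hs}, \eqref{Hrestric} hold by construction, and $H_1+\hbar'H_3=H_1$ is automatically invertible), so \eqref{actionmoduliQtbis} genuinely defines a group action of $\text{GL}_d(\Z)\times\mathfrak S_{n-d}$ on $(\R^d)^{n-d}$; checking it is an action (associativity of the composition $(H_1,s)(H_1',s')$) is a short verification using the composition law for calibrated fan morphisms, or directly $H_1^{-1}(H_1'^{-1}v s')s = (H_1'H_1)^{-1} v (s's)$ after matching conventions. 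Finally I would note that the identification of equivalence classes with $(L,H)$-orbits, together with the compatibility of the quotient topologies (the action is by homeomorphisms of $(\R^d)^{n-d}$, and every calibrated torus is standard up to isomorphism), gives the claimed homeomorphism.

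The only genuinely delicate point, and the one I would state most carefully, is the bookkeeping that $H$ being a \emph{marked}-versus-unmarked isomorphism does not matter here: we are computing the moduli space where $J$ is \emph{fixed as a set} but $s$ is allowed to be an arbitrary permutation of it (not the identity), which is exactly why the symmetric group factor $\mathfrak S_{n-d}$ appears rather than being quotiented away; if instead one wanted marked isomorphisms one would restrict to $s=\mathrm{id}$ and recover just the $\text{GL}_d(\Z)$-action $v\mapsto H_1^{-1}v$. Everything else is a transcription of the already-established computation for $\modulitorusgerbe{d,n}{\emptyset}$ with the block simplification $H_2=H_3=0$, so I do not expect any real obstacle beyond getting the left/right and row/column conventions for the $s$-action internally consistent with \eqref{smatrix}.
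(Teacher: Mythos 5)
Your proposal is correct and follows essentially the same route as the paper: the paper also reuses verbatim the block computation done for $\modulitorusgerbe{d,n}{\emptyset}$ and simply notes that preservation of the virtual generators forces $H_2=H_3=0$ with $H_4$ the permutation matrix of $s$ as in \eqref{smatrix}, so that $L=H_1$ and the relation becomes the $\text{GL}_d(\Z)\times\mathfrak{S}_{n-d}$-action \eqref{actionmoduliQtbis}. The only cosmetic discrepancy is your writing $v'=v\cdot(H_1,s)$ where the substitution actually gives $v=H_1^{-1}v's$; this is just a choice of direction for the isomorphism and does not affect the equivalence relation or the conclusion.
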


Thinking of real Quantum Tori, then we see that $\modulitorusgerbe{d,n}{J_{\text{max}}}$ is homeomorphic to the space of real Quantum Tori $\TT_{\R^{n-d},\theta}^n$ up to linear equivalence of $\R^n$ preserving both the $\R^{n-d}$-action and a fixed basis of $\R^{n-d}$ up to permutation.

Finally,
\begin{proposition}
	\label{propmoduliQtter}
	The moduli space $\modulitorusgerbe{d,n}{J_{\text{max}},*}$ of calibrated Quantum Tori $\nctorusgerbe{h}{J_{\text{max}}}$ with fixed $d$ and $n$ and set of virtual generators $J_{\text{max}}$ up to marked isomorphisms is homeomorphic to the quotient of $(\R^d)^{n-d}$ by the following action of $\text{GL}_d(\Z)$: given $v$ and $H_1$ we set
	\begin{equation}
	\label{actionmoduliQtter}
	v\cdot H_1=H_1^{-1}v 
	\end{equation}
\end{proposition}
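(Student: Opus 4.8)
The plan is to imitate the proof of Proposition \ref{propmoduliQtbis}, keeping careful track of what changes when we pass from isomorphisms to \emph{marked} isomorphisms. First I would recall that, exactly as in the discussion preceding Proposition \ref{propmoduliQt}, a calibrated Quantum Torus $\nctorusgerbe{h}{J_{\text{max}}}$ with fixed $d$, $n$ and virtual set $J_{\text{max}}=\{d+1,\hdots,n\}$ is, after putting $h$ in normal form \eqref{h}, completely determined by the matrix $\hbar=(\hbar(e_{d+1}),\hdots,\hbar(e_n))\in(\R^d)^{n-d}$, which I continue to write $v$. So set-theoretically $\modulitorusgerbe{d,n}{J_{\text{max}},*}$ is a quotient of $(\R^d)^{n-d}$, and I would equip it with the quotient topology coming from the euclidean topology, just as in \eqref{moduliDQT}.

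Next I would identify the equivalence relation. By Corollary \ref{corcaliso} (and Definition \ref{defisomarked}), two such tori are related by a marked isomorphism if and only if there is a calibrated Quantum Fan \emph{marked} isomorphism $(L,H)$ between the corresponding $h$ and $h'$; by Definition \ref{deftorusmorphismdec} a marked isomorphism is one for which moreover $J=J'$ and the map $s:J\to J'$ is the identity. Running the linear-algebra computation from the paragraph before Proposition \ref{propmoduliQt}: writing $H=\left(\begin{smallmatrix}H_1&H_2\\H_3&H_4\end{smallmatrix}\right)$ as in \eqref{Hdecompo}, the constraint \eqref{Hs}–\eqref{Hrestric} that $H$ preserve the virtual generators forces (in the maximal case) $H_2=H_3=0$, exactly as for Proposition \ref{propmoduliQtbis}; and now the extra marking condition $s=\mathrm{Id}$ forces $H_4$ to be the identity matrix rather than a general permutation matrix $s$ of \eqref{smatrix}. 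The commutation relation \eqref{commut}, i.e. $Lh=h'H$, then reads $L=H_1$ on the first $d$ coordinates and $v=L^{-1}(H_2+v'H_4)=H_1^{-1}v'$ on the remaining ones — so $v'=H_1 v$, equivalently $v$ and $v'=H_1^{-1}v$ lie in the same orbit of $\mathrm{GL}_d(\Z)$ acting by $v\mapsto H_1^{-1}v$, which is \eqref{actionmoduliQtter}. Conversely, any $H_1\in\mathrm{GL}_d(\Z)$ yields via $(L,H)=(H_1,\mathrm{diag}(H_1,Id))$ a genuine marked calibrated Quantum Fan isomorphism, so the orbits are exactly the marked-isomorphism classes. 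This makes the bijection with the orbit space a homeomorphism for the two quotient topologies.

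The only real subtlety — and the step I would be most careful about — is making sure the invertibility side-condition that appeared in Proposition \ref{propmoduliQt} (``$H_1+\hbar'H_3$ invertible'') genuinely disappears here: since $H_3=0$ in the maximal case, the condition becomes simply ``$H_1$ invertible,'' which is automatic because $H\in\mathrm{GL}_n(\Z)$ together with the block structure $H=\mathrm{diag}(H_1,Id)$ forces $H_1\in\mathrm{GL}_d(\Z)$. I would also double-check the direction conventions (whether the action is $v\mapsto H_1 v$ or $v\mapsto H_1^{-1}v$), since this is purely a matter of which torus is source and which is target; with the convention used in Proposition \ref{propmoduliQtbis} one lands on $v\cdot H_1=H_1^{-1}v$, matching the stated \eqref{actionmoduliQtter}. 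With these points verified, the proof is a one-line specialization of the argument for Proposition \ref{propmoduliQtbis}: the group $\mathrm{GL}_d(\Z)\times\mathfrak S_{n-d}$ of that statement collapses to its first factor $\mathrm{GL}_d(\Z)$ once $s$ is pinned to the identity, and nothing else changes.
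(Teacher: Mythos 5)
Your proposal is correct and follows essentially the same route as the paper: the paper treats Proposition \ref{propmoduliQtter} as an immediate specialization of the computation behind Propositions \ref{propmoduliQt} and \ref{propmoduliQtbis}, where preserving $J_{\text{max}}$ forces $H_2=H_3=0$ and the marking condition pins $H_4$ to the identity, leaving only the $\mathrm{GL}_d(\Z)$-action \eqref{actionmoduliQtter}. Your extra checks (disappearance of the invertibility side-condition, the converse construction $(L,H)=(H_1,\mathrm{diag}(H_1,Id))$, and the source/target direction convention) are consistent with the paper's conventions and add nothing that conflicts with its argument.
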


Thinking of real Quantum Tori, the difference with the previous interpretation is that $\modulitorusgerbe{d,n}{J_{\text{max}}}$ is homeomorphic to the space of real Quantum Tori $\TT_{\R^{n-d},\theta}^n$ up to linear equivalence of $\R^n$ preserving both the $\R^{n-d}$-action and a fixed basis of $\R^{n-d}$; no permutation of this fixed basis is allowed.
\begin{remark}
	\label{rkmodulitorus}
	To define moduli spaces of Quantum Tori, one is faced with the problem that the rank, that is the minimal number of generators of some $\Gamma$ in $\R^d$ is not stable when moving slightly $\Gamma$. For example $\Z+a\Z$ has rank $1$ if $a$ is rational and rank $2$ if $a$ is irrational. So if we let $\modulitorus{d}{n}$ be the moduli space of Quantum Tori of dimension $d$ and rank $n$, then the same type of computations show that $\modulitorus{d}{n}$ naturally injects in $\modulitorusgerbe{d,n}{\emptyset}$ but it misses the dense subset of $\Gamma$ with smaller rank. We will recover and explain this fact in Section \ref{Qmodulidec}.
	
	Of course, this problem disappears when calibrating the tori. Calibrations are useful in moduli problems.  
\end{remark}

\begin{remark}
	\label{rkmodulitorusvirtual}
	Allowing a non-empty set of virtual generators adds automorphisms acting as a permutation of this set. This explains the $\mathfrak S_{n-d}$ factor in the statement of Proposition \ref{propmoduliQtbis}.
	
	Of course, this problem disappears when using marked isomorphisms. Marking kill these extra automorphisms as shown by Proposition \ref{propmoduliQtter}. This will be crucial in Section \ref{QmoduliPn}.
\end{remark}

\subsection{Moduli Spaces of $\Gamma$-complete Quantum Torics}
\label{QmoduliGC}

We consider now moduli spaces of $\Gamma$-complete Quantum Torics. For that purpose, we need to define the combinatorial type of a Quantum Fan and combinatorial equivalence of Quantum Fans.

\begin{definition}
	\label{defcombtype}
	The {\it combinatorial type} of a Quantum Fan $(\Delta,v)$ is the poset $\text{comb}(\Delta)$ of subsets $I$ of $\{1,\hdots,p\}$ such that $\sigma_I$ is a cone of $\Delta$.
\end{definition} 
Moreover,
\begin{definition}
	\label{defcombequiv}
	Two Quantum Fans $(\Delta,v)$ and $(\Delta',v')$ are {\it combinatorially equivalent} if
	\begin{enumerate}[\rm i)]
		\item They have same number of $1$-cones, say $p$.
		\item There exists a bijection of $\{1,\hdots,p\}$ which sends bijectively $\text{comb}(\Delta)$ onto $\text{comb}(\Delta')$.
	\end{enumerate}
\end{definition}
Let $D$ be the combinatorial type of some $\Gamma$-complete simplicial Quantum Fan. We are interested in the moduli space $\modulitoric{D}{d}$ of $\Gamma$-complete simplicial Quantum Torics $\nctoric{\Delta}{\Gamma}{v}$ such that $\text{comb}(\Delta)$ is equivalent to $D$. Thanks to Theorem \ref{prisoQTV}, it can alternatively be defined as
\begin{equation}
\label{moduliGcompdef}
\modulitoric{D}{d}=\{(\Delta,v)\text{ Quantum Fan in }\R^d\mid \text{comb}(\Delta)\simeq D\}/\sim
\end{equation}
where $\simeq$ means combinatorially equivalent in the sense of Definition \ref{defcombequiv} and $\sim$ is the isomorphism of Quantum Fans.

The permutation group $\mathfrak{S}_{p}$ acts on $\{1,\hdots,p\}$ hence on $D$ and we have
\begin{equation}
\text{comb}(\Delta)\simeq D\iff \exists s\in\mathfrak{S}_{p}\quad\text{ s.t. }\text{comb}(\Delta)=s\cdot D
\end{equation} 
We may assume that all Quantum Fans are standard, hence $(\Delta,v)$ is entirely described by $v_{d+1},\hdots,v_p$ vectors of $\R^d$. 
So making the following definition
\begin{definition}
	\label{defrealiz}
	We say that the set of vectors $\underline{v}=(v_{d+1},\hdots,v_p)$ of $\R^d$ is {$D$-realizable} if there exists a standard fan $\Delta$ in $\Gamma=\Z^d+\Z v_{d+1}+\hdots +\Z v_p$ with $1$-cone generators $v_1=e_1,\hdots,v_d=e_d,v_{d+1},\hdots,v_p$ whose combinatorial type is $D$.
\end{definition}
\noindent we obtain that \eqref{moduliGcompdef} can be rewritten as
\begin{equation}
\label{moduliGcompdef2}
\modulitoric{D}{d}=
\left \{\underline{v}\in(\R^d)^{p}\mid \underline{v}\text{ is }  D\text{-realizable}\right \}
\big /\sim
\end{equation}
Since $D$ is simplicial, being $D$-realizable is an open condition. Thus the set between parenthesis in \eqref{moduliGcompdef2} is an open subset of $(\R^d)^{p-d}$.

As in Section \ref{QmoduliQT}, this description allows us to endow $\modulitoric{D}{d}$ with the quotient topology coming from the euclidean topology of $(\R^{d})^{p-d}$.

The computations are similar to those of Section \ref{QmoduliQT}. Indeed, $\underline{v}\sim\underline{v'}$, that is $(\Delta,v)$ and $(\Delta',v')$ are isomorphic Quantum Fans if and only if there exists a permutation $s\in\mathfrak{S}_p$ such that 
\begin{enumerate}[i)]
	\item The matrix $(v_{s(1)},\hdots,v_{s(d)})$ is invertible.
	\item The linear isomorphism
	\begin{equation}
	\label{L}
	L:=(v_{s(1)},\hdots,v_{s(d)})^{-1}
	\end{equation}
	sends $v_{s(i)}$ onto $v'_i$ for all $i$ between $1$ and $n$.
	\item $\text{comb}(\Delta)=\text{comb}(\Delta')$.
\end{enumerate}
Observe that, by definition, $L$ sends $(v_{s(1)},\hdots,v_{s(d)})$ onto $(e_1,\hdots,e_d)$ which is the same as $(v'_1,\hdots,v'_d)$. Hence point ii) is really meaningful for $i>d$. 

We conclude
\begin{theorem}
	\label{thmmoduliGC}
	The moduli space $\modulitoric{D}{d}$ is homeomorphic to the quotient {\rm \eqref{moduliGcompdef2}} of an open set of $(\R^d)^{n-d}$ by the equivalence relation defined in {\rm\eqref{L}}.
\end{theorem}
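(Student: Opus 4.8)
The strategy is to reduce the homeomorphism claim to the already-established equivalence of categories $\mathscr{Q}\cong Q$ of Theorem \ref{prisoQTV}, exactly as the proofs of Proposition \ref{propmoduliQt} and Theorem \ref{thmmoduliGC}'s preliminary lemmas did for the torus case. First I would make the set-level identification: by Theorem \ref{prisoQTV}, isomorphism classes of $\Gamma$-complete simplicial Quantum Torics $\nctoric{\Delta}{\Gamma}{v}$ with $\mathrm{comb}(\Delta)\simeq D$ are in bijection with isomorphism classes of standard Quantum Fans $(\Delta,v)$ in $\R^d$ whose combinatorial type is combinatorially equivalent to $D$, which is precisely the right-hand side of \eqref{moduliGcompdef}. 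Using Lemma \ref{lemmastandard} (or rather its fan analogue, the normalization following Definition \ref{standarddef}) I then rewrite each such fan in standard form, so that it is pinned down by the tuple $\underline v=(v_{d+1},\hdots,v_p)\in(\R^d)^{p-d}$, giving the parametrization \eqref{moduliGcompdef2}: the parameter set is $\{\underline v\in(\R^d)^{p-d}\mid \underline v\text{ is }D\text{-realizable}\}$, which is open because $D$ is simplicial (the linear-independence conditions defining each maximal cone, together with the combinatorial incidence relations, are open conditions on $\underline v$).

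Next I would pin down the equivalence relation. Two standard fans $(\Delta,v)$ and $(\Delta',v')$ give isomorphic Quantum Torics iff they are isomorphic as Quantum Fans (Theorem \ref{prisoQTV} again), and by Lemma \ref{isoQFlemma} a Quantum Fan isomorphism is exactly a linear isomorphism $L$ of $\R^d$ with $L(\Gamma)=\Gamma'$ that carries $(v_1,\hdots,v_p)$ onto a permutation $(v'_{s(1)},\hdots,v'_{s(p)})$ and hence $\mathrm{comb}(\Delta)=s\cdot\mathrm{comb}(\Delta')$. Choosing $s\in\mathfrak S_p$ so that $(v_{s(1)},\hdots,v_{s(d)})$ is a basis, the normalization forces $L=(v_{s(1)},\hdots,v_{s(d)})^{-1}$ as in \eqref{L}; I would check that the condition $L(\Gamma)=\Gamma'$ is then automatic in the $\Gamma$-complete case, because $\Gamma=\Z v_1+\hdots+\Z v_p$ and $L$ sends the generators of $\Gamma$ bijectively onto those of $\Gamma'=\Z v'_1+\hdots+\Z v'_p$. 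This yields that $\underline v\sim\underline{v'}$ iff there is $s\in\mathfrak S_p$ making conditions i)--iii) above hold, so the underlying sets of $\modulitoric{D}{d}$ and of the quotient \eqref{moduliGcompdef2} coincide.

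Finally I would address the topology. I endow $\modulitoric{D}{d}$ with the quotient topology pushed down from the euclidean topology on the open realizability set in $(\R^d)^{p-d}$ (this is the definition given in the text just before the theorem); the content of ``homeomorphic'' is then just that this definition is intrinsic, i.e.\ independent of the choices (of standard representative, of the ordering of rays) made in setting up the parametrization. Since any two standardizations of a given fan differ by the linear map appearing in \eqref{L}, and such maps depend continuously (indeed polynomially, via Cramer's rule on the open locus where the relevant $d\times d$ minor is nonzero) on $\underline v$, the two quotient topologies agree, giving the asserted homeomorphism.

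The main obstacle I anticipate is not any single hard estimate but the careful bookkeeping in the second paragraph: one must verify that after standardizing, the permutation $s$ and the induced $L=(v_{s(1)},\hdots,v_{s(d)})^{-1}$ genuinely capture \emph{all} Quantum Fan isomorphisms (no isomorphism is lost by insisting the target also be standard), and that $L(\Gamma)=\Gamma'$ comes for free from $\Gamma$-completeness rather than being an extra constraint — this is exactly the point where $\Gamma$-completeness is used and where the computation differs from the non-$\Gamma$-complete case flagged in Remark \ref{rkmodulitorus}. Once that is clean, the openness of the realizability locus and the continuity of the standardization maps are routine.
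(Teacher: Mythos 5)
Your proposal is correct and follows essentially the same route as the paper: reduce via Theorem \ref{prisoQTV} to isomorphism classes of Quantum Fans, standardize so the data is the tuple $\underline v\in(\R^d)^{p-d}$ on the open $D$-realizability locus, and identify the equivalence relation through Lemma \ref{isoQFlemma} with $L$ pinned down as in \eqref{L}, the topology being the quotient topology by definition. Your extra remarks (that $L(\Gamma)=\Gamma'$ is automatic from $\Gamma$-completeness, and that the quotient topology is independent of the standardization choices) are exactly the points the paper leaves implicit, and they are verified correctly.
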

and
\begin{corollary}
	\label{cormoduliGC}
	The moduli space $\modulitoric{D}{d}$ can be endowed with a structure of a real orbifold of dimension $d(n-d)$.
\end{corollary}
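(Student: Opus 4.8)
The plan is to upgrade the homeomorphism of Theorem \ref{thmmoduliGC} to an orbifold structure by exhibiting the moduli space as a global quotient of a manifold by a Lie group acting properly with finite stabilizers. Concretely, write $U\subset(\R^d)^{p-d}$ for the open set of $D$-realizable configurations $\underline v=(v_{d+1},\hdots,v_p)$ appearing in \eqref{moduliGcompdef2}; this is a smooth manifold of dimension $d(p-d)$ since $D$-realizability is an open condition (the fan inequalities are strict). The equivalence relation $\sim$ is generated by: (a) the natural left action of $\mathrm{GL}_d(\R)$, where $g\in\mathrm{GL}_d(\R)$ sends $\underline v$ to $(g v_{d+1},\hdots,g v_p)$ — this is exactly the freedom of choosing a linear isomorphism $L$ — together with (b) the finite group $\mathfrak S_p$ of relabellings $s$ of the generators that preserve the combinatorial type. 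One must be careful: not every $\underline v$ is in standard form, but every $\sim$-class meets the locus where $(v_1,\hdots,v_d)=(e_1,\hdots,e_d)$, and the condition that $L=(v_{s(1)},\hdots,v_{s(d)})^{-1}$ be well-defined is precisely that the relevant $d$ of the $v_i$ span; so the honest statement is that $\modulitoric{D}{d}$ is the quotient of $U'\subset(\R^d)^p$ (all $p$ vectors, with the realizability-plus-spanning conditions) by the action of $G:=\mathrm{GL}_d(\R)\rtimes\mathfrak S_p$, and then one descends to $U$ by fixing the gauge $v_i=e_i$ for $i\le d$.

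First I would set up this action carefully and check it is well-defined on $U'$ (using that combinatorial type is preserved and that, on $U'$, enough subfamilies of $d$ vectors are bases — this follows from simpliciality of $D$). Second, I would verify the action is \emph{proper}: the $\mathfrak S_p$-factor is compact so it suffices to handle $\mathrm{GL}_d(\R)$, and there properness follows because, after normalizing the first $d$ generators of a maximal cone to $(e_1,\hdots,e_d)$, the element $g$ is pinned down as $g=(v_{s(1)},\hdots,v_{s(d)})^{-1}$ continuously in $\underline v$; a sequence $(\underline v^{(k)},g_k)$ with $\underline v^{(k)}$ and $g_k\cdot\underline v^{(k)}$ both convergent then forces $g_k$ to converge, which is exactly properness. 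Third, I would check the stabilizers are \emph{finite}: a pair $(g,s)$ fixing $\underline v$ must permute the finite set $\{v_1,\hdots,v_p\}$ that spans $\R^d$, and a linear map permuting a spanning finite set has finite order, so $\mathrm{Stab}(\underline v)\hookrightarrow\mathfrak S_p$ and is finite. Fourth, I would observe the action is \emph{locally free} would be too strong (stabilizers are only finite, not trivial), but finiteness is exactly what an orbifold quotient needs: a proper action of a Lie group with finite stabilizers on a manifold yields a quotient orbifold, of dimension $\dim U' - \dim G = (dp + d^2) - d^2 = dp$, and after the gauge-fixing reduction by the $d^2$-dimensional subgroup that normalizes $(e_1,\hdots,e_d)$ we land on dimension $d(p-d)$; writing $n$ for the number of $1$-cones $p$ (as the statement does), this is $d(n-d)$.

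The main obstacle I anticipate is not any single step but the bookkeeping around \emph{standardization and the gauge slice}: the clean description \eqref{moduliGcompdef2} already fixes $v_1=e_1,\hdots,v_d=e_d$, but then the residual symmetry is no longer a group acting on that slice — a relabelling $s$ moves which generators are the ``first $d$'', so one must re-standardize, and the composite operation is $\underline v\mapsto (v_{s(1)},\hdots,v_{s(d)})^{-1}\cdot(v_{s(1)},\hdots,v_{s(p)})$, which is only a \emph{partial} action (defined where $(v_{s(1)},\hdots,v_{s(d)})$ is invertible) rather than a genuine group action. The technically cleanest fix, which I would adopt, is to pass to the unnormalized model $U'$ with the honest $\mathrm{GL}_d(\R)\rtimes\mathfrak S_p$-action (everywhere defined, proper, finite stabilizers), prove it is an orbifold there, and then note that the normalization $\{v_i=e_i,\ i\le d\}$ is a global slice for the free proper $\mathrm{GL}_d(\R)$-part up to the finite groupoid coming from $\mathfrak S_p$, so the two quotients agree as orbifolds. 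The only other point requiring a little care is checking that $D$-realizability together with the spanning conditions genuinely cuts out an \emph{open} (hence smooth) subset of $(\R^d)^p$ on which the action is proper — this is where simpliciality of $D$ is used, since for a non-simplicial $D$ realizability can fail to be open and the argument would break.
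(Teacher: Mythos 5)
Your proposal is correct in substance but follows a genuinely different, and considerably heavier, route than the paper's. The paper's proof is a one-liner: with the quotient presentation of Theorem \ref{thmmoduliGC} in hand, and with the deliberately weak notion of orbifold fixed in Remark \ref{rkorbifold} (a stack over $C^\infty$-manifolds with only finite isotropy groups), the only thing checked is that the isotropy group of any $\underline{v}$ embeds into $\mathfrak{S}_p$ and is therefore finite; properness, slices, and the fact that the relabel-then-renormalize operation on the slice of \eqref{moduliGcompdef2} is only a partial action are never addressed, because that definition does not require them. You instead pass to the unnormalized configuration space, equip it with an honest action of $\mathrm{GL}_d(\R)\rtimes\mathfrak{S}_p$, prove properness and finiteness of stabilizers, and recover the normalized model as a gauge slice. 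What this buys is an orbifold structure in the stronger classical sense (a proper almost-free quotient, i.e.\ a proper \'etale groupoid presentation), and it deals head-on with the issue you rightly flag — that on the slice the operation $\underline{v}\mapsto (v_{s(1)},\hdots,v_{s(d)})^{-1}\cdot(v_{s(1)},\hdots,v_{s(p)})$ is defined only where $(v_{s(1)},\hdots,v_{s(d)})$ is invertible, which is exactly what condition i) accompanying \eqref{L} encodes silently. The price is the extra analytic work (properness), which the paper's stack-theoretic definition lets it skip entirely.

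Two small repairs to your argument. First, in the properness step, convergence of $g_k$ as a matrix is not enough: you must also observe that the limit is invertible, which follows because a singular limit $g$ would force the limiting configuration $g\underline{v}$ to lie in a proper subspace of $\R^d$, contradicting $D$-realizability (the generators span). Second, the dimension bookkeeping is garbled: $U'$ is an open subset of $(\R^d)^p$, so $\dim U'=dp$ (not $dp+d^2$), and a single reduction by $\dim\mathrm{GL}_d(\R)=d^2$ already gives $d(p-d)=d(n-d)$; there is no further ``gauge-fixing'' subtraction, since quotienting by $\mathrm{GL}_d(\R)$ and restricting to the slice are the same reduction. Your two slips cancel and the stated dimension is correct, but as written the count is wrong.
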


\begin{remark}
	\label{rkorbifold}
	By a real, respectively complex, orbifold, we mean a stack over the category of $C^\infty$-manifolds, resp. complex manifolds (both with usual euclidean coverings), with only finite isotropy groups.
\end{remark}

\begin{proof}
	The isotropy group of any $\underline{v}$ is a subgroup of $\mathfrak{S}_p$.
\end{proof}

\begin{example}
	\label{P2def}
	We treat the case of the Quantum projective planes. Hence we take
	\begin{equation}
	\label{DP2}
	D=\{1,2,3,12,23,31\}
	\end{equation}
	and consider
	\begin{equation}
	\label{uvP2}
	\underline{v}=(a,b)\in\R^{<0}\times\R^{<0}
	\end{equation}
	and
	\begin{equation}
	\label{GammaP2}
	\Gamma=\Z^2\oplus \Z\underline{v}
	\end{equation}
	A standard Quantum Fan of projective plane has combinatorial type $D$ and $1$-cone generators $(e_1,e_2,\underline{v})$. This implies that \eqref{moduliGcompdef2} takes the form
	\begin{equation}
	\modulitoric{D}{2}=(\R^{<0}\times\R^{<0})/\mathfrak{S}_3
	\end{equation}
	Observe that $D$ is $\mathfrak{S}_3$-invariant, hence there is only one component. Also, condition i) in \eqref{L} is automatic, hence we really have a group action here. We are left with computing it. Consider the generators $(12)$ and $(123)$ of $\mathfrak{S}_3$. They correspond to
	\begin{equation}
	L_1=\begin{pmatrix}
	0 &1\\
	1 &0
	\end{pmatrix}
	\qquad\text{ and }\qquad
	L_2=\begin{pmatrix}
	0 &1/b\\
	1 &-a/b
	\end{pmatrix}
	\end{equation}
	The isomorphism $L_1$ exchanges $e_1$ and $e_2$ and sends $\underline{v}=(a,b)$ onto $(b,a)$. The isomorphism $L_2$ sends $e_1$ onto $e_2$, and $\underline{v}$ onto $e_1$ and finally $e_2$ onto $(1/b,-a/b)$. In other words, setting
	\begin{equation}
	\label{sigmatau}
	\sigma(a,b)=(b,a)\qquad\text{ and }\qquad \tau(a,b)=(1/b,-a/b)
	\end{equation}
	then we obtain
	\begin{theorem}
		\label{thmmoduliP2}
		The moduli stack of Quantum $\mathbb P^2$s is the real orbifold $[(\R^{<0}\times\R^{<0})/\mathfrak{S}_3]$ where the $\mathfrak{S}_3$-action is generated by the involution $\sigma$ and the trivolution $\tau$ defined in \eqref{sigmatau}.
	\end{theorem}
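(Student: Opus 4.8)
The plan is to apply Theorem \ref{thmmoduliGC} (equivalently Corollary \ref{cormoduliGC}) to the specific combinatorial type $D$ of \eqref{DP2} and then make the equivalence relation \eqref{L} completely explicit. First I would record that a standard $\Gamma$-complete simplicial Quantum Fan of combinatorial type $D$ is exactly a fan in $\R^2$ with $1$-cone generators $v_1=e_1$, $v_2=e_2$ and $v_3=\underline v=(a,b)$ with $a<0$, $b<0$, so that the parameter space (the open set in $(\R^d)^{p-d}=(\R^2)^1$ appearing in \eqref{moduliGcompdef2}) is precisely $\R^{<0}\times\R^{<0}$: completeness forces both coordinates of $\underline v$ to be strictly negative, and conversely any such $\underline v$ is $D$-realizable. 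Hence $\modulitoric{D}{2}$ is the quotient of $\R^{<0}\times\R^{<0}$ by the relation $\sim$ of \eqref{L}.

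Next I would verify that $\sim$ is here an honest $\mathfrak S_3$-action. The poset $D$ of \eqref{DP2} is invariant under every permutation of $\{1,2,3\}$, so condition iii) in the characterisation preceding Theorem \ref{thmmoduliGC} is automatic; condition i) — invertibility of $(v_{s(1)},v_{s(2)})$ — also holds for every $s\in\mathfrak S_3$, since any two of $e_1,e_2,\underline v$ are linearly independent when $a,b<0$. Therefore each $s\in\mathfrak S_3$ yields a well-defined linear isomorphism $L_s=(v_{s(1)},v_{s(2)})^{-1}$, these compose correctly (cf.\ Lemma \ref{stackmorphismlemma} applied through Theorem \ref{prisoQTV}), and $\modulitoric{D}{2}=[(\R^{<0}\times\R^{<0})/\mathfrak S_3]$ as a stack, which is a real orbifold by Corollary \ref{cormoduliGC}.

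It then remains to identify the action of the two generators $(12)$ and $(123)$ of $\mathfrak S_3$ on the parameter $(a,b)$. For $(12)$: $L_1$ swaps $e_1$ and $e_2$, hence sends $\underline v=(a,b)$ to $(b,a)$, giving $\sigma(a,b)=(b,a)$. For $(123)$, i.e.\ $s$ with $s(1)=3$, $s(2)=1$, $s(3)=2$: $L_2=(v_3,v_1)^{-1}=\bigl(\begin{smallmatrix}a&1\\ b&0\end{smallmatrix}\bigr)^{-1}=\frac1{-b}\bigl(\begin{smallmatrix}0&-1\\ -b&a\end{smallmatrix}\bigr)=\bigl(\begin{smallmatrix}0&1/b\\ 1&-a/b\end{smallmatrix}\bigr)$; applying $L_2$ to the new third generator $v_{s(3)}=v_2=e_2$ gives $(1/b,-a/b)$, so $\tau(a,b)=(1/b,-a/b)$. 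One checks $\sigma^2=\mathrm{id}$ and $\tau^3=\mathrm{id}$ directly, and that $\sigma,\tau$ generate the image of $\mathfrak S_3$, which establishes Theorem \ref{thmmoduliP2}.

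The genuinely delicate point — and the only one I would treat carefully rather than by citation — is the bookkeeping in the $(123)$ computation: one must use the \emph{standard} normalisation (so that after applying $L_s$ the first two generators become $e_1,e_2$ again) and track which generator plays the role of "$v_3$" after the permutation, since a careless choice produces $\tau$ up to an inversion or a sign and would spoil the relation $\tau^3=\mathrm{id}$. Everything else is routine: checking that $\R^{<0}\times\R^{<0}$ is exactly the realizable locus, that the isotropy groups are subgroups of $\mathfrak S_3$ (hence finite, so Remark \ref{rkorbifold} applies), and that no extra identifications occur because condition iii) is vacuous for this $D$.
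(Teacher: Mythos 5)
Your argument is correct and follows essentially the same route as the paper's own proof (Example \ref{P2def}): identify the $D$-realizable locus with $\R^{<0}\times\R^{<0}$ via Theorem \ref{thmmoduliGC}, note that invariance of $D$ and automatic invertibility make \eqref{L} a genuine $\mathfrak{S}_3$-action, and compute the matrices for the generators $(12)$ and $(123)$, which yield exactly $\sigma(a,b)=(b,a)$ and $\tau(a,b)=(1/b,-a/b)$ as in the paper. Your extra checks (that $\tau$ preserves the negative quadrant, $\tau^3=\mathrm{id}$, and the normalisation bookkeeping for the cycle) are consistent with, and slightly more explicit than, the paper's computation.
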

	Points with isotropy can easily be computed. Firstly, only $a=b=-1$ is $\tau$-invariant. Recall from Example \ref{exqdP2} that this is the only point which encodes the $\mathbb P^2$. Of course, it is also $\sigma$-invariant, so its isotropy group is the full $\mathfrak{S}_3$. Secondly, we obtain that
	\begin{enumerate}[a)]
		\item Points $(a,a)$ with $a\not =-1$ have isotropy group equal to $\Z_2$, generated by the involution $\sigma$.
		\item Points $(a,-1)$ with $a\not =-1$ have isotropy group equal to $\Z_2$, generated by the involution $\sigma\circ\tau$.
		\item Points $(-1,b)$ with $b\not =-1$ have isotropy group equal to $\Z_2$, generated by the involution $\tau\circ\sigma$.
	\end{enumerate}
	All other points have trivial isotropy group.
	
	The computations can easily be generalized to the case of Quantum $\mathbb P^n$s for any $n>0$. One finds that its moduli stack is the real orbifold $[(\R^{<0})^n/\mathfrak{S}_n]$ and the generators of the action can be computed. Here again, the only point with isotropy group equal to $\mathfrak{S}_n$ is $(-1,\hdots,-1)$ corresponding to the classical $\mathbb P^n$.
\end{example}

\subsection{Moduli Spaces of calibrated Quantum Torics of maximal length}
\label{Qmodulidec}
We finally consider moduli spaces $\modulitoricgerbe{D}{n,d}$ of calibrated Quantum Torics of maximal length and fixed combinatorial type $D$. We assume that $D$ contains (at least) a cone of dimension $d$, hence the set of $1$-cones of any Quantum Fan with combinatorial type equivalent to $D$ contains a basis of $\R^d$. The computations are similar to those of Section \ref{QmoduliQT}. Firstly, thanks to Theorem \ref{prisodecQTV}, we have
\begin{equation}
\label{modulidecQTmax}
\modulitoricgerbe{D}{n,d}=
\left\{
\begin{aligned}
&\hbar : \Z^{n-d}\to\R^d\mid \hbar(e_{1},\hdots,e_{p})\text{ is }\\
&D\text{-realizable}
\end{aligned}
\right\}
\bigg/\sim
\end{equation}
where $\sim$ means isomorphism of calibrated Quantum Fans of maximal length. We denote by $U$ the open set of $(\R^d)^{n-d}$ appearing in \eqref{modulidecQTmax} and endow $\modulitoricgerbe{D}{n,d}$ with the quotient topology.

 Then two calibrations $\hbar$ and $\hbar'$ define isomorphic calibrated Quantum Torics if and only if there exists $H$ in $\text{GL}_n(\Z)$ which acts as a permutation both on the set $\{v_1,\hdots,v_p\}$ of $1$-cones and on the set $\{v_{p+1},\hdots,v_n\}$ of virtual generators. Hence $H$ decomposes as a diagonal block matrix
\begin{equation}
\label{Hdecompo2}
H=\begin{pmatrix}
s &0\\
0 &t
\end{pmatrix}^{-1}
\end{equation}
with $s\in\mathfrak{S}_p$ and $t\in\mathfrak{S}_{n-p}$. Note that $s$ is not arbitrary but must satisfy that $(v_{s(1)},\hdots,v_{s(d)})$ is a basis of $\R^d$. As usual, we take as convention that $H$ sends $(v_{s(1)},\hdots,v_{s(d)})$ onto the canonical basis, hence the $(-1)$-exponent in \eqref{Hdecompo2}.

Starting with some $\hbar$ in $U$, then there exists a unique couple $(L,h')$ making \eqref{CDLHmod} commutative. To wit, the linear isomorphism $L$ is given by \eqref{L} and $h'$ is just $LhH^{-1}$.

We thus obtain the following complete description of $\modulitoricgerbe{D}{n,d}$.

\begin{theorem}
	\label{thmmodulidec}
	Fix $0<d<n$. Let $D$ be the combinatorial type of a Fan in $\R^d$.  Assume that $D$ contains (at least) a cone of dimension $d$.
	
	Then, the moduli space $\modulitoricgerbe{D}{n,d}$ of calibrated Quantum Torics of maximal length and fixed combinatorial type $D$ is homeomorphic to the quotient of the open set
	\begin{equation}
	\label{U}
	\{\hbar : \Z^{n-d}\to\R^d\mid \hbar(e_{d+1},\hdots,e_{n})\text{ is }
	D\text{-realizable}\}
	\end{equation}
	by the equivalence relation
	\begin{equation}
	\label{simmodulidec}
	\hbar\sim\hbar'\iff \exists (s,t)\in\mathfrak{S}_p\times\mathfrak{S}_{n-p}\text{ s.t. }L\text{ exists and }h'=LhH^{-1}
	\end{equation}
	for $L$, respectively $H$, defined in {\rm \eqref{L}}, resp. {\rm \eqref{Hdecompo2}}.
\end{theorem}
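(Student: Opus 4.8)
The plan is to reduce the statement to a concrete, explicit description of the parameter space together with an equivalence relation, exactly paralleling the treatment of $\modulitorusgerbe{d,n}{\emptyset}$ in Subsection \ref{QmoduliQT} and of $\modulitoric{D}{d}$ in Subsection \ref{QmoduliGC}. First I would invoke Theorem \ref{prisodecQTV}: since the category of simplicial calibrated Quantum Torics is equivalent to $Q^{cal}$, classifying the objects $\nctoricgerbe{\Delta}{h}{J}$ of maximal length with $\mathrm{comb}(\Delta)\simeq D$ up to isomorphism is the same as classifying maximal-length calibrated Quantum Fans $(\Delta,h)$ with $\mathrm{comb}(\Delta)\simeq D$ up to the isomorphisms of Definition \ref{decQFmorphismdef} (in their maximal-length characterization, Lemma \ref{isoDQFlemma}). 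Then I would normalize: using Lemma \ref{lemmastandarddec} (and the standardization discussion preceding Definition \ref{defstandarddecfan}) every such calibrated fan is isomorphic to a standard one, so it is entirely encoded by the values $\hbar(e_{d+1}),\dots,\hbar(e_n)\in\R^d$ — a point of $(\R^d)^{n-d}$ — subject only to the requirement that the $1$-cone generators $v_1=e_1,\dots,v_d=e_d,v_{d+1},\dots,v_p$ span a fan of combinatorial type $D$, i.e. that $\hbar(e_{d+1}),\dots,\hbar(e_n)$ is $D$-realizable in the sense of Definition \ref{defrealiz}. This identifies the parameter set with the open set $U$ of \eqref{U}; openness follows because $D$ is simplicial so $D$-realizability is an open condition, exactly as in Subsection \ref{QmoduliGC}.

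Next I would work out the equivalence relation. By Lemma \ref{isoDQFlemma}, an isomorphism $(L,H)$ between two maximal-length standard calibrated fans has $H$ a block permutation matrix permuting $\{e_{i_1},\dots,e_{i_p}\}$ among themselves and $\{e_{j_1},\dots,e_{j_{n-p}}\}$ among themselves; after standardizing we may write $H$ in the block-diagonal form \eqref{Hdecompo2} determined by a pair $(s,t)\in\mathfrak S_p\times\mathfrak S_{n-p}$, with the convention that $H$ carries $(v_{s(1)},\dots,v_{s(d)})$ to the canonical basis. The commuting square \eqref{CDLH}, namely $L\circ h=h'\circ H$, then forces $L=(v_{s(1)},\dots,v_{s(d)})^{-1}$ exactly as in \eqref{L} — this requires $(v_{s(1)},\dots,v_{s(d)})$ to be invertible, which is why one must restrict the admissible $s$ — and forces $h'=LhH^{-1}$ as the unique calibration completing the square. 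One checks conversely that for any such admissible $(s,t)$ this recipe does produce a bona fide isomorphism of calibrated fans (all conditions iv), v) of Definition \ref{decQFmorphismdef} are automatic for permutations). This yields precisely the relation \eqref{simmodulidec}. Finally, endowing $\modulitoricgerbe{D}{n,d}$ with the quotient topology inherited from the euclidean topology on $U\subset(\R^d)^{n-d}$ (as in the analogous spaces of the previous subsections) gives the claimed homeomorphism.

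The main obstacle, and the step deserving the most care, is verifying that the isomorphism relation on calibrated Quantum Torics really does descend to exactly \eqref{simmodulidec} with the stated, somewhat delicate, constraint on $s$: one must confirm both that every fan-isomorphism arises from an admissible $(s,t)$ with $(v_{s(1)},\dots,v_{s(d)})$ invertible, and — using $\mathrm{comb}(\Delta)\simeq D$ and the hypothesis that $D$ contains a $d$-dimensional cone — that enough admissible $s$ exist so that the relation is the full equivalence relation generated by these moves rather than something coarser; the bookkeeping for which permutations preserve the combinatorial type and simultaneously give an invertible $L$ is the technical heart of the argument. Everything else (standardization, openness of $U$, the uniqueness of $(L,h')$ given $H$) is routine linear algebra, lifted verbatim from the proofs of Theorem \ref{thmmoduliGC} and Proposition \ref{propmoduliQt}.
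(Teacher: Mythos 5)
Your proposal is correct and follows essentially the same route as the paper: reduction to calibrated Quantum Fans via Theorem \ref{prisodecQTV}, standardization so that the fan is encoded by $\hbar(e_{d+1}),\hdots,\hbar(e_n)$ in the open set \eqref{U}, and identification of the isomorphisms via Lemma \ref{isoDQFlemma} as block-permutation matrices $H$ as in \eqref{Hdecompo2} with $(v_{s(1)},\hdots,v_{s(d)})$ invertible, forcing $L$ as in \eqref{L} and $h'=LhH^{-1}$. The paper treats the step you flag as the "technical heart" exactly as you do, simply noting the constraint that $s$ must make $(v_{s(1)},\hdots,v_{s(d)})$ a basis (guaranteed to be nonvacuous since $D$ contains a $d$-dimensional cone), so no further argument is needed.
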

and 
\begin{corollary}
	\label{cormodulidec}
	The moduli space $\modulitoricgerbe{D}{n,d}$ can be endowed with a structure of a real orbifold of dimension $d(n-d)$.
\end{corollary}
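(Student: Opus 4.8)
The plan is to deduce Corollary \ref{cormodulidec} directly from the explicit description given in Theorem \ref{thmmodulidec}, exactly as Corollary \ref{cormoduliGC} was deduced from Theorem \ref{thmmoduliGC}. Recall from \eqref{modulidecQTmax} that $\modulitoricgerbe{D}{n,d}$ is the quotient of the open set $U\subset (\R^d)^{n-d}$ consisting of $D$-realizable calibrations by the equivalence relation $\sim$ of \eqref{simmodulidec}. First I would argue that $U$ is indeed open: being $D$-realizable means that the standard fan built on $(e_1,\hdots,e_d,v_{d+1},\hdots,v_n)$ has combinatorial type $D$, and since $D$ is simplicial (as it is the combinatorial type of a simplicial Quantum Fan in the maximal setting), the conditions ``$\det(v_{s(1)},\hdots,v_{s(d)})\neq 0$ for $I=\{s(1),\hdots,s(d)\}\in D$'' and ``$\det(v_{s(1)},\hdots,v_{s(d)})$ has the appropriate sign pattern so that exactly the cones of $D$ appear'' are finitely many open numerical conditions on the entries of $\underline v$. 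Hence $U$ is open in the Euclidean space $(\R^d)^{n-d}$, which is a real analytic (indeed $C^\infty$) manifold of dimension $d(n-d)$.

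Next I would analyze the group acting. By \eqref{simmodulidec}, two points $\hbar,\hbar'\in U$ are equivalent precisely when there is a pair $(s,t)\in\mathfrak S_p\times\mathfrak S_{n-p}$ for which the matrix $L$ of \eqref{L} exists (i.e.\ $(v_{s(1)},\hdots,v_{s(d)})$ is invertible) and $h'=LhH^{-1}$ with $H$ the block permutation \eqref{Hdecompo2}. The key point is that $\mathfrak S_p\times\mathfrak S_{n-p}$ is a \emph{finite} group, and for each fixed $(s,t)$ the assignment $\hbar\mapsto LhH^{-1}$ is a rational (hence real-analytic) partially defined map on $U$ whose domain of definition is the open subset where $(v_{s(1)},\hdots,v_{s(d)})$ is invertible; where defined it is a diffeomorphism onto its image, with inverse given by $(s^{-1},t^{-1})$. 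So $\sim$ is the orbit equivalence relation of a partial action of the finite group $\mathfrak S_p\times\mathfrak S_{n-p}$ on $U$ by real-analytic diffeomorphisms of open subsets. In particular the isotropy group of any $\hbar\in U$ is a subgroup of $\mathfrak S_p\times\mathfrak S_{n-p}$, hence finite.

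The remaining step is to package this as an orbifold in the sense of Remark \ref{rkorbifold}, i.e.\ a stack over $C^\infty$-manifolds with euclidean coverings and only finite isotropy. The cleanest way is to present $\modulitoricgerbe{D}{n,d}$ as the quotient stack $[U/(\mathfrak S_p\times\mathfrak S_{n-p})]$, being careful about the partiality of the action: one replaces $U$ by the ``action groupoid'' whose arrows over $\hbar$ are the pairs $(s,t)$ such that $LhH^{-1}$ is defined, with source $\hbar$ and target $LhH^{-1}$; composition is inherited from the group law of $\mathfrak S_p\times\mathfrak S_{n-p}$ (and is defined because if two such arrows are composable their composite again has invertible leading $d\times d$ minor, using that the first target lies in $U$). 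This is an \'etale proper Lie groupoid with finite isotropy, so its stackification is a $C^\infty$-orbifold, and the coarse space is exactly $U/\sim$, topologized as in the statement; its dimension equals $\dim U = d(n-d)$.

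The main obstacle I expect is precisely the bookkeeping around the \emph{partiality} of the $\mathfrak S_p\times\mathfrak S_{n-p}$-action: unlike in Corollary \ref{cormoduliGC}, where the relevant permutation group genuinely acts (condition i) in \eqref{L} being automatic there because $D$ was $\mathfrak S_p$-invariant), here $(v_{s(1)},\hdots,v_{s(d)})$ need not be invertible for every $s$, so one is not handed an honest group action but only a groupoid. Verifying that the resulting groupoid is well-defined, proper and \'etale (so that the stack quotient really is an orbifold rather than something worse) is the one place requiring genuine care; the openness of $U$ and the finiteness of isotropy are then immediate consequences of the preceding computations and of $\mathfrak S_p\times\mathfrak S_{n-p}$ being finite.
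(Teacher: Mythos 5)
Your proposal is correct and takes essentially the same route as the paper, whose entire proof of Corollary \ref{cormodulidec} is the one-line observation that the isotropy group of any $\hbar$ is a subgroup of $\mathfrak{S}_p\times\mathfrak{S}_{n-p}$, hence finite, the openness of $U$ and the dimension count being already contained in Theorem \ref{thmmodulidec}. The extra bookkeeping you supply about the partiality of the $\mathfrak{S}_p\times\mathfrak{S}_{n-p}$-action (presenting the quotient via an \'etale proper groupoid rather than a genuine group action) is a legitimate point the paper leaves implicit, but it does not constitute a different approach.
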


\begin{proof}
	The isotropy group of any $\hbar$ is a subgroup of $\mathfrak{S}_p\times\mathfrak{S}_{n-p}$.
\end{proof}

Consider the special case where $p=n$, hence $J=\emptyset$. Forgetting the calibration, it corresponds to the $\Gamma$-complete case treated in Section \ref{QmoduliGC}. Indeed, we have
\begin{corollary}
	\label{cormodulidecetGCweak}
	The orbifold $\modulitoricgerbe{D}{n,d}$ and the orbifold $\modulitoric{D}{d}$ are isomorphic.
\end{corollary}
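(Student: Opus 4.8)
\textbf{Proof plan for Corollary \ref{cormodulidecetGCweak}.}

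The plan is to unwind both moduli descriptions to the same concrete quotient. On the one hand, by Theorem \ref{thmmodulidec} specialized to $p=n$ (so $J=\emptyset$, and the symmetric group factor $\mathfrak{S}_{n-p}=\mathfrak{S}_0$ is trivial), the moduli space $\modulitoricgerbe{D}{n,d}$ is the quotient of the open set $U=\{\hbar:\Z^{n-d}\to\R^d\mid \hbar(e_{d+1},\hdots,e_n)\text{ is }D\text{-realizable}\}\subset (\R^d)^{n-d}$ by the equivalence relation \eqref{simmodulidec}, which in this case reads $\hbar\sim\hbar'$ iff there is $s\in\mathfrak{S}_n$ with $(v_{s(1)},\hdots,v_{s(d)})$ invertible and $h'=LhH^{-1}$, where $L=(v_{s(1)},\hdots,v_{s(d)})^{-1}$ and $H$ is the permutation matrix of $s$ (inverted, per \eqref{Hdecompo2}). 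On the other hand, Theorem \ref{thmmoduliGC} presents $\modulitoric{D}{d}$ as the quotient of the open set $\{\underline v\in(\R^d)^{p}\mid \underline v\text{ is }D\text{-realizable}\}$ — which for $p=n$ is literally the same open set $U\subset(\R^d)^{n-d}$ (both are parametrized by $(v_{d+1},\hdots,v_n)$ with $v_1,\hdots,v_d$ fixed to $e_1,\hdots,e_d$) — by the relation \eqref{L}: $\underline v\sim\underline{v'}$ iff there is $s\in\mathfrak{S}_n$ with $(v_{s(1)},\hdots,v_{s(d)})$ invertible, $L=(v_{s(1)},\hdots,v_{s(d)})^{-1}$ sends $v_{s(i)}$ onto $v'_i$ for all $i$, and $\text{comb}(\Delta)=\text{comb}(\Delta')$.

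The key step is to check that these two equivalence relations on the common open set $U$ coincide. First I would observe that the forgetful functor $\text{\sl f}$ (Proposition \ref{propgerbe}) in the maximal-$p$ case has $a=n-\mathrm{rank}_\Z\Gamma$ and, since $\Gamma=\Z v_1+\hdots+\Z v_n$ with the $v_i$ spanning $\R^d$, in fact $a$ may be positive — but the point is that the \emph{calibration} data $h$ carries no extra freedom beyond $v=(v_1,\hdots,v_n)$ when $J=\emptyset$ and $p=n$, because $h$ is forced to be the standard epimorphism $h(x)=\sum x_i v_i$ (cf.\ Example \ref{completecalibration}, the trivial calibration of a $\Gamma$-complete fan). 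Thus a point of $U$ determines $h$, hence both $\nctoricgerbe{\Delta}{h}{\emptyset}$ and $\nctoric{\Delta}{\Gamma}{v}$, and the orbifold charts are the same. Then I would match the morphisms: a marked isomorphism of calibrated Quantum Fans $(L,H)$ with $H$ a permutation block $\begin{pmatrix}s&0\\0&t\end{pmatrix}^{-1}$ and $t$ trivial is, via the commuting square $Lh=h'H$ of \eqref{completemorphismCD}, exactly the same datum as a Quantum Fan isomorphism $L$ with $L(v_{s(i)})=v'_i$; the condition $L\circ h = h'\circ H$ translates into $L$ sending each generator $v_{s(i)}$ to $v'_i$, and $h'=LhH^{-1}$ is then automatic. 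Conversely, given a Quantum Fan isomorphism $L$ as in \eqref{L}, Lemma \ref{isoQFlemma} says $L$ permutes the $1$-cone generators, and defining $H$ as the corresponding permutation matrix and $h'=LhH^{-1}$ recovers a (marked) isomorphism of calibrated fans by Lemma \ref{isoDQFlemma}. Finally, I would note that the combinatorial-equivalence condition $\text{comb}(\Delta)=\text{comb}(\Delta')$ is built into $D$-realizability in both cases (both parametrize fans of fixed combinatorial type $D$), so it imposes no further restriction on the identification. Hence the two relations on $U$ agree, and the quotients are homeomorphic; since the isotropy groups are in both cases the corresponding stabilizers in $\mathfrak{S}_n$, the orbifold structures agree as well.

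The main obstacle I anticipate is bookkeeping around whether the relevant $H$ must literally be a \emph{permutation} matrix or merely, per Definition \ref{decQFmorphismdef}(iv), send $e_i$ ($i\notin J$) to a $\Z$-linear combination of $(e_j)_{j\notin J}$ — but in the \emph{maximal} length case with $p=n$ all indices are $1$-cone generators, so Lemma \ref{isoDQFlemma}(ii) pins $H$ down to a block permutation matrix (with the $\mathfrak{S}_{n-p}$-block absent), which is exactly what makes \eqref{simmodulidec} collapse onto \eqref{L}. A second minor point to handle carefully is the convention that $H$ appears inverted in \eqref{Hdecompo2} (so that $H$ sends $(v_{s(1)},\hdots,v_{s(d)})$ to the canonical basis), matching the convention $L=(v_{s(1)},\hdots,v_{s(d)})^{-1}$ in \eqref{L}; once both conventions are lined up the identity $h'=LhH^{-1}$ is a one-line verification. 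No genuinely hard analysis is involved; the corollary is essentially the statement that ``calibration data is vacuous when $J=\emptyset$ and every generator is a $1$-cone generator,'' and the proof is the explicit translation of Theorem \ref{thmmodulidec} into Theorem \ref{thmmoduliGC}.
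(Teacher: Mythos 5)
Your proposal is correct and follows essentially the same route as the paper: the paper's proof simply observes that for $p=n$ the group in \eqref{simmodulidec} reduces to $\mathfrak{S}_p$ and then compares the statements of Theorems \ref{thmmoduliGC} and \ref{thmmodulidec}, which is exactly the identification of parametrizing open sets and equivalence relations you spell out. Your extra verification that the marked calibrated-fan isomorphisms $(L,H)$ with $H$ a permutation block collapse onto the relation \eqref{L} is just the detail the paper leaves implicit in the phrase ``comparing the statements.''
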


\begin{proof}
	Since $p=n$, the group occurring in \eqref{simmodulidec} is just $\mathfrak{S}_p$. Then, comparing the statements of Theorems \ref{thmmoduliGC} and \ref{thmmodulidec} yields the result.
\end{proof}
As an application of Corollary \ref{cormodulidecetGCweak}, we obtain directly the moduli stack of trivially calibrated $\Gamma$-complete quantum deformations of $\mathbb P^2$ described in Example \ref{exqdP2dec}.
\begin{corollary}
	\label{cormoduliP2dec}
	 The moduli stack of trivially calibrated Quantum $\mathbb P^2$s is the real orbifold $[(\R^{<0}\times\R^{<0})/\mathfrak{S}_3]$ where the $\mathfrak{S}_3$-action is generated by the involution $\sigma$ and the trivolution $\tau$ defined in \eqref{sigmatau}.
\end{corollary}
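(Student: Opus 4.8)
The plan is to deduce Corollary \ref{cormoduliP2dec} from Corollary \ref{cormodulidecetGCweak} together with the already-established description of the moduli stack of (non-calibrated) Quantum $\mathbb P^2$'s in Theorem \ref{thmmoduliP2}. The key observation is that a trivially calibrated $\Gamma$-complete quantum deformation of $\mathbb P^2$ (as described in Example \ref{exP2dec} and Example \ref{exqdP2dec}) is exactly the case $p=n$, i.e. $J=\emptyset$, of the calibrated picture: the combinatorial type is $D=\{1,2,3,12,23,31\}$ as in \eqref{DP2}, so $p=3$, and since the fan is $\Gamma$-complete we have $n=p=3$ and the set of virtual generators is empty.

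First I would invoke Corollary \ref{cormodulidecetGCweak}: since $p=n$, the group $\mathfrak{S}_p\times\mathfrak{S}_{n-p}$ appearing in \eqref{simmodulidec} collapses to $\mathfrak{S}_p=\mathfrak{S}_3$, and the corollary gives a canonical isomorphism of real orbifolds
\begin{equation*}
\modulitoricgerbe{D}{3,2}\ \cong\ \modulitoric{D}{2}.
\end{equation*}
Next I would identify $\modulitoricgerbe{D}{3,2}$ with the moduli stack of trivially calibrated Quantum $\mathbb P^2$'s: by Example \ref{exqdP2dec}, the trivial calibration of a standard $\Gamma$-complete quantum deformation of $\mathbb P^2$'s fan is $h(x,y,z)=(x+az,y+bz)$, which is precisely a standard calibration with $p=n=3$ and empty $J$, and conversely every calibrated Quantum Torics of maximal length with combinatorial type $D$ and $p=n$ arises this way; so $\modulitoricgerbe{D}{3,2}$ is by definition that moduli stack. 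Finally, by Theorem \ref{thmmoduliP2} (proved in Example \ref{P2def}) the right-hand side $\modulitoric{D}{2}$ is the real orbifold $[(\R^{<0}\times\R^{<0})/\mathfrak{S}_3]$ with the $\mathfrak{S}_3$-action generated by the involution $\sigma$ and the trivolution $\tau$ of \eqref{sigmatau}. Chaining the two identifications yields the claim.

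The only genuinely non-formal point — and hence the step I expect to be the main (mild) obstacle — is checking that the equivalence relation \eqref{simmodulidec} in the case $p=n=3$ really reduces to the $\mathfrak{S}_3$-action by $\sigma$ and $\tau$ of Theorem \ref{thmmoduliP2}, rather than to some a priori larger or differently-presented relation; but this is exactly the content of the comparison of Theorems \ref{thmmoduliGC} and \ref{thmmodulidec} carried out in the proof of Corollary \ref{cormodulidecetGCweak}, so no new computation is needed. One should also note in passing (as already remarked in Example \ref{exqdP2dec}) that the \emph{point} $a=b=-1$ now corresponds to a $\Z$-gerbe over $\mathbb P^2$ rather than to $\mathbb P^2$ itself, so the isotropy stratification of the moduli stack — full $\mathfrak{S}_3$ at $(-1,-1)$, the three $\Z_2$-loci from $\sigma$, $\sigma\circ\tau$, $\tau\circ\sigma$, trivial isotropy elsewhere — is literally the same as in Example \ref{P2def}, the only change being the geometric interpretation of the special point. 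This completes the proof proposal.
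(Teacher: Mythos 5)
Your argument is exactly the paper's proof: Corollary \ref{cormoduliP2dec} is obtained there by combining Theorem \ref{thmmoduliP2} with Corollary \ref{cormodulidecetGCweak}, which is precisely your chain of identifications (trivial calibration $\Leftrightarrow$ $p=n$, $J=\emptyset$, so the $\mathfrak{S}_p\times\mathfrak{S}_{n-p}$-action collapses to the $\mathfrak{S}_3$-action by $\sigma$ and $\tau$). Your closing remarks on the isotropy strata and on $(-1,-1)$ corresponding to a $\Z$-gerbe over $\mathbb P^2$ are correct and consistent with Example \ref{exqdP2dec}, but they are not needed for the statement.
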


\begin{proof}
	Combine Theorem \ref{thmmoduliP2} and Corollary \ref{cormodulidecetGCweak}.
\end{proof}

It is important to notice from Example \ref{exqdP2dec} that the moduli stacks of  Corollary \ref{cormoduliP2dec} (and more generally those of Theorem \ref{thmmoduliP2}) consist of {\it different} Quantum Torics. Irrational points (that is points in $\R^{<0}\times\R^{<0}$ with at least one irrationnal coordinate) are isomorphic but rational points are not. In particular, the first one contains $\mathbb P^2$ whereas the second one contains only a $\Z$-gerbe over $\mathbb P^2$.

Hence, this isomorphism of moduli stacks needs an explanation. It comes indeed from the following equivalence of category, which has its own interest. We first need a definition.
\begin{definition}
	\label{definjectivecat}
	A full subcategory $\mathscr{C}$ of $\mathscr{Q}^{cal}$ is said to be {\it $\hbar$-injective} if 
	\begin{enumerate}[\rm i)]
		\item Every object of $\mathscr{C}$ has empty set of virtual generators.
		\item Every object of $\mathscr{C}$ has injective $\hbar$ function from $\Z^{n-d}$ to $\Gamma$.
	\end{enumerate}
\end{definition}
Then we state
\begin{theorem}
	\label{thmeqcatmoduli}
	Let $\mathscr{C}$ be a $\hbar$-injective full subcategory of $\mathscr{C}$. Then, the forgetful  functor $\text{\slshape f}$ induces an equivalence of categories between $\mathscr{C}$ and the subcategory $\text{\slshape f}(\mathscr C)$ of $\mathscr{Q}$.
\end{theorem}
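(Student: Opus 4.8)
\textbf{Proof proposal for Theorem \ref{thmeqcatmoduli}.}

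The plan is to show directly that the forgetful functor $\text{\slshape f}$, when restricted to an $\hbar$-injective full subcategory $\mathscr{C}$, is fully faithful; essential surjectivity onto $\text{\slshape f}(\mathscr{C})$ is automatic by definition of the target. Since both $\mathscr{C}\subset\mathscr{Q}^{cal}$ and $\text{\slshape f}(\mathscr{C})\subset\mathscr{Q}$ are full subcategories, fully faithfulness reduces to a bijection on Hom-sets between individual objects, and by Theorems \ref{prisoQTV} and \ref{prisodecQTV} this is a purely combinatorial statement about quantum fans: I must show that for calibrated quantum fans $(\Delta,h)$ and $(\Delta',h')$ with empty virtual generators and injective $\hbar$, $\hbar'$, the assignment $(L,H)\mapsto L$ is a bijection from morphisms of calibrated quantum fans to morphisms of the underlying quantum fans $(\Delta,v)\to(\Delta',v')$. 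Injectivity of this assignment: given $L$, the diagram \eqref{CDLH} forces $h'\circ H = L\circ h$, and since $h$ is an epimorphism from $\Z^n$ (here $n=p$ as $J=\emptyset$) and $h'$ is injective (this is exactly condition ii) of Definition \ref{definjectivecat}), $H$ is uniquely determined as $H = (h')^{-1}\circ L\circ h$ — note $L\circ h$ lands in $\Gamma'=\operatorname{im} h'$ because $L(\Gamma)\subset\Gamma'$, so the composite makes sense. Thus there is at most one $H$, and $s$ is vacuous since $J=J'=\emptyset$.

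The substantive point is surjectivity: given any quantum fan morphism $L:(\Delta,v)\to(\Delta',v')$, I must check that the uniquely-forced candidate $H := (h')^{-1}\circ L\circ h : \Z^p\to\Z^{p'}$ is actually an \emph{integral} linear map (a priori it is only a map of abelian groups into $\Gamma'\subset\R^{d'}$ composed with the set-theoretic inverse of $h'$ onto its image) and that $(L,H)$ satisfies all five conditions of Definition \ref{decQFmorphismdef}. First, $H$ is a homomorphism $\Z^p\to\Gamma'$ as a composite of homomorphisms; the key is that $h'$ being injective makes $(h')^{-1}:\Gamma'\to\Z^{p'}$ a genuine group isomorphism onto its image, so $H$ is a well-defined homomorphism $\Z^p\to\Z^{p'}$, i.e. an integral matrix. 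Condition iii) $L\circ h = h'\circ H$ holds by construction. Conditions iv) and v) (preservation of virtual generators) are vacuous since all virtual-generator sets are empty. It remains to verify condition ii): that $H$ is a morphism of the classical fans $(\Delta_h,e_I)\to(\Delta'_{h'},e_{I'})$, i.e. $H$ maps cones of $\Delta_h$ into cones of $\Delta'_{h'}$ with the $\N$-linearity of Definition \ref{QFmorphismdef}iii). For this I would invoke Lemma \ref{lemmadecpreserve}: since $L$ is a quantum fan morphism it sends each cone $\sigma$ of $\Delta$ into some cone $\sigma'$ of $\Delta'$, and because $h'$ realizes an order-preserving bijection between the cones of $\Delta'_{h'}$ and those of $\Delta'$ (Remark \ref{rkimath}), together with $L\circ h = h'\circ H$, one deduces $H(\imath^*\sigma)\subset(\imath')^*\sigma'$; the $\N$-linearity on ray generators $e_i$ ($i\in I$) follows because $H(e_i) = (h')^{-1}(L(v_i))$ and condition iii) of Definition \ref{QFmorphismdef} applied to $L$ says $L(v_i)$ is an $\N$-combination of the relevant $v'_{j}$, which pulls back through the isomorphism $(h')^{-1}$ to an $\N$-combination of the corresponding $e'_{j}$. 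Condition iv) of Definition \ref{decQFmorphismdef} (that $e_i$ for $i\notin J$ maps to a $\Z$-combination of $e_j$, $j\notin J'$) is then automatic since $J=J'=\emptyset$.

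I expect the main obstacle to be the careful bookkeeping in verifying condition ii) of Definition \ref{decQFmorphismdef} — i.e. that the forced $H$ respects the fan structures on both sides with the correct positivity — rather than the algebraic well-definedness of $H$, which follows cleanly from injectivity of $h'$. One should also be slightly careful that functoriality is preserved: the assignment $L\mapsto H$ is compatible with composition (this follows from uniqueness of $H$, since $(h')^{-1}$ is a homomorphism on $\Gamma'$), so the bijection on Hom-sets is natural and $\text{\slshape f}|_{\mathscr{C}}$ is genuinely an equivalence onto $\text{\slshape f}(\mathscr{C})$. Finally, to close the loop with Corollary \ref{cormodulidecetGCweak}, one notes that $\hbar$-injectivity is exactly the condition isolating the irrational locus where a calibrated quantum toric and its forgotten version coincide, which is why the moduli-space statement holds as an isomorphism of orbifolds even though individual rational points differ (as in Example \ref{exqdP2dec}).
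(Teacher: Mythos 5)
Your proof is correct and takes essentially the same route as the paper: essential surjectivity is automatic, and full faithfulness comes from the fact that, the calibrations being bijections onto $\Gamma$ and $\Gamma'$, the map $H:=(h')^{-1}\circ L\circ h$ is forced, integral, and the virtual-generator conditions iv)–v) are vacuous, so $(L,H)$ is a calibrated fan morphism lifting $L$. The one caveat is that your appeal to Lemma \ref{lemmadecpreserve} is circular (its hypothesis is that $(L,H)$ is already a morphism of calibrated Quantum Fans), but this is harmless since your direct computation $H(e_i)=(h')^{-1}(L(v_i))$ combined with condition iii) of Definition \ref{QFmorphismdef} for $L$ already yields the $\N$-positivity and the cone condition ii) of Definition \ref{decQFmorphismdef} — a verification the paper's own proof compresses into ``no further constraint to check on $H$''.
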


\begin{proof}
	We just restrict $\text{\slshape f}$ to $\mathscr{C}$. It is (essentially) surjective by definition so we just need to check it is fully faithful. Now, given $\mathscr L$ between $\text{\slshape f}(\nctoricgerbe{\Delta}{h}{J})$ and $\text{\slshape f}(\nctoricgerbe{\Delta'}{h'}{J'})$, take the corresponding $L$ and observe that $h$, respectively $h'$ is a bijection from $\Z^n$ to $\Gamma$, resp. from $\Z^{n'}$ to $\Gamma'$, since $\mathscr{C}$ is assumed to be $\hbar$-injective. This implies that $H:=h'Lh^{-1}$ is well defined as linear map from $\Z^n$ to $\Z^{n'}$ making commutative the diagram \eqref{CDLHmod}. Since there is no virtual generator, hence no further constraint to check on $H$, then $(L,H)$ satisfies the conditions of Theorem \ref{thmdecmorphisms2}. This gives a well-defined morphism $\mathscr{L}^{cal}$ such that $\text{\slshape f}(\mathscr{L}^{cal})$ is equal to $\mathscr{L}$. This finishes the proof.
\end{proof}
\noindent Examples \ref{P1QFans} and \ref{P1P2morphisms} show that the condition of having empty set of virtual generators is necessary.
\vspace{5pt}\\
From Theorem \ref{thmeqcatmoduli}, we obtain a more precise version of Corollary \ref{cormodulidecetGCweak}.
\begin{corollary}
	\label{cormodulidecetGC}
	The functor $\text{\slshape f}$ induces a stack isomorphism between the orbifold $\modulitoricgerbe{D}{n,d}$ and the orbifold $\modulitoric{D}{d}$.
\end{corollary}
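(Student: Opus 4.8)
The plan is to promote Corollary \ref{cormodulidecetGCweak} from a mere homeomorphism of coarse orbifolds to the assertion that the morphism induced by the forgetful functor $\text{\slshape f}$ is an isomorphism of stacks. First I would fix the bookkeeping: we are in the situation $p=n$, so the set of virtual generators $J$ is empty, and a calibrated Quantum Fan of combinatorial type $D$ (of maximal length) is nothing but a $\Gamma$-complete Quantum Fan carrying its canonical (trivial) calibration $h(e_i)=v_i$ of Example \ref{completecalibration}; in particular $\Gamma=\Z v_1+\hdots +\Z v_n$ is determined by $v$. Consequently the open parameter set of Theorem \ref{thmmodulidec} — the $D$-realizable tuples $(\hbar(e_{d+1}),\hdots,\hbar(e_n))$ — coincides on the nose with the open set of $D$-realizable tuples $(v_{d+1},\hdots,v_n)$ of Theorem \ref{thmmoduliGC}, and $\text{\slshape f}$ realizes this identification at the level of atlases.

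Second, I would observe that $\text{\slshape f}$ extends to a morphism of moduli stacks $\text{\slshape f}\colon\modulitoricgerbe{D}{n,d}\to\modulitoric{D}{d}$. By Proposition \ref{propQTVetQTVdec} and Lemma \ref{lemmadecoublicom}, $\text{\slshape f}$ is compatible with base change and with toric morphisms, so it carries a family of calibrated Quantum Torics with combinatorial type $D$ to a family of Quantum Torics functorially; this is the required morphism. Essential surjectivity is exactly the remark of the first paragraph: every $\Gamma$-complete Quantum Fan of type $D$ arises as the decalibration of its own trivial calibration.

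The heart of the matter is that $\text{\slshape f}$ is fully faithful on the groupoids of isomorphisms that build these orbifolds. Here I would invoke the classification of isomorphisms: by Lemma \ref{isoDQFlemma}, in the case $p=n$ a calibrated Quantum Fan isomorphism $(L,H)$ between two trivially calibrated $\Gamma$-complete fans has $H$ equal to the permutation matrix $s^{-1}$, where $s\in\mathfrak{S}_n$ is the permutation of the $1$-cone generators induced by $L$; and by Lemma \ref{isoQFlemma} that permutation is read off from $L$ alone. Hence $(L,H)\mapsto L$ is a bijection between calibrated fan isomorphisms and Quantum fan isomorphisms, so $\text{\slshape f}$ is full and faithful on isomorphisms (this is the analogue, in the $\hbar$-non-injective setting, of Theorem \ref{thmeqcatmoduli}, which handles the generic $\hbar$-injective locus directly). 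Combining this with Theorems \ref{thmmodulidec} and \ref{thmmoduliGC}, which present both stacks as the quotient of the \emph{same} open set $U\subset(\R^d)^{n-d}$ by the \emph{same} $\mathfrak{S}_n$-action — in Theorem \ref{thmmodulidec} the factor $\mathfrak{S}_{n-p}$ being trivial — one sees that $\text{\slshape f}$ is the identity on $U$ and matches the two $\mathfrak{S}_n$-actions, hence descends to an isomorphism of quotient stacks, with the isotropy groups (subgroups of $\mathfrak{S}_n$) matching on both sides.

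I expect the main obstacle to be conceptual rather than computational: $\text{\slshape f}$ is \emph{not} faithful on arbitrary toric morphisms — it annihilates the $\Z^{a}$ worth of gerbe automorphisms of Proposition \ref{propgerbe} — so the argument must be run strictly on the isomorphism groupoids presenting the moduli stacks, never on the full morphism categories. This is precisely what reconciles the statement with the remark following Corollary \ref{cormodulidecetGCweak} and with Example \ref{exqdP2dec}: on individual objects $\text{\slshape f}$ may be a nontrivial gerbe (the trivially calibrated $\mathbb P^2$ is a $\Z$-gerbe over $\mathbb P^2$), yet the only automorphisms the moduli stack records are the finite fan symmetries, which $\text{\slshape f}$ preserves bijectively.
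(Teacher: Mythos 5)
Your argument is correct, but it is not the paper's route. The paper proves this corollary in one line, by applying Theorem \ref{thmeqcatmoduli} to the full subcategory of trivially calibrated $\Gamma$-complete Quantum Torics: that is, it invokes an equivalence of categories on \emph{all} toric morphisms, whose proof hinges on producing $H:=h'Lh^{-1}$ from $L$, i.e. on (a bijectivity property of) the calibration. You instead never leave the isomorphism groupoids: Lemmas \ref{isoQFlemma} and \ref{isoDQFlemma} show that for a maximal calibration with $J=\emptyset$ the block permutation $H$ is forced by $L$ (the generators $v'_i$ are pairwise distinct, so the permutation is read off from $L$ alone), whence $(L,H)\mapsto L$ is a bijection on isomorphisms between trivially calibrated $\Gamma$-complete fans; combining this with the coincidence of the parameter set $U$ and of the $\mathfrak{S}_n$-actions in Theorems \ref{thmmoduliGC} and \ref{thmmodulidec} (the content of Corollary \ref{cormodulidecetGCweak}) identifies the groupoid presentations of $\modulitoricgerbe{D}{n,d}$ and $\modulitoric{D}{d}$, and the identification is induced by $\text{\slshape f}$ since $\text{\slshape f}$ is the identity on $U$ and matches the arrows. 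What each route buys: the paper's is shorter and, where it applies, yields the stronger categorical statement (full faithfulness on all toric morphisms, not just isomorphisms); yours is more robust, since it avoids the $\hbar$-injectivity hypothesis of Theorem \ref{thmeqcatmoduli}, which can actually fail for trivially calibrated $\Gamma$-complete fans (for the fan of Example \ref{exampleGIT} one has $v_3-v_4-v_5=0$, so $\hbar$ is not injective and $h$ has nontrivial kernel), and in that situation your rigidity-of-isomorphisms argument is exactly what still works. Two minor remarks: the paper's moduli orbifolds are defined directly through their quotient presentations over $U$, so your opening appeal to families and base change via Proposition \ref{propQTVetQTVdec} is dispensable — the final groupoid comparison is the whole proof; and your closing caveat is apt, because $\text{\slshape f}$ indeed need not be faithful on non-invertible toric morphisms when $\ker h'\neq 0$, which is precisely why the argument must be, and in your write-up is, confined to isomorphisms.
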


\begin{proof}
	Apply Theorem \ref{thmeqcatmoduli} to the full subcategory $\mathscr{C}$ of $\mathscr{Q}^{cal}$ formed by the trivially calibrated $\Gamma$-complete Quantum Torics.
\end{proof}
We also obtain a direct proof of the injection of $\modulitorus{d}{n}$ in $\modulitorusgerbe{d,n}{\emptyset}$, which is asserted in Remark \ref{rkmodulitorus}.
\begin{corollary}
	\label{cormodulitori}
	The functor $\text{\slshape f}$ induces a homeomorphism between the subset of $\modulitorusgerbe{d,n}{\emptyset}$ consisting of calibrated Quantum Tori with $\text{rk}(\Gamma)$ equal to $n$ and $\modulitorus{d}{n}$.
\end{corollary}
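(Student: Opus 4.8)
The plan is to obtain this as a direct application of the category equivalence in Theorem \ref{thmeqcatmoduli}, followed by a point-set translation into a statement about quotient spaces.

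First I would single out the full subcategory $\mathscr C\subset\mathscr Q^{cal}$ of calibrated Quantum Tori $\nctorusgerbe{h}{\emptyset}$ whose q-lattice $\Gamma$ has rank $n$. Since $h\colon\Z^n\to\Gamma$ is always onto, $\text{rk}(\Gamma)=n$ is equivalent to $h$ being an isomorphism; in particular $\hbar$ is injective, and as $J=\emptyset$, the subcategory $\mathscr C$ is $\hbar$-injective in the sense of Definition \ref{definjectivecat}. It is closed under isomorphism: a calibrated torus isomorphism $(L,H)$ has $H\in\text{GL}_n(\Z)$ and $L$ linear with $L\Gamma=\Gamma'$, so $h$ is bijective iff $h'=LhH^{-1}$ is. I would then identify $\text{\slshape f}(\mathscr C)$ with the full subcategory of $\mathscr Q$ of Quantum Tori of dimension $d$ and rank $n$: it reaches every such torus because a $\Z$-basis of a rank-$n$ lattice $\Gamma$ spanning $\R^d$ furnishes a calibration $(h,\emptyset)$ with $\text{\slshape f}(\nctorusgerbe{h}{\emptyset})=\nctorus{d}{\Gamma}$, and by Theorem \ref{thmeqcatmoduli} the functor $\text{\slshape f}|_{\mathscr C}$ is fully faithful, so $\text{\slshape f}(\mathscr C)$ has no spurious morphisms.

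By Theorem \ref{thmeqcatmoduli}, $\text{\slshape f}$ restricts to an equivalence $\mathscr C\xrightarrow{\ \sim\ }\text{\slshape f}(\mathscr C)$, hence induces a bijection on isomorphism classes. Writing $U_n\subset(\R^d)^{n-d}$ for the (saturated) set of tuples $(v_{d+1},\dots,v_n)$ with $\text{rk}(\Z^d+\Z v_{d+1}+\dots+\Z v_n)=n$ in standard normalisation, the rank-$n$ part of $\modulitorusgerbe{d,n}{\emptyset}$ is $U_n$ modulo the restriction of \eqref{actionmoduliQt}, while $\modulitorus{d}{n}$ is $U_n$ modulo the torus-isomorphism relation; the map induced by $\text{\slshape f}$ is the one induced by the identity of $U_n$. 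Concretely, the equivalence of categories says these two relations on $U_n$ coincide: if the underlying tori of $\nctorusgerbe{h}{\emptyset}$ and $\nctorusgerbe{h'}{\emptyset}$ are isomorphic via a linear $L$ with $L\Gamma=\Gamma'$, then $H:=(h')^{-1}\!\circ L\circ h$ is a composite of $\Z$-module isomorphisms, hence lies in $\text{GL}_n(\Z)$, makes \eqref{CDLHmod} commute, and is subject to no virtual-generator constraint, so $(L,H)$ is already a calibrated torus isomorphism; the converse is trivial through $\text{\slshape f}$.

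Finally, both quotients carry the quotient topology induced from the Euclidean topology of $(\R^d)^{n-d}$, and the bijection together with its inverse is induced by the identity of $U_n$, so by the universal property of quotient maps it is a homeomorphism; identifying the rank-$n$ locus sitting inside $\modulitorusgerbe{d,n}{\emptyset}$ with the quotient $U_n/{\sim}$ uses that the quotient map defining $\modulitorusgerbe{d,n}{\emptyset}$ is open, the relation being generated by a countable family of local homeomorphisms. I expect the only real work, beyond routine bookkeeping, to be the identification of the two equivalence relations on $U_n$ — i.e. the lifting of a bare torus isomorphism between rank-$n$ calibrated tori to a calibrated one — which is exactly the mechanism of Theorem \ref{thmeqcatmoduli} and uses both the bijectivity of $h$ and the emptiness of $J$.
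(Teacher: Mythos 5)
Your proposal is correct and follows the same route as the paper: the paper's proof is precisely an application of Theorem \ref{thmeqcatmoduli} to the full subcategory of calibrated Quantum Tori $\nctorusgerbe{h}{\emptyset}$ with $\text{rk}(\Gamma)=n$, which is $\hbar$-injective exactly for the reason you give (an epimorphism $h\colon\Z^n\to\Gamma$ with $\text{rk}(\Gamma)=n$ is an isomorphism). The only difference is that you additionally spell out the point-set bookkeeping (saturation of the rank-$n$ locus, openness of the quotient map, identification of the two equivalence relations on $U_n$) that the paper leaves implicit; this is consistent with its framework and harmless.
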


\begin{proof}
	Apply Theorem \ref{thmeqcatmoduli} to the full subcategory $\mathscr{C}$ of $\mathscr{Q}^{cal}$ formed by calibrated Quantum Tori $\nctorusgerbe{h}{\emptyset}$ with $\text{rk}(\Gamma)$ equal to $n$.
\end{proof}

\subsection{The case of the projective space}
\label{QmoduliPn}
In Corollary \ref{cormoduliP2dec}, we describe the moduli space of trivially calibrated Quantum $\mathbb P^2$s as a real orbifold and the case of Quantum $\mathbb P^d$s follow easily, using also Example \ref{P2def}. The goal of this section is to use LVMB manifolds and configurations to show that, adding an even number of virtual generators, and using marked isomorphisms, the corresponding moduli space of Quantum $\mathbb P^d$s becomes a {\sl complex} orbifold. 

So we start with an even calibrated Quantum Fan $(\Delta,h)$ such that the combinatorial type $\Delta_0$ of $\Delta$ is that of a $d$-simplex and $h$ sends the first $d+1$ vectors canonical basis of $\R^n$ onto
\begin{equation*}
v_1=e_1\qquad \hdots \qquad v_d=e_d\qquad v_{d+1}=-a_1e_1-\hdots-a_de_d
\end{equation*}
with $a_i>0$ and the other ones onto some virtual generators $v_{d+2}$, ..., $v_n$.

We add $v_{n+1}$ such that the sum of the $v_i$'s is zero. We associate to it the unique (affine) Gale transform $(A_1,\hdots, A_{n+1})$ satisfying
\begin{equation*}
A_{d+2}=e_1\qquad \hdots \qquad A_{n+1}=e_{n-d}
\end{equation*}
Since $n-d$ is even, we transform it through \eqref{isoRC} into a configuration $(\Lambda_1,\hdots,\Lambda_{n+1})$ with
\begin{equation}
\label{Lambdanorm}
\Lambda_{d+2}=e_1\qquad \hdots \qquad \Lambda_{n+1}=ie_{m}
\end{equation}
with $m$ equal to $(n-d)/2$. The $(d+1)$-uple $(\Lambda_1,\hdots,\Lambda_{d+1})$ lives in the open set
\begin{equation}
\label{Omega}
\Omega\subset\{Z\in (\C^m)^{d+1}\mid Z_1+\hdots+Z_{d+1}=(-1-i,\hdots,-1-i)\}
\end{equation}
The couple $(\mathbb C^{d+1}\setminus\{0\}\times \Torus^{n-d},\Lambda)$ is a LVMB datum by Lemma \ref{LVM+B} with indispensable points $\Lambda_{d+2},\hdots,\Lambda_{n+1}$ by Lemma \ref{lemmaindetvir}.
We prove
\begin{theorem}
	\label{thmoduliPn}
	The moduli stack $\modulitoricgerbe{\Delta_0}{d+1,d,*}$ of Quantum $\mathbb P^d$s up to mark\-ed isomorphisms is the complex orbifold $[\Omega/\mathfrak{S}_{d+1}]$ where $\mathfrak{S}_{d+1}$ acts on a configuration $(\Lambda_1,\hdots, \Lambda_{d+1})\in \Omega$ by permutation.
\end{theorem}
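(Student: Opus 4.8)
The plan is to combine the equivalence of categories of Theorem \ref{thmLVMBdec} (between $\mathscr V^*_{LVMB}$ and $\mathscr Q^{cal}_{even}$) with the explicit description of the moduli space of calibrated Quantum Torics of maximal length from Theorem \ref{thmmodulidec}, specialised to the combinatorial type $\Delta_0$ of a $d$-simplex, and then to repackage the parameter space and the equivalence relation in complex-analytic terms using the affine Gale transform of Section \ref{LVMBQT}. First I would observe that by Corollary \ref{cormodulidecetGC} (or directly by Theorem \ref{thmmodulidec}) the moduli stack $\modulitoricgerbe{\Delta_0}{d+1,d,*}$ is the quotient of the open set $U$ of $(\R^d)^{n-d}$ consisting of $\Delta_0$-realizable $\hbar$ by the equivalence relation of marked isomorphisms; since we are in the projective case, $p=d+1$ and the set of $1$-cones is rigid up to the $\mathfrak S_{d+1}$ permuting $v_1,\dots,v_{d+1}$, while the marking kills the $\mathfrak S_{n-p}$ factor permuting the virtual generators (Proposition \ref{propmoduliQtter}, Remark \ref{rkmodulitorusvirtual}). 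So the acting group is exactly $\mathfrak S_{d+1}$, acting through the linear maps $L$ of \eqref{L}.

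Next I would translate this picture to the LVMB side. Given $(\Delta,h)$ even with underlying combinatorial type $\Delta_0$, one adds $v_{n+1}$ with $\sum v_i=0$ and takes the affine Gale transform $(A_1,\dots,A_{n+1})$; normalising by \eqref{Lambdanorm} (using $\Delta_0$-realizability to ensure the normalisation is legitimate, i.e. that $A_{d+2},\dots,A_{n+1}$ can be straightened to the standard basis by an element of $\mathrm{GL}_{n-d}(\R)$ — here Remark \ref{Galemodulo} is the key point), the remaining freedom in $(\Lambda_1,\dots,\Lambda_{d+1})$ lies exactly in the affine subspace cut out by $\Lambda_1+\dots+\Lambda_{d+1}=(-1-i,\dots,-1-i)$, and the realizability condition becomes an open condition carving out $\Omega\subset(\C^m)^{d+1}$. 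Because the normalisation pins down $A_{d+2},\dots,A_{n+1}$, the identification $\hbar\leftrightarrow(\Lambda_1,\dots,\Lambda_{d+1})$ is a homeomorphism $U\xrightarrow{\sim}\Omega$; and crucially it is complex-analytic, since the isomorphism \eqref{isoRC} turns the real coordinates of the $A_i$ into honest complex coordinates and the (affine) Gale transform depends linearly on $v$. I would then check that under this identification the marked-isomorphism equivalence relation on $U$ corresponds precisely to the $\mathfrak S_{d+1}$-action on $\Omega$ by permutation of $\Lambda_1,\dots,\Lambda_{d+1}$: a marked isomorphism is encoded by a block-permutation matrix $H$ which is the identity on the virtual block (because of marking) and a permutation $s\in\mathfrak S_{d+1}$ on the generator block, and by the functoriality of Gale transforms (reversibility, Lemma \ref{lemmaGalereverse}, together with the fact that $L$ is determined by $s$, $h$, $h'$) this is exactly the permutation action $(\Lambda_1,\dots,\Lambda_{d+1})\mapsto(\Lambda_{s(1)},\dots,\Lambda_{s(d+1)})$, which preserves the affine constraint in \eqref{Omega} since the constant vector $(-1-i,\dots,-1-i)$ is symmetric.

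Putting these together: $\modulitoricgerbe{\Delta_0}{d+1,d,*}\cong[U/{\sim}]\cong[\Omega/\mathfrak S_{d+1}]$, and since $\Omega$ is an open subset of a complex affine space and $\mathfrak S_{d+1}$ is finite and acts by complex-linear (coordinate-permuting) automorphisms, the quotient stack is a complex orbifold in the sense of Remark \ref{rkorbifold}. I would also remark that the underlying structure matches the real orbifold $[(\R^{<0})^d/\mathfrak S_{d+1}]$ found via Example \ref{P2def} (the $a_i$ are recovered from the $\Lambda$'s), so the complex structure is a genuine enrichment in the same homotopy class, consistently with the general statements of the introduction; but this last comparison is a remark, not needed for the theorem. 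The step I expect to be the main obstacle is the verification that the marked-isomorphism relation transports \emph{exactly} to the permutation action on $\Omega$ — one has to be careful that the normalisation \eqref{Lambdanorm} is compatible with the $\mathfrak S_{d+1}$-action (it is, since $s$ only permutes the first $d+1$ indices and leaves the virtual/indispensable indices $d+2,\dots,n+1$ fixed), and that no extra automorphisms sneak in from the ambiguity of the Gale transform; pinning down that the $\mathrm{GL}_{n-d}(\R)$-ambiguity of the Gale transform is completely absorbed by the normalisation, so that the only residual symmetry is the expected $\mathfrak S_{d+1}$, is the delicate bookkeeping point.
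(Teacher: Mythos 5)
Your proposal is correct and follows essentially the same route as the paper: normalize via the affine Gale transform so that the data is encoded by $(\Lambda_1,\hdots,\Lambda_{d+1})\in\Omega$, observe that a marked isomorphism forces the permutation of virtual generators to be trivial so only $s\in\mathfrak{S}_{d+1}$ survives, check that this acts on $\Omega$ by permuting the $\Lambda_i$ while preserving the normalization \eqref{Lambdanorm} (hence holomorphically, with no renormalization), and conclude the quotient by this finite group is a complex orbifold. One small caveat: Corollary \ref{cormodulidecetGC} only applies when $J=\emptyset$ (the $\Gamma$-complete case $p=n$), so it is your parenthetical direct appeal to Theorem \ref{thmmodulidec} (in its marked form) that actually identifies the parameter space here, exactly as in the paper.
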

	
\begin{proof}
	Let $(L,H)$ be an isomorphism between $(\Delta,h)$ and $(\Delta,h')$. We assume that both are standard. Then $H$ consists of a permutation $s$ of $\{1,\hdots,d+1\}$ and a permutation $t$ of $\{d+2,\hdots,n+1\}$. And $L$ sends the $1$-cone generators $v_1,\hdots,v_{d+1}$ onto $v'_{s(1)},\hdots, v'_{s(d+1)}$ and the virtual generators $v_{d+2},\hdots , v_{n+1}$ onto $v'_{t(d+2)},\hdots , v'_{t(n+1)}$. Since we only use marked isomorphisms, this forces $t$ to be the identity.
	
	Through the Gale transform, $H$ induces the permutation $s$ on $A_1,\hdots, A_{d+1}$ and thus on $\Lambda_1,\hdots,\Lambda_{d+1}$. Note that \eqref{Lambdanorm} is still verified, hence there is no renormalization to do and we are done.
\end{proof}	
	Here the use of marked isomorphisms is crucial. Let $(L,H)$ be an isomorphism between $(\Delta,h)$ and $(\Delta,h')$, not necessarily a marked one. Recall the isomorphism $K$ of \eqref{CDRLHK}. We have
	\begin{equation}
	\label{calculmodPn}
	\begin{aligned}
	H(\langle A_1,T\rangle,\hdots,\langle A_{n+1},T\rangle)&=(A_{s(1)},\hdots,A_{s(d+1)},\\
	&\qquad\qquad A_{t(d+2)},\hdots,A_{t(n+1)})\\
	&=(\langle A'_1,K(T)\rangle,\hdots, \langle A'_{n+1},K(T)\rangle) \\
	&=(\langle\kern 2pt^t\kern -2ptK A'_1,T\rangle,\hdots, \langle \kern 2pt^t\kern -2ptKA'_{n+1},T\rangle)
	\end{aligned}
	\end{equation}
	If $H$ is the permutation matrix of $(s,t)$ as assumed in \eqref{calculmodPn}, then $K$ is nothing else than the permutation matrix of $t$. So, using marked isomorphisms means keeping $K$ equal to the identity and we obtain that $H$ induces the permutation $s$ on $A_1,\hdots, A_{d+1}$ and thus on $\Lambda_1,\hdots,\Lambda_{d+1}$ as wanted.
	
	If we now use arbitrary isomorphism, then \eqref{calculmodPn} implies that $H$ induces the permutation $s$ on (the columns of) $A_1,\hdots, A_{d+1}$ and the permutation $t$ on the rows of $A_1,\hdots, A_{d+1}$. Hence it induces the permutation $s$ on (the columns of) $\Lambda_1,\hdots,\Lambda_{d+1}$ and the permutation $t$ on the {\sl real} rows of $\Lambda_1,\hdots,\Lambda_{d+1}$. But this does not produce always holomorphic transformations of $\Lambda_1,\hdots,\Lambda_{d+1}$.
	
	In other words, denoting by $\Omega^{\R}$ the set in $(\R^{n-d})^{d+1}$ obtained by rewriting $\Omega$ through \eqref{isoRC}, we have
	\begin{corollary}
		\label{thmoduliPnnotmark}
		The moduli stack $\modulitoricgerbe{\Delta_0}{d+1,d}$ of Quantum $\mathbb P^d$s is the real orbifold $[\Omega^{\R}/\mathfrak{S}_{d+1}\times\mathfrak S_{n-d}]$ where $\mathfrak{S}_{d+1}$, respectively $\mathfrak S_{n-d}$ acts on a configuration $(A_1,\hdots, A_{d+1})\in \Omega^{\R}$ by permuting its columns, resp. its rows.
	\end{corollary}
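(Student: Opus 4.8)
The statement is Corollary \ref{thmoduliPnnotmark}, which we deduce from Theorem \ref{thmoduliPn} together with the analysis of what happens when one drops the restriction to marked isomorphisms. The overall strategy is: (1) recall the description of the moduli problem for (calibrated) Quantum $\mathbb{P}^d$'s of maximal length and fixed combinatorial type $\Delta_0$ via Theorem \ref{thmmodulidec}, so that $\modulitoricgerbe{\Delta_0}{d+1,d}$ is the quotient of the realizability open set $U\subset(\R^d)^{n-d}$ by the equivalence relation generated by $\mathfrak{S}_p\times\mathfrak{S}_{n-p}$ as in \eqref{simmodulidec}; here $p=d+1$, so the acting group is $\mathfrak{S}_{d+1}\times\mathfrak{S}_{n-d-1}$; (2) transport this through the (affine) Gale transform, exactly as in the proof of Theorem \ref{mainGIT} and Theorem \ref{thmoduliPn}, replacing a calibration $h$ by its Gale dual configuration $(A_1,\dots,A_{n+1})$ in $\R^{n-d}$, normalized so that $A_{d+2}=e_1,\dots,A_{n+1}=e_{n-d}$; (3) parametrize the remaining freedom by the $(d+1)$-tuple $(A_1,\dots,A_{d+1})$, which, because of the balanced condition $\sum A_i=0$ after normalization, lives in the affine slice described by $\Omega^{\R}$ (the real incarnation of $\Omega$ through \eqref{isoRC}); (4) identify the induced action of $\mathfrak{S}_{d+1}\times\mathfrak{S}_{n-d-1}$ on $\Omega^{\R}$ as permuting columns (by the $1$-cone index permutation $s$) and permuting rows (by the permutation $t$ coming from the virtual generators, transported via the map $K$ of \eqref{CDRLHK}). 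The key computation is \eqref{calculmodPn}, which shows precisely that $H$ induces the permutation $s$ on the columns of $(A_1,\dots,A_{d+1})$ and the permutation $t$ on its rows.

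\textbf{Carrying out the steps.} First I would invoke Theorem \ref{thmmodulidec} with $D=\Delta_0$ the combinatorial type of the $d$-simplex and $p=d+1$: since $\Delta_0$ contains a top-dimensional cone, the hypotheses are satisfied, and $\modulitoricgerbe{\Delta_0}{d+1,d}$ is homeomorphic to $U/\!\sim$ where the equivalence relation is cut out by pairs $(s,t)\in\mathfrak{S}_{d+1}\times\mathfrak{S}_{n-d-1}$ for which the matrix $L$ of \eqref{L} exists. Next, following verbatim the Gale-transform bookkeeping of Section \ref{LVMBdonnedec} (Lemmas \ref{lemmaGaleetGaleaff}, \ref{lemmaGalereverse}, and the construction preceding Theorem \ref{thmoduliPn}), I add $v_{n+1}=-\sum v_i$, pass to the affine Gale dual $(A_1,\dots,A_{n+1})$ in $\R^{n-d}$, and normalize so that $A_{d+2},\dots,A_{n+1}$ are the standard basis vectors. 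Since this normalization is achieved by an element of $\mathrm{GL}_{n-d}(\R)$ (Remark \ref{Galemodulo}), and since two calibrations are isomorphic iff their Gale duals differ by such a transformation composed with the permutation data, one is left with the residual parameter $(A_1,\dots,A_{d+1})$ constrained by $\sum_{i=1}^{d+1}A_i=-\sum_{i=d+2}^{n+1}A_i=-(e_1+\dots+e_{n-d})$ plus the $D$-realizability open condition; rewriting via \eqref{isoRC} this is exactly the open set $\Omega^{\R}\subset(\R^{n-d})^{d+1}$. Then, the computation \eqref{calculmodPn} (used in the proof of Theorem \ref{thmoduliPn}, but now without imposing that $K$ be the identity) shows that an arbitrary isomorphism $(L,H)$, with $H$ the permutation matrix of $(s,t)$, induces on the Gale dual the permutation $s$ on columns and, through $^tK={}^t(\text{permutation matrix of }t)$, the permutation $t$ on rows of $(A_1,\dots,A_{d+1})$. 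Finally, I would check that every such column/row permutation is realizable (the normalization \eqref{Lambdanorm}/its real analogue is preserved, so no renormalization is needed), and that distinct $(s,t)$ give distinct elements of the quotient unless they fix the configuration — i.e., the isotropy groups are subgroups of $\mathfrak{S}_{d+1}\times\mathfrak{S}_{n-d-1}$, giving the orbifold structure of dimension $(n-d)(d+1)-\dim(\text{constraint})$, consistent with Corollary \ref{cormodulidec}.

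\textbf{Main obstacle.} The delicate point is step (4): making precise that, once one relaxes the marking, the row permutation $t$ does genuinely act on $\Omega^{\R}$ (a \emph{real} space of dimension $(n-d)\times(d+1)$ with the linear constraint) rather than on $\Omega$ (its complex avatar under \eqref{isoRC}). Concretely, a permutation of the $n-d$ real coordinates need not commute with the complex structure on $\C^m=\R^{n-d}$, so it does not in general descend to a biholomorphism of $\Omega$; this is exactly why the non-marked moduli space is only a \emph{real} orbifold while the marked one is complex. I would handle this by keeping all the analysis on the real side $\Omega^{\R}$, quotienting by the full $\mathfrak{S}_{d+1}\times\mathfrak{S}_{n-d-1}$ acting $\R$-linearly (columns resp. rows), and only afterwards remarking that the subgroup $\mathfrak{S}_{d+1}$ alone preserves the complex structure — recovering Theorem \ref{thmoduliPn} as the special (marked) case $t=\mathrm{id}$. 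The rest — realizability being open, isotropy groups being finite subgroups of a symmetric group, the quotient topology matching — is routine and parallels Sections \ref{QmoduliGC} and \ref{Qmodulidec}.
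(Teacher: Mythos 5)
Your overall route --- reduce to the isomorphism analysis of Theorem \ref{thmmodulidec}, pass through the affine Gale transform normalized by $A_{d+2}=e_1,\hdots,A_{n+1}=e_{n-d}$, read off the induced action from \eqref{calculmodPn}, and keep everything real because row permutations need not respect the complex structure \eqref{isoRC} --- is exactly the paper's. But there is a genuine gap at the point where you pass from the group you actually derive to the group in the statement. From Theorem \ref{thmmodulidec} with $p=d+1$ you correctly get $\mathfrak{S}_{d+1}\times\mathfrak{S}_{n-d-1}$: in the setup of Section \ref{QmoduliPn} the virtual generators are $v_{d+2},\hdots,v_n$, so $t$ permutes only $n-d-1$ indices. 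Moreover, any such isomorphism $(L,H)$ automatically sends the added vector $v_{n+1}=-\sum_{i\leq n}v_i$ to $v'_{n+1}$ (since $L$ permutes the $v_i$ with $i\leq n$), so in the normalized Gale picture the matrix $K$ of \eqref{CDRLHK} is forced to fix $e_{n-d}$: the row of $(A_1,\hdots,A_{d+1})$ attached to $A_{n+1}$ is frozen. Hence your argument, carried out as described, yields the quotient of $\Omega^{\R}$ by $\mathfrak{S}_{d+1}\times\mathfrak{S}_{n-d-1}$ (rows permuted only among the first $n-d-1$), not by $\mathfrak{S}_{d+1}\times\mathfrak{S}_{n-d}$ as the Corollary asserts; your closing claim that ``every such column/row permutation is realizable'' is precisely the unproved step, and it fails for permutations moving the last row under the conventions you set up.

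The paper obtains the full $\mathfrak{S}_{n-d}$ only because, in the proof of Theorem \ref{thmoduliPn} and in the discussion leading to the Corollary, it treats $v_{d+2},\hdots,v_{n+1}$ --- including the added balancing vector --- as the virtual generators, i.e.\ it lets $t$ range over permutations of $\{d+2,\hdots,n+1\}$ (equivalently, it works with a length-$(n-d)$ calibration on $\Z^{n+1}$ rather than the length-$(n-d-1)$ calibration on $\Z^n$ announced at the beginning of Section \ref{QmoduliPn}). To prove the statement as written you must adopt that convention from the outset and invoke Theorem \ref{thmmodulidec} with those parameters; otherwise you must either exhibit, for a row permutation moving the last row, a toric isomorphism of calibrated Quantum Torics realizing it (Definition \ref{decQFmorphismdef} does not supply one: it would have to exchange $v_{n+1}$ with a virtual generator, and the multisets of virtual generators then generically differ), or weaken the conclusion to the smaller group. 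A further small imprecision: for the $t$-part the normalization is \emph{not} automatically preserved --- the row-permutation action on $(A_1,\hdots,A_{d+1})$ arises exactly from the renormalization restoring $A_{d+2}=e_1,\hdots,A_{n+1}=e_{n-d}$; what is preserved without renormalization is only the constraint $\sum_{i\leq d+1}A_i=-(1,\hdots,1)$, because the all-ones vector is permutation invariant.
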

Unfortunately, the case of $\mathbb P^n$ is very special. There seems to be no Theorem analogous to Theorem \ref{thmoduliPn} when $\Delta$ is a more complicated complete and $\Gamma$-complete fan. The technical reason for that is that such a fan would have at least $d+2$ cones of dimension $1$. Performing the same transformations as above, we encode every associated Quantum Torics as a $(d+1)$-uple $(\Lambda_1,\hdots,\Lambda_{d+1})$ living in the set $\Omega$ defined in \eqref{Omega}. Now, if $(L,H)$ is an isomorphism between $(\Delta,h)$ and $(\Delta,h')$, it acts as a couple of permutations $(s,t)\in\mathfrak{S}_p\times \mathfrak{S}_{n+1-p}$. Marking forces $t$ to be the identity and we are left with $(s,Id)$. However, since $p$ is at least $d+2$, when applying $(s,t)$ to a configuration, we do not always obtain a configuration satisfying \eqref{Lambdanorm}. Of course, this just means that we have to renormalize the image configuration. But renormalizing means applying a {\sl real} linear transformation to $\Lambda$ since we want to fix the values of $2m$ of the $\Lambda_i$s. Hence such a $H$ defines a {\sl real, a priori non holomorphic} transformation of $\Omega$, hence the associated moduli stack is only a real orbifold.

\subsection{Moduli of LVMB manifolds}
\label{Gmoduli}
To obtain complex orbifolds as moduli stacks, we must consider moduli stacks of the categories $\mathscr{N}_{LVMB}$ and $\mathscr{N^*}_{LVMB}$, that is of LVMB manifolds up to $G$-biholomorphisms.

Fix $\mathcal S$ and let $(\mathcal S,\Lambda)$ be a LVMB datum. We want to compute the moduli space $\mathcal M^{\mathcal S}_{m,n}$ of $G$-biholomorphism classes of LVMB manifolds obtained from a datum $(\mathcal S,\Lambda')$ with $m'=m$ and $n'=n$. We assume that 
\begin{equation}
\mathcal S=\Torus^k\times\mathcal S_0\qquad\text{ and }\qquad \mathcal S_0\text{ is simply-connected}
\end{equation} which implies that
\begin{equation}
\label{km}
\Lambda_i\text{ is indispensable}\iff i\leq k
\end{equation}
We also assume that
\begin{equation}
\Torus^{m+1}\times\{0\}\subset \mathcal S
\end{equation}
hence $(\Lambda_{1},\hdots, \Lambda_{m+1})$ are affinely independent, i.e. \eqref{firstmcondition} is satisfied.

It follows that there exists an affine transformation of $\mathbb C^m$ sending $\Lambda$ onto a configuration 
(which we still denote by $\Lambda$) satisfying 
\begin{equation}
\label{LambdaAffineNorm}
\Lambda_1=ie_1,\qquad \Lambda_2-\Lambda_1=e_1,\qquad\hdots,\qquad\Lambda_{m+1}-\Lambda_1=e_m
\end{equation}
for $(e_1,\hdots,e_m)$ the canonical basis of $\mathbb C^m$.
 By Lemma \ref{lemmaaffindLVMB},
this does not change $N_{\Lambda}$ up to $G$-biholomorphism. 

In the same way, every $G$-biholomorphism class $[N_{\Lambda'}]$ of $\mathcal M^{\mathcal S}_{m,n}$ can be represented by a 
configuration $\Lambda'$ satisfying \eqref{firstmcondition}, \eqref{km} and \eqref{LambdaAffineNorm}. Observe that since we fix $\mathcal S$, the number of indispensable points $k$ is fixed and equal to the first Betti number of $\mathcal S$.

So we may encode the LVMB data we need in an open set $\mathcal T_{\mathcal S}$ in $(\mathbb C^m)^{n-m-1}$.
Assume now that $N_{\Lambda'}$ is $G$-biholomorphic to $N_\Lambda$ with $\Lambda$ and $\Lambda'$ belonging to $\mathcal T_{\mathcal S}$. Then, $G_\Lambda$ and $G_{\Lambda'}$ are isomorphic Lie groups. Hence, their universal cover are isomorphic as Lie groups. Using the presentation given in 
Proposition \ref{Glattice}, this shows that there exists a matrix $M$ in $\text{GL}_{n-m-1}(\mathbb C)$ which sends the lattice of $G_\Lambda$ bijectively onto that of 
$G_{\Lambda'}$. 
Using notations \eqref{Alattice} and \eqref{Blattice}, this means that there exists a matrix $P$ in $\text{GL}_{n-1}(\mathbb Z)$ such that 
\begin{equation}
\label{isoAB}
M(Id, B_\Lambda A_\Lambda^{-1})=(Id,B_{\Lambda'}A_{\Lambda'}^{-1})P.
\end{equation}
Decomposing $P$ as
\begin{equation}
\label{Pdec}
P=
\begin{pmatrix}
P_1 &P_2\cr
Q_1 &Q_2
\end{pmatrix}
\end{equation}
with $P_1$ a square matrix of size $n-m-1$ and $Q_2$ a square matrix of size $m$, we obtain
\begin{equation}
\label{equa1}
MB_\Lambda A_\Lambda^{-1}=(P_1+B_{\Lambda'}A_{\Lambda'}^{-1}Q_1)B_\Lambda A_\Lambda^{-1}=P_2+B_{\Lambda'}A_{\Lambda'}^{-1}Q_2.
\end{equation}
Because of \eqref{LambdaAffineNorm}, this means that 
\begin{equation}
\label{isoLambda}
^t \kern-2ptB_\Lambda=(\,^t\kern-2pt P_2+\,^t\kern-2pt Q_2\,^t\kern-2pt B_{\Lambda'})(\,^t\kern-2pt P_1+\,^t\kern-2pt Q_1\,^t\kern-2pt B_{\Lambda'})^{-1}
\end{equation}
that is
\begin{proposition}
	\label{propiso}
	The moduli space $\mathcal M^{\mathcal S}_{m,n}$ is the quotient of $\mathcal T_{\mathcal S}$ by the action of $\text{\rm GL}_{n-1}(\mathbb Z)$ described in {\rm\eqref{isoLambda}}.
\end{proposition}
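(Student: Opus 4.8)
<br>

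The plan is to translate the statement that $N_{\Lambda'}$ is $G$-biholomorphic to $N_\Lambda$ (for data $\Lambda,\Lambda'$ in the normalized parameter set $\mathcal T_{\mathcal S}$) into the explicit matrix equation \eqref{isoLambda}, and conversely. Most of the forward direction has already been carried out in the paragraphs preceding the statement: a $G$-biholomorphism induces a Lie group isomorphism $G_\Lambda\cong G_{\Lambda'}$, which lifts to a linear isomorphism $M\in\text{GL}_{n-m-1}(\C)$ of the universal covers carrying the defining lattice of $G_\Lambda$ (presented via Proposition \ref{Glattice} as generated by $(Id,B_\Lambda A_\Lambda^{-1})$) onto that of $G_{\Lambda'}$, which is exactly the existence of $P\in\text{GL}_{n-1}(\Z)$ with \eqref{isoAB}. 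So the first step is simply to record that the map sending $[N_{\Lambda'}]$ to its class in $\mathcal T_{\mathcal S}/\text{GL}_{n-1}(\Z)$ is well-defined, using Lemma \ref{lemmaaffindLVMB} to see that the normalization \eqref{LambdaAffineNorm} can always be achieved without changing the $G$-biholomorphism class, and that $k$ is fixed since it is the first Betti number of the fixed $\mathcal S$.

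Second, I would verify that \eqref{isoAB} is equivalent to \eqref{isoLambda}: decompose $P$ as in \eqref{Pdec}, read off \eqref{equa1}, and then use the normalization \eqref{LambdaAffineNorm} — which forces $A_\Lambda=A_{\Lambda'}=Id$ (up to the identification implicit in writing $B_\Lambda,B_{\Lambda'}$ as the relevant transposed blocks) — to solve for $^tB_\Lambda$ in terms of $^tB_{\Lambda'}$ and the blocks of $P$. This is the routine computation already sketched; one only needs to check that $^tP_1+{}^tQ_1\,{}^tB_{\Lambda'}$ is invertible, which holds because $M$ is invertible and $M=(P_1+B_{\Lambda'}A_{\Lambda'}^{-1}Q_1)$ by \eqref{equa1}.

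Third, for the converse I would show that if $\Lambda,\Lambda'\in\mathcal T_{\mathcal S}$ satisfy \eqref{isoLambda} for some $P\in\text{GL}_{n-1}(\Z)$ then $N_\Lambda$ and $N_{\Lambda'}$ are $G$-biholomorphic. Reversing the algebra gives $M\in\text{GL}_{n-m-1}(\C)$ with \eqref{isoAB}, hence an isomorphism of universal covers sending one lattice to the other, hence a Lie isomorphism $\phi\colon G_\Lambda\to G_{\Lambda'}$. One then checks that $\phi$ extends to a $(G_\Lambda,G_{\Lambda'})$-equivariant biholomorphism $N_\Lambda\to N_{\Lambda'}$: both are equivariant compactifications of these groups with the same underlying toric variety $\mathcal S$ (we fixed $\mathcal S$), the added divisors correspond to the same combinatorial data, and $\phi$ is compatible with this data precisely because $P$ has integer entries (so it respects the lattice of one-parameter subgroups of the ambient torus $(\C^*)^n$, equivalently the fan of $\mathcal S$). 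This is where Lemma \ref{lemmaaffindLVMB} and the description of $\mathcal S$ via \eqref{Sdelta}/\eqref{SB} are used again. Finally one notes the residual ambiguity: two $P$'s give the same class iff they differ by an element fixing $\mathcal T_{\mathcal S}$ pointwise, so the quotient is exactly $\mathcal T_{\mathcal S}/\text{GL}_{n-1}(\Z)$.

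The main obstacle I expect is the converse direction — more precisely, promoting the abstract Lie group isomorphism $\phi$ produced from \eqref{isoAB} to an \emph{equivariant biholomorphism of the compactifications}, making sure the action \eqref{actionLVM} intertwines correctly and that the indispensable points (fixed by the normalization, and matched because $k$ is constant on $\mathcal T_{\mathcal S}$) are preserved. The algebraic equivalence of \eqref{isoAB} and \eqref{isoLambda} is just transposition and block matrix bookkeeping; the subtlety is purely that the statement as written asserts a homeomorphism of moduli spaces, so one must confirm that the $\text{GL}_{n-1}(\Z)$-action on $\mathcal T_{\mathcal S}$ given by \eqref{isoLambda} is continuous (clear, it is a matrix fractional-linear action on an open set where the relevant determinant is nonzero) and that the bijection $\mathcal M^{\mathcal S}_{m,n}\to\mathcal T_{\mathcal S}/\text{GL}_{n-1}(\Z)$ and its inverse are continuous for the quotient topologies, which follows once the parametrization $\mathcal T_{\mathcal S}\to\mathcal M^{\mathcal S}_{m,n}$ is exhibited as a continuous surjection whose fibers are exactly the $\text{GL}_{n-1}(\Z)$-orbits.
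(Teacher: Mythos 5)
Your forward direction coincides with the paper's own argument, which is in fact all the paper explicitly proves: after normalizing via Lemma \ref{lemmaaffindLVMB} to achieve \eqref{LambdaAffineNorm}, a $G$-biholomorphism gives $M$ and $P\in\text{GL}_{n-1}(\Z)$ satisfying \eqref{isoAB}, and the passage through \eqref{Pdec} and \eqref{equa1} to \eqref{isoLambda} is exactly the transposition bookkeeping you describe, with $A_\Lambda=A_{\Lambda'}=Id$ and the invertibility of ${}^t P_1+{}^t Q_1\,{}^t B_{\Lambda'}$ coming from $M=P_1+B_{\Lambda'}A_{\Lambda'}^{-1}Q_1$. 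Up to that point you are reproducing the paper's proof, which ends there: the proposition is stated as an immediate consequence, and neither the converse nor the topological identification you raise is discussed in the text.

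The gap is in your converse, and you have put your finger on the right difficulty but resolved it with a false justification. You claim the Lie isomorphism $\phi\colon G_\Lambda\to G_{\Lambda'}$ obtained from \eqref{isoAB} extends to an equivariant biholomorphism of the compactifications \emph{because} $P$ has integer entries, ``so it respects the lattice of one-parameter subgroups of the ambient torus, equivalently the fan of $\mathcal S$.'' That parenthetical equivalence is not correct: an element of $\text{GL}_{n-1}(\Z)$ induces a monomial automorphism of the open torus, but nothing in \eqref{isoLambda} forces this automorphism to permute the cones of the fan of $\mathbb P(\mathcal S)$, i.e.\ to preserve the combinatorial data $\mathscr E$ of \eqref{SB}; and without that compatibility the map does not extend across the boundary strata (just as, for classical toric varieties, a lattice isomorphism of the tori extends to the compactifications only when it carries one fan onto the other). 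That compatibility with $\mathcal S$ is a genuine additional constraint rather than a formal consequence of integrality is visible in the proof of Theorem \ref{thmorbifold}, where the authors use an actual $G$-biholomorphism $f$, its lift $F_1$ to $\mathcal S_1$ and Hartogs extension to force $Q_1=0$ and $P_1$ to be a permutation matrix. So as written your third step would not go through: you would either have to show that every $P$ realizing \eqref{isoLambda} between two points of $\mathcal T_{\mathcal S}$ does induce a transformation preserving $\mathcal S$ (and hence descends, intertwining the two actions \eqref{actionLVM}), or follow the paper and rest the proposition on the forward computation alone.
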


We claim

\begin{theorem}
	\label{thmorbifold}
	If the number $k$ of indispensable points is less than $m+1$, then the moduli space $\mathcal M^{\mathcal S}_{m,n}$ can be endowed with a structure of a complex orbifold.
\end{theorem}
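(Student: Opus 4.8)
The strategy is to show that the action of $\mathrm{GL}_{n-1}(\Z)$ on $\mathcal T_{\mathcal S}$ described in Proposition \ref{propiso} is \emph{properly discontinuous with finite stabilizers} once $k\leq m+1$, so that the quotient inherits an orbifold structure in the sense of Remark \ref{rkorbifold}. Since $\mathcal T_{\mathcal S}$ is an open subset of $(\C^m)^{n-m-1}$ and the action \eqref{isoLambda} is by biholomorphisms (it is a ``matrix fractional linear'' transformation, holomorphic wherever the denominator $\,^t P_1+\,^t Q_1\,^t B_{\Lambda'}$ is invertible), properness and finiteness of stabilizers will immediately give a complex orbifold. So the real content is to establish these two topological properties.

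First I would set up the bookkeeping: rewrite \eqref{isoAB} as the statement that $M\in\mathrm{GL}_{n-m-1}(\C)$ conjugates the period lattice of $\widetilde{G_\Lambda}$ to that of $\widetilde{G_{\Lambda'}}$, and note that $M$ is then \emph{determined} by $P$ and $\Lambda'$ (it is the left factor in \eqref{equa1}); hence the ``group'' acting is genuinely $\mathrm{GL}_{n-1}(\Z)$ (or a quotient thereof) acting on configurations, with $M$ a derived quantity. Then I would use the normalization \eqref{LambdaAffineNorm} together with the constraint \eqref{km} that the \emph{first $k$ points are indispensable and the rest are not}: this pins down which of the $\Lambda_i$ may be permuted and, more importantly, forces the block $Q_1$ (the lower-left block of $P$) to have a constrained form, because an automorphism of $G_\Lambda$ must respect the splitting $\mathcal S=\Torus^k\times\mathcal S_0$, i.e. it must fix the ``indispensable'' directions. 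The key quantitative input is the hypothesis $k<m+1$: I expect this to guarantee that the affine span condition \eqref{firstmcondition} is preserved along any sequence produced by the group action, and that one cannot use indispensable points to absorb arbitrarily large integer translations — in the borderline case $k=m+1$ one would lose control of a torus factor and the quotient could fail to be an orbifold (this is exactly the phenomenon flagged in Remark \ref{rkmodulitorus} about rank jumping).

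Next I would prove proper discontinuity. Fix a compact $K\subset\mathcal T_{\mathcal S}$ and suppose $P^{(j)}\in\mathrm{GL}_{n-1}(\Z)$ with $P^{(j)}\cdot\Lambda^{(j)}=\Lambda'^{(j)}$, both staying in $K$. From \eqref{isoLambda}, $\,^t B_\Lambda(\,^t P_1+\,^t Q_1\,^t B_{\Lambda'})=\,^t P_2+\,^t Q_2\,^t B_{\Lambda'}$; since $B_\Lambda,B_{\Lambda'}$ range over a compact set and the matrices $P_i,Q_i$ have integer entries, this is a system of integer linear relations with bounded coefficients, so there are only finitely many possibilities for $(P_1,P_2,Q_1,Q_2)$ — \emph{provided} the relevant denominator stays bounded away from singular, which is where $k\leq m+1$ (hence \eqref{firstmcondition} holds uniformly on $\mathcal T_{\mathcal S}$, after possibly shrinking it to the locus where \eqref{LambdaAffineNorm} is the chosen normalization) is used. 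Then each stabilizer is a subgroup of this finite set and hence finite, and the action is properly discontinuous. Finally I would invoke the standard fact that a properly discontinuous holomorphic action with finite stabilizers on a complex manifold yields a complex orbifold, and combine this with Proposition \ref{propiso} to conclude $\mathcal M^{\mathcal S}_{m,n}=[\mathcal T_{\mathcal S}/\mathrm{GL}_{n-1}(\Z)]$ is a complex orbifold.

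\textbf{Main obstacle.} The delicate point is controlling the denominator $\,^t P_1+\,^t Q_1\,^t B_{\Lambda'}$: a priori the lower-left block $Q_1$ need not vanish, and without the hypothesis $k\leq m+1$ one could have sequences of group elements with unbounded $Q_1$ for which the induced biholomorphism is still well-defined on a sub-locus, destroying properness. So the crux is to show, using \eqref{km} and the indispensability structure of $\mathcal S$, that the admissible $P$'s form a \emph{sublattice on which $Q_1$ is controlled} — concretely, that the automorphisms of $G_\Lambda$ compatible with the toric structure of $\mathcal S$ are block-triangular in a way that forces $\,^t P_1+\,^t Q_1\,^t B_{\Lambda'}$ to be uniformly invertible on $\mathcal T_{\mathcal S}$. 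I would handle this by a careful analysis of how the $k$ indispensable coordinates sit inside the lattice presentation of $\widetilde{G_\Lambda}$ from Proposition \ref{Glattice}, translating ``$i$ is indispensable'' into a statement that the corresponding basis vector of $\Z^{n-1}$ is fixed (up to the finite permutation group of indispensable points) by any admissible $P$; the numerical hypothesis $k<m+1$ then ensures the complementary block carrying the genuine moduli is acted on by a lattice with the required discreteness.
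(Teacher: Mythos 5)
There is a genuine gap, and it is exactly at the point you flag as the "main obstacle". Your finiteness argument — that the identity ${}^tB_\Lambda\left({}^tP_1+{}^tQ_1\,{}^tB_{\Lambda'}\right)={}^tP_2+{}^tQ_2\,{}^tB_{\Lambda'}$ with $B_\Lambda,B_{\Lambda'}$ in a compact set leaves "only finitely many possibilities" for the integer blocks — does not work: the relation is bilinear and underdetermined, and large $P_1,Q_1$ can be compensated by large $P_2,Q_2$, so compactness of the $B$'s bounds nothing. The acid test is Example \ref{tori}: for $n=2m+1$ every point is indispensable and \eqref{isoLambda} is precisely the classical action of $\mathrm{GL}_{n-1}(\Z)$ on period matrices of compact complex tori, where for $m>1$ stabilizers can be infinite (tori with extra endomorphisms) and the quotient is not an orbifold — yet your boundedness reasoning would apply verbatim there. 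The hypothesis $k<m+1$ enters your plan only through the unproved claims that it keeps the denominator ${}^tP_1+{}^tQ_1\,{}^tB_{\Lambda'}$ uniformly invertible and that indispensability pins down "fixed basis vectors" of $\Z^{n-1}$; neither is where the hypothesis actually acts. Moreover you aim at proper discontinuity, which the paper neither needs nor claims: by Remark \ref{rkorbifold} an orbifold here is a quotient stack with \emph{finite isotropy}, and properness of the $\mathrm{GL}_{n-1}(\Z)$-action is in general too much to hope for (compare the dense-orbit phenomena already present for quantum tori in Example \ref{exmoduliQt}).

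The missing idea is the paper's covering-space/Hartogs mechanism, which is what $k<m+1$ really buys. Given a $G$-biholomorphism $f$ of $N_\Lambda$ stabilizing the class of $\Lambda$, restrict it (after a permutation) to $N_1$, the quotient of $\mathcal S\cap\{z_1\cdots z_{m+1}\neq 0\}$. Because all indispensable coordinates lie among the first $m+1$, the slice $\mathcal S_1=\{w\in\C^{n-m-1}\mid (1,\hdots,1,w)\in\mathcal S\}$ is $\C^{n-m-1}$ minus codimension-two coordinate subspaces, hence simply connected; so $f_1$ lifts to a biholomorphism $F_1$ of $\mathcal S_1$, which extends to all of $\C^{n-m-1}$ by Hartogs. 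On the other hand $f$ restricted to $G_\Lambda$ lifts to the linear map $\tilde F=M$ on the universal cover. Compatibility of the two lifts through the exponential $E$ (diagram \eqref{lifts}) forces $M$ to preserve the standard lattice $\Z^{n-m-1}$, i.e. $Q_1=0$ in \eqref{Pdec}; then $F_1$ is a monomial map with exponent matrix $P_1$, and a monomial biholomorphism of the whole of $\C^{n-m-1}$ must send coordinate hyperplanes to coordinate hyperplanes without ramification, so $P_1$ is a permutation matrix. Hence each stabilizer embeds in $\mathfrak{S}_{n-m-1}$ and is finite, which, with Proposition \ref{propiso} and the stack-theoretic definition of orbifold, concludes. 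Without this lifting argument (or a genuine substitute that visibly fails in the torus case), your proposal does not establish finiteness of stabilizers, let alone proper discontinuity.
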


\begin{proof}
	From the previous description and Remark \ref{rkorbifold}, it is enough to prove that the stabilizers of action \eqref{isoLambda} are finite. Let $f$ be a $G$-biholo\-morphism of $N_\Lambda$. Set
	\begin{equation}
	\label{S1}
	\mathcal S_1=\{w\in\mathbb C^{n-m-1}\quad\vert\quad (1,\hdots,1,w)\in\mathcal S\}.
	\end{equation}
	Observe that \eqref{S1} is a cover of the quotient $N_1$ of $\mathcal S\cap \{z_1\cdots z_{m+1}\not= 0\}$ by the action \eqref{actionLVM}. Indeed, we have a commutative diagram
	\begin{equation}
	\label{incl}
	\begin{tikzcd}
	(\mathbb C^*)^{n-m-1}\arrow[r]\arrow[d] &\mathcal S_1 \arrow[r]\arrow[d] &\mathcal S\arrow[d]\\
	G_\Lambda \arrow[r] &N_1 \arrow[r] &N_\Lambda
	\end{tikzcd}
	\end{equation}
	where the horizontal maps are inclusions and the first two vertical ones are covers. 
	Then, up to composing with a permutation of $\mathbb C^n$, we may assume that $f$ sends 
	$N_1$
	onto itself. 
	Because of assumption \eqref{km}, the set \eqref{S1} is simply-connected, since it is in fact equal to $\mathcal S_0$. Hence it is the universal cover of $N_1$ and the restriction of $f$ to $N_1$, say $f_1$, lifts to a biholomorphic map $F_1$ of $\mathcal S_1$. More precisely, $\mathcal S_1$ is equal to $\mathbb C^{n-m-1}$ minus a finite union of codimension $2$ vector subspaces, hence by Hartogs, $F_1$ extends as a biholomorphism of $\mathbb C^{n-m-1}$.
	\vspace{5pt}\\
	On the other hand, the restriction of $f$ to $G_\Lambda$ is a Lie isomorphism of $G_\Lambda$ and lifts as a Lie isomorphism $\tilde F$ of its universal cover $\mathbb C^{n-m-1}$. And we have a commutative diagram
	\begin{equation}
	\label{lifts}
	\begin{tikzcd}[column sep=huge]
	\mathbb C^{n-m-1}\arrow[r,"E"]\arrow[d,"\tilde F"']&(\mathbb C^*)^{n-m-1}\arrow[d,"F_1"]\\
	\mathbb C^{n-m-1}\arrow[r,"E"'] &(\mathbb C^*)^{n-m-1}
	\end{tikzcd}
	\end{equation}
	But, diagram \eqref{lifts} shows that the linear map $\tilde F=M$ must preserve the standard lattice of $\C^{n-m-1}$, so we have
	\begin{equation}
	\label{equ3}
	\tilde F(z+e_i)=\tilde F(z)+P_1e_i:=\tilde F(z)+\sum_{j=1}^{n-m-1}a_{ij}e_j
	\end{equation}
	that is, comparing with \eqref{isoAB}, then $Q_1$ is equal to $0$ in the decomposition \eqref{Pdec}. But through \eqref{lifts}, this implies that 
	\begin{equation}
	\label{equ4}
	F_1(w)=\Big (w_1^{a_{1j}}\cdots w_{n-m-1}^{a_{n-m-1j}}\Big )_{j=1}^{n-m-1}
	\end{equation}
	Now, recall that $F_1$ is a biholomorphism of the whole $\mathbb C^{n-m-1}$, so must send a coordinate hyperplane onto another one without ramifying. This shows that $P_1=(a_{ij})$ is a matrix of permutation. Hence every stabilizer is a subgroup of $\mathfrak{S}_{n-m-1}$, so is finite.
\end{proof}
\begin{example}
	\label{tori}
	{\bf Tori.} Let $n=2m+1$, then there are $2m+1$ indispensable points, $\mathcal S$ is $(\mathbb C^*)^n$ and $N$ is a compact complex torus of dimension $m$ \cite[Theorem 1]{meersseman2000}. The associate polytope $K$ is reduced to a 
	point and $N=G$. The moduli space $\mathcal M$ is equal to the moduli space of compact complex tori of dimension $m$, which is not an orbifold for $m>1$.
\end{example}

\begin{example}
	\label{Hopf}
	{\bf Hopf surfaces.} Let $n=4$ and $m=1$, then there are two indispensable points and $\mathcal S$ is $(\mathbb C^*)^2\times\mathbb C^2\setminus\{(0,0)\}$.
	We fix $\lambda_1=i$ and $\lambda_2=1+i$. They generate the real affine line $\Im z=1$. The complement of this line in $\C$ has two connected components, say $\mathscr{H}^+$ and $\mathscr{H}^-$. A LVMB datum $(\mathcal S,\lambda)$ is given by a couple of complex numbers $(\lambda_3,\lambda_4)$ belonging to
	\begin{equation}
	\label{zoneHopf}
	\mathscr{H}^+\times \mathscr{H}^+\cup \mathscr{H}^-\times \mathscr{H}^-.
	\end{equation}
	The manifold $N_\Lambda$ is equal to the diagonal Hopf surface obtained by taking the quotient of $\mathbb C^2\setminus\{(0,0)\}$ by the group generated by
	\begin{equation}
	\label{eqHopf}
	(z,w)\longmapsto (E(\lambda_3-\lambda_1)\cdot z, (E(\lambda_4-\lambda_1)\cdot w)
	\end{equation}
	Two points $(\lambda_3,\lambda_4)$ and $(\lambda'_3,\lambda'_4)$ with coordinates in \eqref{zoneHopf} are equivalent if and only if their difference $(\lambda'_3,\lambda'_4)-
	(\lambda_3,\lambda_4)$ or their switched difference  $(\lambda'_3,\lambda'_4)-
	(\lambda_4,\lambda_3)$ belongs to the lattice $\mathbb Z\oplus \mathbb Z$. The isotropy group of 
	a point is $\mathbb Z_2$ for the diagonal $\lambda_3=\lambda_4$ and more generally for all couples with $\lambda_4-\lambda_3\in\mathbb Z$; and is zero elsewhere. The moduli space is a complex orbifold.
	\\
	Observe that not all Hopf surfaces are obtained as LVMB-manifolds, but only the linear diagonal ones. Now, they coincide with the set of Hopf surfaces that are 
	equivariant compactifications of $(\mathbb C^*)^2$.
\end{example}

\subsection{Morphisms between moduli stacks}
\label{GQmoduyli}
We want to compare the complex moduli stack $\mathcal M^{\mathcal S}_{m,n}$ of Section \ref{Gmoduli} (assuming Theorem \ref{thmorbifold} holds) with the real moduli stack $\modulitoricgerbe{D}{n,d,*}$ of calibrated Quantum Torics of maximal length up to marked isomorphisms.

We first review and complete the construction of $\modulitoricgerbe{D}{n,d, *}$ following the case of Quantum $\mathbb P^n$ treated in Section \ref{QmoduliPn}. Here $D$ is the combinatorial type of a complete Quantum Fan and the calibration is assumed to be maximal. Theorem \ref{thmmodulidec} describes $\modulitoricgerbe{D}{n,d}$ (i.e. without using marked isomorphisms). The case of $\modulitoricgerbe{D}{n,d,*}$ follows easily.
\begin{theorem}
	\label{thmmodulidecmarked}
	The moduli stack $\modulitoricgerbe{D}{n,d,*}$ is the real orbifold obtained as quotient stack of the set {\rm \eqref{U}} by the equivalence relation
	\begin{equation}
	\label{simmodulidecmkd} 
		\hbar\sim\hbar'\iff \exists s\in\mathfrak{S}_p\text{ s.t. }L\text{ exists and }h'=LhH^{-1}
	\end{equation}
	for $L$, respectively $H$, defined in {\rm \eqref{L}}, resp. {\rm \eqref{Hdecompo2}} with $t$ representing the identity.
\end{theorem}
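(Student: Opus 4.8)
The plan is to deduce Theorem \ref{thmmodulidecmarked} directly from the already-established Theorem \ref{thmmodulidec} by tracking the single extra constraint imposed by the word \emph{marked}. Recall that in Theorem \ref{thmmodulidec} the moduli space $\modulitoricgerbe{D}{n,d}$ is the quotient of the open set $U$ of $\eqref{U}$ by the relation $\hbar\sim\hbar'$ governed by pairs $(s,t)\in\mathfrak{S}_p\times\mathfrak{S}_{n-p}$, with $H$ the block-diagonal permutation matrix of $\eqref{Hdecompo2}$ and $L$ the linear isomorphism of $\eqref{L}$. The whole point is that passing to marked isomorphisms changes nothing about the ambient set $U$ and nothing about the topology (still the quotient topology from the euclidean topology of $(\R^d)^{n-d}$); it only restricts the equivalence relation.

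First I would recall Definition \ref{defisomarkedfull}: a calibrated toric isomorphism $\mathscr{L}^{cal}$, equivalently its associated calibrated Quantum Fan isomorphism $(L,H)$, is \emph{marked} precisely when $J=J'$ and the induced permutation $s$ of the virtual generators is the identity. In our situation $J=\{p+1,\hdots,n\}$ is fixed throughout the moduli problem (we are fixing the combinatorial type $D$ and working at maximal length), so the condition $J=J'$ is automatic, and the condition "$s=\mathrm{id}$" is exactly the condition "$t$ represents the identity" in the block decomposition $\eqref{Hdecompo2}$, where $t\in\mathfrak{S}_{n-p}$ is the permutation of the virtual generators $\{v_{p+1},\hdots,v_n\}$. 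Hence, among all the isomorphisms $(L,H)$ considered in the proof of Theorem \ref{thmmodulidec}, the marked ones are exactly those with $t=\mathrm{id}$, i.e. those indexed by $s\in\mathfrak{S}_p$ alone, with $H=\left(\begin{smallmatrix} s&0\\0&\mathrm{Id}\end{smallmatrix}\right)^{-1}$ and with $L$ still determined by the unique solution of the commuting square $\eqref{CDLHmod}$ via $\eqref{L}$ and $h'=LhH^{-1}$. This yields precisely the relation $\eqref{simmodulidecmkd}$.

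Concretely the steps are: (1) invoke Theorem \ref{prisodecQTV}(ii) and Corollary \ref{corcalisofull} to translate "marked isomorphism classes of calibrated Quantum Torics of maximal length and combinatorial type $D$" into "marked isomorphism classes of maximal-length calibrated Quantum Fans with $\mathrm{comb}(\Delta)\simeq D$", so that $\modulitoricgerbe{D}{n,d,*}$ is literally the quotient of $U$ by the marked-isomorphism relation; (2) rerun the linear-algebra computation in the proof of Theorem \ref{thmmodulidec} verbatim, noting that the existence and uniqueness of $L$ (hence of $h'=LhH^{-1}$) given $(\hbar,H)$ does not use anything about $t$; (3) apply Definition \ref{defisomarkedfull}/Lemma \ref{isoDQFlemma} to identify the marked subclass as exactly $\{(s,\mathrm{id})\}$, obtaining $\eqref{simmodulidecmkd}$; and (4) for the orbifold statement, observe as in the proof of Corollary \ref{cormodulidec} that the stabilizer of any $\hbar$ is now a subgroup of $\mathfrak{S}_p$ (rather than $\mathfrak{S}_p\times\mathfrak{S}_{n-p}$), hence finite, so by Remark \ref{rkorbifold} the quotient is a real orbifold of dimension $d(n-d)$ — the dimension count is unchanged since $U\subset(\R^d)^{n-d}$ and the group is finite.

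There is essentially no deep obstacle here; the theorem is a bookkeeping corollary of Theorem \ref{thmmodulidec}. The one point requiring a little care is verifying that restricting to $t=\mathrm{id}$ really does coincide with the marked condition in \emph{all} cases — in particular one should check that the block decomposition $\eqref{Hdecompo2}$ is forced (so that $H$ cannot mix $1$-cone generators with virtual generators), which follows from Lemma \ref{isoDQFlemma} in the maximal-length case, and that no additional identifications sneak in because two different $s$ could give the same $h'$ (this is harmless: it just means stabilizers, and we only claim they are finite). A secondary cosmetic point is to state clearly that the indexing set $U$ and its topology are the same as in Theorem \ref{thmmodulidec}, so the only difference between $\modulitoricgerbe{D}{n,d}$ and $\modulitoricgerbe{D}{n,d,*}$ is the coarser versus finer equivalence relation — indeed there is a natural surjection $\modulitoricgerbe{D}{n,d,*}\to\modulitoricgerbe{D}{n,d}$ of orbifolds forgetting the marking, whose fibers are finite. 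I would close by remarking, as the paper does for $\mathbb P^n$, that the marked version is the one that can acquire a complex structure (cf. Theorem \ref{thmoduliPn}), which motivates introducing it.
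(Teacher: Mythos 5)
Your proposal is correct and follows essentially the same route as the paper, whose proof is simply the observation that the statement is a direct rephrasing of Theorem \ref{thmmodulidec} and Corollary \ref{cormodulidec} once one notes that marked isomorphisms force $t$ to be the identity in the decomposition \eqref{Hdecompo2}. Your additional bookkeeping (the reduction via Theorem \ref{prisodecQTV}, the identification of the marked condition through Lemma \ref{isoDQFlemma}, and the finiteness of stabilizers inside $\mathfrak{S}_p$) just spells out what the paper leaves implicit.
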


\begin{proof}
	This is a direct rephrasing of Theorem \ref{thmmodulidec} and Corollary \ref{cormodulidec}, noting that $t$ must be the identity since we use marked isomorphisms.
\end{proof}
Now, let $(\mathcal S,\Lambda)$ be a balanced LVMB configuration. As in Section \ref{LVMBdonnedec}, we assume that it is a configuration of $n+1$ points, cf. Warning \ref{warLVMB}. Fix $\mathcal S$, $m$ and $n+1$. We may associate to it the quotient stack $[N_\Lambda/\C^m]$, where $\C^m$ is the composition of the flows of the vector fields \eqref{etavf}. By Theorem \ref{mainGIT}, this is a calibrated Quantum Toric. In this way, we construct a map from the set of objects of $\mathscr{N}^*_{LVMB}$ with same $\mathcal S$, $m$ and $n+1$ to the corresponding moduli stack $\modulitoricgerbe{D}{n,d,*}$. Two $G$-isomorphic objects are sent to the same point $\modulitoricgerbe{D}{n,d,*}$. So we just defined a map
\begin{equation}
\label{mapmodst}
[N_\Lambda]\in \mathscr{M}^{\mathcal S}_{m,n+1}\longmapsto \mathscr{D}[N_\Lambda]:=[N_\Lambda/\C^m]\in\modulitoricgerbe{D}{n,d,*}
\end{equation}
We want to investigate the local structure of this map. To do that, we first rewrite $\mathscr{M}^{\mathcal S}_{m,n+1}$ locally in a slightly different way as that given in Proposition \ref{propiso}. Especially, we normalize the involved LVMB data in a slighty different way.

So, consider  a fixed balanced LVMB datum $(\mathcal S,\Lambda)$ with dimensions $m$ and $n+1$. Assume that $k$ is less than $m+1$. We assume that $\mathcal S$ is equal to $\mathcal S_0\times\Torus^k$, hence the $k$ indispensable points are $n-k+1,\hdots, n+1$. We also assume that $\{d+1,\hdots,n+1\}$ belongs to $\mathscr{E}$, hence for the real affine span of $(\Lambda_{d+1},\hdots ,\Lambda_{n+1})$ is the whole $\C^m \simeq\R^{n-d}$.

Up to $G$-isomorphisms, every LVMB datum $(\mathcal S,\Lambda')$ close to $(\mathcal S,\Lambda)$ can be normalized as follows 
\begin{enumerate}[(i)]
	\item $\Lambda'$ is balanced.
	\item $\Lambda'_i=\Lambda_i$ for all $i$ between $n-m+2$ and $n+1$.
\end{enumerate}
We perform an affine Gale transform \eqref{GT2} on the corresponding $A$, respectively $A'$, and define uniquely $\underline{v}$, respectively $\underline{v'}$ by the additional constraints
\begin{equation}
\label{constraints}
v_1=v'_1=e_1,\ \hdots,\ v_d=v'_d=e_d
\end{equation}
This is possible thanks to the fact that $\{d+1,\hdots,n+1\}$ belongs to $\mathscr{E}$.

Theorem \ref{mainGIT} shows that $v=(v_1,\hdots,v_n)$, resp. $v'=(v'_1,\hdots, v'_n)$ is a representant of $\mathscr{D}[N_\Lambda]$, resp. $\mathscr{D}[N_{\Lambda'}]$. 

Set 
\begin{equation}
\label{OmegaL}
	\left\{Z\in (\C^m)^{n-m+1}\mid \sum_{i=1}^{n-m+1}Z_i=-\sum_{i=n-m+2}^{n+1}\Lambda_{i}\right \}
\end{equation}
Then a neighborhood of $(\Lambda_1,\hdots,\Lambda_{n-m+1})$ in \eqref{OmegaL} encodes all $[N_{\Lambda'}]$ close to $[N_{\Lambda}]$ through the map 
\begin{equation}
Z\in\Omega_\Lambda\longmapsto \Lambda':=(Z_1,\hdots,Z_{n-m+1}, \Lambda_{n-m+2},\hdots,\Lambda_{n+1})
\end{equation}
More precisely, we may take the open subset $\Omega_\Lambda$ of \eqref{OmegaL} which consists of points $Z$ such that 
\begin{equation}
\text{rank }(Z_{d+1},\hdots,Z_{n-m-1},\Lambda_{n-m+2},\hdots,\Lambda_{n+1})=n-d+1
\end{equation}
that is maximal.

The previous Gale Transform with constraints \eqref{constraints} gives a well defined mapping, say $GT$, from $\Omega_\Lambda$ to 
\begin{equation}
\label{arrivaldomain}
\left\{X\in (\R^d)^{n-d+1}\mid\sum_{i=1}^{n-d+1}X_i=-e_1-\hdots-e_d\right\}
\end{equation}
sending $\Lambda$ (that is $(\Lambda_1,\hdots,\Lambda_{n-m+1})$) on $v$ (that is on $v_{d+1},\hdots,v_n$).	Denote by $\Pi_v$ the image $GT(\Omega_\Lambda)$.

A local model for the complex orbifold $\mathscr{M}^{\mathcal S}_{m,n+1}$ at $[N_\Lambda]$ is given by the complex orbifold chart $[\Omega_\Lambda/H_\Lambda]$ for $H_{\Lambda}$ a finite subgroup of $\text{GL}_n(\Z)$ fixing $\Lambda$. It follows from the proof of Theorem \ref{thmorbifold} that $H_\Lambda$ is indeed a subgroup of $\mathfrak S_{n-m}$ and acts by permutation on the first $n-m$ coordinates of $\Omega_\Lambda$ fixing $\Lambda$.

A local model for the real orbifold $\modulitoricgerbe{D}{n,d,*}$ at $v$ is given by the real orbifold chart $[\Pi_v/J_v]$ where $J_v$ is a subgroup of $\mathfrak S_p$ acting on $\Pi_v$ and fixing $v$. The precise action can be computed from the presentation given in Theorem \ref{thmmodulidecmarked}.

The mapping $GT$ is equivariant with respect to these actions and descends as the morphism $\mathscr{D}$ making the following diagram commutative
\begin{equation}
\begin{tikzcd}
\Omega_\Lambda \arrow[d]\arrow[r,"GT"] &\Pi_v\arrow[d]\\
\text{[}\Omega_\Lambda/H_\Lambda\text{]}\arrow[r,"\mathscr{D}"']&\text{[}\Pi_v/J_v\text{]}
\end{tikzcd}
\end{equation} 

Given $v'$ in $\Pi_\Lambda$, recall from Lemma \ref{lemmaGalereverse} that every $\Lambda'$ in the fiber $GT^{-1}(v')$ is a Gale transform of $\underline v'$. Recall also from Lemma \ref{lemmaGaleetGaleaff} and Remark \ref{Galemodulo} that such a Gale transform is unique up to action of the affine group $\text{Aff }(\R^{n-d})$. 

Now, every $\Lambda'$ in the fiber $GT^{-1}(v')$ is balanced with fixed last $m$ coordinates. Hence the fiber $GT^{-1}(v')$ naturally identifies with the group
\begin{equation}
\label{Gfiber}
\begin{aligned}
G:&=\{M\in\text{GL}_{n-d}(\R)\mid \\
&\qquad\qquad MA_i=A_i\text{ for all }n-m+2\leq i\leq n+1\}\\
\end{aligned}
\end{equation}
Observe that $G$ is isomorphic to 
\begin{equation}
\label{Gmodel}
\left\{\begin{pmatrix}
Id &B\\
0 &C
\end{pmatrix}\mid B,C\in \text{M}_m(\R)\right\}
\end{equation}
In particular, every fiber of $GT$ contains a configuration $\Lambda'$ which satisfies the stronger constraints
\begin{enumerate}[(i)]
	\item $\Lambda'$ is balanced.
	\item $\Lambda'_i=\Lambda_i$ for all $i$ between $d+2$ and $n+1$.
\end{enumerate}
Let $\tilde \Omega_\Lambda$ the subset of $\Omega_\Lambda$ consisting of those $\Lambda'$. Then $GT$ is a diffeomorphism between $\tilde \Omega_\Lambda$ and $\Pi_v$.
\begin{remark}
	The same argument was used to prove Theorem \ref{thmLVMBdec}.
\end{remark}
In other words, the map
\begin{equation}
\label{Gtriv}
(\Lambda',M)\in\tilde\Omega\times G\longmapsto M\Lambda'\in\Omega_\Lambda
\end{equation}
is a trivialization for $GT$ so the following diagram is commutative
\begin{equation}
\label{cdGtrivializations}
\begin{tikzcd}[row sep=normal, column sep=scriptsize]
\Omega_\Lambda \arrow[rr,"GT"] \arrow[dd,"\simeq"' near start] \arrow[dr]& & \Pi_v\arrow[dr]\\
& \text{[}\Omega_\Lambda/H_\Lambda\text{]}\arrow[rr,"\mathscr{D}" near start]  & & \text{[}\Pi_v/J_v\text{]} \\
\tilde\Omega_\Lambda\times G\arrow[ur]\arrow[rr,"\text{1st projection}"] & & \tilde\Omega_\Lambda\arrow[ur]\arrow[from=uu, crossing over,"\simeq" near start] \\
\end{tikzcd}
\end{equation}
We just proved that
\begin{theorem}
	The map $\mathscr{D}$ is an orbibundle with fiber $G$ isomorphic to $\R^{(n-d)^2/2}$.
\end{theorem}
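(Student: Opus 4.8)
The plan is to read off the conclusion from the commutative diagram \eqref{cdGtrivializations}, which has essentially already been assembled in the preceding paragraphs; the only work remaining is to verify that the pieces fit together to give a \emph{locally trivial} bundle and to identify the fiber. First I would fix a point $[N_\Lambda]\in\mathscr{M}^{\mathcal S}_{m,n+1}$ and work in the chosen normalization (i.e. $\Lambda$ balanced, last $m$ coordinates fixed, $k<m+1$, $\{d+1,\hdots,n+1\}\in\mathscr{E}$, so that the preceding normalizations apply). The local model at $[N_\Lambda]$ is the orbifold chart $[\Omega_\Lambda/H_\Lambda]$ with $H_\Lambda$ a finite subgroup of $\mathfrak{S}_{n-m}$ acting by permutation on the first $n-m$ coordinates (this is precisely what the proof of Theorem \ref{thmorbifold} establishes); the local model at $v=GT(\Lambda)\in\modulitoricgerbe{D}{n,d,*}$ is $[\Pi_v/J_v]$ with $J_v\subset\mathfrak{S}_p$ as computed from Theorem \ref{thmmodulidecmarked}. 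The map $GT$ was shown to be equivariant and to descend to $\mathscr{D}$; so it suffices to analyze $GT:\Omega_\Lambda\to\Pi_v$ as an $H_\Lambda$-equivariant map over the $J_v$-action.

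Next I would pin down the fiber. By Lemma \ref{lemmaGalereverse}, a point of $GT^{-1}(v')$ is exactly a Gale transform of $\underline{v'}$; by Lemma \ref{lemmaGaleetGaleaff} together with Remark \ref{Galemodulo}, such a Gale transform is unique up to $\text{Aff}(\R^{n-d})$, and imposing the balancing and the fixing of the last $m$ coordinates cuts this affine ambiguity down exactly to the group $G$ of \eqref{Gfiber}. The computation identifying $G$ with the block group \eqref{Gmodel} is the routine linear-algebra step: a matrix $M\in\text{GL}_{n-d}(\R)$ fixing $A_i$ for $n-m+2\le i\le n+1$, where those $A_i$ (via \eqref{isoRC}) span a fixed $\R^m\subset\R^{n-d}$, must be the identity on that subspace, hence has the stated block-upper-triangular shape with an $m\times m$ block $C$ and an $m\times m$ block $B$; this is a real vector space of dimension $m^2 + m\cdot 0 = \ldots$ — more precisely, $B$ and $C$ each contribute $m^2$ real parameters, but one must be careful: I would recount, because the paper asserts $\dim G = (n-d)^2/2 = 2m^2$, and indeed $B,C\in\text{M}_m(\R)$ gives $2m^2$ parameters, matching $(2m)^2/2 = 2m^2$. (One should double-check $G$ really is all of \eqref{Gmodel} and not a proper subgroup — the constraint is an equality of matrices on a spanning set, so this is immediate.) Then \eqref{Gtriv}, $(\Lambda',M)\mapsto M\Lambda'$, is a bijection $\tilde\Omega_\Lambda\times G\to\Omega_\Lambda$ intertwining the projection with $GT$; verifying it is a diffeomorphism (smoothness and smooth inverse) is again routine since everything is polynomial/rational in the entries and $G$ acts freely and properly on $\Omega_\Lambda$ by the uniqueness just discussed.

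Finally I would assemble the global statement. Locally, diagram \eqref{cdGtrivializations} exhibits $\mathscr{D}$ as the projection $\tilde\Omega_\Lambda\times G\to\tilde\Omega_\Lambda$ modulo the finite groups $H_\Lambda$ (acting, by the proof of Theorem \ref{thmorbifold}, through permutations that are compatible with the bundle structure — here one must check $H_\Lambda$ acts on the total space covering its action on the base, which follows because $H_\Lambda\subset\mathfrak{S}_{n-m}$ permutes coordinates of $\Omega_\Lambda$ lying over permutations of $\Pi_v$), so $\mathscr{D}$ is an orbibundle with typical fiber $G\cong\R^{2m^2}=\R^{(n-d)^2/2}$. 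Patching these local trivializations over an atlas of $\modulitoricgerbe{D}{n,d,*}$ uses the naturality of the Gale-transform normalization under change of base point; the transition maps land in the automorphism group of $G$ (indeed in an affine/linear group), so the orbibundle structure is well defined globally.

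\textbf{Main obstacle.} The genuinely delicate point is the equivariance/compatibility bookkeeping between the two finite groups $H_\Lambda$ (acting on $\Omega_\Lambda$, i.e.\ on the LVMB side) and $J_v$ (acting on $\Pi_v$, i.e.\ on the quantum-fan side): one must verify that $GT$ really is equivariant for a homomorphism $H_\Lambda\to J_v$ compatible with the product decomposition $\tilde\Omega_\Lambda\times G$, so that the quotient $\mathscr{D}:[\Omega_\Lambda/H_\Lambda]\to[\Pi_v/J_v]$ is still a (orbi)fiber bundle rather than merely a map with contractible fibers. This amounts to carefully tracking, through the affine Gale transform, how a permutation $s\in\mathfrak{S}_p$ of the $1$-cone generators induces a permutation of $(\Lambda_1,\hdots,\Lambda_{n-m+1})$ together with a renormalizing element of $G$ — essentially the same computation appearing in the proof of Theorem \ref{thmoduliPn} and the discussion following it — and checking it respects the splitting. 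Everything else (smoothness, the explicit shape of $G$, the dimension count) is straightforward.
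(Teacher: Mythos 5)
Your proposal follows essentially the same route as the paper: the local orbifold charts $[\Omega_\Lambda/H_\Lambda]$ and $[\Pi_v/J_v]$, identification of the fiber of $GT$ with the group $G$ of \eqref{Gfiber} via Lemmas \ref{lemmaGalereverse}, \ref{lemmaGaleetGaleaff} and Remark \ref{Galemodulo}, the block description \eqref{Gmodel} giving the dimension $2m^2=(n-d)^2/2$, and the trivialization \eqref{Gtriv} summarized in diagram \eqref{cdGtrivializations}, from which the paper reads off the theorem directly. The equivariance bookkeeping you flag as the main obstacle is exactly what the paper disposes of by asserting that $GT$ is equivariant and descends to $\mathscr{D}$, so your proof is correct and matches the paper's argument.
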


\bibliographystyle{plain}
\bibliography{QTGBib}

\end{document}